\numberwithin{equation}{section} 
\theoremstyle{plain}
\newcounter{nonum}
\newtheorem{theon}[nonum]{Theorem}
\def\CC{\mathbb{C}}
\def\ZZ{\mathbb{Z}} 
\def\A{{\rm A}}
\def\B{{\rm B}}
\def\C{{\rm C}}
\def\D{{\rm D}}
\def\E{{\rm E}}
\def\F{{\rm F}}
\def\G{{\rm G}}
\def\H{{\rm H}}
\def\I{{\rm I}}
\def\J{{\rm J}}
\def\K{{\rm K}}
\def\L{{\rm L}}
\def\M{{\rm M}}
\def\N{{\rm N}}
\def\P{{\rm P}}
\def\R{{\rm R}}
\def\SS{{\rm S}}
\def\T{{\rm T}}
\def\U{{\rm U}}
\def\V{{\rm V}}
\def\W{{\rm W}}
\def\Z{{\rm Z}}
\def\Aa{\mathscr{A}}
\def\Bb{\mathscr{B}}
\def\Cc{\EuScript{C}}
\def\Ee{\EuScript{E}}
\def\Ii{\mathscr{I}}
\def\Oo{\EuScript{O}}
\def\Pp{\mathscr{P}}
\def\Qq{\mathscr{Q}}
\def\Qq{\EuScript{Q}}
\def\Ss{\mathfrak{S}}
\def\Tt{\mathfrak{T}}
\def\Ww{\mathscr{W}}
\def\AA{\mathfrak{A}}
\def\BB{\mathfrak{B}}
\def\HH{\mathfrak{H}}
\def\JJ{\mathfrak{J}}
\def\KK{\mathfrak{K}}
\def\PP{\mathfrak{P}}
\def\Ga{\Gamma}
\def\La{\Lambda}
\def\Om{\Omega}
\def\a{\alpha} 
\def\b{\beta}
\def\d{\delta}
\def\e{{\rm e}}
\def\g{\gamma}
\def\h{\varphi}
\def\l{\lambda}
\def\n{\eta}
\def\p{\mathfrak{p}}
\def\s{\sigma}
\def\t{\theta}
\def\v{\upsilon}
\def\w{\varpi}
\def\ie{that is, }
\def\>{\geqslant}
\def\<{\leqslant}
\def\Hom{{\rm Hom}}
\def\End{{\rm End}}
\def\Aut{{\rm Aut}}
\def\Mat{{\rm M}}
\def\GL{{\rm GL}}
\def\Gal{{\rm Gal}}
\def\tr{{\rm tr}}
\def\mult#1{{#1}^{\times}}
\def\CC{\mathfrak{C}}
\def\PB{\mathfrak{Q}}
\def\aa{\mathfrak{a}}
\def\AL{\overline\A{}}
\def\BL{\overline\B{}}
\def\CL{\overline\C{}}
\def\RL{\overline\R{}}
\def\UL{\overline\U{}}
\def\VL{\overline\V{}}
\def\GaL{\overline\Ga{}}
\def\LaL{\overline\La{}}
\def\AH{\A_{\K}}
\def\aa{\mathfrak{P}}
\def\({\left(}
\def\){\right)}
\def\psi{\Psi}
\def\PSI{\boldsymbol{\psi}}
\def\d{f}
\def\c{s}
\def\ee{{\rm c}}
\def\bc{\boldsymbol{b}}
\def\ii{\iota}
\def\can{\varkappa}
\def\flit{\diamond}
\def\Ee{\EuScript{E}}
\def\flop{\prime}
\def\up{0}
\def\ce{{\bf 1}}
\def\ss{\mathfrak{s}}
\def\JL{{\bf JL}}
\def\qq#1{\widehat{#1}}
\author{P. Broussous}
\address{
Universit\'e de Poitiers\\ 
Laboratoire de Math\'ematiques\\ 
T\'el\'eport 2\\ 
Boulevard Marie et Pierre Curie\\ 
BP 30179, F-86962 Futuroscope-Chasseneuil cedex\\
France}
\email{Paul.Broussous@math.univ-poitiers.fr}
\author{V. S\'echerre}
\address{Institut de Math\'ematiques de Luminy\\
CNRS UMR $6206$\\
Universit\'e de la M\'editerran\'ee, 163 avenue de Luminy\\
$13288$ Marseille Cedex $09$\\
France}
\email{secherre@iml.univ-mrs.fr}
\author{S. Stevens}
\address{School of Mathematics\\
University of East Anglia\\
Norwich NR4 7TJ\\
United Kingdom}
\email{Shaun.Stevens@uea.ac.uk}
\title[Smooth representations of $\GL_{m}(\D)$, V]
{Smooth representations of $\GL_{m}(\D)$\\ V: Endo-classes}
\begin{abstract}
Let $\F$ be a locally compact nonarchimedean local field. 
In this article, we extend to any inner form of $\GL_n$ 
over $\F$, with $n\>1$, the notion of endo-class introduced
by Bush\-nell and Henniart for $\GL_n(\F)$. 
We investigate the intertwining relations of simple characters of these groups,
in particular their preservation properties under transfer. 
This allows us to associate to any discrete series representation
of an inner form of $\GL_n(\F)$ an endo-class over $\F$.
We conjecture that this endo-class is invariant under the 
local Jacquet-Langlands correspondence.
\end{abstract}
\thanks{
This work was supported by: EPSRC (grant GR/T21714/01), 
the British Council and the Partenariat Hubert Curien 
in the framework of the Alliance programme (number 19418YK), 
the Agence Nationale de la Recherche (ANR-08-BLAN-0259-01) 
and the Universit\'e de la M\'editerran\'ee Aix-Marseille 2. 
}
\dedicatory{In memory of Martin Grabitz}
\begin{document}

\maketitle

\tableofcontents

\section*{Introduction}

This is the fifth in a series of articles whose objective is a
complete description of the category of smooth complex representations
of $\GL_{m}(\D)$, with $m$ a positive integer and 
$\D$ a division algebra over a locally compact 
nonarchimedean local field.
The longer term aim is an
explicit description, in terms of types, of the local
Jacquet-Langlands cor\-res\-pon\-den\-ce~\cite{DKV,Ba}, as begun by Bushnell
and Henniart~\cite{HJL1,BHJL2,BHLTL3}, and by Silberger and
Zink~\cite{SZ1,SZ2}.

\medskip

The main object of study in this paper is the notion of
\emph{endo-class}, introduced by Bush\-nell and Henniart~\cite{BH}. 
An endo-class (over a locally compact nonarchimedean
local field~$\F$) is an invariant associated to an irreducible
cuspidal representation of~$\GL_n(\F)$, con\-struc\-ted by
explicit methods related to the description of this  
representation as com\-pact\-ly induced from an irreducible 
representation of
a compact-mod-centre subgroup of~$\GL_n(\F)$ (see~\cite{BK,BH}). 
The arithmetic significance of this invariant has been described by
Bushnell and Henniart in~\cite{BHLTL4}, in the case where $\F$ is of
characteristic zero: if we denote by~$\Ww_\F$ the Weil group of~$\F$
(relative to an algebraic closure) and by~$\Pp_\F$ its wild inertia
subgroup, there is a bijection between the set~$\Ee(\F)$ of
endo-classes over~$\F$ and the set of $\Ww_\F$-conjugacy classes of
irreducible representations of~$\Pp_\F$, which is compatible with the
local Langlands correspondence.

\medskip

In this article, we extend the notion of endo-class to any inner form
of~$\GL_n(\F)$, $n\>1$, \ie to any group of the form~$\GL_m(\D)$,
with~$m$ a positive integer 
and~$\D$ an $\F$-central division algebra of dimension
$d^2$ over~$\F$, with $n=md$. If~$\G$ is an inner form
of~$\H=\GL_n(\F)$, and if~$\EuScript{D}(\G)$ denotes the discrete
series of~$\G$ (\ie the set of isomorphism classes of essentially
square-integrable irreducible representations of~$\G$), we define a map:
\begin{equation*}
\boldsymbol{\Theta}_{\G}:\EuScript{D}(\G)\to\Ee(\F)
\end{equation*}
(see paragraph \ref{daga}) 
which associates an endo-class over~$\F$ to any discrete series
representation of~$\G$. This map should play an important role in an
explicit description of the local Jacquet--Langlands correspondence:
\begin{equation*}
\label{JLC}
{\bf JL}:\EuScript{D}(\G)\to\EuScript{D}(\H).
\end{equation*}
In particular, we expect that ${\bf JL}$ preserves the endo-class (see
Conjecture~\ref{EndoClassJL1}), that is:
\begin{equation*}
\boldsymbol{\Theta}_{\H}\circ{\bf JL}=\boldsymbol{\Theta}_{\G}.
\end{equation*}
This conjectural property can be seen as a generalization of the fact
that the correspondence ${\bf JL}$ preserves the representations of
level zero (see~\cite{SZ1}). The notion of endo-class also plays a
central role in the construction of semisimple types (in an article in
preparation~\cite{BSS2}), which will give a complete description of
the structure of the category of smooth complex representations of~$\G$.

\medskip

One of the objectives of~\cite{VS1}, completed in~\cite{SeSt}, is the
construction of \emph{simple characters}, which are certain special 
characters of particular compact open subgroups of~$\G$. These simple
characters are attached to data called \emph{simple strata}, and are
a fundamental part of the construction of more elaborate objects
called \emph{simple types} (see~\cite{VS2,VS3}). 
One knows from~\cite{VS3,SeSt} that every irreducible discrete series
representation~$\pi$ of~$\G$ contains a simple character~$\t$ attached
to a simple stratum. 
Neither the simple stratum nor the simple
cha\-rac\-ter are unique, but every other simple character~$\t'$ contained
in~$\pi$ intertwines~$\t$, \ie there is an element~$g\in\G$ such that
$\t'$ and the conjugate character $\t^g$ coincide on the intersection
of the compact open subgroups where they are defined. It is this
observation which leads to the notion of endo-class.

\medskip

An endo-class is an equivalence class of objects called
\emph{potential simple characters} (or \emph{ps-characters} for short), 
for a relation called \emph{endo-equivalence}. 
A ps-character $\Theta$ 
is characterized by giving a simple stratum~$[\La,n,m,\b]$ 
in an~$\F$-central simple algebra $\A$ and a simple character~$\t$ attached 
to this simple stratum. 
The pair $([\La,n,m,\b],\t)$ is called a {\it real\-iza\-tion} of $\Theta$.
Another simple stratum~$[\La',n',m',\b]$ in another~$\F$-central simple 
algebra $\A'$ (note that $\b$ is unchanged) and a
simple character~$\t'$ for this stratum define the same ps-character
precisely when~$\t$ and~$\t'$ are linked by the transfer map
defined in~\cite{VS1}. 
Two ps-characters~$\Theta_1$ and~$\Theta_2$ are
said to be endo-equivalent (see Definition~\ref{Petrone2}) if they can
be characterized by giving realizations 
$([\La,n_i,m_i,\b_i],\t_i)$ in an
$\F$-central simple algebra~$\A$, for $i=1,2$ (note that $\A$ and $\La$
do not depend on $i$), of the same degree and normalized level, and
such that the simple characters~$\t_1$ and~$\t_2$ intertwine in~$\mult\A$. 

\medskip

The properties of endo-equivalence depend on important intertwining
properties of simple characters, notably the preservation of these
properties under the transfer map. This article centres on two
important technical results: the property of ``preservation of
inter\-twi\-ning'' (Theorem~\ref{EndoClasChar}) and the ``intertwining
implies conjugacy'' property (Theorem~\ref{TrelawneyIIC}). Partial
results on these questions were already given by Grabitz~\cite{Gr},
notably a proof of  ``intertwining implies conjugacy'', but these
results are proved under unnecessarily restrictive hypotheses:
that the simple strata underlying the construction are \emph{sound} in
the sense of Definition~\ref{StrateSoundDef}. We have sought to
develop the notion of endo-class in as general a situation as
possible, emphasizing the functorial properties of the objects
in\-vol\-ved. 
However, rather than starting again from scratch, we decided
to use the work of Grabitz as much as possible. 
We note that, as well 
as~\cite{Gr}, our proofs rely heavily on the results of Bushnell,
Henniart and Kutzko~\cite{BK,BH} in the split case.

\medskip

Let us now describe in more detail the results, and the techniques used, 
in this article. 
For~$i=1,2$, let~$\Theta_i$ be a ps-character defined by a
simple stratum $[\La,n_i,m_i,\b_i]$ in an $\F$-cen\-tral simple algebra 
$\A$ and a simple character $\t_i\in\Cc(\La,m_i,\b_i)$ 
attached to this stratum (see paragraph \ref{Dubol} for the notation).
Suppose from now on that the ps-characters~$\Theta_1$ 
and~$\Theta_2$ are endo-equivalent so 
that, in particular, we may assume the characters~$\t_1$ and~$\t_2$ 
inter\-twi\-ne in~$\mult\A$. 
The ``preservation of intertwining'' property can be stated as follows: 

\begin{theon}[see Theorem \ref{EndoClasChar}]
For $i=1,2$, let $[\La',n'_i,m'_i,\b_i^{}]$ be a simple stratum in a simple
central $\F$-algebra $\A'$ and $\t'_i\in\Cc(\La',m'_i,\b_i^{})$ 
defining the ps-character 
$\Theta_i$, that is, $\t'_i$ is the transfer of $\t_i$. 
Then the characters $\t_1'$ and $\t_2'$ intertwine in $\A^{\prime\times}$.
\end{theon}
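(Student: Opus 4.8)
The plan is to reduce the statement to the split case, where the analogous ``preservation of intertwining'' result is available from the work of Bushnell--Henniart \cite{BH} (via \cite{BK}), and to control the passage between $\A$ and $\A'$ by carefully comparing the various realizations of the ps-characters $\Theta_1$ and $\Theta_2$ through the transfer map. First I would fix a common $\F$-central simple algebra $\A''$ admitting an embedding of the field extension $\F[\b_1,\b_2]$ -- or rather, since $\b_1$ and $\b_2$ need not generate the same field, a common simple algebra in which both $\b_1$ and $\b_2$ realize the given ps-characters. The endo-equivalence hypothesis on $\Theta_1$ and $\Theta_2$ gives us realizations $([\La,n_i,m_i,\b_i],\t_i)$ in a common $\A$ with $\t_1$ and $\t_2$ intertwining in $\mult\A$; by the definition of transfer, there are also realizations $([\La',n'_i,m'_i,\b_i],\t'_i)$ in $\A'$, and our task is to propagate the intertwining from $\mult\A$ to $\mult{\A'}$.

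The key steps, in order, would be: (i) use the transitivity and functoriality of the transfer map to replace $\A$ and $\A'$ by a single auxiliary \emph{split} algebra $\M_N(\F)$ -- that is, choose lattice sequences in $\F^N$ refining the relevant data so that the simple characters $\t_1,\t_2$ transfer to simple characters $\tau_1,\tau_2$ in $\mult{\M_N(\F)}$; (ii) apply the split-case result of Bushnell--Henniart (preservation of intertwining in $\GL_N(\F)$) to conclude that $\tau_1$ and $\tau_2$ intertwine in $\GL_N(\F)$, using the fact that endo-equivalence in the sense of Definition~\ref{Petrone2} is detected by the split realizations; (iii) transfer back down to $\A'$, i.e.\ show that intertwining of the split transfers $\tau_1,\tau_2$ forces intertwining of $\t'_1,\t'_2$ in $\mult{\A'}$. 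For step (iii) one needs the converse direction: that the transfer map reflects, and not merely preserves, the intertwining relation. This is where the intertwining-formula computations -- describing the set $I_{\mult{\A'}}(\t'_1,\t'_2)$ in terms of the group $J(\b_1,\La')J(\b_2,\La')$ and double cosets attached to the $\b_i$-adapted structure -- come into play, and where one invokes \cite{Gr} to handle the non-split combinatorics.

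The hard part will be step (iii): while ``preservation'' of intertwining under transfer (going from $\A$ to a split algebra, or between any two algebras once one knows the endo-class data match) is comparatively formal, controlling the passage \emph{back} to $\A'$ requires knowing that the transfer is compatible with the intertwining sets on the nose, which in the non-sound case is exactly the technical gap left open by Grabitz. Concretely, the obstacle is that the natural maps on intertwining sets are only defined after choosing compatible embeddings of $\F[\b_i]$ and matching hereditary orders, and showing these choices can be made simultaneously for $i=1,2$ -- so that a single $g\in\mult{\A'}$ intertwines both $\t'_1$ and $\t'_2$ -- is a genuine rigidity statement. I would expect to prove it by a careful induction on the ``normalized level'' of the strata (the parameter $m_i/e_i$), using the derived simple characters and the tame corestriction to reduce the level, with the base case handled by the level-zero / maximal case where the intertwining is governed by a genuinely split computation. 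The remaining steps -- transfer transitivity, reduction to split, and the split-case input -- I regard as routine once the correct bookkeeping of lattice sequences and embeddings is set up.
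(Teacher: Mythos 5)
There is a genuine gap, and it sits exactly where you place the weight of the argument. First, your steps (i)--(ii) are circular as stated: to invoke the split-case result of \cite{BH} you need to know that $\Theta_1$ and $\Theta_2$ admit \emph{intertwining split realizations} (endo-equivalence in the sense of \cite[Definition 8.6]{BH}), but the hypothesis only gives intertwining realizations in the possibly non-split algebra $\A$, and the equivalence of the two definitions (Corollary \ref{Vendries2}) is itself a consequence of Theorem \ref{EndoClasChar} --- transferring $\t_1,\t_2$ to a split algebra and claiming the transfers still intertwine is a special case of the very statement being proved. (Passing to $\widetilde\A$ preserves intertwining of \emph{strata}, but no such formal statement is available for simple characters.) Second, and more seriously, step (iii) is not a ``reflection'' property of the transfer map that one can establish by induction on the level with tame corestrictions: an element of the big split group intertwining $\tau_1$ and $\tau_2$ has no reason to lie in, or be replaceable by an element of, $\A^{\prime\times}$, and intertwining does not imply conjugacy in the non-split setting without matching embedding types (the Broussous--Grabitz obstruction), so the combinatorial description of intertwining sets does not by itself produce the required $g\in\A^{\prime\times}$.

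The paper's route around both problems is structurally different from what you propose. One first upgrades intertwining to an actual \emph{equality} of simple characters on sound realizations with the same embedding type, using the Fr\"ohlich-invariant normalization on $\La\oplus\La$ (Lemma \ref{Blaise3}) and Grabitz's intertwining-implies-conjugacy for sound strata (Proposition \ref{Hengist6}); the whole theorem is then reduced (Corollary \ref{Psaume}, via Skolem--Noether) to the conjugacy statement of Theorem \ref{Gata}. The reduction to the split case is not a direct transfer to $\Mat_N(\F)$: a naive base change of a simple stratum to a splitting field produces \emph{quasi-simple} rather than simple characters, which is why one must first perform the interior lift relative to the maximal unramified subextension $\K$ of $\F(\b)$ and only then base change along $\L/\K$, checking compatibility of both operations with transfer (Theorem \ref{ResComWithTransAndAEFit}). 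Finally, the descent back from $(\C'\otimes_\K\L)^{\times}$ to $\C^{\prime\times}$ is not formal either: it uses that the split case yields a \emph{conjugating} element normalizing the relevant compact subgroup, so that $\s\mapsto u^{-1}\s(u)$ is a cocycle with values in a pro-$p$-filtered group whose $\H^1$ vanishes (Lemma \ref{CohoArg}), together with Galois-invariance of the base change. None of these mechanisms --- equality via sound realizations, interior lift before base change, and cohomological descent of a conjugator --- is supplied by your plan, so the ``hard part'' you isolate remains unproved by the method you sketch.
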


This means that the property that two simple characters intertwine is 
invariant under transfer. The statement above is the same as its 
analogue~\cite[Theorem 8.7]{BH} in the case that~$\A$ is split and~$\La$ 
is strict. 
However, we will see that the proof requires new ideas. 

\medskip

One of the important results in~\cite{BK} is the ``intertwining implies 
conjugacy'' property for simple characters, which expresses the fact that 
intertwining of simple characters is a very stringent relation. It is this 
property which allows a classification ``up to con\-ju\-ga\-cy'' of the 
irreducible cuspidal representations of~$\GL_n(\F)$.
This property no longer holds in the general case, as was already observed 
in~\cite{BG} for simple strata. 
To remedy the situation, we introduce the notion of \emph{embedding type} 
of a simple stratum (see Definition~\ref{Sidonie}):
two simple strata~$[\La,n_i,m_i,\b_i]$ have the same embedding type if the 
maximal unramified sub\-extensions of~$\F(\b_i)/\F$ are conjugate under the 
normalizer of~$\La$ in~$\mult\A$. 
With the same no\-ta\-tion and hypotheses as above, we prove the following: 

\begin{theon}[see Theorem \ref{TrelawneyIIC}]
Suppose that~$n_1=n_2$,~$m_1=m_2$, and the simple strata~$[\La,n_i,m_i,\b_i]$ 
have the same embedding type. 
Write $\K_i$ for the maximal un\-ramified extension of $\F$ contained in 
$\F(\b_i)\subseteq\A$. 
Then there is an element of the normalizer of~$\La$ in~$\A^{\times}$ which 
simultaneously conjugates~$\K_1$ to~$\K_2$ and~$\t_1$ to~$\t_2$.
\end{theon}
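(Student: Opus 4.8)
The plan is to conjugate so that the two simple strata share the same maximal unramified sub\-extension $\K$ of $\F$, then descend to the centralizer of $\K$ inside $\A$ and there invoke the ``intertwining implies conjugacy'' theorem of Grabitz~\cite{Gr} for \emph{sound} simple strata. Write $n=n_1=n_2$ and $m=m_1=m_2$. Since $[\La,n,m,\b_1]$ and $[\La,n,m,\b_2]$ have the same embedding type, Definition~\ref{Sidonie} furnishes an element $g_0$ of the normalizer of $\La$ in $\mult\A$ with $g_0\K_1g_0^{-1}=\K_2$; conjugating $\b_1$ and $\t_1$ by $g_0$ affects neither the hypotheses nor, up to composition with $g_0$, the conclusion, so we may assume $\K_1=\K_2=:\K$, in which case $\F(\b_1)/\K$ and $\F(\b_2)/\K$ are totally ramified. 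It then suffices to produce an element of the normalizer of $\La$ in $\mult\A$ which \emph{centralizes} $\K$ and conjugates $\t_1$ to $\t_2$.

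Set $\B=Z_{\A}(\K)$, a central simple $\K$-algebra, and $\La_{\B}=\La\cap\B$; the normalizer of $\La_\B$ in $\mult\B$ is contained in that of $\La$ in $\mult\A$ and consists of elements centralizing $\K$. By the compatibility of simple characters with restriction to the centralizer (developed in~\cite{VS1,SeSt}), each $\t_i$ restricts to a simple character $\t^{\K}_i$ over $\K$, attached to a simple stratum $[\La_\B,\,\cdot\,,\,\cdot\,,\b_i]$ which, $\F(\b_i)/\K$ being totally ramified, is sound in the sense of Definition~\ref{StrateSoundDef}; the characters $\t^{\K}_1$ and $\t^{\K}_2$ have the same degree and level over $\K$, and the restriction map on simple characters is bijective and equivariant under conjugation by $\mult\B$. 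The crucial point is that $\t^{\K}_1$ and $\t^{\K}_2$ intertwine in $\mult\B$. We know $\t_1$ and $\t_2$ intertwine in $\mult\A$ (endo-equivalence of $\Th_1$ and $\Th_2$, Definition~\ref{Petrone2}), and that their intertwining set in $\mult\A$, if non-empty, is stable under left multiplication by $\J^1(\b_1,\La)$ and right multiplication by $\J^1(\b_2,\La)$; so it remains to find, inside such a double coset $\J^1(\b_1,\La)\,x\,\J^1(\b_2,\La)$, a representative centralizing $\K$. This straightening, which uses the equal-embedding-type hypothesis in an essential way, is the technical heart of the proof and the point at which it genuinely departs from the split case of~\cite{BK}.

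Granting this, Grabitz's ``intertwining implies conjugacy'' for sound simple characters~\cite{Gr}, applied over $\K$, yields an element $b$ of the normalizer of $\La_\B$ in $\mult\B$ with $(\t^{\K}_1)^b=\t^{\K}_2$; such a $b$ centralizes $\K$ and normalizes $\La$. Now $\t_1^b\in\Cc(\La,m,\b_1^b)$ restricts over $\K$ to $(\t^{\K}_1)^b=\t^{\K}_2$, which is also the restriction of $\t_2\in\Cc(\La,m,\b_2)$; as these two sets of simple characters over $\K$ meet, they coincide, hence so do $\Cc(\La,m,\b_1^b)$ and $\Cc(\La,m,\b_2)$, and injectivity of restriction gives $\t_1^b=\t_2$. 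Composing $b$ with $g_0$ produces the element sought. The main obstacle is the claim of the middle paragraph --- that intertwining in $\mult\A$ forces intertwining in $\mult\B$ of the $\K$-restricted characters; all remaining ingredients are either formal or already available in~\cite{Gr,VS1,SeSt}.
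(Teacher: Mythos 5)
Your overall strategy (reduce to a common unramified field $\K$, restrict to its centralizer, and finish with an ``intertwining implies conjugacy'' statement there) is the right general shape, but two of its load-bearing steps are not justified, and one of them is where essentially all the work of the paper lies. First, the claim that total ramification of $\F(\b_i)/\K$ makes the lifted stratum in the centralizer $\C$ of $\K$ \emph{sound} is false: soundness (Definition \ref{StrateSoundDef}) is a condition on the lattice sequence and the orders ($\La$ strict, $\AA\cap\B$ principal, $\KK(\AA)\cap\mult\B=\KK(\AA\cap\B)$), not on the field extension, and the theorem is stated precisely for arbitrary, possibly non-strict, lattice sequences. So Grabitz's theorem cannot be invoked over $\K$ as you do; making the strata sound forces one to pass to the auxiliary realization $\La^{\ddag}$ in a different algebra, and conjugacy there does not transport back to $\La$ without further work (this is exactly why the paper routes the argument through transfer, Proposition \ref{Dolomites}, and the strict sequence $\Ss$). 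Moreover, even granting soundness and intertwining, the centralizer $\C$ is in general a non-split simple $\K$-algebra, so neither \cite{BK} nor a ``totally ramified hence easy'' reduction applies directly; in the paper the totally ramified case still requires the unramified base change of section \ref{BCASDFSC}, the split-case results of \cite{BK,BH}, and the Galois-cohomological descent of Lemma \ref{CohoArg}.

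Second, and more seriously, the statement you flag as ``the technical heart'' --- that intertwining of $\t_1,\t_2$ in $\mult\A$ forces intertwining of the interior lifts $\t_1^{\K},\t_2^{\K}$ in $\mult\C$ --- is asserted but never proved, and it is not a routine straightening of a double coset: unlike the base-change situation of Proposition \ref{Maui}, the intertwining element $g$ has no reason to be movable into the centralizer of $\K$, and the paper never establishes this implication directly. Instead it is circumvented: Proposition \ref{Hengist6} (via the Fr\"ohlich-invariant manipulation of Lemma \ref{Blaise3}, the $\ddag$-construction, and Grabitz's theorem over $\F$) produces auxiliary \emph{sound} realizations on which the two simple characters are actually \emph{equal}; then Proposition \ref{MauiToho} makes their interior lifts equal, Theorem \ref{ResComWithTransAndAEFit} transports this through transfer to the realization of interest, and only then does the totally ramified case (paragraph \ref{TRC}, itself resting on base change, \cite{BK,BH} and Galois descent) give the conjugating element, with injectivity of the interior lift (Proposition \ref{PropInjEquivInt}) concluding as in your last step. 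So your final paragraph is sound in outline, but the middle of your argument replaces the actual content of sections \ref{Sec4}--\ref{Sec8} of the paper by an unproved claim plus an incorrect soundness assertion; as written, the proof has a genuine gap.
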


This result was proved by Grabitz~\cite[Corollary 10.15]{Gr} with the 
additional assumption that the sim\-ple strata~$[\La,n,m,\b_i]$ are sound. 
We prove it here without this hypothesis. 

\medskip

Once one has proved that endo-equivalence preserves certain numerical 
invariants (see Lemma~\ref{Hectare1}), it is not hard to see that the proofs 
of these two Theorems can be reduced to the following: 

\begin{theon}[see Theorem \ref{Gata}]
For $i=1,2$, let $[\La',n',m',\b_i^{}]$ be a simple stratum in a simple
central $\F$-algebra $\A'$ and $\t'_i\in\Cc(\La',m',\b_i^{})$ defining 
the ps-character $\Theta_i$, that is, $\t'_i$ is the transfer of $\t_i$. 
Assume the simple strata have the same embedding type and write $\K_i$ 
for the maximal un\-ramified extension of $\F$ contained in 
$\F(\b_i)\subseteq\A'$. 
Then there is an element of the normalizer of~$\La'$ 
in~$\A^{\prime\times}$ which simultaneously conjugates~$\K_1$ to~$\K_2$ 
and~$\t'_1$ to~$\t'_2$. 
\end{theon}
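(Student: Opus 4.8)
The plan is to reduce, via successive base changes and twisting arguments, to the split case, where the analogous statement is available from the work of Bushnell--Henniart and Bushnell--Kutzko, and then to descend the resulting conjugacy back to~$\A'$. First I would recall that, since $\Theta_1$ and $\Theta_2$ are endo-equivalent, the elements $\b_1$ and $\b_2$ generate fields $\F(\b_i)$ with the same ramification index and residue degree over~$\F$; together with the hypothesis that the two simple strata in~$\A'$ have the same embedding type, this lets me fix a common maximal unramified extension. More precisely, after conjugating by a suitable element of the normalizer of~$\La'$ in~$\mult{\A'}$ I may and do assume $\K_1=\K_2=\K$, and that $\La'$ is an $\mathfrak{o}_\K$-lattice sequence; it then remains to conjugate $\t'_1$ to $\t'_2$ by an element of the normalizer of~$\La'$ that centralizes~$\K$, i.e.\ by an element of $\mult{\B}$ where $\B=\End_\D(V)\cap\mathrm{centralizer\ of}\ \K$. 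This is the step where working over~$\K$ rather than over~$\F$ is essential: over~$\K$ the algebra $\B$ is again a central simple algebra and the two simple strata become, after a twist removing the unramified part of~$\b_i$, simple strata over~$\K$ with totally ramified field extensions.

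Next I would invoke the interpretation of simple characters over~$\K$ and the functoriality of the transfer map with respect to the passage from~$\A$ to~$\A'$: both $\t'_1$ and~$\t'_2$ are transfers of the characters $\t_1,\t_2$ which by assumption intertwine in~$\mult\A$, and by Theorem~\ref{EndoClasChar} (preservation of intertwining) they intertwine in~$\mult{\A'}$ as well. The key point is to upgrade ``intertwine in~$\mult{\A'}$'' to ``intertwine by an element of~$\mult\B$'', and then to ``conjugate by an element of~$\mult\B$''. For the first implication I would use the standard fact that the intertwining of a simple character is controlled by the group $\mult\B$ — concretely, the intertwining set of $\t'_i$ is $\mult{J'}\,\mult\B\,\mult{J'}$ for the appropriate compact group $\mult{J'}$ — so that an intertwining element may be taken in~$\mult\B$. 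For the second, having reduced to the totally ramified situation over~$\K$, I would appeal to the ``intertwining implies conjugacy'' statement in the split case (via a further reduction tower, following Grabitz~\cite{Gr} and \cite{BK}): one climbs down the defining sequence $\b_i,\b_i^{(1)},\dots$ of the simple stratum, at each stage using a comparison of the characters induced on successive quotients and an approximation/conjugation argument inside~$\B$, until one reaches a stratum of the same type in a split algebra, where \cite[(3.5.11)]{BK} applies.

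The main obstacle, and where genuinely new arguments are needed compared to~\cite{BH}, is that in the non-split case — and already when $\La'$ is not a \emph{strict} lattice sequence, so that the stratum need not be \emph{sound} — the lattice chain techniques of~\cite{BK} do not apply verbatim: the hereditary order attached to~$\La'$ in~$\B$ need not be principal, and the normalizer of~$\La'$ in~$\mult\B$ is larger and more delicate to handle than in the classical setting. To deal with this I would work systematically with lattice sequences rather than lattice chains, replace the use of ``soundness'' by a direct analysis of the normalizer $\mathfrak{K}(\La')\cap\mult\B$, and carefully track the compatibility of the transfer map with the embedding $\B\hookrightarrow\A'$. The twist removing the unramified part of~$\b_i$ must be chosen compatibly for $i=1$ and $i=2$ (this is exactly what the common embedding type and the endo-equivalence guarantee), so that after the twist the two strata live in the \emph{same} algebra over~$\K$ with the \emph{same} lattice sequence; verifying this compatibility, and that the conjugating element produced over~$\K$ indeed normalizes~$\La'$ as an $\mathfrak{o}_\F$-lattice sequence, is the remaining technical heart of the argument.
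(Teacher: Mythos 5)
Your overall strategy (reduce to the split case by unramified base change, apply the results of \cite{BH,BK}, then descend) points in the right general direction, but as written the argument is circular at its crucial step: you invoke Theorem \ref{EndoClasChar} to conclude that $\t'_1$ and $\t'_2$ intertwine in $\A^{\prime\times}$. That theorem is not available here — in the paper it is \emph{deduced} from the statement you are proving (Corollary \ref{Psaume} reduces both Theorem \ref{EndoClasChar} and Theorem \ref{TrelawneyIIC} to Theorem \ref{Gata}), and your proposal offers no independent argument for preservation of intertwining under transfer, which is exactly the hard new content. The paper's route is different: one first replaces the given realizations in $\A$ by \emph{sound} realizations with the same embedding type, using the Fr\"ohlich invariant and the doubling trick $\La\oplus\La$ (Lemma \ref{Blaise3}) followed by the $\La^{\ddag}$ construction, so that Grabitz's intertwining-implies-conjugacy theorem for sound strata applies and one may normalize to an actual \emph{equality} $\t_1=\t_2$ (Proposition \ref{Hengist6}). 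Only because of this equality do the lifted characters intertwine (they are in fact equal after interior lifting and base change, by Propositions \ref{MauiToho} and \ref{Maui35}), which is what makes \cite[Theorem 8.7]{BH} and \cite[Theorem 3.5.11]{BK} applicable to the transferred characters in the split algebra. Without this first stage, your appeal to the split-case theorems has no input.

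Two further gaps. First, your claim that an element intertwining $\t'_1$ with $\t'_2$ may be taken in the centralizer of $\K$ because the intertwining set has the shape $\J^\times\B^\times\J^\times$ misuses that formula: it describes the \emph{self}-intertwining of a single simple character, not the set of elements carrying one character to a different one; and in the non-sound, non-strict setting the upgrade from intertwining to conjugacy is precisely what forces the paper's detour through the strict lattice sequence $\Ss$ in $\A(\SS)$, the sound strata $\La^{\ddag}$, and Proposition \ref{Dolomites}. Second, the order of operations matters and the compatibilities you defer are the substance of the proof, not routine checks: base change performed before the interior lift produces quasi-simple rather than simple characters; the interior lift and the base change must be shown to commute with transfer (Theorem \ref{ResComWithTransAndAEFit}); the Galois-invariance of the base change (Proposition \ref{GalInv}) feeds the cohomological descent (Lemma \ref{CohoArg}) that brings the conjugating element from $(\C'\otimes_\K\L)^{\times}$ down to the centralizer of $\K$; and the injectivity and equivariance of the lifting maps (Propositions \ref{PropInjEquivInt} and \ref{PropInjEquiv}) are what allow the conjugacy to be pulled back to $\KK(\La')$ and to $\t'_1,\t'_2$ themselves. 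Deferring all of this leaves the descent step, and hence the theorem, unproved.
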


Now let us describe the scheme of the proof. 
We begin with our endo-equivalent ps-cha\-racters $\Theta_1$ and $\Theta_2$, 
together with realizations $([\La,n_i,m_i,\b_i],\t_i)$ in $\A$ such that 
the sim\-ple characters $\t_1$ and $\t_2$ intertwine in $\A^\times$. 
In order to use the results of Grabitz, we need first to produce sound 
realizations of the ps-characters $\Theta_i$ with the same embedding type, 
which intertwine. For sound strata, the embedding type 
is determined by a single integer, the \emph{Fr\"ohlich invariant}, 
which can also be defined for arbitrary strata  (see
Definition~\ref{def:Frinv}).
One can then realize $\Theta_i$ on the lattice sequence $\La\oplus\La$ in such 
a way that the Fr\"ohlich invariant is $1$ and the simple characters still 
intertwine (see Lemma~\ref{Blaise3}). In particular, replacing our original 
realizations of $\Theta_1$ and $\Theta_2$ with these new ones, we can assume 
the simple strata $[\La,n_i,m_i,\b_i]$ have the same Fr\"ohlich invariant. 
Now we define a process:
\begin{equation*}
([\La,n,m,\b],\t)\mapsto([\La^{\ddag},n,m,\b],\t^\ddag)
\end{equation*}
from arbitrary realizations to sound realizations, with $\t^\ddag$ the 
transfer of $\t$, which preserves intertwining and the Fr\"ohlich invariant
(see paragraph \ref{PreDague}). 
In particular, from $\t_1$ and $\t_2$ one obtains simple characters 
$\t^\ddag_1$ and $\t^\ddag_2$ on sound simple strata with the same Fr\"ohlich 
invariant (so same embedding type) which intertwine. 
Thus we can apply Grabitz's results, together with a reduction to the case 
$m_1=m_2$, to deduce that $\t^\ddag_1$ and $\t^\ddag_2$ are conjugate under 
$\A^{\ddag\times}$ (where $\A^\ddag$ is the simple central $\F$-algebra with 
respect to which the stratum $[\La^\ddag,n,m,\b]$ is defined).
Changing again our realizations of $\Theta_1$ and $\Theta_2$ we can suppose we 
have an equality $\t_1=\t_2$ of simple characters. 
This is given in Proposition~\ref{Hengist6}, the culmination of the first 
stage of the proof. 

\medskip

To show that other realizations $\t'_1$ and $\t'_2$ on simple 
strata in $\A'$ with 
the same embedding type are conjugate, we would like to reduce to the split 
case so that we can use results from~\cite{BK,BH}. 
For this we define an \emph{interior lifting} (see section \ref{PILSC}):
\begin{equation*}
([\La,n,m,\b],\t) \mapsto ([\Ga,n,m,\b],\t^\K)
\end{equation*}
relative to $\K/\F$, the maximal unramified subextension of $\F(\b)/\F$, 
where $[\Ga,n,m,\b]$ is a simple stratum in the centralizer $\C$ of $\K$
in the simple central $\F$-algebra $\A$ with 
respect to which $[\La,n,m,\b]$ is defined.
Then we make a \emph{base change} (see section \ref{BCASDFSC}):
\begin{equation*}
([\Ga,n,m,\b],\t^\K) \mapsto ([\overline\Ga,n,m,\b],\overline{\t}{}^\K)
\end{equation*}
relative to $\L/\K$, a finite 
unramified extension which is sufficiently large so 
that the al\-ge\-bra $\C\otimes_\K\L$ is split. 
The definition of the base change used here is somewhat subtle: 
indeed, it is not clear how to make a good definition 
which will preserve intertwining and, when applied to our characters $\t_i$, 
will be independent of $i$. 
Moreover, it is necessary to begin with the interior 
lift or else the base change process would produce
\emph{quasi-simple} characters (see \cite{VS1}), 
rather than simple characters.

\medskip

In order to apply these processes, we note that the maximal unramified 
subextension~$\K$ of $\F(\b_i)/\F$ in $\A$ can be assumed to be 
independent of $i$ since the simple strata have the same embedding type. 
Combining now interior lifting and base change, we get a process:
\begin{equation*}
([\La,n,m,\b],\t)\mapsto([\overline\Ga,n,m,\b],\overline{\t}{}^\K)
\end{equation*}
denoted here $\t\mapsto\widetilde{\t}$ 
for simplicity, which is both injective and 
equivariant, so it is enough to show that 
$\widetilde{\t}'_1$ and $\widetilde{\t}'_2$
are conjugate under $\A^{\prime\times}$.
Now the hypothesis $\t_1=\t_2$ implies 
$\widetilde{\t}_1=\widetilde{\t}_2$ 
(see Propositions~\ref{MauiToho} 
and~\ref{Maui35}), so that the ps-characters 
$\widetilde{\Theta}_1$ and $\widetilde{\Theta}_2$
defined by $\widetilde{\t}_1$ and $\widetilde{\t}_2$ 
are endo-equivalent. 
Moreover, for each $i$, the simple cha\-rac\-ter 
$\widetilde{\t}'_i$ is the transfer of $\widetilde{\t}_i$
(see Theorem~\ref{ResComWithTransAndAEFit}), so 
it is another realization of the ps-character 
$\widetilde\Theta_i$. 
We are now in the split case so, 
modulo a finesse in the case that we do not have strict lattice sequences, 
we deduce from endo-equivalence~\cite{BH} that the characters 
$\widetilde{\t}'_i$ intertwine. 
Thus, from the ``intertwining implies conjugacy'' property~\cite{BK}, 
the characters $\widetilde{\t}'_1$ and $\widetilde{\t}'_2$ are conjugate
under $(\C'\otimes_\K\L)^{\times}$, where $\C'$ denotes 
the centralizer of $\K$ in $\A'$.
Thanks to the invariance property of the base change under the action 
of the Galois group $\Gal(\L/\K)$ (see Proposition~\ref{GalInv}), 
a cohomological argument (see Lemma~\ref{CohoArg}) 
allows us to show that they 
are actually conjugate under $\C^{\prime\times}$.
This completes the proof. 

\section*{Notation}

Let $\F$ be a nonarchimedean locally compact field.
All $\F$-algebras are supposed to be fi\-ni\-te-dimensional 
with a unit. 
By an \textit{$\F$-division algebra} we mean a central 
$\F$-al\-gebra which is a division algebra.

For $\K$ a finite extension of $\F$, or more generally a division algebra over
a finite extension of $\F$, we denote by $\Oo_{\K}$ its ring of integers, 
by $\p_{\K}$ the maximal ideal of $\Oo_{\K}$ and by $\mathfrak{k}_\K$
its residue field.

For $\A$ a simple central algebra over a finite extension $\K$ of $\F$, we 
denote by $\N_{\A/\K}$ and $\tr_{\A/\K}$ respectively the reduced norm and 
trace of $\A$ over $\K$.

For $u$ a real number, we denote by $\lceil{u}\rceil$ the smallest integer 
which is greater than or equal to $u$, and by $\lfloor{u}\rfloor$ the greatest 
integer which is smaller than or equal to $u$, that is, its integer part.

A \textit{character} of a topological group $\G$ is a continuous homomorphism 
from $\G$ to the group $\mathbb{C}^{\times}$ of non-zero complex numbers.

All representations are supposed to be smooth with complex coefficients.

%%%%%%%%%%%%%%%%%%%%%%%%%%%%%%%%%%%%%%%%%%%%%%%%%%%%%%%%%%%%%%%%%%%%%%%%%%%
%%%%%%%%%%%%%%%%%%%%%%%%%%%%%%%%%%%%%%%%%%%%%%%%%%%%%%%%%%%%%%%%%%%%%%%%%%%

\section{Statement of the main results}

In this section, we recall some well known facts about lattice 
sequences, simple strata and simple characters in a simple central 
$\F$-algebra (see \cite{Br1,BK,BK3,VS1,SeSt} for more details),
and we state the main results of this article.

%%%%%%%%%%%%%%%%%%%%%%%%%%%%%%%%%%%%%%%%%%%%%%%%%%%%%%%%%%%%%%%%%%%%%%%%%%%

\subsection{}
\label{Dubol}

Let $\A$ be a simple central $\F$-algebra, and let $\V$ be a simple left 
$\A$-module. 
The algebra $\End_{\A}(\V)$ is an $\F$-division algebra, the opposite of 
which we denote by $\D$.
Considering $\V$ as a right $\D$-vector space, we have a canonical 
isomorphism of $\F$-algebras between $\A$ and $\End_{\D}(\V)$.

\begin{defi}
An {\it $\Oo_{\D}$-lattice sequence} on $\V$ is a sequence 
$\La=(\La_k)_{k\in\ZZ}$ of $\Oo_\D$-lattices of $\V$ such that %we have 
$\La_{k}\supseteq\La_{k+1}$ for all $k\in\ZZ$, and such that there exists 
a positive integer $e$ satisfy\-ing $\La_{k+e}=\La_k\p_\D$ for all
$k\in\ZZ$. 
This integer is called the {\it period} of $\La$ over $\Oo_\D$.
\end{defi}

If $\La_k\supsetneq\La_{k+1}$ for all $k\in\ZZ$, then the lattice
sequence $\La$ is said to be {\it strict}. 

\medskip

Associated with an $\Oo_{\D}$-lattice sequence $\La$ on $\V$, we have an
$\Oo_{\F}$-lattice sequence on $\A$ defined by:
\begin{equation*}
\aa_{k}(\La)=\{a\in\A\ |\ a\La_{i}\subseteq\La_{i+k},\ i\in\ZZ\},
\quad k\in\ZZ.
\end{equation*}
The lattice $\AA(\La)=\aa_0(\La)$ is a hereditary $\Oo_\F$-order in $\A$, 
and $\mathfrak{P}(\La)=\aa_1(\La)$ is its Jacob\-son radical.
They depend only on the set $\{\La_k\ |\ k\in\ZZ\}$.

\medskip

We denote by $\KK(\La)$ the $\mult\A$-{\it normalizer} of $\La$, 
that is the subgroup of $\mult\A$ made of all elements 
$g\in\mult\A$ for which there is an integer $n\in\ZZ$ such that 
$g(\La_{k})=\La_{k+n}$ for all $k\in\ZZ$.
Given  $g\in\KK(\La)$, such an integer 
is unique: it is de\-noted 
$\v_{\La}(g)$ and called the $\La$-{\it valuation} of $g$. 
This defines a group homo\-morphism $\v_{\La}$ from $\KK(\La)$ to $\ZZ$.
Its kernel, denoted $\U(\La)$, is the group of invertible elements 
of $\AA(\La)$. 
We set $\U_0(\La)=\U(\La)$ and, for $k\>1$, we set 
$\U_k(\La)=1+\aa_k(\La)$.

\medskip

Let $\F'$ be a finite extension of $\F$ contained in $\A$.
An $\Oo_\D$-lattice sequence $\La$ on $\V$ is said to be 
{\it $\F'$-pure} if it is norm\-alized by $\F^{\prime\times}$.
The centralizer of $\F'$ in $\A$, denoted $\A'$, is a
sim\-ple central $\F'$-algebra.
We fix a simple left $\A'$-module $\V'$ and write $\D'$ 
for the algebra oppos\-ite to $\End_{\A'}(\V')$.
By \cite[Th\'eor\`eme 1.4]{SeSt} 
(see also \cite[Theorem 1.3]{Br1}), 
given an $\F'$-pure $\Oo_{\D}$-lattice sequence on $\V$, 
there is an $\Oo_{\D'}$-lattice sequence $\La'$ 
on $\V'$ such that:
\begin{equation}
\label{FPrimedescen}
\aa_{k}(\La)\cap\A'=\aa_{k}(\La'), \quad k\in\ZZ.
\end{equation}
It is unique up to translation of indices, and its
$\A^{\prime\times}$-normalizer is $\KK(\La)\cap\A^{\prime\times}$.

\begin{defi}
A \emph{stratum} in $\A$ is a quadruple $[\La,n,m,\b]$ made of an 
$\Oo_\D$-lattice sequence $\La$ on $\V$, 
two integers $m,n$ such that $0\<m\<n-1$ and an element 
$\b\in\aa_{-m}(\La)$.
\end{defi} 

For $i=1,2$, let $[\La,n,m,\b_i]$ be a stratum in $\A$.
We say these two strata are \emph{equivalent}
if $\b_2-\b_1\in\aa_{-m}(\La)$. 

\medskip

Given a stratum $[\La,n,m,\b]$ in $\A$, we denote by $\E$ the 
$\F$-algebra generated by $\b$. 
This stratum is said to be {\it pure} if $\E$ is a field, 
if $\La$ is $\E$-pure and if $\v_{\La}(\b)=-n$.
In this situation, we denote by:
\begin{equation*}
e_{\b}(\La)
\end{equation*}
the period of $\La$ as an $\Oo_{\E}$-lattice sequence. 
Given a pure stratum $[\La,n,m,\b]$, we denote by $\B$ the centralizer 
of $\E$ in $\A$.
For $k\in\ZZ$, we set:
\begin{equation*}
\mathfrak{n}_k(\b,\La)=\{x\in\AA(\La)\ |\ \b x-x\b\in\aa_k(\La)\}.
\end{equation*}
The smallest integer $k\>\v_{\La}(\b)$ such that $\mathfrak{n}_{k+1}(\b,\La)$ 
is contained in $\AA(\La)\cap\B+\PP(\La)$ is called the 
\emph{critical exponent} of the stratum $[\La,n,m,\b]$, denoted 
$k_0(\b,\La)$.

\begin{defi}
\label{stratepure}
The stratum $[\La,n,m,\b]$ is said to be \emph{simple} if it is pure
and if we have $m\<-k_0(\b,\La)-1$. 
\end{defi}

Let $[\La,n,m,\b]$ be a simple stratum in $\A$. 
In \cite{SeSt} (see paragraph 2.4), one attaches to this simple stratum
a compact open subgroup 
$\H^{m+1}(\b,\La)$ of $\mult\A$ and a finite set $\Cc(\La,m,\b)$ 
of characters of $\H^{m+1}(\b,\La)$, called simple 
characters of level $m$, depending on the choice of
an additive character:
\begin{equation}
\label{AddCharJ}
\psi:\F\to\mathbb{C}^{\times}
\end{equation}
which is trivial on $\p_{\F}$ but not on $\Oo_{\F}$, and which will 
be fixed once and for all throughout this paper.
If $\lfloor n/2\rfloor\<m$, then $\H^{m+1}(\b,\La)=\U_{m+1}(\La)$, 
and the set $\Cc(\La,m,\b)$ reduces to a single character 
$\psi_\b^{\A}$ of $\U_{m+1}(\La)$ defined by:
\begin{equation}
\label{LesLapinsNAimentPasLesCarottes}
\psi_\b^{\A}:x\mapsto\psi\circ\tr_{\A/\F}(\b(x-1)),
\end{equation}
which depends only on the equivalence class of $[\La,n,m,\b]$.
More generally, for any possible value of $m$, the subgroup
$\H^{m+1}(\b,\La)$ and the set $\Cc(\La,m,\b)$ depend only on the
equivalence class of $[\La,n,m,\b]$.

%%%%%%%%%%%%%%%%%%%%%%%%%%%%%%%%%%%%%%%%%%%%%%%%%%%%%%%%%%%%%%%%%%%%%%%%%%%

\subsection{}
\label{Jessica}

Let $\b$ be a non-zero element of some finite extension of $\F$. 
We set $\E=\F(\b)$ and:
\begin{align*}
\label{Pitot}
n_{\F}(\b)&=-\v_{\E}(\b),\\
e_{\F}(\b)&=e(\E:\F),\\
f_{\F}(\b)&=f(\E:\F),
\end{align*}
where $e(\E:\F)$ and $f(\E:\F)$ stand for the ramification index 
and the residue class degree of $\E$ over $\F$ respectively, and 
$\v_{\E}$ for the valuation map of the field $\E$ 
giving the value $1$ to any uniformizer of $\E$.
The lattice sequence $i\mapsto\p_{\E}^{i}$, denoted $\La(\E)$, 
is the unique (up to transl\-ation) $\E$-pure strict 
$\Oo_{\F}$-lattice sequence on the $\F$-vector space $\E$,
and its valuation map coincide with $\v_{\E}$ on $\mult\E$.
To any integer $0\<k\<n_{\F}(\b)-1$ we can attach the pure stratum
$[\La(\E),n_{\F}(\b),k,\b]$ of the split $\F$-algebra
$\A(\E)=\End_{\F}(\E)$, the critical exp\-onent of which we denote by:
\begin{equation*}
\label{Potit}
k_{\F}(\b)=k_0(\b,\La(\E)).
\end{equation*}
This is an integer greater than or equal to $-n_{\F}(\b)$.
Let us recall the definition of a simple pair over $\F$ 
(see \cite[Definition 1.5]{BH}).

\begin{defi}
A \textit{simple pair} over $\F$ is a pair $(k,\b)$ consisting of a
non-zero element $\b$ of some finite extension of $\F$ and an integer
$0\<k\<-k_{\F}(\b)-1$. 
\end{defi}

Associated with a simple pair $(k,\b)$ over $\F$ is the simple stratum 
$[\La(\E),n_{\F}(\b),k,\b]$ in $\A(\E)$
together with a compact open subgroup of $\A(\E)^{\times}$ and a
set of simple char\-ac\-ters: 
\begin{equation*}
\H^{k+1}_{\F}(\b)=\H^{k+1}(\b,\La(\E)),
\quad
\Cc_{\F}(k,\b)=\Cc(\La(\E),k,\b).
\end{equation*}
Now let $\A$ be a simple central 
$\F$-algebra and $\V$ be a simple left $\A$-module.
A \textit{realization} of the simple pair $(k,\b)$ in $\A$
is a stratum in $\A$ of the form $[\La,n,m,\h(\b)]$ made of:
\begin{enumerate}
\item
a homomorphism $\h$ of $\F$-algebra from $\F(\b)$ to $\A$;
\item
an $\Oo_{\D}$-lattice sequence $\La$ on $\V$ 
normalized by the image of $\F(\b)^{\times}$ 
under $\h$;
\item
an integer $m$ such that
$\left\lfloor m/e_{\h(\b)}(\La)\right\rfloor=k$.
\end{enumerate}
The integer $-n$ is then the $\La$-valuation of $\h(\b)$.
By \cite[Proposition 2.25]{VS1} we have:
\begin{equation}
\label{Lakme}
k_0(\h(\b),\La)=e_{\h(\b)}(\La)k_{\F}(\b),
\end{equation}
which implies that any realization of a simple pair is a simple stratum.
According to \cite{VS1} again ({\it ibid.}, paragraph 3.3), 
for such a realization there is a canonical bijective map:
\begin{equation}
\label{JungerJournauxDeGuerre}
\boldsymbol{\tau}_{\La,m,\h}:\Cc_{\F}(k,\b)\to\Cc(\La,m,\h(\b))
\end{equation}
called the {\it transfer} map.
Some of its properties have been studied in \cite{SeSt} and some further
properties will be given in sections \ref{ILT} and \ref{BCASDFSC}
of the present article.
Given another realization $[\La',n',m',\h'(\b)]$ of the pair $(k,\b)$
in some simple central $\F$-algebra $\A'$, we have a transfer map from 
$\Cc(\La,m,\h(\b))$ to $\Cc(\La',m',\h'(\b))$ by composing
$\boldsymbol{\tau}_{\La,m,\h}^{-1}$ with 
$\boldsymbol{\tau}_{\La',m',\h'}^{}$.

\medskip

Given a simple pair $(k,\b)$ over $\F$, we denote by 
$\boldsymbol{\EuScript{C}}_{(k,\b)}$ the set of pairs 
$([\La,n,m,\h(\b)],\t)$ made of a realization $[\La,n,m,\h(\b)]$ 
of $(k,\b)$
in a simple central $\F$-algebra and a simple character
$\t\in\Cc(\La,m,\h(\b))$.
Hence the surjective map:
\begin{equation*}
([\La,n,m,\h(\b)],\t)\mapsto
\boldsymbol{\tau}_{\La,m,\h}^{-1}(\t)\in\Cc_{\F}(k,\b)
\end{equation*}
is well defined on $\boldsymbol{\EuScript{C}}_{(k,\b)}$ and 
induces, by its fibers, an equi\-valence relation on it.

\begin{defi}
\label{DefPSC}
A \textit{potential simple character} over $\F$ (or {\it ps-character}
for short) is a triple $(\Theta,k,\b)$ made of a simple pair $(k,\b)$
over $\F$ and an equivalence class $\Theta$ in 
$\boldsymbol{\EuScript{C}}_{(k,\b)}$. 
\end{defi}

When the context is clear, we will often denote by $\Theta$ the 
ps-character $(\Theta,k,\b)$.
Given a realization $[\La,n,m,\h(\b)]$ of $(k,\b)$, we will denote 
by $\Theta(\La,m,\h)$ 
the simple character $\t$ such that the pair 
$([\La,n,m,\h(\b)],\t)$ belongs to $\Theta$.

%%%%%%%%%%%%%%%%%%%%%%%%%%%%%%%%%%%%%%%%%%%%%%%%%%%%%%%%%%%%%%%%%%%%%%%%%%%

\subsection{}
\label{MainStreet}

We now state the main results which are proved in this article.
Our first task is to extend the notion of endo-equivalence of 
simple pairs developed by Bushnell and Henniart in \cite{BH}.
More precisely, we extend it to realizations in non-necessarily 
split simple central $\F$-algebras with non-necessarily strict 
lattice sequences.

\begin{defi}
\label{Petrone}
For $i=1,2$, let $(k_{i},\b_{i})$ be a simple pair over $\F$.
We say that these pairs are \textit{endo-equivalent}, denoted:
\begin{equation*}
(k_{1},\b_1)\thickapprox(k_{2},\b_2),
\end{equation*}
if $k_1=k_2$ and $[\F(\b_1):\F]=[\F(\b_2):\F]$, and if
there exists a simple central $\F$-algebra $\A$ together with 
realizations $[\La,n_i,m_i,\h_i(\b_i)]$ of $(k_i,\b_i)$ 
in $\A$, with $i=1,2$, which intertwine in $\A$.
\end{defi}

Recall that two strata $[\La,n_i,m_i,\b_i]$ in $\A$, 
with $i\in\{1,2\}$, \emph{intertwine} in $\A$ if there 
exists $g\in\mult\A$ such that: 
\begin{equation}
\label{Cinq-Mars}
(\b_1+\aa_{-m_1}(\La))\cap g(\b_2+\aa_{-m_2}(\La))g^{-1}\neq\varnothing.
\end{equation}

As we will see in paragraph \ref{Dague} (see Corollary \ref{Epaminondas}), 
this definition of 
endo-equivalence of simple pairs is equivalent to \cite[Definition 1.14]{BH}, 
although more general in appearance. 

\medskip

We now investigate the intertwining relations among various
realizations of given sim\-ple pairs, and in particular their 
preservation properties.
Our first result is the following pro\-position, which generalizes 
\cite[Proposition 1.10]{BH} and is proved in paragraph \ref{ProvaDAmore}.

\begin{prop}
\label{SimonLeMagicienStrong}
For $i=1,2$, let $(k,\b_{i})$ be a simple pair over $\F$, and suppose 
these pairs are endo-equivalent.
Let $\A$ be a simple central $\F$-algebra and let 
$[\La,n_i,m_i,\h_i(\b_i)]$ be a real\-ization of 
$(k,\b_i)$ in $\A$, for $i=1,2$.
These strata then intertwine in $\A$.
\end{prop}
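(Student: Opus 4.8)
The plan is to reduce Proposition~\ref{SimonLeMagicienStrong} to the defining property of endo-equivalence of simple pairs (Definition~\ref{Petrone}) by exploiting the freedom one has in choosing realizations of a given simple pair inside a fixed algebra. By hypothesis $(k,\b_1)\thickapprox(k,\b_2)$, so there exist a simple central $\F$-algebra $\A_0$ and realizations $[\Mm,p_i,q_i,\h_i^0(\b_i)]$ of $(k,\b_i)$ in $\A_0$ which intertwine in $\A_0^{\times}$. The difficulty is that $\A_0$ and the lattice sequence $\Mm$ need not be the ones, $\A$ and $\La$, in the statement; nor need the embeddings $\h_i^0$ be related to the given $\h_i$. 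So the first task is a \emph{transport} step: to move an intertwining statement from one pair of realizations of the simple pairs $(k,\b_i)$ to another.

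First I would record that the transfer map $\boldsymbol{\tau}_{\La,m,\h}$ of \eqref{JungerJournauxDeGuerre}, and more generally the composed transfers between any two realizations of a fixed simple pair, are defined purely at the level of the simple pair; in particular the simple characters in $\Cc(\La,m,\h(\b))$ and $\Cc(\La',m',\h'(\b))$ attached to the \emph{same} element of $\Cc_\F(k,\b)$ correspond under transfer. The key input I want is that \emph{whether two realizations of endo-equivalent simple pairs intertwine depends only on the pair of simple pairs together with $\A$ and $\La$, not on the auxiliary choices}. Concretely: given the realizations $[\La,n_i,m_i,\h_i(\b_i)]$ in the statement, I would like to replace $\A$ by $\A_0$ and $\La$ by a suitable $\Oo$-lattice sequence, or conversely bring the intertwining realizations in $\A_0$ over to $\A$, by first passing to a common ambient algebra obtained from a direct sum $\La\oplus\cdots\oplus\La$ (so as to adjust periods and the module structure) and then using that any two embeddings of the field $\F(\b_i)$ into a matrix algebra over $\D$ with the prescribed behaviour on lattices are conjugate by an element of the relevant group, up to the translation ambiguity in \eqref{FPrimedescen}. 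This conjugation absorbs the difference between $\h_i$ and $\h_i^0$, at the cost of conjugating the whole stratum, which does not affect the existence of an intertwining element in \eqref{Cinq-Mars}.

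The second task is the period/level bookkeeping: the integers $m_i$, $n_i$ in the two realizations need not match those in the intertwining pair in $\A_0$. Here I would invoke the observation, already built into the definition, that $\Cc(\La,m,\b)$ and $\H^{m+1}(\b,\La)$ and the intertwining condition \eqref{Cinq-Mars} are controlled by the data of the simple pair: the constraint $\lfloor m/e_{\h(\b)}(\La)\rfloor=k$ together with \eqref{Lakme} pins down $k_0(\h(\b),\La)$, so simplicity is automatic, and the statement ``the strata intertwine'' is insensitive to enlarging $m$ within the range allowed — intertwining of the strata $[\La,n_i,m_i,\h_i(\b_i)]$ is equivalent to intertwining of a fixed pair of simple characters of minimal level, hence to a statement about $(k,\b_1)$ and $(k,\b_2)$ alone. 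Combining the transport step with this reduction, the intertwining of the realizations $[\Mm,p_i,q_i,\h_i^0(\b_i)]$ in $\A_0$ that is given by endo-equivalence yields the intertwining of $[\La,n_i,m_i,\h_i(\b_i)]$ in $\A$.

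The main obstacle, I expect, is precisely the transport step: making rigorous the claim that an intertwining relation between realizations of a pair of endo-equivalent simple pairs can be moved between ambient algebras and lattice sequences. This requires a careful use of the uniqueness (up to translation) in \eqref{FPrimedescen}, of the conjugacy of embeddings of $\F(\b_i)$, and of the compatibility of the transfer maps \eqref{JungerJournauxDeGuerre} with these conjugations and with direct sums; one must also check that the single intertwining element $g$ can be chosen to work simultaneously after all these adjustments, rather than obtaining $g$'s that a priori depend on auxiliary choices. The generalization over \cite[Proposition 1.10]{BH} lies exactly here, since in the split strict case the ambient algebra is essentially canonical, whereas here one must genuinely argue that the construction is independent of the realization.
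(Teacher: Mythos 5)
The gap is in the transport step, which you rightly identify as the crux but for which the mechanism you propose does not work. Intertwining of strata is a relation relative to the fixed lattice sequence $\La$ (through the lattices $\aa_{-m_i}(\La)$ in (\ref{Cinq-Mars})), and it is not transitive. Your plan is to absorb the discrepancy between the given embeddings $\h_i$ and the embeddings occurring in the defining intertwining realizations by Skolem--Noether conjugations, claiming this ``does not affect the existence of an intertwining element''. But the Skolem--Noether elements in general do not normalize $\La$: conjugating $\b_1$ and $\b_2$ by two different elements outside $\KK(\La)$ replaces the cosets $\h_i(\b_i)+\aa_{-m_i}(\La)$ by cosets attached to different lattice sequences, and one cannot chain ``stratum $1$ intertwines with stratum $2'$'' and ``stratum $2'$ intertwines with stratum $2$'' to conclude that strata $1$ and $2$ intertwine. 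Moreover, conjugacy of two embeddings by an element of $\KK(\La)$ (Proposition \ref{Tauride}) is only available when the embeddings are equivalent, i.e.\ have the same embedding type --- the remark following Proposition \ref{Tauride} gives an explicit counterexample otherwise --- and your proposal never addresses embedding types. Finally, a ``common ambient algebra obtained from a direct sum $\La\oplus\cdots\oplus\La$'' cannot relate the auxiliary algebra $\A_0$ of Definition \ref{Petrone} to the given $\A$ when their underlying division algebras differ, and the bookkeeping claim that intertwining of the given strata ``is a statement about $(k,\b_1)$ and $(k,\b_2)$ alone'' is precisely the conclusion of the proposition, so it cannot be used as an ingredient.

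For comparison, the paper uses the defining intertwining realizations only through Corollary \ref{Epaminondas} and Proposition \ref{Eponine}, i.e.\ to see that the pairs are endo-equivalent in the sense of \cite{BH} and hence that $n_{\F}$, $e_{\F}$, $f_{\F}$, $k_{\F}$ agree; all further work takes place inside $\A$. Both pairs are realized on the strict sequence $\Ss$ on a simple $\E_1\otimes_{\F}\D$-module $\SS$, with $\rho_2$ chosen so that the two embeddings have the same embedding type (Lemma \ref{Merimee}, which needs the matching of $e_{\F},f_{\F}$); the split-case result \cite[Proposition 1.10]{BH} applied to $\widetilde\Ss$ gives intertwining there, and Proposition \ref{BG413} upgrades this to conjugacy modulo equivalence by an element $g\in\KK(\Ss)$. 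Conjugacy by a normalizer element, unlike mere intertwining, does transport: the block-diagonal embedding $\iota$ is chosen (Lemma \ref{Merimee2}) so that $\iota\circ\rho_1=\h_1$ and $\iota(g)\in\KK(\La)$, and a final Skolem--Noether element relating $\iota\circ\rho_2$ to $\h_2$ composes with this conjugation to yield intertwining of the given strata, first at level $m=e_{\h_i(\b_i)}(\La)k$ and then at the levels $m_i\>m$. Any repair of your argument will need this passage through an ``intertwining implies conjugacy modulo equivalence'' statement with an embedding-type hypothesis, precisely because intertwining is not stable under the independent conjugations your transport step requires.
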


Broussous and Grabitz remarked in \cite{BG} that two simple strata 
$[\La,n,m,\b_{i}]$, $i=1,2$, in $\A$ which intertwine in $\A$ may 
be not conjugate under $\mult\A$, unlike the case where $\A$ is split
(see \cite[Theorem 2.6.1]{BK} for the case where $\A$ is split and 
$\La$ is strict).
In order to remedy this, they introduced the notion of an embedding 
type (see also Fr\"ohlich \cite{Fr}).
Here we extend this notion to non-necessarily strict lattice sequences.

\medskip

We fix a simple central $\F$-algebra $\A$ and a simple left $\A$-module 
$\V$ as in paragraph \ref{Dubol}.
Associated with it, we have an $\F$-division algebra $\D$.
An {\it embedding} in $\A$ is a pair $(\E,\La)$ made of a finite extension 
$\E$ of $\F$ contained in $\A$ and an $\E$-pure $\Oo_\D$-lattice sequence 
$\La$ on $\V$.
Given such a pair, we denote by $\E^{\flit}$ the maximal 
finite unramified extension of $\F$ which is contained in $\E$ and 
whose degree divides the reduced degree of $\D$ over $\F$.

\medskip

Two em\-bed\-dings $(\E_i,\La_i)$, $i=1,2$, in $\A$ are said 
to be {\it equivalent} in $\A$ if there exists an element 
$g\in\mult\A$ such 
that $\La_1$ is in the translation class of $g\La_2$ and 
$\E_1^{\flit}=g\E_2^{\flit}g^{-1}$.
This defines an equivalence relation on the set of embeddings in 
$\A$, and an equivalence class for this relation is called an 
{\it embedding type} in $\A$.

\begin{defi}
\label{Sidonie}
The {\it embedding type} of a pure stratum $[\La,n,m,\b]$ is the 
embedding type of the pair $(\F(\b),\La)$ in $\A$.
\end{defi}

This allows us to state the following ``intertwining implies 
conjugacy'' theorem, which generalizes \cite[Theorem 2.6.1]{BK} and 
\cite[Proposition 4.1.2]{BG} and is proved in paragraph \ref{Barnabooth}. 

\begin{prop}
\label{IreneeDeLyon}
For $i=1,2$, let $[\La,n,m,\b_i]$ be a simple stratum in $\A$.
Assume that they intertwine in $\A$ and have the same embedding type.
Write $\K_i$ for the maximal un\-ramified extension of $\F$ contained
in $\F(\b_i)$.
Then there is $u\in\KK(\La)$ such that $\K_1=u\K_2u^{-1}$ and 
$\b_1-u\b_2u^{-1}\in\aa_{-m}(\La)$.
\end{prop}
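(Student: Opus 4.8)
The plan is to reduce Proposition~\ref{IreneeDeLyon} to the split case treated in \cite{BK}, via the interior lifting and base change machinery outlined in the introduction, and then to descend the resulting conjugacy using Galois cohomology. First I would observe that, since the two simple strata $[\La,n,m,\b_i]$ have the same embedding type, after replacing $\b_2$ by a $\KK(\La)$-conjugate we may assume that the maximal unramified subextensions $\K_i/\F$ inside $\F(\b_i)$ are \emph{equal}, say to a common $\K$; the intertwining hypothesis is preserved under this conjugation. Thus it suffices to find $u\in\KK(\La)\cap\C^\times$ (where $\C$ is the centralizer of $\K$ in $\A$) with $\b_1-u\b_2u^{-1}\in\aa_{-m}(\La)$. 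Passing to $\C$ is legitimate because, by \eqref{FPrimedescen}, the $\Oo_\D$-lattice sequence $\La$ being $\K$-pure descends to an $\Oo_{\D'}$-lattice sequence $\Ga$ on a simple $\C$-module with $\aa_k(\Ga)=\aa_k(\La)\cap\C$, and $\F(\b_i)/\K$ has no nontrivial unramified subextension meeting the reduced degree constraint, so each $[\Ga,n,m,\b_i]$ is again a simple stratum, now with trivial Fr\"ohlich invariant.

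Next I would apply a base change along a finite unramified extension $\L/\K$ large enough that $\C\otimes_\K\L$ splits. Here one must be careful, as the introduction warns: the base-change character $\overline{\t}{}^\K$ is only well-defined after first performing the interior lift, and the key point is that the process $\t\mapsto\widetilde\t$ is injective and equivariant, and commutes with transfer (Theorem~\ref{ResComWithTransAndAEFit}), and is Galois-invariant under $\Gal(\L/\K)$ (Proposition~\ref{GalInv}). Over the split algebra $\overline\C=\C\otimes_\K\L$, the two strata $[\overline\Ga,n,m,\b_i]$ are simple strata with strict — or near-strict — lattice sequences; I would invoke \cite[Theorem 2.6.1]{BK} (together with the finesse for non-strict lattice sequences alluded to in the introduction, which one handles by inflating $\La$ to $\La\oplus\cdots\oplus\La$ and passing back down) to conclude that there exists $v\in\KK(\overline\Ga)$ with $v$ conjugating the two strata. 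One still needs that the hypotheses of that theorem are met, i.e.\ that the two strata \emph{intertwine} over $\overline\C$; this follows because intertwining is preserved by interior lifting and base change (this is essentially the content of the $i=1,2$ compatibility that makes $\widetilde\Theta_1,\widetilde\Theta_2$ endo-equivalent, and one uses the split-case implication ``endo-equivalence $\Rightarrow$ intertwining'' of \cite{BH}).

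Finally, descent: I have a conjugating element $v\in(\C\otimes_\K\L)^\times$ normalizing the lattice sequence, and I want one in $\C^\times$ normalizing $\Ga$ hence $\La$. By Galois invariance of the base change, the image of $v$ under any $\sigma\in\Gal(\L/\K)$ also conjugates the strata, so $v^{-1}\sigma(v)$ lies in the stabilizer of $\overline{\t}{}^\K$ — a group whose relevant cohomology is controlled. A standard cocycle argument (this is Lemma~\ref{CohoArg}, using the triviality of $H^1(\Gal(\L/\K),\cdot)$ for the connected, or filtered pro-unipotent-times-torus, groups involved) then produces $u\in\C^\times$, in fact in $\KK(\Ga)=\KK(\La)\cap\C^\times$, with the required property; tracing the valuations back gives $\b_1-u\b_2u^{-1}\in\aa_{-m}(\Ga)\subseteq\aa_{-m}(\La)$.

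The main obstacle I anticipate is not the cohomological descent — that is a clean formal step once set up — but rather verifying that the base-change construction genuinely preserves intertwining \emph{and} is independent of the index $i$ when applied to the transferred characters; the introduction explicitly flags that ``it is not clear how to make a good definition'' with these properties, so this is where the real work lies. Concretely, the delicate point is that one must begin with the interior lift (otherwise base change yields merely \emph{quasi-simple} characters, not simple ones), and then show that the two operations compose to an injective equivariant map commuting with transfer; granting the cited Propositions~\ref{MauiToho}, \ref{Maui35}, \ref{GalInv} and Theorem~\ref{ResComWithTransAndAEFit}, the argument above goes through.
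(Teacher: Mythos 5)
Your plan transplants the machinery the paper develops for the simple \emph{character} statement (Theorem \ref{Gata}) onto a proposition that is purely about strata, and it breaks down at exactly the point you flag and then grant yourself. The hypothesis only provides an intertwining element $g\in\mult\A$; to invoke \cite[Theorem 2.6.1]{BK} over $\C\otimes_\K\L$ you must first know that the two lifted strata intertwine in $(\C\otimes_\K\L)^{\times}$, i.e.\ that intertwining survives the interior lift from $\A$ to the centralizer $\C$ of $\K$. Nothing cited supplies this: Proposition \ref{Maui} concerns only the base-change step (and works because the diagonal image $\iota(g)$ automatically lies in $\A_{\K}^{\times}$ — a trick with no analogue for the interior lift, where $g$ need not be anywhere near $\mult\C$), while Proposition \ref{MauiToho} transports an \emph{equality} $\t_1=\t_2$, not an intertwining. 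The appeal to ``endo-equivalence of the lifted ps-characters'' is circular: in the paper that endo-equivalence is only obtained after the given realizations have been replaced by sound ones and Grabitz's intertwining-implies-conjugacy has been used to force an actual equality of simple characters (Proposition \ref{Hengist6}), precisely because preservation of intertwining under interior lifting is not available. There is also a category slip: no simple characters, transfers or ps-characters occur in Proposition \ref{IreneeDeLyon}, and even granting your character-level steps you would obtain conjugacy of characters, from which the congruence $\b_1-u\b_2u^{-1}\in\aa_{-m}(\La)$ does not follow without further argument; likewise the claim that the descended strata have trivial Fr\"ohlich invariant is unjustified and plays no role.

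The paper's own proof is much lighter and stays at the level of strata. After using Proposition \ref{Tauride} and Remark \ref{Pisot} to arrange $\K_1=\K_2=\K$ (your first step, which is correct), it realizes both strata on the standard strict lattice sequence $\Ss$ of a simple right $\E_1\otimes_{\F}\D$-module $\SS$, with matching embedding types (Lemma \ref{Merimee}), applies the Broussous--Grabitz conjugacy result there (the proof of \cite[Theorem 4.1.2]{BG}, see also \cite[Lemma 10.5]{Gr} and Proposition \ref{BG413}) to get $v\in\KK(\Ss)$ conjugating $\K$ and the strata up to $\aa_{-m_0}(\Ss)$, and transports $v$ back to $\A$ through the embedding $\iota$ of Lemma \ref{Merimee2} combined with a second use of Proposition \ref{Tauride}; the only cohomological input is the one inside Lemma \ref{TotRam}. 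If you wish to keep your route, you must first prove that the two strata intertwine in $\mult\C$ and not merely in $\mult\A$ — and that step is essentially as hard as the proposition itself.
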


%%%%%%%%%%%%%%%%%%%%%%%%%%%%%%%%%%%%%%%%%%%%%%%%%%%%%%%%%%%%%%%%%%%%%%%%%%%

\subsection{}
\label{MainStreet2}

We now extend the notion of endo-equivalence of simple 
characters developed by Bushnell and Henniart in \cite{BH}.
As for simple pairs, we extend it to realizations in 
non-ne\-cessarily split 
sim\-ple central $\F$-algebras with non-necessarily strict lattice 
sequences.

\begin{defi}
\label{Petrone2}
For $i=1,2$, let $(\Theta_{i},k_{i},\b_{i})$ be a ps-character over $\F$.
We say that these ps-characters are \textit{endo-equivalent}, denoted:
\begin{equation*}
\Theta_1\thickapprox\Theta_2,
\end{equation*}
if $k_1=k_2$ and 
$[\F(\b_1):\F]=[\F(\b_2):\F]$, and if there exists a 
simple central $\F$-algebra $\A$ together with realizations
$[\La,n_i,m_i,\h_i(\b_i)]$ of $(k_i,\b_i)$ in $\A$, 
with $i=1,2$, such that the simple characters 
$\Theta_1(\La,m_1,\h_1)$ and $\Theta_2(\La,m_2,\h_2)$ inter\-twine in 
$\mult\A$.
\end{defi}

Recall that two simple characters $\t_i\in\Cc(\La,m_i,\b_i)$, $i=1,2$, 
\emph{intertwine} in $\mult\A$ if there exists $g\in\mult\A$ such that: 
\begin{equation}
\label{Eaque}
\t_2(x)=\t_1(gxg^{-1}), 
\quad
x\in\H^{m_2+1}(\b_2,\La)\cap g^{-1}\H^{m_1+1}(\b_1,\La)g.
\end{equation}

As we will see at the end of this article 
(see Corollary \ref{Vendries2}), 
this definition of endo-equi\-valence of simple characters is 
equivalent to \cite[Definition 8.6]{BH}.

\medskip

We now state the main results of this article concerning properties of 
simple characters with respect to intertwining and conjugacy.
The following generalizes \cite[Theorem 8.7]{BH}. 

\begin{theo}
\label{EndoClasChar}
For $i=1,2$, let $(\Theta_{i},k_{i},\b_{i})$ be a ps-character over $\F$, 
and suppose that $\Theta_{1}\thickapprox\Theta_{2}$. 
Let $\A$ be a simple central $\F$-algebra and let 
$[\La,n_i,m_i,\h_i(\b_i)]$ be realizations of 
$(k_{i},\b_i)$ in $\A$, for $i=1,2$. 
Then $\Theta_1(\La,m_1,\h_1)$ and 
$\Theta_2(\La,m_2,\h_2)$ intertwine in $\A^{\times}$.
\end{theo}

The following ``intertwining implies conjugacy'' theorem 
for simple characters generalizes \cite[Theorem 3.5.11]{BK} and 
\cite[Corollary 10.15]{Gr} to simple characters in non-necessarily split 
simple central $\F$-algebras with non-necessarily strict lattice sequences. 

\begin{theo}
\label{TrelawneyIIC}
Let $\A$ be a simple central $\F$-algebra.
For $i=1,2$, let $[\La,n,m,\b_i]$ be a simple stratum in $\A$, 
and let $\t_{i}\in\Cc(\La,m,\b_i)$ be a simple character.
Write $\K_i$ for the maximal un\-ramified extension of $\F$ contained
in $\F(\b_i)$.
Assume that $\t_1$ and $\t_2$ inter\-twine in $\mult\A$ and that the strata 
$[\La,n,m,\b_i]$ have the same embedding type.
Then there is an element $u\in\KK(\La)$ such that: 
\begin{enumerate}
\item 
$\K_1=u\K_2u^{-1}$; 
\item
$\Cc(\La,m,\b_1)=\Cc(\La,m,u\b_2u^{-1})$;
\item 
$\t_2(x)=\t_1(uxu^{-1})$, 
for all $x\in\H^{m+1}(\b_2,\La)=u^{-1}\H^{m+1}(\b_1,\La)u$.
\end{enumerate}
\end{theo}

We will see in section \ref{Sec4} (see Corollary \ref{Psaume}) 
that the proofs of these 
two theorems can be reduced to that of the following statement, 
which will be proved in section \ref{Sec8}.

\begin{theo}
\label{Gata}
For $i=1,2$, let $(\Theta_{i},k_{i},\b_{i})$ be a ps-character over $\F$, 
and suppose that $\Theta_{1}\thickapprox\Theta_{2}$. 
Let $\A$ be a simple central $\F$-algebra, and let 
$[\La,n,m,\h_i(\b_i)]$ be realizations of 
$(k_{i},\b_i)$ in $\A$, for $i=1,2$. 
Write $\K_i$ for the maximal un\-ramified extension of $\F$ 
contained in $\F(\b_i)$ and $\t_i$ for the simple character 
$\Theta_i(\La,m,\h_i)$.
Assume these strata have the same embedding type. 
Then there is an element $u\in\KK(\La)$ such that: 
\begin{enumerate}
\item 
$\h_1(\K_1)=u\h_2(\K_2)u^{-1}$; 
\item
$\Cc(\La,m,\h_1(\b_1))=\Cc(\La,m,u\h_2(\b_2)u^{-1})$;
\item 
$\t_2(x)=\t_1(uxu^{-1})$, 
for all $x\in\H^{m+1}(\h_2(\b_2),\La)=u^{-1}\H^{m+1}(\h_1(\b_1),\La)u$.
\end{enumerate}
\end{theo}

The main ingredient in this reduction step is Lemma \ref{Hectare1}, 
which states that the endo-equivalence relation preserves certain 
numerical invariants attached to a ps-character.

%%%%%%%%%%%%%%%%%%%%%%%%%%%%%%%%%%%%%%%%%%%%%%%%%%%%%%%%%%%%%%%%%%%%%%%%%%%

\subsection{}

As has been explained in the introduction, 
this article makes a large use of the results of 
Bushnell, Henniart and Kutzko in the split case 
\cite{BH,BK} (see paragraphs \ref{MainStreet} and 
\ref{MainStreet2}), 
as well as results of Grabitz \cite{Gr} which are 
based on the following definition. 

\begin{defi}
\label{StrateSoundDef}
A simple stratum $[\La,n,m,\b]$ in $\A$ is {\it sound} 
if $\La$ is strict, $\AA\cap\B$ is principal and 
$\KK(\AA)\cap\mult\B=\KK(\AA\cap\B)$, where $\AA$ is 
the hereditary $\Oo_\F$-order defined by $\La$.
\end{defi}

More generally, an embedding $(\E,\La)$ in $\A$ is {\it sound} 
if the conditions of Definition \ref{StrateSoundDef}
are fulfilled with $\B$ the centralizer of $\E$ in $\A$. 

\begin{rema}
Note that the condition on $\AA\cap\B$ forces $\AA$ to 
be a principal $\Oo_\F$-order.
In the split case, a simple stratum $[\La,n,m,\b]$ is sound 
if and only if $\La$ is strict and $\AA$ is principal.
\end{rema}

When $\La$ is strict, its translation class is entirely determined
by the hereditary $\Oo_\F$-order $\AA=\AA(\La)$.
In this case, we will sometimes write $(\E,\AA)$ and $[\AA,n,m,\b]$
rather than $(\E,\La)$ and $[\La,n,m,\b]$.

\medskip

In the case where 
the simple strata $[\La,n,m,\b_i]$, $i=1,2$, are sound, 
Grabitz has proved in \cite{Gr} the ``intertwining implies 
conjugacy'' theorem for simple characters 
(see \textit{ibid.}, Theorem 10.3 and Corollary 10.15). 
More precisely, he has proved the following result. 

\medskip

Given $\K/\F$ an un\-ramified extension contained in $\A$,
a sound simple stratum $[\La,n,m,\b]$ in $\A$ is 
$\K$-{\it special} (see \cite[Definition 3.1]{Gr})
if it is $\K$-pure in the sense of Definition 
\ref{KPure} and if $(\K(\b),\AA(\La)\cap\C)$ is a sound embedding 
in $\C$, where $\C$ is the centralizer of $\K$ in $\A$. 

\begin{theo}[\cite{Gr}, Theorem 10.3]
\label{Grabinoulor}
For $i=1,2$, let $[\La,n,m,\b_i]$ be a sound simple stratum in a 
simple central $\F$-algebra $\A$ and let $\t_{i}\in\Cc(\La,m,\b_i)$ 
be a simple character.
Let $f$ be a multiple of the greatest common divisor of $f_\F(\b_1)$ 
and $f_\F(\b_2)$, and let $\K_i$ be an un\-ramified extension of $\F$
of degree $f$ contained in $\A$ such that 
$[\La,n,m,\b_i]$ is $\K_i$-special. 
Assume $(\K_1,\La)$ and $(\K_2,\La)$ are equivalent embeddings 
in $\A$, and that $\t_1$ and $\t_2$ inter\-twine in $\mult\A$.
Then:
\begin{enumerate}
\item 
$e_\F(\b_1)=e_\F(\b_2)$ and $f_\F(\b_1)=f_\F(\b_2)$;
\item
$\K_i$ contains the maximal un\-ramified extension of $\F$ contained
in $\F[\b_i]$.
\end{enumerate}
Moreover, there exists $u\in\KK(\La)$ such that: 
\begin{enumerate}
\item[(3)]
$\K_1=u\K_2u^{-1}$; 
\item[(4)]
$\Cc(\La,m,\b_1)=\Cc(\La,m,u\b_2u^{-1})$;
\item[(5)]
$\t_2(x)=\t_1(uxu^{-1})$, 
for all $x\in\H^{m+1}(\b_2,\La)=u^{-1}\H^{m+1}(\b_1,\La)u$.
\end{enumerate}
\end{theo}

We will also need the following result.

\begin{prop}[\cite{Gr}, Propositions 9.1 and 9.9]
\label{Gr9199}
For $i=1,2$, let $[\La,n_,m_,\b_{i}]$ be a sound simple stratum
in $\A$.
Assume that $\Cc(\La,m,\b_1)\cap\Cc(\La,m,\b_2)$ is not empty.
Then $e_\F(\b_1)=e_\F(\b_2)$, $f_\F(\b_1)=f_\F(\b_2)$, 
$k_\F(\b_1)=k_\F(\b_2)$
and $\Cc(\La,m,\b_{1})=\Cc(\La,m,\b_{2})$.
\end{prop}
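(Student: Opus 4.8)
The plan is to adapt to the present sound setting the ``intertwining implies conjugacy'' argument for simple characters of \cite[\S3.5]{BK}, soundness being exactly what makes that argument go through when $\A$ is not split. Fix $\t\in\Cc(\La,m,\b_1)\cap\Cc(\La,m,\b_2)$; since a character has a well-defined domain, this already forces $\H^{m+1}(\b_1,\La)=\H^{m+1}(\b_2,\La)$, the common domain of $\t$. For $i=1,2$ write $\B_i$ for the centralizer of $\F(\b_i)$ in $\A$; by soundness $\AA\cap\B_i=\aa_0(\La)\cap\B_i$ is a \emph{principal} $\Oo_\F$-order, its centre is $\Oo_{\F(\b_i)}$, and so from the $\mult\A$-conjugacy class of the pair $(\AA,\AA\cap\B_i)$ one reads off the field $\F(\b_i)$, hence $e_\F(\b_i)$ and $f_\F(\b_i)$, and also the integer $e_{\b_i}(\La)$. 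The first task is thus to recover that conjugacy class from $\t$ alone.

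For this I would use the intertwining formula for simple characters (\cite[(3.3.2)]{BK} in the split case, \cite{SeSt} in general): the set $I_\A(\t)$ of $g\in\mult\A$ intertwining $\t$ with itself is of the form $\J_i\,\mult{\B_i}\,\J_i$ for a compact open subgroup $\J_i\subseteq\U_1(\La)$ attached to the stratum $[\La,n,m,\b_i]$. Intersecting with $\U_0(\La)$ and using $\J_i\subseteq\U_1(\La)\subseteq\U_0(\La)$ gives $I_\A(\t)\cap\U_0(\La)=\J_i\,(\AA\cap\B_i)^{\times}\,\J_i$, whose image in the residue ring $\AA/\PP(\La)$ is precisely the image of $(\AA\cap\B_i)^{\times}$. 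Now $I_\A(\t)$ depends only on $\t$, so this image is independent of $i$; and, because $\AA\cap\B_i$ is principal, the image of its unit group in $\AA/\PP(\La)$ determines the $\mult\A$-conjugacy class of $(\AA,\AA\cap\B_i)$. Hence those classes agree for $i=1$ and $i=2$, and therefore $e_\F(\b_1)=e_\F(\b_2)$, $f_\F(\b_1)=f_\F(\b_2)$, and $e_{\b_1}(\La)=e_{\b_2}(\La)$. The principality of $\AA\cap\B_i$ is essential here: for an arbitrary lattice sequence $\AA\cap\B$ need not be principal and this residue-level recovery fails, which is the obstruction exhibited in \cite{BG} for simple strata.

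It remains to prove $\Cc(\La,m,\b_1)=\Cc(\La,m,\b_2)$ and $k_\F(\b_1)=k_\F(\b_2)$, and here I would transpose the inductive argument of \cite[\S3.5]{BK}. Running along a defining sequence of $[\La,n,m,\b_1]$ and using the inductive construction of the sets $\Cc(\La,m,-)$, one proves by induction on $[\F(\b_1):\F]$ — the base case being that in which $\Cc(\La,m,\b_i)$ is a singleton — that $[\La,n,m,\b_2]$ admits a defining sequence whose terms are equivalent, term by term, to those chosen for $[\La,n,m,\b_1]$, and that the two recipes producing $\Cc(\La,m,\b_1)$ and $\Cc(\La,m,\b_2)$ coincide. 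The former yields $k_0(\b_1,\La)=k_0(\b_2,\La)$, whence $k_\F(\b_1)=k_\F(\b_2)$ by \eqref{Lakme} together with $e_{\b_1}(\La)=e_{\b_2}(\La)$; the latter yields the equality of character sets. I expect this inductive step to be the main obstacle: at each stage the \cite[\S3.5]{BK} argument needs the hereditary order cut out, inside the relevant centralizer, by the current term of the defining sequence to be principal, so that the residue-level comparison of the previous paragraph can be iterated — and in the non-split case this requires checking that soundness is inherited all the way down the defining sequence, as well as that the transfer maps identify the two inductive constructions stage by stage. This is the technical core of \cite[\S9]{Gr}, whose Propositions~9.1 and~9.9 are the assertions proved here.
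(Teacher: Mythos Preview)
The paper does not supply its own proof of this proposition: it is quoted from \cite[Propositions~9.1 and~9.9]{Gr}, and the sentence immediately following it in the paper explains how the pieces are assembled --- \cite[Proposition~9.1]{Gr} gives only $[\F(\b_1):\F]=[\F(\b_2):\F]$ (together with $k_\F$ and the equality of character sets via 9.9), while the \emph{separate} equalities $e_\F(\b_1)=e_\F(\b_2)$ and $f_\F(\b_1)=f_\F(\b_2)$ are deduced from Theorem~\ref{Grabinoulor} (i.e.\ \cite[Theorem~10.3]{Gr}). Your outline is the right architecture and is in the spirit of Grabitz's argument.

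One step needs tightening. You claim that the image of $(\AA\cap\B_i)^{\times}$ in $\AA/\PP(\La)$ determines the $\mult\A$-conjugacy class of the pair $(\AA,\AA\cap\B_i)$, and thence $e_\F(\b_i)$ and $f_\F(\b_i)$ directly. The paper's note signals that this shortcut is not how the argument runs: the residue comparison yields the isomorphism type $\GL_{s_i}(\mathfrak{k}_i)^{r_i}$ of the reductive quotient (exactly as in the later proof of Proposition~\ref{BrattleStreet}), from which one extracts coarser invariants such as the period $r_i$ and $[\mathfrak{k}_i:\mathfrak{k}_\F]$, hence the total degree $[\F(\b_i):\F]$; but disentangling $e$ from $f$ is routed through the intertwining-implies-conjugacy machinery of Theorem~\ref{Grabinoulor}, which requires an auxiliary choice of $\K$-special structure. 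Indeed, in Proposition~\ref{BrattleStreet} the same residue computation you describe is carried out and used only to obtain $r_1=r_2$, with the passage to equivalence of embeddings supplied separately by Lemma~\ref{Boubinette}. So either justify directly that in the sound case the residue image pins down $e$ and $f$ individually, or reroute that conclusion through Theorem~\ref{Grabinoulor} as the paper indicates.
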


Note that \cite[Proposition 9.1]{Gr} gives us an equality between 
$[\F(\b_1):\F]$ and $[\F(\b_2):\F]$, but the two finer equalities 
between the ramification indexes and residue class degrees come 
from Theorem \ref{Grabinoulor}.

\medskip 

Our proof of Theorem \ref{Gata} in section \ref{Sec8}
is decomposed into two steps. 
The first step con\-sists of treating the case where the extensions 
$\F(\b_i)/\F$ are totally ramified, and the second step consists 
of reducing to the totally ramified case.
In section \ref{PILSC}, we develop an interior 
lifting process for simple strata and simple characters with respect 
to a finite un\-ra\-mified extension $\K$ of $\F$, in a way similar 
to \cite{BH} and \cite{Gr}.
Its compatibility with transfer is explored in section \ref{ILT}. 
This interior lifting is enough to reduce to the totally ramified case. 
The totally ramified case is more subtle.
For this we develop an `exterior lifting' or un\-ramified base change
in section \ref{BCASDFSC}. 

%%%%%%%%%%%%%%%%%%%%%%%%%%%%%%%%%%%%%%%%%%%%%%%%%%%%%%%%%%%%%%%%%%%%%%%%%%%
%%%%%%%%%%%%%%%%%%%%%%%%%%%%%%%%%%%%%%%%%%%%%%%%%%%%%%%%%%%%%%%%%%%%%%%%%%%

\section{Realizations and intertwining for simple strata}
\label{SSa}

In this section, we introduce various constructions 
which will be used throughout the paper.
More precisely, we describe various processes, 
preserving intertwining, which associate to a 
realization of a simple pair in some simple central $\F$-algebra a 
realization in a (possibly different) simple central 
$\F$-algebra, with additional properties.
This allows us to prove that Definition \ref{Petrone} 
and the definition of endo-equivalence of simple pairs 
given in \cite{BH} 
are equivalent (see Corollary \ref{Epaminondas}), and 
to prove Proposition \ref{SimonLeMagicienStrong}.

%%%%%%%%%%%%%%%%%%%%%%%%%%%%%%%%%%%%%%%%%%%%%%%%%%%%%%%%%%%%%%%%%%%%%%%%%%%

\subsection{}
\label{Split}

We fix a simple central $\F$-algebra $\A$ and a simple left $\A$-module 
$\V$.
We set: 
\begin{equation*}
\widetilde\A=\End_{\F}(\V),
\end{equation*}
which is a split simple central $\F$-algebra in which the 
algebra $\A$ embeds naturally.
To any stratum $[\La,n,m,\b]$ in $\A$ we can attach a stratum
$[\widetilde\La,n,m,\b]$ in $\widetilde\A$, where $\widetilde\La$ denotes 
the $\Oo_\F$-lattice sequence defined by $\La$.
By \cite[Th\'eor\`eme 2.23]{VS1},
this latter stratum is simple if and only the first one is, and in this 
case they are realizations of the same simple pair over $\F$.
More\-over, we have the following result.

\begin{prop}
\label{LongJohnSilverTildeVariante}
For $i=1,2$, let $[\La,n_i,m_i,\b_i]$ be a simple stratum in $\A$.
Assume they intertwine in $\A$.
Then the strata $[\widetilde\La,n_i,m_i,\b_i]$ intertwine in 
$\widetilde{\A}$. 
\end{prop}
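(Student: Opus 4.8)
The plan is to reduce the intertwining of the strata $[\widetilde\La,n_i,m_i,\b_i]$ in $\widetilde\A$ to the given intertwining of $[\La,n_i,m_i,\b_i]$ in $\A$ by unwinding the defining relation~\eqref{Cinq-Mars} in the larger algebra. The starting point is an element $g\in\mult\A$ realizing the intertwining in $\A$, so that there are $x_1\in\b_1+\aa_{-m_1}(\La)$ and $x_2\in\b_2+\aa_{-m_2}(\La)$ with $x_1=gx_2g^{-1}$. Since $\A\subseteq\widetilde\A$, the same $g$ lies in $\mult{\widetilde\A}$, so it suffices to check that $\aa_{-m_i}(\La)\subseteq\aa_{-m_i}(\widetilde\La)$ and, ideally, $\aa_{k}(\La)=\aa_k(\widetilde\La)\cap\A$ for all $k$, which is exactly the content of the statement that $\widetilde\La$ is ``the $\Oo_\F$-lattice sequence defined by $\La$'' (cf.\ the discussion around~\eqref{FPrimedescen}). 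Granting this, $x_i\in\b_i+\aa_{-m_i}(\widetilde\La)$ as well, and the element $g$ witnesses
\begin{equation*}
(\b_1+\aa_{-m_1}(\widetilde\La))\cap g(\b_2+\aa_{-m_2}(\widetilde\La))g^{-1}\neq\varnothing,
\end{equation*}
which is precisely intertwining in $\widetilde\A$.

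First I would recall precisely how $\widetilde\La$ is attached to $\La$: viewing $\V$ as a $\D$-vector space with $\A=\End_\D(\V)$, one fixes a simple left module and forgets the $\D$-structure, regarding $\V$ merely as an $\F$-vector space with $\widetilde\A=\End_\F(\V)$; then $\widetilde\La_k$ is the same $\Oo_\D$-lattice $\La_k$ now viewed as an $\Oo_\F$-lattice (this uses $\Oo_\D\cap\F=\Oo_\F$ and that $\Oo_\D$-lattices are in particular $\Oo_\F$-lattices). From the defining formula $\aa_k(\La)=\{a\mid a\La_i\subseteq\La_{i+k},\ i\in\ZZ\}$ one reads off immediately that an element of $\A$ lies in $\aa_k(\La)$ if and only if, viewed inside $\widetilde\A$, it lies in $\aa_k(\widetilde\La)$; indeed the lattices $\La_i$ are literally the same sets. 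Hence $\aa_k(\La)=\aa_k(\widetilde\La)\cap\A$, and in particular $\aa_{-m_i}(\La)\subseteq\aa_{-m_i}(\widetilde\La)$. This is the only technical point and it is essentially a bookkeeping matter.

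There is one subtlety to mention, although it does not affect the proof: the statement that $[\widetilde\La,n,m,\b]$ is a \emph{simple} stratum when $[\La,n,m,\b]$ is, and that they are realizations of the same simple pair, is already invoked from \cite[Th\'eor\`eme 2.23]{VS1}, so one may freely use it; but for the intertwining assertion alone we do not even need simplicity --- only the inclusion of the lattice filtrations and the fact that $g\in\mult\A\subseteq\mult{\widetilde\A}$. So the proof is genuinely short: produce $g$ and the common value $x_1=gx_2g^{-1}$ from the hypothesis, observe $x_i\in\b_i+\aa_{-m_i}(\widetilde\La)$ by the lattice inclusion, and conclude. I do not anticipate a real obstacle here; the one place to be careful is to state the identification $\aa_k(\La)=\aa_k(\widetilde\La)\cap\A$ cleanly (or cite it from~\cite{VS1}) rather than leave it implicit, since everything rests on it.
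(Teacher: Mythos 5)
Your proof is correct and is essentially the paper's own argument: the paper likewise observes that $\aa_k(\La)\subseteq\aa_k(\widetilde\La)$ (indeed $\aa_k(\La)=\aa_k(\widetilde\La)\cap\A$, since $\widetilde\La_k=\La_k$ as sets) and that the intertwining element $g\in\mult\A\subseteq\mult{\widetilde\A}$ then witnesses the relation (\ref{Cinq-Mars}) in $\widetilde\A$. Your additional remarks (that simplicity is not needed, and that the lattice identification is the only technical point) are accurate but do not change the argument.
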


\begin{proof}
This follows immediately from the definition of intertwining and the fact 
that the $\Oo_\F$-module $\PP_k(\La)$ is contained in $\PP_k(\widetilde\La)$ 
for all $k\in\ZZ$. 
\end{proof}

%%%%%%%%%%%%%%%%%%%%%%%%%%%%%%%%%%%%%%%%%%%%%%%%%%%%%%%%%%%%%%%%%%%%%%%%%%%

\subsection{}
\label{DerivedStratum}

Let $[\La,n,m,\b]$ be a simple stratum in $\A$, which is a realization
of a simple pair $(k,\b)$ over $\F$.
The {\it affine class} of $\La$ is the set of all $\Oo_{\D}$-lattice 
sequences on $\V$ of the form:
\begin{equation}
\label{AffCla}
a\La+b:k\mapsto\La_{\lceil(k-b)/a\rceil},
\end{equation}
with $a,b\in\ZZ$ and $a\>1$.
The period of (\ref{AffCla}) is 
$a$ times the period $e(\La)$ of $\La$. 
Given an integer $l\>1$, we set $\V'=\V\oplus\dots\oplus\V$ ($l$ times)
and $\A'=\End_{\D}(\V')$, and 
embed $\A$ in $\A'$ diagonally.
For each $j\in\{1,\dots,l\}$, we choose a lattice sequence $\La^{j}$ 
in the 
affine class of $\La$, and assume the periods of the $\La^{j}$'s 
are all equal to a common integer $ae(\La)$ with $a\>1$.
We now form the $\Oo_{\D}$-lattice sequence $\La'$ on $\V'$ 
defined by: 
\begin{equation}
\label{SpadassinVariante}
\La'=\La^{1}\oplus\La^{2}\oplus\cdots\oplus\La^{l},
\end{equation}
and fix a non-negative integer $m'$ such that:
\begin{equation*}
\label{Seide}
\lfloor{m'/a}\rfloor=m.
\end{equation*}
If we set $n'=an$, this gives us a simple stratum $[\La',n',m',\b]$ 
in $\A'$, which is a realization of the simple pair $(k,\b)$.
In the particular case where $l=1$, we have the following result.

\begin{lemm}
\label{Bachir}
Assume that $l=1$, so that $\La'$ is in the affine class of $\La$.
Then we have $\H^{m'+1}(\b,\La')=\H^{m+1}(\b,\La)$ and 
$\Cc(\La',m',\b)=\Cc(\La,m,\b)$.
Moreover, the transfer map from $\Cc(\La',m',\b)$ to $\Cc(\La,m,\b)$ 
is the identity map.
\end{lemm}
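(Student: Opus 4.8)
The plan is to unwind the definitions in paragraph~\ref{DerivedStratum} in the special case $l=1$, so that $\La'=a\La+b$ for some $a\>1$ and $b\in\ZZ$, with $\lfloor m'/a\rfloor=m$ and $n'=an$. The first observation is that, since $\La'$ and $\La$ have the same set of lattices (up to reindexing), the orders and Jacobson radicals essentially agree after rescaling indices: concretely $\aa_k(\La')=\aa_{\lceil k/a\rceil}(\La)$ for all $k\in\ZZ$, and in particular $\AA(\La')=\AA(\La)$ and the groups $\U_j(\La')$ are cofinal with the groups $\U_j(\La)$. Similarly the $\mathfrak{n}_k(\b,\cdot)$ filtration rescales, so that $k_0(\b,\La')=ak_0(\b,\La)$, which is consistent with the fact that both strata realize the same simple pair $(k,\b)$ (via~\eqref{Lakme}).

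Next I would check the statement at the ``top'' level, i.e.\ when $\lfloor n/2\rfloor\<m$, where everything is explicit. Here $\H^{m+1}(\b,\La)=\U_{m+1}(\La)$, $\H^{m'+1}(\b,\La')=\U_{m'+1}(\La')$, and these coincide because $\aa_{m'+1}(\La')=\aa_{\lceil(m'+1)/a\rceil}(\La)=\aa_{m+1}(\La)$ (using $\lfloor m'/a\rfloor=m$, so $\lceil(m'+1)/a\rceil=m+1$). In this case $\Cc(\La,m,\b)=\{\vpsi_\b^{\A}\}$ and $\Cc(\La',m',\b)=\{\vpsi_\b^{\A}\}$ with the \emph{same} character $x\mapsto\vpsi\circ\tr_{\A/\F}(\b(x-1))$, since the defining formula~\eqref{LesLapinsNAimentPasLesCarottes} does not refer to $\La$ at all. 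The transfer map $\boldsymbol\tau_{\La',m',\h}\circ\boldsymbol\tau_{\La,m,\h}^{-1}$ between one-element sets is then forced to be the identity.

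For the general case, the cleanest route is to invoke the inductive construction of $\H^{m+1}(\b,\La)$ and $\Cc(\La,m,\b)$ from \cite{SeSt,VS1}: these are built by descending induction on $m$ from the top level using the defining tower of subalgebras/orders attached to $[\La,n,m,\b]$, and every ingredient in that construction (the orders $\aa_k(\La)$, the intersections $\aa_k(\La)\cap\B$, the $\mathfrak{n}_k(\b,\La)$, the tame corestriction, the approximation of $\b$ by a simpler element) transforms under $\La\mapsto a\La+b$ simply by rescaling and shifting indices, matching the change $m\mapsto m'$, $n\mapsto n'$ with $\lfloor m'/a\rfloor=m$. Carrying this bookkeeping through the induction gives $\H^{m'+1}(\b,\La')=\H^{m+1}(\b,\La)$ and $\Cc(\La',m',\b)=\Cc(\La,m,\b)$ as sets of characters of the same group, and shows the transfer map is the identity (a transfer map characterized by its compatibility with restriction to the successive subgroups in the tower must be the identity once the towers are identified). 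I expect the main obstacle to be purely notational: checking at each step of the inductive definition that the index rescaling $k\mapsto\lceil k/a\rceil$ (and the floor condition on $m$) is compatible with the various floors and ceilings appearing in the definitions of the orders $\mathfrak{J}$, $\mathfrak{H}$ and in the tame corestriction, rather than any conceptual difficulty; the affine-class invariance of the underlying hereditary order is exactly what makes all of this go through.
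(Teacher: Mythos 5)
Your treatment of the first two equalities is correct and is essentially the paper's own route: the rescaling $\aa_k(a\La+b)=\aa_{\lceil k/a\rceil}(\La)$, the observation $\lceil (m'+1)/a\rceil=m+1$ coming from $\lfloor m'/a\rfloor=m$, and an induction on $\b$ through the defining tower (the paper dismisses this part as ``straightforward by induction on $\b$''), with the top-level case $\lfloor n/2\rfloor\<m$ explicit as you say.

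The gap is in the final assertion. In this non-split, non-strict setting the transfer map from $\Cc(\La',m',\b)$ to $\Cc(\La,m,\b)$ is \emph{not} defined or characterized by ``compatibility with restriction to the successive subgroups in the tower''; it is the composite $\boldsymbol{\tau}_{\La,m,\h}\circ\boldsymbol{\tau}_{\La',m',\h}^{-1}$ of the canonical maps of \cite[\S 3.3]{VS1}, which are constructed by passing through quasi-simple characters over an unramified extension splitting the algebra. So identifying the two towers does not by itself force this bijection to be the identity: a canonical bijection between two equal sets need not be the identity, and the statement that transfer is realized here by restriction of characters (hence, the groups being equal, by the identity) is exactly the non-notational content of the lemma. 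Your argument proves it only in the top-level case, where both sets are the singleton $\{\psi_\b^{\A}\}$. The paper settles the general case by invoking \cite[Th\'eor\`eme 2.13]{SeSt}, which establishes precisely this restriction property of transfer; to complete your proof you must either quote that result or genuinely trace the quasi-simple-character construction of $\boldsymbol{\tau}$ through the change $\La\mapsto a\La+b$, rather than asserting that the identification of the towers forces the conclusion.
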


\begin{proof}
The first assertion is straightforward by induction on $\b$.
For the second one, see \cite[Th\'eor\`eme 2.13]{SeSt}.
\end{proof}

%%%%%%%%%%%%%%%%%%%%%%%%%%%%%%%%%%%%%%%%%%%%%%%%%%%%%%%%%%%%%%%%%%%%%%%%%%%

\subsection{}
\label{Ecarlate}

Assume now we are given two simple strata $[\La,n_i,m_i,\b_i]$, 
$i=1,2$, in $\A$.
For each $i$, we set $n_i'=an^{}_i$ and fix a non-negative 
integer $m_i'$ such that $\lfloor{m_i'/a}\rfloor=m^{}_i$, so that we
have a simple stratum $[\La',n_i',m_i',\b^{}_i]$ in ${\A}'$.

\begin{prop}
\label{LongJohnSilverGeneralVariante}
Assume that the strata $[\La,n_i,m_i,\b_i]$, $i\in\{1,2\}$, 
intertwine in $\A$.
Then the strata $[\La',n_i',m_i',\b^{}_i]$, $i\in\{1,2\}$, 
intertwine in $\A'$.
\end{prop}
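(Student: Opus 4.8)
The plan is to transport both the intertwining element and the intertwining witnesses from $\A$ to $\A'$ along the diagonal embedding, and then to check that the resulting elements sit in the required cosets. Write $\iota\colon\A\hookrightarrow\A'$ for the diagonal embedding with respect to which the strata $[\La',n'_i,m'_i,\b_i]$ are defined, so that $\b_i$, viewed inside $\A'$, is exactly $\iota(\b_i)$. By hypothesis there are $g\in\mult\A$ and elements $y_i\in\b_i+\aa_{-m_i}(\La)$, for $i=1,2$, with $y_1=gy_2g^{-1}$ in $\A$. I would set $g'=\iota(g)\in\mult{\A'}$ and $x_i=\iota(y_i)$; since $\iota$ is a homomorphism of $\F$-algebras, $x_1=g'x_2g'^{-1}$ is automatic, and $x_i-\iota(\b_i)=\iota(y_i-\b_i)$.

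The one point requiring an argument is the inclusion $\iota(\aa_{-m_i}(\La))\subseteq\aa_{-m'_i}(\La')$. Since $\iota(\A)$ consists of block-diagonal elements of $\A'$, the off-diagonal blocks contribute nothing, and it suffices to show, for each summand $\La^j$ of $\La'=\La^1\oplus\cdots\oplus\La^l$ and each $z\in\aa_{-m_i}(\La)$, that $z\La^j_k\subseteq\La^j_{k-m'_i}$ for all $k\in\ZZ$. Each $\La^j$ lies in the affine class of $\La$ and has period $ae(\La)$, hence is of the form $\La^j=a\La+b_j$ with $b_j\in\ZZ$, so $\La^j_k=\La_{\lceil(k-b_j)/a\rceil}$. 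As $z\La_p\subseteq\La_{p-m_i}$ for all $p$, one gets $z\La^j_k\subseteq\La_{\lceil(k-b_j)/a\rceil-m_i}$, and since $\La$ is decreasing the claim reduces to the inequality $\lceil(k-b_j)/a\rceil-m_i\>\lceil(k-m'_i-b_j)/a\rceil$. Writing $m'_i=am_i+r$ with $0\<r\<a-1$ (which is possible precisely because $\lfloor m'_i/a\rfloor=m_i$), the right-hand side equals $\lceil(k-b_j-r)/a\rceil-m_i$, and the inequality holds because $r\>0$ and the ceiling function is non-decreasing.

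Granting this inclusion, $x_i\in\b_i+\aa_{-m'_i}(\La')$ for $i=1,2$, hence $x_1$ belongs to $(\b_1+\aa_{-m'_1}(\La'))\cap g'(\b_2+\aa_{-m'_2}(\La'))g'^{-1}$; this intersection is therefore non-empty and the strata $[\La',n'_i,m'_i,\b_i]$ intertwine in $\A'$. I do not anticipate any genuine obstacle: the only delicate point is the bookkeeping with ceilings, and it is exactly the normalisation $\lfloor m'_i/a\rfloor=m_i$ imposed in paragraph \ref{DerivedStratum} that makes it go through, the direct-sum structure playing no role beyond reducing the verification to one diagonal block at a time.
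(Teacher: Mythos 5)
Your proof is correct and follows essentially the same route as the paper: transport the intertwining element via the diagonal embedding $\iota$ and reduce to the block-diagonal inclusion $\iota(\aa_{-m_i}(\La))\subseteq\aa_{-m'_i}(\La')$, which the paper obtains from the equality $\aa_{-m'_i}(\La^j)=\aa_{-m_i}(\La)$ left as a ``direct computation'' and which you verify explicitly via the normalisation $\lfloor m'_i/a\rfloor=m_i$. No gaps.
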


\begin{proof}
We start with an element $g\in\A^{\times}$ which intertwines the two
strata $[\La,n_i,m_i,\b_i]$, that is, which satisfies the condition 
(\ref{Cinq-Mars}),
and we let $\iota$ denote the diagonal embedding of $\A$ in $\A'$
(which we omit from the notation when the context is clear).
For $j\in\{1,\dots,l\}$, write $\V^{j}$ for the $j$th copy 
of $\V$ in $\V'=\V\oplus\dots\oplus\V$.
Then for each $i$, we have:
\begin{equation*}
\PP_{-m_i'}(\La')\cap\End_{\D}(\V^j)=\PP_{-m_i'}(\La^{j}),
\quad
j\in\{1,\dots,l\},
\end{equation*}
which is equal to $\PP_{-m_i}(\La)$
as can be seen by a direct computation in the case $l=1$.
This implies that $\iota$ induces an $\Oo_\F$-module embedding of 
$\PP_{-m_i}(\La)$ in $\PP_{-m_i'}(\La')$, from which we deduce that 
$g'=\iota(g)\in\A^{\prime\times}$ intertwines the strata 
$[\La',n_i',m_{i}',\b^{}_i]$.
\end{proof}

\begin{rema}
\label{blioblieris}
Note that $\iota$ induces a group homomorphism of $\KK(\La)$ into 
$\KK(\La')$.  
Therefore, if $g\in\KK(\La)$ intertwines two simple strata 
$[\La,n,m,\b_i]$, $i=1,2$, that is, if we have:
\begin{equation*}
\b_2-g\b_1g^{-1}\in\PP_{-m}(\La),
\end{equation*}
and if we set $n'=an$ and fix a non-negative integer $m'$ such that 
$\lfloor{m'/a}\rfloor=m$, then the element 
$\iota(g)\in\KK(\La')$ intertwines the strata $[\La',n',m',\b_i]$.
\end{rema}

\begin{prop}
\label{JeromeBauche2}
Assume that the strata $[\La,n_i,m_i,\b_i]$, for $i\in\{1,2\}$, 
have the same em\-bed\-ding type. 
Then the strata $[\La',n_i',m_i',\b^{}_i]$, $i\in\{1,2\}$, have the
same embedding type.
\end{prop}

\begin{proof}
Given $g\in\KK(\La)$ which conjugates 
the unramified extensions $\F(\b_i)^{\flit}$, $i\in\{1,2\}$, 
the element $\iota(g)\in\KK(\La')$ conjugates the extensions 
$\F(\iota(\b_i))^{\flit}$, $i\in\{1,2\}$.
\end{proof}

%%%%%%%%%%%%%%%%%%%%%%%%%%%%%%%%%%%%%%%%%%%%%%%%%%%%%%%%%%%%%%%%%%%%%%%%%%%

\subsection{}
\label{Miskatonic}

For $i=1,2$, let $\t_i$ be a simple character in 
$\Cc(\La,m_i,\b_i)$, and let $\t_i'$ be 
its trans\-fer in $\Cc(\La',m_i',\b^{}_i)$. 
The following result is an analogue of Proposition 
\ref{LongJohnSilverGeneralVariante} for simple characters. 

\begin{prop}
\label{LongJohnSilverGeneralCarSimVariante}
Assume that $\t_1$ and $\t_2$ intertwine in $\A^{\times}$.
Then $\t_1'$ and $\t_2'$ intertwine in $\A^{\prime\times}$.
\end{prop}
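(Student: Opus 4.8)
The plan is to deduce this from Proposition~\ref{LongJohnSilverGeneralVariante} by an approximation argument, exactly as in the classical split case. First I would recall that a simple character, although a character of the group $\H^{m_i+1}(\b_i,\La)$, is built up inductively from strata $[\La,n_i,r,\b_i]$ for $r$ ranging over an interval, and that the key point relating intertwining of characters to intertwining of strata is the following: if $\t_i \in \Cc(\La,m_i,\b_i)$ and $g$ intertwines $\t_1$ with $\t_2$, then in particular $g$ intertwines the underlying strata $[\La,n_i,m_i,\b_i]$ (because the restriction of $\t_i$ to the appropriate unipotent-type subgroup $\U_{t}(\La)\cap\ldots$ recovers $\psi_{\b_i}^\A$ up to the smaller-level terms). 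Conversely, the transfer map $\boldsymbol{\tau}_{\La',m_i',\h}$ is defined precisely so as to be compatible with the inductive construction. So the structural content is already packaged in the cited results; the work is to feed in the right intertwining element.

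The main step: start with $g \in \A^\times$ intertwining $\t_1$ and $\t_2$, that is, satisfying~(\ref{Eaque}). By the remark above, $g$ also intertwines the underlying simple strata $[\La,n_i,m_i,\b_i]$ in $\A$, so~(\ref{Cinq-Mars}) holds. Applying Proposition~\ref{LongJohnSilverGeneralVariante} (in the special case $l=1$, which is all we need here, since $\A' = \A$ after the affine-class modification — in fact the statement should be read with the $\A'$, $\La'$ of paragraph~\ref{Ecarlate}), the element $g' = \iota(g)$ intertwines the strata $[\La',n_i',m_i',\b_i]$ in $\A^{\prime\times}$. Then I would invoke the compatibility of the transfer map with intertwining. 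The cleanest route is to observe that in the $l=1$ / affine-class case Lemma~\ref{Bachir} already tells us $\H^{m_i'+1}(\b_i,\La') = \H^{m_i+1}(\b_i,\La)$ and $\Cc(\La',m_i',\b_i) = \Cc(\La,m_i,\b_i)$ with the transfer map being the identity, so $\t_i' = \t_i$ and there is literally nothing to prove. For the genuinely new case ($l>1$, where $\A' \neq \A$ and $\La' = \La^1 \oplus \cdots \oplus \La^l$ lives on $\V' = \V^{\oplus l}$), I would use the fact that the transfer $\t_i'$ of $\t_i$ can be realized, via the diagonal embedding and the intermediate realizations on the summands $\La^j$, in a way that is manifestly $\iota(g)$-equivariant: the defining relation of the transfer and the block-diagonal form of $\iota(g)$ show that~(\ref{Eaque}) for $(\t_1,\t_2,g)$ on $\La$ propagates to~(\ref{Eaque}) for $(\t_1',\t_2',\iota(g))$ on $\La'$.

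The expected main obstacle is the bookkeeping in the $l>1$ case: one must check that the transfer of $\t_i$ from $\Cc_\F(k,\b_i)$ to $\Cc(\La',m_i',\b_i)$ really is intertwined by $\iota(g)$ and not merely by some element of $\A^{\prime\times}$ depending on $i$. The point to verify is that the transfer map is compatible with the diagonal embedding $\A \hookrightarrow \A'$ in a sufficiently strong, $\A^\times$-equivariant sense — i.e.\ that for $u \in \KK(\La)$ the transfer of $u\cdot\t_i\cdot u^{-1}$ is $\iota(u)\cdot(\text{transfer of }\t_i)\cdot\iota(u)^{-1}$. This kind of functoriality of the transfer follows from its characterization in~\cite[Th\'eor\`eme 2.13]{SeSt} and the inductive definition in~\cite{VS1}, and once it is in hand the proposition is immediate: apply Proposition~\ref{LongJohnSilverGeneralVariante} to get the intertwining of strata, then use equivariance of transfer together with the fact that intertwining of simple characters is detected level by level, reducing at level $m_i = \lfloor n_i/2 \rfloor$ or above to the elementary intertwining of the characters $\psi_{\b_i}^{\A'}$, which is exactly~(\ref{Cinq-Mars}) for the derived strata.
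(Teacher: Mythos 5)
Your reduction has a genuine gap, and it sits exactly in the case $l>1$, which is the only case with content. Knowing that $\iota(g)$ intertwines the derived strata $[\La',n_i',m_i',\b_i]$ (Proposition \ref{LongJohnSilverGeneralVariante}) does not yield intertwining of the transferred characters: when $m_i<\lfloor n_i/2\rfloor$ a simple character carries strictly more information than its underlying stratum, and intertwining of simple characters is \emph{not} ``detected level by level'' by the characters $\psi_{\b_i}^{\A'}$ --- if it were, Theorem \ref{EndoClasChar} would follow at once from Proposition \ref{SimonLeMagicienStrong}. The equivariance you propose to verify, namely that the transfer of $u\t_iu^{-1}$ is $\iota(u)\t_i'\iota(u)^{-1}$ for $u\in\KK(\La)$, is beside the point: the intertwining element $g$ is an arbitrary element of $\A^{\times}$ and need not normalize $\La$, so no such statement can be applied to it. (Your preliminary claim that $g$ automatically intertwines the strata at level $m_i$ is also not immediate --- restricting $\t_i$ only controls the strata at level $\lfloor n_i/2\rfloor$ --- but this is harmless, since stratum intertwining is not what the proof needs anyway.)

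What is actually required is control of $\t_i'$ on the whole group $\H^{m_2'+1}(\b_2,\La')\cap\iota(g)^{-1}\H^{m_1'+1}(\b_1,\La')\iota(g)$, not merely on its block-diagonal part, and this is the substantive input your phrase ``manifestly $\iota(g)$-equivariant'' replaces. The paper obtains it as follows: the decomposition $\V'=\V\oplus\dots\oplus\V$ defines a Levi subgroup $\M=\A^{\times}\times\dots\times\A^{\times}$ of $\A^{\prime\times}$, and by \cite[Th\'eor\`eme 2.17]{SeSt} each $\H^{m_i'+1}(\b_i,\La')$ has an Iwahori decomposition with respect to $(\N^-,\M,\N)$ whose $\M$-part is $\H^{m_i+1}(\b_i,\La)\times\dots\times\H^{m_i+1}(\b_i,\La)$; moreover (Lemma \ref{JacquesVerges}, using Lemma \ref{Bachir} to identify each block) the transfer $\t_i'$ is trivial on the $\N$- and $\N^-$-parts and equals $\t_i\otimes\dots\otimes\t_i$ on the $\M$-part. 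Since $\iota(g)\in\M$, the intersection above inherits an Iwahori decomposition whose $\M$-part is $(\H_2\cap g^{-1}\H_1g)\times\dots\times(\H_2\cap g^{-1}\H_1g)$, with $\H_i=\H^{m_i+1}(\b_i,\La)$, and only then does the relation (\ref{Eaque}) for $(\t_1,\t_2,g)$ propagate to $(\t_1',\t_2',\iota(g))$. Without this explicit description of $\t_i'$ off the Levi, your argument is circular: it assumes precisely the compatibility it is supposed to prove.
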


\begin{proof}
The decomposition of $\V'$ into a sum of copies of $\V$
defines a Levi subgroup:
\begin{equation}
\label{Arendt}
\M=\A^{\times}\times\dots\times\A^{\times}
\end{equation}
of $\A^{\prime\times}$. 
We fix a parabolic subgroup $\P$ of $\A^{\times}$ with Levi factor $\M$ 
and unipotent radical $\N$, and we write $\N^{-}$ for the unipotent radical 
of the parabolic subgroup of $\A^{\times}$ opposite to $\P$ with respect to 
$\M$. 
According to \cite[Th\'eor\`eme 2.17]{SeSt}, we have an Iwahori
decomposition: 
\begin{eqnarray*}
\H^{m_{i}'+1}(\b_i,\La^{\prime})&=&
\big(\H^{m_{i}'+1}(\b_i,\La^{\prime})\cap\N^{-}\big)%\cdot
\big(\H^{m_{i}'+1}(\b_i,\La^{\prime})\cap\M\big)%\cdot
\big(\H^{m_{i}'+1}(\b_i,\La^{\prime})\cap\N\big),\\
\H^{m_{i}'+1}(\b_i,\La^{\prime})\cap\M&=&
\H^{m_{i}+1}(\b_i,\La)\times\dots\times\H^{m_{i}+1}(\b_i,\La),
\end{eqnarray*}
for each integer $i=1,2$. 
We have the following result.

\begin{lemm}
\label{JacquesVerges}
The simple character $\t^{\prime}_{i}$ is trivial 
on the subgroups $\H^{m_{i}'+1}(\b^{}_i,\La^{\prime})\cap\N$ and
$\H^{m_{i}'+1}(\b^{}_i,\La^{\prime})\cap\N^{-}$, and we have:
\begin{equation*}
\t^{\prime}_i\ |\ \H^{m_{i}'+1}(\b^{}_i,\La^{\prime})\cap\M=
\t_i\otimes\dots\otimes\t_i.
\end{equation*}
\end{lemm}

\begin{proof}
This derives from \cite[Th\'eor\`eme 2.17]{SeSt}.
Indeed, for $j\in\{1,\dots,l\}$, the restriction of 
$\t^{\prime}_i$ to 
$\H^{m_{i}'+1}(\b^{}_i,\La^{\prime})\cap\Aut_{\D}(\V^j)$
is the transfer of $\t^{\prime}_i$ to $\Cc(\La^j,m'_i,\b^{}_i)$,
which is equal to $\t_i$ by Lemma \ref{Bachir}.
\end{proof}

Now let $g\in\A^{\times}$ intertwine $\t_1$ and $\t_2$ as in 
the identity (\ref{Eaque}), 
and set $g'=\iota(g)\in\M$. 
If we write $\H_{i}=\H^{m_{i}+1}(\b_i,\La)$ and 
$\H'_{i}=\H^{m_{i}'+1}(\b_i,\La^{\prime})$ for each integer $i\in\{1,2\}$, 
we get an Iwahori decomposition: 
\begin{eqnarray*}
\H'_2\cap g^{\prime -1}\H'_1g'&=&
\big(\H'_2\cap g^{\prime-1}\H'_1g'\cap\N^{-}\big)%\cdot
\big(\H'_2\cap g^{\prime-1}\H'_1g'\cap\M\big)%\cdot
\big(\H'_2\cap g^{\prime-1}\H'_1g'\cap\N\big),\\
\H'_2\cap g^{\prime-1}\H'_1g'\cap\M&=&
\big(\H_2\cap g^{-1}\H_1g\big)\times
\dots\times\big(\H_2\cap g^{-1}\H_1g\big).
\end{eqnarray*}
According to Lemma \ref{JacquesVerges}, the simple characters 
$\t^{\prime}_{1}$ and $\t^{\prime}_{2}$ are trivial
on the two sub\-groups $\H'_2\cap g^{\prime-1}\H'_1g'\cap\N$ and 
$\H'_2\cap g^{\prime-1}\H'_1g'\cap\N^{-}$, and we have:
\begin{equation*}
\t^{\prime}_i\ |\ \H'_2\cap g^{\prime-1}\H'_1g'\cap\M=
\big(\t_i\ |\ \H_2\cap g^{-1}\H_1g\big)
\otimes\dots\otimes\big(\t_i\ |\ \H_2\cap g^{-1}\H_1g\big)
\end{equation*}
for each $i\in\{1,2\}$.
This ensures that $g'$ intertwines the simple characters $\t'_1$ and 
$\t'_2$. 
\end{proof}

%%%%%%%%%%%%%%%%%%%%%%%%%%%%%%%%%%%%%%%%%%%%%%%%%%%%%%%%%%%%%%%%%%%%%%%%%%%

\subsection{}
\label{Dague}

We give an example which will be of particular interest for us.
Let $[\La,n,m,\b]$ be a simple stratum in $\A$, which is a realization
of a simple pair $(k,\b)$ over $\F$, and let $e$
denote the period of $\La$ over $\Oo_{\D}$. 
We set:
\begin{equation}
\label{Spadassin}
\La^{\dag}:k\mapsto\La_{k}\oplus\La_{k+1}\oplus\cdots\oplus\La_{k+e-1},
\end{equation}
which is a strict $\Oo_{\D}$-lattice sequence on 
$\V^{\dag}=\V\oplus\dots\oplus\V$ ($e$ times)
of the form (\ref{SpadassinVariante}).
Thus we can form the simple 
stratum $[\La^{\dag},n,m,\b]$ in $\A^{\dag}=\End_{\D}(\V^{\dag})$,
which is a realization of $(k,\b)$.
Moreover, the hereditary $\Oo_\F$-order $\AA^{\dag}$ defined by
$\La^{\dag}$ is principal, and we have the following result, 
which derives from Propositions \ref{LongJohnSilverGeneralVariante}
and \ref{JeromeBauche2}.

\begin{prop}
\label{LongJohnSilverVariante}
For $i=1,2$, let $[\La,n_i,m_i,\b_i]$ be a simple stratum in $\A$.
Assume they intertwine in $\A$ (resp. have the same embedding type).
Then the strata $[\La^{\dag},n_i,m_i,\b_i]$ 
intertwine in ${\A}^{\dag}$ (resp. have the same embedding type).
\end{prop}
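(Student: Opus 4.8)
The plan is to deduce Proposition~\ref{LongJohnSilverVariante} as a direct specialisation of the constructions already set up in paragraphs~\ref{DerivedStratum}--\ref{Ecarlate}. First I would observe that the lattice sequence $\La^{\dag}$ defined in~(\ref{Spadassin}) is precisely of the shape~(\ref{SpadassinVariante}): it corresponds to the choice $l=e$, $a=1$, and $\La^{j}=\La[j-1]$, the shift of $\La$ by $j-1$ indices, which lies in the affine class of $\La$ and has period $e(\La)$. Since $a=1$ we may take $n_i'=n_i$ and $m_i'=m_i$ for $i=1,2$, so the stratum $[\La^{\dag},n_i,m_i,\b_i]$ in $\A^{\dag}=\End_{\D}(\V^{\dag})$ is exactly the stratum $[\La',n_i',m_i',\b_i]$ produced by the general construction of paragraph~\ref{Ecarlate}. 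It is a realization of the simple pair $(k_i,\b_i)$ by the discussion there, hence in particular a simple stratum.

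Next I would treat the two assertions separately. For the intertwining statement, the hypothesis is that $[\La,n_1,m_1,\b_1]$ and $[\La,n_2,m_2,\b_2]$ intertwine in $\A$; Proposition~\ref{LongJohnSilverGeneralVariante}, applied with this choice of $l$, $a$ and the $\La^{j}$, then gives immediately that $[\La^{\dag},n_1,m_1,\b_1]$ and $[\La^{\dag},n_2,m_2,\b_2]$ intertwine in $\A^{\dag}$. For the embedding-type statement, the hypothesis is that the two strata $[\La,n_i,m_i,\b_i]$ have the same embedding type, and Proposition~\ref{JeromeBauche2}, applied to the same data, yields that $[\La^{\dag},n_i,m_i,\b_i]$ have the same embedding type in $\A^{\dag}$. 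That is the whole argument.

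The only point that requires a word of care — and it is the closest thing to an obstacle — is checking that the hypothesis ``$m_i'$ a non-negative integer with $\lfloor m_i'/a\rfloor=m_i$'' needed to invoke the general propositions is genuinely met here; but with $a=1$ this reads $m_i'=m_i$, which holds by construction, and $m_i\geqslant 0$ is part of the definition of a stratum. One should also note that the period of each $\La^{j}$ equals $e(\La)=ae(\La)$ with $a=1$, so the common-period hypothesis of paragraph~\ref{DerivedStratum} is satisfied. With these trivial verifications in place, the statement follows. I would also remark, for completeness, that the principality of the hereditary order $\AA^{\dag}=\AA(\La^{\dag})$ — already asserted in the paragraph preceding the proposition — is an elementary consequence of $\La^{\dag}$ being strict together with the cyclic shift structure of~(\ref{Spadassin}); it is not needed for the proof of the proposition itself but justifies why $\La^{\dag}$ is the right object to pass to when preparing to apply Grabitz's results on sound strata.
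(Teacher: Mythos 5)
Your argument is correct and is exactly the paper's: the result is stated there as deriving from Propositions~\ref{LongJohnSilverGeneralVariante} and~\ref{JeromeBauche2}, and your verification that $\La^{\dag}$ is the special case $l=e$, $a=1$ (with the $\La^{j}$ translates of $\La$, so $n_i'=n_i$, $m_i'=m_i$) of the construction of paragraphs~\ref{DerivedStratum}--\ref{Ecarlate} is precisely the implicit reduction the paper relies on. No discrepancy to report.
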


Note that the operations $\La\mapsto\widetilde\La$ (see paragraph 
\ref{Split}) and $\La\mapsto\La^{\dag}$ 
commute, so that there is no ambiguity in writing $\widetilde\La^{\dag}$ 
for the strict $\Oo_\F$-lattice sequence defined by $\La^{\dag}$.

\begin{coro}
\label{Epaminondas}
Definition \ref{Petrone} is equivalent to Definition \cite[1.14]{BH}.
\end{coro}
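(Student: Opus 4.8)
The plan is to prove Corollary~\ref{Epaminondas} by showing that the two notions of endo-equivalence of simple pairs agree, using the reduction machinery developed in paragraphs \ref{Split}--\ref{Dague}. Recall that \cite[Definition~1.14]{BH} phrases endo-equivalence of simple pairs $(k,\b_i)$ in terms of realizations on \emph{strict} lattice sequences in a \emph{split} simple central $\F$-algebra (in fact on hereditary orders there), whereas our Definition~\ref{Petrone} allows realizations in an arbitrary simple central $\F$-algebra $\A$ with an arbitrary (not necessarily strict) $\Oo_\D$-lattice sequence $\La$. Since a split algebra with a strict lattice sequence is a special case, the implication ``\cite[1.14]{BH}~$\Rightarrow$~Definition~\ref{Petrone}'' is immediate; the content is the converse.

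For the converse, I would start from endo-equivalent pairs in the sense of Definition~\ref{Petrone}: there is a simple central $\F$-algebra $\A$ and realizations $[\La,n_i,m_i,\h_i(\b_i)]$ of $(k,\b_i)$ in $\A$ which intertwine in $\A$. The idea is to transport this datum to the split case in two steps. First apply the operation $\La\mapsto\La^\dag$ of paragraph~\ref{Dague}: by Proposition~\ref{LongJohnSilverVariante} the strata $[\La^\dag,n_i,m_i,\h_i(\b_i)]$ are again realizations of $(k,\b_i)$ in $\A^\dag=\End_\D(\V^\dag)$, they still intertwine in $\A^\dag$, and now $\La^\dag$ is \emph{strict}. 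Then apply the operation $\La\mapsto\widetilde\La$ of paragraph~\ref{Split}: by Proposition~\ref{LongJohnSilverTildeVariante} the strata $[\widetilde{\La^\dag},n_i,m_i,\h_i(\b_i)]$ intertwine in the split algebra $\widetilde{\A^\dag}=\End_\F(\V^\dag)$, and by \cite[Th\'eor\`eme~2.23]{VS1} (invoked at the start of paragraph~\ref{Split}) they are still simple and still realizations of the same simple pairs $(k,\b_i)$. Since $\widetilde{\La^\dag}$ is a strict $\Oo_\F$-lattice sequence, its translation class is determined by the hereditary $\Oo_\F$-order it defines, so we have produced realizations of $(k,\b_1)$ and $(k,\b_2)$ on a common hereditary order in a split $\F$-algebra which intertwine. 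That is exactly the situation of \cite[Definition~1.14]{BH}, giving $(k_1,\b_1)\thickapprox(k_2,\b_2)$ in the sense of Bushnell--Henniart. (The numerical compatibility $k_1=k_2$ and $[\F(\b_1):\F]=[\F(\b_2):\F]$ is part of both definitions and is unchanged by the two operations, since $\widetilde\La^\dag$ and $\La$ carry the same $\b_i$.)

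The main subtlety — really the only place one must be slightly careful — is to check that the simplicity and the ``realization of $(k,\b_i)$'' property genuinely survive the two reductions. For $\La\mapsto\La^\dag$ this is recorded explicitly in paragraph~\ref{Dague}: $[\La^\dag,n,m,\b]$ is a simple stratum realizing $(k,\b)$, with $\AA^\dag$ principal; one should note that the relevant constant $a$ in the construction of paragraph~\ref{DerivedStratum} is taken to be $1$ here (each $\La^j$ is a shift of $\La$), so the period and the condition $\lfloor m/e_{\h(\b)}(\La)\rfloor=k$ are preserved. For $\La\mapsto\widetilde\La$ the preservation of simplicity and of the simple pair is precisely the content of \cite[Th\'eor\`eme~2.23]{VS1} quoted in paragraph~\ref{Split}. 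Once these two points are in hand the equivalence of the two definitions is formal, and I would write the proof in essentially the two-step form above, remarking at the end that the reverse implication is trivial because a hereditary order in a split algebra is a particular $\Oo_\D$-lattice sequence datum.
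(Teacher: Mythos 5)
Your proposal is correct and follows essentially the same route as the paper's own proof: reduce to a strict lattice sequence in a split algebra via the operations $\La\mapsto\La^{\dag}$ and $\La\mapsto\widetilde\La$, invoking Propositions \ref{LongJohnSilverVariante} and \ref{LongJohnSilverTildeVariante} to preserve intertwining (and \cite[Th\'eor\`eme~2.23]{VS1} for simplicity), with the reverse implication being immediate. Your extra remarks on the constant $a=1$ and on the preservation of the realization property are just explicit checks of points the paper leaves implicit.
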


\begin{proof}
Assume we are given two simple pairs $(k,\b_i)$, $i=1,2$, which are 
endo-equivalent in the sense of Definition \ref{Petrone}. 
Then we have $[\F(\b_1):\F]=[\F(\b_2):\F]$, and 
there exists a simple central $\F$-algebra $\A$ together 
with realizations $[\La,n_i,m_i,\h_i(\b_i)]$ of $(k,\b_i)$ 
in $\A$, with $i=1,2$, which intertwine in $\A$.
By replacing $\A$ and $\La$ by $\widetilde\A^{\dag}$ and
$\widetilde\La^{\dag}$, we have realizations 
$[\widetilde\La^{\dag},n_i,m_i,\h_i(\b_i)]$ of $(k,\b_i)$ in 
$\widetilde\A^{\dag}$, with $i=1,2$, and these realizations 
intertwine in $\widetilde\A^{\dag}$ according to Propositions 
\ref{LongJohnSilverTildeVariante} and \ref{LongJohnSilverVariante}.
Thus the simple pairs $(k,\b_1)$ and $(k,\b_2)$ are endo-equivalent 
in the sense of \cite[Definition 1.14]{BH}. 
Conversely, two simple pairs which are endo-equivalent in this 
sense are clearly endo-equivalent in the sense of Definition 
\ref{Petrone}. 
\end{proof}

%%%%%%%%%%%%%%%%%%%%%%%%%%%%%%%%%%%%%%%%%%%%%%%%%%%%%%%%%%

\subsection{}
\label{ProvaDAmore}

We now prove the preservation property of 
intertwining for sim\-ple strata, that is, 
Proposition \ref{SimonLeMagicienStrong}.
We first prove that the endo-equivalence relation 
preserves certain numerical invariants attached to simple pairs. 
Compare the following proposition with \cite{BH}, Property (1.15).
See paragraph \ref{Jessica} for the notation.

\begin{prop}
\label{Eponine}
For $i=1,2$, let $(k,\b_{i})$ be a simple pair over $\F$, and suppose that 
$(k,\b_1)$ and $(k,\b_2)$ are endo-equivalent.
Then we have $n_{\F}(\b_1)=n_{\F}(\b_2)$, $e_{\F}(\b_1)=e_{\F}(\b_2)$, 
$f_{\F}(\b_1)=f_{\F}(\b_2)$ and $k_{\F}(\b_1)=k_{\F}(\b_2)$.
\end{prop}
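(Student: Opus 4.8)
The plan is to reduce the equalities of all four invariants $n_{\F}$, $e_{\F}$, $f_{\F}$, $k_{\F}$ to the ``sound'' situation, where Grabitz's results (Theorem \ref{Grabinoulor} and Proposition \ref{Gr9199}) already deliver most of what we need, and then to note that the reduction operations of the preceding paragraphs leave these invariants unchanged. Concretely, by Definition \ref{Petrone} there is a simple central $\F$-algebra $\A$ and realizations $[\La,n_i,m_i,\h_i(\b_i)]$ of $(k,\b_i)$ in $\A$ which intertwine in $\A$. First I would replace $(\A,\La)$ by $(\widetilde\A^{\dag},\widetilde\La^{\dag})$: by Propositions \ref{LongJohnSilverTildeVariante} and \ref{LongJohnSilverVariante} the transferred strata still intertwine, and now the lattice sequence is strict with $\widetilde\AA{}^{\dag}$ a principal order in a split algebra, so both strata are sound in the sense of Definition \ref{StrateSoundDef}. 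Crucially, none of the invariants $n_{\F}(\b_i)$, $e_{\F}(\b_i)$, $f_{\F}(\b_i)$, $k_{\F}(\b_i)$ depend on the realization — they are intrinsic to the simple pair $(k,\b_i)$ — so it suffices to prove the equalities after this replacement.

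\textbf{The split sound case.} With sound strata $[\widetilde\La^{\dag},n_i,m_i,\b_i]$ in the split algebra $\widetilde\A^{\dag}$ that intertwine, I would invoke the classical theory of Bushnell--Kutzko in the split case: for sound (equivalently, for strict $\La$ with principal $\AA$) simple strata, ``intertwining implies conjugacy'' holds up to adjusting the unramified part, but here I only need the weaker consequence that intertwining simple strata share $e_{\F}$, $f_{\F}$, $n_{\F}$ and $k_{\F}$. One clean route: from intertwining of $[\widetilde\La^{\dag},n_i,m_i,\b_i]$ and the fact that $\lfloor m_i / e_{\b_i}(\La)\rfloor = k$ is common, the period constraints force $n_1 = n_2$ and $e_{\b_1}(\widetilde\La^{\dag}) = e_{\b_2}(\widetilde\La^{\dag})$ (these are read off from $\v_{\La}(\b_i) = -n_i$ and equality \eqref{Lakme}, $k_0(\h(\b),\La) = e_{\h(\b)}(\La)k_{\F}(\b)$, together with $-m \le -k_0 - 1$). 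Then $n_{\F}(\b_i) = -\v_{\E_i}(\b_i) = n_i / e_{\b_i}(\widetilde\La^{\dag}) \cdot e(\widetilde\La^{\dag})/\ldots$ — more precisely $n_i = e_{\b_i}(\widetilde\La^{\dag})\, n_{\F}(\b_i)$ for a realization, so $n_1=n_2$ and $e_{\b_1}=e_{\b_2}$ give $n_{\F}(\b_1)=n_{\F}(\b_2)$. For $e_{\F}$, $f_{\F}$, $k_{\F}$, I would use Proposition \ref{Gr9199}: intertwining of two sound simple strata on the same $\La$ in particular gives, via \cite[Theorem 2.6.1]{BK} applied to conjugating one into an equivalent form (after an unramified adjustment, which does not affect $e_{\F}$, $f_{\F}$, $k_{\F}$), a common simple character, whence Proposition \ref{Gr9199} yields $e_{\F}(\b_1)=e_{\F}(\b_2)$, $f_{\F}(\b_1)=f_{\F}(\b_2)$, $k_{\F}(\b_1)=k_{\F}(\b_2)$.

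\textbf{Main obstacle.} The delicate point is that two intertwining sound simple \emph{strata} need not be conjugate (this is precisely the phenomenon \cite{BG} flagged and the reason embedding types were introduced), so I cannot directly quote a strata-conjugacy statement to transport one stratum to the other and then apply Proposition \ref{Gr9199}. The fix is to pass through the split case carefully: in $\widetilde\A^{\dag}$ split with $\widetilde\AA{}^{\dag}$ principal, the results of \cite{BK} (Theorem 2.6.1) do give conjugacy of intertwining simple strata once one has arranged matching embedding types, and for \emph{invariants} one does not even need the full conjugacy — one needs only that the common realization data force the numerical coincidences, which is exactly the content one extracts from \cite[Theorem 2.6.1]{BK} combined with \eqref{Lakme}. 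So the argument I expect to write is: (i) reduce to split sound via $\widetilde{(\cdot)}^{\dag}$; (ii) in the split sound case, enlarge/adjust by an unramified twist so the embedding types agree — harmlessly, since unramified adjustments preserve $n_{\F}, e_{\F}, f_{\F}, k_{\F}$; (iii) apply \cite[Theorem 2.6.1]{BK} to get the strata conjugate, hence $\Cc(\widetilde\La^{\dag}, m, \b_1) = \Cc(\widetilde\La^{\dag}, m, \b_2')$ for a conjugate $\b_2'$ of $\b_2$; (iv) apply Proposition \ref{Gr9199} to this common simple character to read off $e_{\F}=e_{\F}$, $f_{\F}=f_{\F}$, $k_{\F}=k_{\F}$, and deduce $n_{\F}(\b_1)=n_{\F}(\b_2)$ from these plus the period relations. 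The only real care needed is in step (ii), matching embedding types; but since $\widetilde\A^{\dag}$ is split, the embedding type of a sound stratum is governed by a single integer (the Fr\"ohlich invariant), which can be normalized, so this is a routine — if slightly technical — maneuver rather than a conceptual hurdle.
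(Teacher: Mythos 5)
Your opening reduction is sound and is in fact exactly what the paper does: passing to $(\widetilde\A^{\dag},\widetilde\La^{\dag})$ via Propositions \ref{LongJohnSilverTildeVariante} and \ref{LongJohnSilverVariante} is precisely the proof of Corollary \ref{Epaminondas}, and the invariants $n_{\F},e_{\F},f_{\F},k_{\F}$ are indeed intrinsic to the simple pairs. At that point the paper simply quotes \cite[Proposition 1.10]{BH}, which \emph{is} the split-case statement you then try to re-derive, and your re-derivation is circular. The ``intertwining implies conjugacy'' theorem \cite[Theorem 2.6.1]{BK} invoked in your step (iii) carries among its hypotheses the equalities $e(\F(\b_1)/\F)=e(\F(\b_2)/\F)$ and $f(\F(\b_1)/\F)=f(\F(\b_2)/\F)$ --- exactly what you are trying to prove --- so it cannot be used to manufacture the conjugate $\b_2'$ from which you would then read off the invariants via Proposition \ref{Gr9199}. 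Substituting Proposition \ref{BG413} does not help either: it requires the two strata to share the same $n$ and the same $m$, and arranging a common level (e.g.\ $m_i=e_{\h_i(\b_i)}(\La)k$) already presupposes $e_{\F}(\b_1)=e_{\F}(\b_2)$, while the equality $n_1=n_2$ is itself part of what must be proved. This is why, in the paper, the invariant equalities are always established \emph{first} (via \cite[Proposition 1.10]{BH} here, or via Grabitz's Theorem \ref{Grabinoulor} at the level of characters in Lemma \ref{Hectare1}) and the conjugacy statements are applied only afterwards (see paragraph \ref{ProvaDAmore}).

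Two further points. First, ``the period constraints force $n_1=n_2$'' is an assertion, not an argument: the intertwining element $g$ need not lie in $\KK(\La)$, so conjugation by $g$ does not preserve $\La$-valuations, and $n_1=n_2$ is genuinely part of the nontrivial content of \cite[Proposition 1.10]{BH} (compare the first part of the proof of \cite[Proposition 8.4]{BH}, which the paper likewise does not reproduce in Lemma \ref{Hectare1}). Second, in the split algebra $\widetilde\A^{\dag}$ the embedding type of Definition \ref{Sidonie} is trivial, since $\E^{\flit}=\F$ when the reduced degree of $\D$ is $1$; so your step (ii), with its Fr\"ohlich-invariant normalization, is vacuous here --- that maneuver (Lemma \ref{Blaise3}) belongs to the character-level arguments of Proposition \ref{Hengist}, not to this proposition. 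The correct short proof is: Corollary \ref{Epaminondas}, then \cite[Proposition 1.10]{BH}.
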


\begin{proof}
By Corollary \ref{Epaminondas}, we may assume that the pairs 
$(k,\b_{i})$ are endo-equivalent in the sense of \cite{BH}. 
The result then follows from \cite[Proposition 1.10]{BH}. 
\end{proof}

For $i=1,2$, let $(k,\b_{i})$ be a simple pair over $\F$, and suppose that 
$(k,\b_1)\thickapprox(k,\b_2)$.

\medskip

Let $\A$ be a simple central $\F$-algebra and, for $i=1,2$, 
let $[\La,n_i,m_i,\h_i(\b_i)]$ be a real\-ization of 
$(k,\b_i)$ in $\A$.
Let $\V$ denote the simple left $\A$-module on which $\La$ is a lattice 
sequence and write $\D$ for the $\F$-algebra opposite to $\End_{\A}(\V)$.
For $i=1,2$, let $\E_i$ denote the $\F$-algebra $\F(\b_i)$.
We fix a simple right $\E_1\otimes_{\F}\D$-module $\SS$ and set 
$\A(\SS)=\End_{\D}(\SS)$, and we denote by $\rho_1$ the natural 
$\F$-algebra homomorphism $\E_1\to\A(\SS)$.
Let $\Ss$ denote the unique (up to translation) $\E_1$-pure 
strict $\Oo_{\D}$-lattice sequence on $\SS$. 

\begin{lemm}
\label{Merimee}
There is a homomorphism of $\F$-algebras $\rho_2:\E_2\to\A(\SS)$ such 
that $\Ss$ is $\rho_2(\E_2)$-pure, 
and such that the pairs $(\rho_1(\E_1),\Ss)$ and $(\rho_2(\E_2),\Ss)$
have the same embedding type in $\A(\SS)$ (see paragraph \ref{MainStreet}). 
\end{lemm}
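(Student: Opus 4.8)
The goal is to find a second embedding $\rho_2\colon\E_2\to\A(\SS)$ of the field $\E_2=\F(\b_2)$ so that $\Ss$ is $\rho_2(\E_2)$-pure and so that $(\rho_1(\E_1),\Ss)$ and $(\rho_2(\E_2),\Ss)$ have the same embedding type. Recall that since $(k,\b_1)\thickapprox(k,\b_2)$, Proposition \ref{Eponine} gives $e_\F(\b_1)=e_\F(\b_2)$ and $f_\F(\b_1)=f_\F(\b_2)$, hence $[\E_1:\F]=[\E_2:\F]$ and the maximal unramified subextensions $\K_i/\F$ of $\E_i/\F$ have the same degree. The first step is to analyse what "embedding type of $(\rho_1(\E_1),\Ss)$" means concretely: by definition it depends only on the $\mult{\A(\SS)}$-conjugacy class of $\rho_1(\E_1)^\flit$, the maximal unramified subextension of $\rho_1(\E_1)$ whose degree divides the reduced degree of $\D$ over $\F$, together with the translation class of $\Ss$. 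So what I really need to produce is an embedding of $\E_2$ whose associated unramified part $\rho_2(\E_2)^\flit$ is $\mult{\A(\SS)}$-conjugate (in fact I will aim for $\KK(\Ss)$-conjugate, since $\Ss$ is $\rho_1(\E_1)$-pure and $\rho_1(\E_1)^\flit\subseteq\A(\SS(\Ss))$ already) to $\rho_1(\E_1)^\flit$.

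First I would embed $\E_2$ into $\A(\SS)$ in a way that is compatible with the module structure. Since $\SS$ is a simple right $\E_1\otimes_\F\D$-module, it is in particular a right $\D$-vector space, and $\A(\SS)=\End_\D(\SS)$. The key observation is that $\E_1^\flit=\K_1$ (the maximal unramified subextension, as its degree automatically divides the reduced degree of $\D$ — this is part of the content that $\SS$ is a simple $\E_1\otimes_\F\D$-module, forcing $f_\F(\b_1)\mid d$). I would choose inside $\A(\SS)$ an unramified field extension $\K$ of $\F$ of degree $f_\F(\b_1)=f_\F(\b_2)$ containing $\rho_1(\K_1)$ — for the obvious reason I can simply take $\K=\rho_1(\K_1)$. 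Then $\E_2/\K_2$ is totally ramified of degree $e_\F(\b_2)=e_\F(\b_1)=[\E_1:\K_1]=\dim_{\K}$ of the centralizer situation, so I can find a homomorphism $\rho_2\colon\E_2\to\C$, where $\C$ is the centralizer of $\K$ in $\A(\SS)$, using the standard fact (e.g. via Skolem–Noether applied inside $\C$, or the lattice-theoretic existence statement from \cite{Br1,SeSt}) that a totally ramified extension of $\K$ of the right degree embeds into any central simple $\K$-algebra of sufficiently large index, and moreover can be arranged to normalize a prescribed lattice chain. Concretely I would use the interior lift / base-change machinery cited in the excerpt: the centralizer of $\rho_1(\K_1)$ in $\A(\SS)$ contains a copy of $\E_1$ and is a central simple $\K_1$-algebra of the same invariant as a block of $\D$; the same algebra therefore contains a copy of any totally ramified degree-$e_\F(\b_2)$ extension of $\K_1\cong\K_2$, and I pick $\rho_2$ landing there. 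Then $\rho_2(\E_2)^\flit=\rho_2(\K_2)=\rho_1(\K_1)=\rho_1(\E_1)^\flit$ as subfields of $\A(\SS)$ — identical, not merely conjugate — so the two embeddings have the same embedding type as soon as I check that $\Ss$ can be taken $\rho_2(\E_2)$-pure.

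The purity of $\Ss$ with respect to $\rho_2(\E_2)$ is the step I would treat next. Here $\Ss$ was defined as the unique-up-to-translation $\E_1$-pure strict $\Oo_\D$-lattice sequence on $\SS$, i.e. it is normalized by $\rho_1(\mult{\E_1})$; in particular it is normalized by $\rho_1(\mult{\K_1})=\mult\K$. Since $\rho_2(\E_2)$ is contained in the centralizer $\C$ of $\K$, and $\mult\K\subseteq\KK(\Ss)$ so that the hereditary order $\AA(\Ss)$ is $\K$-stable, I descend to a hereditary order $\AA(\Ss)\cap\C$ in $\C$, corresponding to a lattice sequence $\Ga$ on a simple $\C$-module by \eqref{FPrimedescen}; then any $\rho_2(\E_2)$-pure lattice sequence on that $\C$-module — which exists and is essentially unique by \cite[Th\'eor\`eme 1.4]{SeSt} applied to the totally ramified extension $\rho_2(\E_2)/\K$ inside $\C$ — lifts back. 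But in fact, because $\rho_2(\E_2)/\K$ is totally ramified, I can arrange $\rho_2$ so that $\rho_2(\mult{\E_2})$ normalizes $\AA(\Ss)$: a uniformizer of $\rho_2(\E_2)$ maps into the normalizer of the (principal, since $\Ss$ is strict and of the right form) order $\AA(\Ss)\cap\C$, hence of $\AA(\Ss)$, and the residue part is already in $\U(\AA(\Ss))$. So $\Ss$ is $\rho_2(\E_2)$-pure with $\rho_2$ chosen appropriately.

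\textbf{The main obstacle.} The delicate point is not the existence of \emph{some} embedding $\rho_2$ of $\E_2$ — that is Skolem–Noether or the cited existence results — but arranging simultaneously that (i) $\Ss$ is $\rho_2(\E_2)$-pure and (ii) the unramified parts coincide (or are at least $\KK(\Ss)$-conjugate). The tension is that "pure" constrains $\rho_2$ to send $\E_2$ into the normalizer of $\AA(\Ss)$, while "same embedding type" constrains the unramified part; I resolve both at once by first fixing the unramified field $\K=\rho_1(\K_1)\subseteq\A(\SS)$ (which is automatically in the normalizer of $\Ss$), then working entirely inside its centralizer $\C$, where the remaining extension $\E_2/\K_2$ is totally ramified and where the relevant hereditary order $\AA(\Ss)\cap\C$ is principal — so a totally ramified extension of the right degree can always be embedded \emph{normalizing} a principal order. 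Thus the whole difficulty is funnelled into the statement: a totally ramified degree-$e$ extension of a local field embeds, as a normalizing subfield, into the principal hereditary order attached to a strict lattice sequence in a central simple algebra of matching numerical invariants — and this is precisely the kind of statement established in \cite{Br1} and \cite{SeSt}, which I may invoke.
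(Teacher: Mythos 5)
Your first step---invoking Proposition \ref{Eponine} to get $e_\F(\b_1)=e_\F(\b_2)$ and $f_\F(\b_1)=f_\F(\b_2)$---is exactly the paper's; but where the paper then simply quotes \cite[Corollary 3.16]{BG}, you try to rebuild that result by hand, and the reconstruction has a genuine gap at its crucial point. Your plan is: set $\K=\rho_1(\K_1)$, embed $\E_2$ over $\K$ into the centralizer $\C$ of $\K$ in $\A(\SS)$, and then obtain purity of $\Ss$ because a uniformizer of $\rho_2(\E_2)$ normalizes the order $\AA(\Ss)\cap\C$, \emph{hence} normalizes $\AA(\Ss)$. That ``hence'' is precisely the non-automatic inclusion $\KK(\AA(\Ss)\cap\C)\subseteq\KK(\AA(\Ss))\cap\mult\C$ (the reverse inclusion is the trivial one), i.e.\ the nontrivial half of the requirement that $(\K,\AA(\Ss))$ be a sound embedding. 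This is exactly the kind of condition that can fail for general embeddings, and whose delicacy is the reason the notions of sound strata and $\K$-special strata appear in this paper and in \cite{Gr}; you neither verify it in this situation nor cite a result that gives it. The references you lean on (\cite{Br1}, \cite{SeSt}) provide the descent (\ref{FPrimedescen}) and identify the normalizer of the descended \emph{lattice sequence} with $\KK(\Ss)\cap\mult\C$, which is not the same as the normalizer of the descended \emph{order}, and your argument works with the order. So the entire difficulty of the lemma---producing an embedding of $\E_2$ whose image normalizes the given lattice sequence $\Ss$ and whose unramified part is in the prescribed position---has been funnelled into an unproved assertion; that difficulty is exactly what \cite[Corollary 3.16]{BG} resolves, and citing it is the paper's whole proof.

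Two smaller points. The parenthetical claim that $f_\F(\b_1)$ divides the reduced degree $d$ of $\D$, so that $\E_1^{\flit}=\K_1$, ``because $\SS$ is a simple $\E_1\otimes_\F\D$-module,'' is false (take $\D=\F$, so $d=1$, with $f_\F(\b_1)>1$); it happens to be harmless for your strategy, since once $\rho_2(\K_2)=\rho_1(\K_1)$ the two $\flit$-subfields coincide in any case, but it should not be asserted. Likewise ``the residue part is already in $\U(\AA(\Ss))$'' presupposes $\rho_2(\Oo_{\E_2})\subseteq\AA(\Ss)$, which is part of what must be arranged, not a given. If you want a proof that does not reduce to \cite{BG}, you must either verify directly that $(\K,\Ss)$ satisfies the normalizer condition in this minimal situation, or arrange the embedding of $\E_2$ to normalize the descended lattice sequence itself (then \cite[Th\'eor\`eme 1.4]{SeSt} does give that its normalizer is $\KK(\Ss)\cap\mult\C$)---but proving that such an embedding exists with the same $e$ and $f$ over $\K$ as $\E_1$ is again the content of the Fr\"ohlich--Broussous--Grabitz embedding theory you were trying to bypass.
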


\begin{proof}
As $(k,\b_1)$ and $(k,\b_2)$ are endo-equivalent, Proposition \ref{Eponine} 
gives us the equalities $e_\F(\b_1)=e_\F(\b_2)$ and $f_\F(\b_1)=f_\F(\b_2)$.
The result follows from \cite[Corollary 3.16]{BG}.
\end{proof}

\begin{rema}
We actually have a stronger result: for any $\F$-algebra 
homomorphism $\rho_2$ such that $\Ss$ is 
$\rho_2(\E_2)$-pure, the pairs $(\rho_1(\E_1),\Ss)$ and 
$(\rho_2(\E_2),\Ss)$ have the same embedding type in $\A(\SS)$. 
Indeed, if $\rho_2$ is such a homomorphism and if $\n_{2}$
is an $\F$-algebra homo\-morphism as in Lemma \ref{Merimee}, the 
Skolem-Noether theorem gives us 
$g\in\A(\SS)^{\times}$ which conjugates these
$\F$-algebra homomorphisms $\rho_{2}$ and $\n_{2}$.
As $\E_1$ and $\E_2$ have the same degree over $\F$, the 
lattice sequence $\Ss$ is the unique (up to translation) 
$\rho_2(\E_2)$-pure strict $\Oo_{\D}$-lattice sequence | 
and also the unique (up to translation) $\n_2(\E_2)$-pure 
strict $\Oo_{\D}$-lattice sequence | on $\SS$. 
It follows that $g$ normalizes the lattice sequence 
$\Ss$ and that the pairs 
$(\rho_2(\E_2),\Ss)$ and $(\n_2(\E_2),\Ss)$ have the 
same embedding type in $\A(\SS)$. 
\end{rema}

Let us fix an $\F$-algebra homomorphism $\rho_2$ as in Lemma \ref{Merimee}.
As $(k,\b_1)$ and $(k,\b_2)$ are endo-equivalent, we have
$n_{\F}(\b_1)=n_{\F}(\b_2)$ and $e_{\F}(\b_1)=e_{\F}(\b_2)$,
so that the $\Ss$-valuation of $\rho_i(\b_i)$, denoted $n_0$, 
and the period $e_{\rho_i(\b_i)}(\Ss)$ do not depend on $i\in\{1,2\}$.
We set:
\begin{equation*}
m_0={e_{\rho_i(\b_i)}(\Ss)}k.
\end{equation*}
For each $i\in\{1,2\}$, we have a stratum $[\Ss,n_0,m_0,\rho_i(\b_i)]$, 
which is a realization of  $(k,\b_i)$ in $\A(\SS)$. 
By paragraph \ref{Split}, we have a realization 
$[\widetilde\Ss,n_0,m_0,\rho_i(\b_i)]$ of $(k,\b_i)$ in the split simple 
central $\F$-algebra $\End_{\F}(\SS)$, and the $\Oo_{\F}$-lattice sequence
$\widetilde\Ss$ is strict.
Hence we can apply \cite[Proposition 1.10]{BH}, 
which implies that these realizations, for $i=1,2$, intertwine in 
$\End_{\F}(\SS)$.
By our assumption (see Lemma \ref{Merimee}), 
the strata $[\Ss,n_0,m_0,\rho_i(\b_i)]$, for $i=1,2$, 
have the same embedding type. 
Here we need to recall the following statement, due to Broussous and 
Grabitz. 

\begin{prop}[\cite{BG}, Proposition 4.1.3]
\label{BG413}
For $i=1,2$, let $[\Sigma,n,m,\g_i]$ be a simple stratum in a simple 
central $\F$-algebra $\U$, where $\Sigma$ is strict.
Assume that they have the same embedding type, and that the strata
$[\widetilde{\Sigma},n,m,\g_i]$ intertwine in $\widetilde{\U}$.
Then there exists an element $u\in\KK(\Sigma)$ such that 
$\g_1-u\g_2u^{-1}\in\aa_{-m}(\Sigma)$.

Moreover, $u$ can be chosen such that the maximal unramified extension 
of $\F$ contained in $\F(\g_1)$ is equal to that of $\F(u\g_2u^{-1})$.
\end{prop}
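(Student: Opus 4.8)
The plan is to reduce the statement to the split case, where it is \cite[Theorem~2.6.1]{BK}, the equality of embedding types being exactly what licenses the descent from $\widetilde\U$ to $\U$. Concretely: by \cite[Th\'eor\`eme~2.23]{VS1} each $[\widetilde\Sigma,n,m,\g_i]$ is a simple stratum of $\widetilde\U$ realizing the same simple pair over $\F$ as $[\Sigma,n,m,\g_i]$; as these intertwine in $\widetilde\U$ by hypothesis, the split-case ``intertwining implies conjugacy'' theorem \cite[Theorem~2.6.1]{BK} gives $\tilde u\in\KK(\widetilde\Sigma)$ with $\g_1-\tilde u\g_2\tilde u^{-1}\in\aa_{-m}(\widetilde\Sigma)$. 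Since equivalent simple strata share their numerical invariants (cf.\ Proposition~\ref{Eponine}), we obtain $n_\F(\g_1)=n_\F(\g_2)$, $e_\F(\g_1)=e_\F(\g_2)$, $f_\F(\g_1)=f_\F(\g_2)$ and $k_\F(\g_1)=k_\F(\g_2)$. As $\aa_{-m}(\widetilde\Sigma)\cap\U=\aa_{-m}(\Sigma)$ and $\KK(\widetilde\Sigma)\cap\mult\U=\KK(\Sigma)$, it now suffices to find $u\in\KK(\Sigma)$ for which $[\widetilde\Sigma,n,m,\g_1]$ and $[\widetilde\Sigma,n,m,u\g_2u^{-1}]$ are equivalent in $\widetilde\U$; equivalently, to upgrade the $\KK(\widetilde\Sigma)$-conjugacy already obtained to a $\KK(\Sigma)$-conjugacy, the obstruction to which is precisely the embedding type.

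Next I would use the embedding-type hypothesis, together with the equalities of $e_\F$ and $f_\F$, and the classification of embeddings of Broussous and Grabitz (see \cite[Corollary~3.16]{BG}), to find $z\in\KK(\Sigma)$ with $z\F(\g_2)^{\flit}z^{-1}=\F(\g_1)^{\flit}$ and $z\Sigma$ in the translation class of $\Sigma$; replacing $\g_2$ by $z\g_2z^{-1}$ (harmless, since $z$ will be absorbed into $u$), the $\flit$-parts agree. Interior-lifting relative to this common unramified extension $\K$ (see section~\ref{PILSC}), using the descent of lattice sequences to centralizers \cite[Th\'eor\`eme~1.4]{SeSt}, replaces $\U$ and $\Sigma$ by the centralizer $\C$ of $\K$ and a lattice sequence $\Ga$ with $\KK(\Sigma)\cap\mult\C=\KK(\Ga)$, turning the data into simple strata $[\Ga,n,m,\g_i]$ in $\C$. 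In $\C$ the residual unramified part of each $\F(\g_i)/\K$ has degree coprime to the reduced degree of the division part of $\C$, so these residual extensions are $\KK(\Ga)$-conjugate, and a further interior lifting reduces us to the case where $\F(\g_1)/\F$ and $\F(\g_2)/\F$ are totally ramified, $\K$ now denoting their common maximal unramified subextension and $\C$ the centralizer of $\K$ in $\U$. Throughout, the split conjugacy of the first step is carried along, which is routine for interior liftings since the conjugator may be taken to normalize the unramified extension.

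There then remains the technical heart: two simple strata $[\Ga,n,m,\g_i]$ in a simple central $\K$-algebra $\C$, with $\F(\g_i)/\K$ totally ramified and $\Ga$ strict, conjugate under the normalizer of $\Ga$ in the split $\K$-algebra defined by $\C$ — one must show they are $\KK(\Ga)$-conjugate. I would carry this out by the d\'evissage of \cite[Chapter~2]{BK} and \cite{Gr}: a double induction, on $-m$ and on $[\F(\g_i):\K]$, in which the inductive hypothesis at level $m+1$ (whose split conjugacy class is inherited) reduces matters to $\g_1\equiv\g_2\pmod{\aa_{-m-1}(\Ga)}$, after which the theory of derived strata and a tame corestriction onto the centralizer of $\F(\g_2)$ in $\C$ show that the residual discrepancy in $\aa_{-m-1}(\Ga)/\aa_{-m}(\Ga)$ is absorbed by conjugating by an element close to $1$ in the multiplicative group of that centralizer; the base case is that of minimal strata, $k_\F(\g_i)=-n_\F(\g_i)$. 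The step I expect to be the main obstacle is ensuring that this correcting element lies in the non-split centralizer rather than only in its split hull: it is here that the (now trivial) embedding type, equivalently the totally-ramified reduction, is indispensable — the conclusion is false without the equality of embedding types, as Broussous and Grabitz observed — and it is to be controlled by a careful analysis of the derived strata, as in the sound case of \cite{Gr} but now dispensing with the principality hypotheses.

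Finally, every conjugation performed in the last step centralizes the field $\F(\g_i)$ in play, hence fixes its maximal unramified subextension; assembling it with the elements $z$ and $\tilde u$ from the earlier steps produces $u\in\KK(\Sigma)$ with $\g_1-u\g_2u^{-1}\in\aa_{-m}(\Sigma)$ and with the maximal unramified extension of $\F$ in $\F(\g_1)$ equal to that in $\F(u\g_2u^{-1})$, which is the ``moreover'' assertion.
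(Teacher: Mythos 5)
This statement is not proved in the paper: it is imported verbatim from Broussous--Grabitz \cite[Proposition 4.1.3]{BG}, so the only comparison available is with their argument. Your outline has the right general shape (split-case ``intertwining implies conjugacy'' via \cite[Theorem 2.6.1]{BK}, alignment of unramified parts using the embedding-type hypothesis, passage to the centralizer of a common unramified subfield, reduction to the totally ramified situation), and this is broadly how the problem is attacked in \cite{BG}. But the proposal has a genuine gap exactly where you yourself flag ``the main obstacle'': the final descent, namely that two totally ramified simple strata $[\Ga,n,m,\g_i]$ which are conjugate under the normalizer of the lattice sequence in the split hull are already conjugate under $\KK(\Ga)$ in the non-split algebra. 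This step is the entire content of the proposition beyond the split case, and you do not carry it out; you only assert that it ``is to be controlled by a careful analysis of the derived strata, as in the sound case of \cite{Gr} but now dispensing with the principality hypotheses'' --- which is precisely the statement to be proved, not an argument for it. In \cite{BG} (and in the analogous descents appearing in this paper, e.g.\ Lemma \ref{TotRam} and Lemma \ref{CohoArg}), the passage from the split cover back to $\mult\A$ is effected by an unramified Galois descent: one forms a $1$-cocycle from the split-hull conjugator and kills it using the vanishing of $\H^1$ of the relevant congruence unit filtration under $\Gal(\L/\K)$, a mechanism entirely absent from your sketch. Without either that cohomological argument or a genuinely worked-out corestriction/d\'evissage replacing it, the proof is incomplete.

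A secondary weakness concerns the unramified bookkeeping. Equality of embedding types only controls the subfields $\F(\g_i)^{\flit}$, i.e.\ the unramified parts of degree dividing the reduced degree of $\D$; it does not by itself identify the full maximal unramified subextensions $\K_i\subseteq\F(\g_i)$. Your reduction ``a further interior lifting reduces us to the case where $\F(\g_1)/\F$ and $\F(\g_2)/\F$ are totally ramified, $\K$ now denoting their common maximal unramified subextension'' therefore presupposes a conjugation making $\K_1=\K_2$ which still has to be produced (this is where the numerical equalities $e_\F(\g_1)=e_\F(\g_2)$, $f_\F(\g_1)=f_\F(\g_2)$ and the classification of embeddings must be used inside the centralizer of the $\flit$-part), and the ``moreover'' clause of the proposition is exactly the output of that step together with the unproven final descent. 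As written, both the reduction and the closing sentence of your argument rest on the part of the proof that is missing.
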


We deduce from Proposition \ref{BG413}
that there exists an element $g\in\KK(\Ss)$ such that:
\begin{equation}
\label{Laodice}
\rho_1(\b_1)-g\rho_2(\b_2)g^{-1}\in\PP_{-m_0}(\Ss).
\end{equation}
We now fix a decomposition:
\begin{equation*}
\V=\V^{1}\oplus\cdots\oplus\V^{l}
\end{equation*}
of $\V$ into simple right $\E_{1}\otimes_{\F}\D$-modules (which all are 
copies of $\SS$) such that the lattice sequence $\La$ 
decomposes into the direct sum of the 
$\La^{j}=\La\cap\V^{j}$, for $j\in\{1,\dots,l\}$.

\begin{lemm}
\label{Merimee2}
There are isomorphisms of $\E_{1}\otimes_{\F}\D$-modules 
$\V^{j}\to\SS$, $j\in\{1,\dots,l\}$, such that 
the resulting $\F$-algebra homomorphism $\iota:\A(\SS)\to\A$
satisfies $\iota\circ\rho_{1}=\h_{1}$.
\end{lemm}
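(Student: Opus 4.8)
The plan is to build the isomorphisms $\phi_j\colon\V^j\to\SS$ by hand from the semisimplicity of the algebra $\E_1\otimes_\F\D$, and then to observe that the compatibility $\iota\circ\rho_1=\h_1$ holds automatically, whatever choice of such isomorphisms one makes.

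First I would note that, since $\E_1$ is a field and $\D$ is an $\F$-division algebra, the $\F$-algebra $\E_1\otimes_\F\D$ is simple, so it admits a unique simple right module up to isomorphism, namely $\SS$. By construction each $\V^j$ is a simple right $\E_1\otimes_\F\D$-module, hence isomorphic to $\SS$; I fix such isomorphisms $\phi_j\colon\V^j\to\SS$ for $j\in\{1,\dots,l\}$. Recall that, on $\V$, the module structure is $v\cdot(x\otimes d)=\h_1(x)(v)d$ for $x\in\E_1$, $d\in\D$, and similarly $s\cdot(x\otimes d)=\rho_1(x)(s)d$ on $\SS$; thus $\phi_j$ being $\E_1\otimes_\F\D$-linear means precisely that it is $\D$-linear and that $\rho_1(x)\circ\phi_j=\phi_j\circ\bigl(\h_1(x)|_{\V^j}\bigr)$ for all $x\in\E_1$.

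Next I would assemble the $\phi_j$ into the $\D$-linear isomorphism $\Phi\colon\V\to\SS\oplus\dots\oplus\SS$ ($l$ copies) which restricts to $\phi_j$ on the $j$-th summand $\V^j$. Conjugation by $\Phi$ identifies $\A=\End_\D(\V)$ with $\Mat_l(\A(\SS))$, and $\iota\colon\A(\SS)\to\A$ is obtained by composing the diagonal embedding $\A(\SS)\hookrightarrow\Mat_l(\A(\SS))$ with this identification; concretely $\iota(a)$ is the endomorphism of $\V$ acting on $\V^j$ as $\phi_j^{-1}\circ a\circ\phi_j$. It then remains to compute, for $x\in\E_1$, that $\iota(\rho_1(x))|_{\V^j}=\phi_j^{-1}\circ\rho_1(x)\circ\phi_j=\h_1(x)|_{\V^j}$ by the intertwining property of $\phi_j$ recalled above; since $\V$ is the direct sum of the $\V^j$ this gives $\iota(\rho_1(x))=\h_1(x)$, that is $\iota\circ\rho_1=\h_1$.

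I do not anticipate any real difficulty here: the content is just the Wedderburn--Skolem--Noether fact that simple modules over a simple algebra are isomorphic, arranged so that the resulting embedding of $\A(\SS)$ into $\A$ extends the prescribed embedding $\h_1$ of $\E_1$. The only thing deserving care is keeping track of the left $\E_1$- and right $\D$-actions, so as to be certain that ``$\E_1\otimes_\F\D$-module isomorphism'' is exactly the hypothesis needed for the final one-line verification; once the module structures are pinned down, everything is routine.
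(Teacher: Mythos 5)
Your proof is correct, and the construction is the same one the paper uses: choose $\E_1\otimes_\F\D$-module isomorphisms $\V^j\to\SS$ and let $\iota$ act blockwise by conjugation. The only real difference is in how the compatibility $\iota\circ\rho_1=\h_1$ is obtained. The paper writes $\h_1=(\omega_1,\dots,\omega_l)$ with $\omega_j:\E_1\to\End_\D(\V^j)$ and invokes the Skolem--Noether theorem to pick the isomorphisms ``suitably'' so that the induced algebra isomorphisms $\pi_j:\End_\D(\V^j)\to\A(\SS)$ satisfy $\pi_j\circ\omega_j=\rho_1$. You instead observe that this identity is nothing but the $\E_1$-equivariance already contained in the definition of an $\E_1\otimes_\F\D$-module isomorphism (once the right module structures, via $\h_1$ on $\V^j$ and via $\rho_1$ on $\SS$, are pinned down), so that \emph{any} choice of isomorphisms works and no adjustment is needed; existence of such isomorphisms follows since $\E_1\otimes_\F\D$ is simple (indeed it is part of the fixed decomposition of $\V$). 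This buys a slightly more elementary and arguably cleaner argument — Skolem--Noether disappears — at no cost in generality; the paper's phrasing buys nothing extra here beyond making the choice explicit. Your verification that $\iota(\rho_1(x))$ and $\h_1(x)$ agree on each $\V^j$, hence on $\V$ (the $\V^j$ being $\h_1(\E_1)$-stable), is exactly what is needed.
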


\begin{proof}
Since each $\V^{j}$, for $j\in\{1,\dots,l\}$, 
is an $\E_1$-vector subspace of $\V$,
the $\F$-algebra homomorphism $\h_1$ has the form
$x\mapsto(\omega_{1}(x),\dots,\omega_{l}(x))$, where 
$\omega_{j}$ is an $\F$-algebra homo\-mor\-phism from $\E_1$ to 
$\End_{\D}(\V^{j})$.
By the Skolem-Noether theorem, one can choose, 
for each integer $j$, a suitable $\E_{1}\otimes_{\F}\D$-module 
isomorphism between $\V^{j}$ and $\SS$ such that the 
resulting $\F$-algebra homomorphism $\pi_{j}$ between 
$\End_{\D}(\V^{j})$ and $\A(\SS)$ satisfies the condi\-tion 
$\pi_{j}\circ\omega_{j}=\rho_{1}$.
Then the $\F$-algebra homomorphism $\iota$ defined by 
$\iota(x)=(\pi_1^{-1}(x),\dots,\pi_l^{-1}(x))$ for $x\in\A(\SS)$
satisfies the required condition.
\end{proof}

We now fix isomorphisms of $\E_{1}\otimes_{\F}\D$-modules 
$\V^{j}\to\SS$, $j\in\{1,\dots,l\}$, as in Lemma \ref{Merimee2}. 
Then each $\La^{j}$ is in the affine class of $\Ss$ 
(see (\ref{AffCla}) and \cite[\S1.4.8]{VS3}), and these 
lattice sequences all have the same 
period, equal to that of $\La$.
Therefore, we are in the situation of paragraph \ref{DerivedStratum}.
We set:
\begin{equation*}
n=n_i, \quad m={e_{\h_i(\b_i)}(\La)}k, 
\end{equation*}
which both do not depend on $i\in\{1,2\}$.
By (\ref{Laodice}) and Remark \ref{blioblieris}, the element 
$\iota(g)$ normalizes $\La$ and conjugates 
$[\La,n,m,\iota(\rho_{2}(\b_2))]$ into a simple 
stratum in $\A$ which is equivalent to $[\La,n,m,\h_1(\b_1)]$.
By the Skolem-Noether theorem, there is an element 
$x\in\A^{\times}$ which 
conjugates the $\F$-algebra homomorphisms $\iota\circ\rho_{2}$
and $\h_2$, and thus intertwines the simple strata 
$[\La,n,m,\iota(\rho_{2}(\b_2))]$ and 
$[\La,n,m,\h_{2}(\b_2)]$.
Therefore the strata $[\La,n,m,\h_i(\b_i)]$ intertwine.
As $m\<m_1,m_2$, the strata 
$[\La,n_i,m_i,\h_i(\b_i)]$ intertwine, which 
ends the proof of Proposition \ref{SimonLeMagicienStrong}.

%%%%%%%%%%%%%%%%%%%%%%%%%%%%%%%%%%%%%%%%%%%%%%%%%%%%%%%%%%%%%%%%%%%%%%%%%%%

\begin{rema}
\label{gaptransfer}
There is a gap in the proof of the existence of the transfer map given
in \cite[Th\'eor\`eme 3.53]{VS1}, in the case where $\La$ is a strict lattice 
sequence. 
To complete this proof, one has to prove that, given a non-minimal simple 
pair $(k,\b)$ over $\F$ together with 
a realization $[\La,n,m,\h(\b)]$ of this pair 
in a simple central $\F$-algebra $\A$, 
there is a simple pair $(k',\g)$ over $\F$ having realizations in $\A(\E)$ 
and $\A$ which are approximations of $\b$ and $\h(\b)$, respectively.
More precisely, set $q=-k_0(\b,\La)$, and start with a stratum 
$[\La,n,q,\g]$ in $\A$ which is simple and equivalent to $[\La,n,q,\h(\b)]$.
If we denote by $(k',\g)$ the simple pair of which this stratum 
is a realization, and if we set $n_0=n_{\F}(\b)$ and $q_0=-k_\F(\b)$,
then we search for a realization $[\La(\E),n_{0},q_0,\h_0(\g)]$
of $(k',\g)$ in $\A(\E)$ which is equivalent to the pure stratum 
$[\La(\E),n_{0},q_0,\b]$ (see paragraph \ref{Jessica}).
Let us remark that, 
when passing to $\widetilde{\A}$ 
(see paragraph \ref{Split}), we get a stratum 
$[\widetilde\La,n,q,\g]$ which is simple and equivalent to 
$[\widetilde\La,n,q,\h(\b)]$.
Now let $[\La(\E),n_{0},q_0,\delta]$ be a stratum in $\A(\E)$ which is 
simple and equi\-va\-lent to $[\La(\E),n_{0},q_0,\b]$.
By choosing a suitable decomposition of the $\F$-vector space $\V$ into 
a direct sum of copies of $\E$, we get an $\F$-embedding:
\begin{equation*}
\iota:\A(\E)\to\widetilde\A,
\end{equation*}
thus a stratum 
$[\widetilde\La,n,q,\iota(\delta)]$ in $\widetilde\A$
which is simple and equivalent to 
$[\widetilde\La,n,q,\iota(\b)]$.
By the Skolem-Noether theorem, there exists an element 
$g\in\widetilde\A^{\times}$
which conjugates $\iota(\b)$ and $\h(\b)$, thus inter\-twines 
the strata $[\widetilde\La,n,q,\g]$ and $[\widetilde\La,n,q,\iota(\delta)]$. 
The simple pairs $(k',\g)$ and $(k',\delta)$ are thus endo-equivalent.
Now let $[\La(\E),n_{0},q_0,\jmath(\g)]$ be a realization of $(k',\g)$ in 
$\A(\E)$ which intertwines with $[\La(\E),n_{0},q_0,\delta]$.
By the ``intertwining implies conjugacy'' theorem \cite[Theorem 3.5.11]{BK} 
in the split simple central $\F$-algebra $\A(\E)$, 
there is $g\in\U(\La(\E))$ such that 
$g\jmath(\g)g^{-1}-\delta\in\PP(\La(\E))^{-q_0}$.
The homomorphism of $\F$-algebras $\h_0:x\mapsto g\jmath(x)g^{-1}$ has the 
required property.
\end{rema}

%%%%%%%%%%%%%%%%%%%%%%%%%%%%%%%%%%%%%%%%%%%%%%%%%%%%%%%%%%%%%%%%%%%%%%%%%

\subsection{}
\label{PreDague}

Before closing this section, we give a more elaborate example 
than that of paragraph \ref{Dague}, which will be very useful 
in the sequel. 
As in paragraph \ref{Dague}, let $[\La,n,m,\b]$ be a simple stratum in
$\A$, which is a realization of a simple pair $(k,\b)$ over $\F$, and
let $e$ denote the period of $\La$ over $\Oo_{\D}$.
Write $\B$ for the centralizer of the field $\E=\F(\b)$ in $\A$,
fix a simple left $\B$-module $\V_{\b}$ and write $\D_{\b}$ for the 
$\E$-algebra opposite to the algebra of $\B$-endomorphisms of $\V_{\b}$.
Let $\Sigma$ denote an $\Oo_{\D_{\b}}$-lattice sequence on $\V_{\b}$ 
corresponding to $\La$ by (\ref{FPrimedescen}), and let $e'$ denote
its period over $\Oo_{\D_{\b}}$. 
We fix an integer $l$ which is a multiple of $e$ and $e'$ and set: 
\begin{equation}
\label{Francisque}
\La^{\ddag}:k\mapsto\La_{k}\oplus\La_{k+1}\oplus\cdots\oplus\La_{k+l-1},
\end{equation}
which is a strict $\Oo_{\D}$-lattice sequence on
$\V^{\ddag}=\V\oplus\dots\oplus\V$ ($l$ times) of the form
(\ref{SpadassinVariante}).
Thus we can form the simple stratum $[\La^{\ddag},n,m,\b]$
in $\A^{\ddag}=\End_{\D}(\V^{\ddag})$, which is a realization 
of $(k,\b)$.
Moreover, the hereditary $\Oo_\F$-order $\AA^{\ddag}$ defined 
by $\La^{\ddag}$ is principal, and we have the following result.

\begin{lemm}
\label{EveryBodyIsSound}
The stratum $[\La^{\ddag},n,m,\b]$ is sound
(see Definition \ref{StrateSoundDef}).
\end{lemm}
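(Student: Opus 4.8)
The plan is to verify the three conditions in Definition \ref{StrateSoundDef} for the stratum $[\La^{\ddag},n,m,\b]$: that $\La^{\ddag}$ is strict, that $\AA^{\ddag}\cap\B$ is a principal $\Oo_\F$-order, and that $\KK(\AA^{\ddag})\cap\mult\B=\KK(\AA^{\ddag}\cap\B)$. The first point is immediate from the construction: the shifted direct sum \eqref{Francisque} is strict by inspection (consecutive terms are distinct since the $\La_k$ differ), and this is already noted in the text. It remains to deal with the centralizer $\B$ of $\E=\F(\b)$.

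First I would identify the lattice sequence on $\V_\b$ induced by $\La^{\ddag}$. Since $\La^{\ddag}$ is an $\E$-pure lattice sequence (each summand is $\E$-pure, being a translate of $\La$), the descent \eqref{FPrimedescen} gives an $\Oo_{\D_\b}$-lattice sequence $\Sigma^{\ddag}$ on $\V_\b^{\ddag}=\V_\b\oplus\dots\oplus\V_\b$ ($l$ times) with $\aa_k(\La^{\ddag})\cap\B=\aa_k(\Sigma^{\ddag})$. The key computation is that $\Sigma^{\ddag}$ is precisely the shifted direct sum $k\mapsto\Sigma_k\oplus\Sigma_{k+1}\oplus\dots\oplus\Sigma_{k+l-1}$ built from $\Sigma$ — this is the statement that forming the $\dag$-type construction commutes with the descent to the centralizer, which follows from the uniqueness clause in \eqref{FPrimedescen} together with the compatibility of the construction with direct sum decompositions. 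Because $l$ is a multiple of $e'$, the period of $\Sigma$, the lattice sequence $\Sigma^{\ddag}$ is strict on $\V_\b^{\ddag}$ (the shifts $\Sigma_{k},\dots,\Sigma_{k+l-1}$ run through a full period, hence are pairwise distinct), so $\AA^{\ddag}\cap\B=\AA(\Sigma^{\ddag})$ is a hereditary order attached to a strict lattice sequence on a vector space over a division algebra, with all the $\Sigma^{\ddag}_k/\Sigma^{\ddag}_{k+1}$ of the same dimension over $\mathfrak{k}_{\D_\b}$; this forces it to be a principal $\Oo_{\E}$-order, hence a principal $\Oo_\F$-order.

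For the normalizer condition $\KK(\AA^{\ddag})\cap\mult\B=\KK(\AA^{\ddag}\cap\B)=\KK(\Sigma^{\ddag})$, the inclusion $\supseteq$ is formal (an element of $\mult\B$ normalizing $\Sigma^{\ddag}=\{\aa_k(\La^{\ddag})\cap\B\}$ normalizes the filtration, hence normalizes $\AA^{\ddag}$ as well, using that $\AA^{\ddag}$ is determined by $\La^{\ddag}$ and the element lies in $\B$). The substantive inclusion is $\subseteq$: an element $g\in\mult\B$ with $g\AA^{\ddag}g^{-1}=\AA^{\ddag}$ must permute the lattices $\La^{\ddag}_k$ up to translation of indices, and one needs to see it normalizes $\Sigma^{\ddag}$. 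Since $\AA^{\ddag}$ is principal, $\KK(\AA^{\ddag})=\mult\F\cdot\mult{\AA^{\ddag}}\cdot\lp\varpi\rp$ for a suitable element $\varpi\in\KK(\La^{\ddag})$ with $\v_{\La^{\ddag}}(\varpi)=1$, and one can take $\varpi$ of the explicit ``cyclic shift'' form coming from the block structure of \eqref{Francisque}; then $g\in\mult\B$ lies in $\mult\F\cdot\mult{\AA^{\ddag}}\cdot\lp\varpi\rp$, and since $g$ and the cyclic shift both centralize $\E$ while $\varpi$ has a controlled interaction with the $\E$-structure, one extracts that $g$ normalizes the induced filtration on $\V_\b$, i.e. $g\in\KK(\Sigma^{\ddag})$. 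I expect this normalizer equality to be the main obstacle, precisely because the failure of exactly this property for non-strict or non-principal $\La$ is the reason one passes to $\La^{\ddag}$ in the first place; the point is that after the $\ddag$-construction both $\AA^{\ddag}$ and $\AA^{\ddag}\cap\B$ are principal, and for principal orders the normalizer is generated by units together with a single uniformizing element, which makes the comparison tractable. A clean way to package this is to invoke the criterion, recalled in the Remark after Definition \ref{StrateSoundDef}, that in terms of the embedding $(\E,\La^{\ddag})$ soundness amounts to $\Sigma^{\ddag}$ being strict and $\AA(\Sigma^{\ddag})$ principal — both of which we have verified — together with the compatibility of normalizers, which for principal orders is standard (cf. \cite{BK,Gr}).
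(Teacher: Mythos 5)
Your construction is the same as the paper's: you form the shifted direct sum $\Sigma^{\ddag}:k\mapsto\Sigma_{k}\oplus\dots\oplus\Sigma_{k+l-1}$, identify it by a block computation with the lattice sequence corresponding to $\La^{\ddag}$ under \eqref{FPrimedescen}, and deduce that $\La^{\ddag}$ and $\Sigma^{\ddag}$ are strict and that $\AA^{\ddag}\cap\B^{\ddag}=\AA(\Sigma^{\ddag})$ is principal (the divisibility $e'\mid l$ enters exactly as you say; note only that strictness of $\Sigma^{\ddag}$ comes from the fact that a window of length $l\>e'$ contains at least one strict containment, not from the lattices $\Sigma_k,\dots,\Sigma_{k+l-1}$ being pairwise distinct, which they need not be). The genuine gap is in the normalizer condition, where you have the difficulty of the two inclusions reversed. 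The inclusion you dismiss as formal --- that $g\in\B^{\ddag\times}$ normalizing $\Sigma^{\ddag}$ (equivalently the order $\AA^{\ddag}\cap\B^{\ddag}$) must normalize $\AA^{\ddag}$ --- is precisely the substantive content of soundness: preserving each intersection $\aa_k(\La^{\ddag})\cap\B^{\ddag}$ does not imply preserving $\aa_k(\La^{\ddag})$, and your parenthetical ``$\AA^{\ddag}$ is determined by $\La^{\ddag}$'' assumes what has to be proved. That this implication is not formal is visible from Lemma \ref{PerRinc}(1): among the $\E$-pure hereditary orders $\AA$ with $\AA\cap\B$ equal to a given principal order, only one satisfies $\KK(\AA)\cap\mult\B=\KK(\AA\cap\B)$. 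Conversely, the inclusion you call substantive is the trivial one: if $g\in\B^{\ddag\times}$ and $g\AA^{\ddag}g^{-1}=\AA^{\ddag}$, then $g(\AA^{\ddag}\cap\B^{\ddag})g^{-1}=\AA^{\ddag}\cap\B^{\ddag}$, and strictness of $\Sigma^{\ddag}$ already gives $g\in\KK(\Sigma^{\ddag})$; the cyclic-shift/uniformizer analysis you sketch (and leave unfinished) is aimed at a direction that needs no argument, while the direction that does need one receives only a circular line.

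The fallback you propose does not repair this: the Remark after Definition \ref{StrateSoundDef} (sound $\Leftrightarrow$ strict and principal) is stated for the split case only and concerns $\AA$, not $\AA\cap\B$; here $\A^{\ddag}=\End_{\D}(\V^{\ddag})$ is in general not split, and soundness is strictly stronger than strictness plus principality of $\AA^{\ddag}\cap\B^{\ddag}$. The missing ingredient is the normalizer statement accompanying \eqref{FPrimedescen}, quoted from \cite[Th\'eor\`eme 1.4]{SeSt}: the $\B^{\ddag\times}$-normalizer of the lattice sequence corresponding to $\La^{\ddag}$ is $\KK(\La^{\ddag})\cap\B^{\ddag\times}$. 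Once your block identity $\aa_k(\La^{\ddag})\cap\B^{\ddag}=\aa_k(\Sigma^{\ddag})$ is in place, this statement together with strictness of $\La^{\ddag}$ and $\Sigma^{\ddag}$ (which converts normalizers of sequences into normalizers of the corresponding orders, $\KK(\La^{\ddag})=\KK(\AA^{\ddag})$ and $\KK(\Sigma^{\ddag})=\KK(\AA^{\ddag}\cap\B^{\ddag})$) yields $\KK(\AA^{\ddag})\cap\B^{\ddag\times}=\KK(\AA^{\ddag}\cap\B^{\ddag})$ in one line --- this is exactly how the paper concludes. As written, your argument does not establish the normalizer condition.
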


\begin{proof}
Write $\B^{\ddag}$ for the centralizer of $\E$ in $\A^{\ddag}$ and 
$\Sigma^{\ddag}$ for the $\Oo_{\D_{\b}}$-lattice sequence on 
$\V_{\b}\times\dots\times\V_{\b}$ ($l$ times) defined by: 
\begin{equation*}
\Sigma^{\ddag}:k\mapsto\Sigma_{k}\oplus\Sigma_{k+1}
\oplus\cdots\oplus\Sigma_{k+l-1}.
\end{equation*}
This is a strict lattice sequence, which defines a principal order 
of $\B^{\ddag}$.
By direct computation of each block, we get for all $k\in\ZZ$: 
\begin{equation}
\label{DuSeigneur}
\aa_{k}(\La^{\ddag})\cap\B^{\ddag}=\aa_{k}(\Sigma^{\ddag}),
\end{equation}
which amounts to saying that $\Sigma^{\ddag}$ is an
$\Oo_{\D_{\b}}$-lattice sequence which corresponds 
to $\La^{\ddag}$ by (\ref{FPrimedescen}).
In particular, its $\B^{\ddag\times}$-normalizer is 
$\KK(\La^{\ddag})\cap\B^{\ddag\times}$. 
As $\La^{\ddag}$ is strict, its normalizer is equal to $\KK(\AA^{\ddag})$, 
and a similar statement holds for the lattice sequence $\Sigma^{\ddag}$, 
so that we have
$\KK(\AA^{\ddag})\cap\B^{\ddag\times}=\KK(\AA^{\ddag}\cap\B^{\ddag})$. 
Finally, if we choose $k=0$ in (\ref{DuSeigneur}), we deduce that
$\AA^{\ddag}\cap\B^{\ddag}$ is principal.
\end{proof}

Note that, unlike (\ref{Spadassin}), the process def\-ined by 
(\ref{Francisque}) depends on $\E$ and $l$, and not only on 
the lattice sequence $\La$.

\medskip

Now let $[\La,n_i,m_i,\b_{i}]$, for $i=1,2$, be simple strata 
in $\A$.
Let $e$ denote the period of $\La$ over $\Oo_{\D}$, and
write $e'_i$ for the period of the $\Oo_{\D_{\b_i}}$-lattice sequence 
associated with $\La$ as above. 

\begin{prop}
\label{LongJohnSilverVarianteSound}
Let $l\>1$ be a multiple of $e'_1,e'_2$ and $e$, and assume that the simple 
strata $[\La,n_i,m_i,\b_i]$, $i=1,2$, intertwine in $\A$ (resp. have the 
same embedding type).
Then the simple strata $[\La^{\ddag},n_i,m_i,\b_i]$, $i=1,2$, are sound, 
and intertwine in ${\A}^{\ddag}$ (resp. have the same embedding type). 
\end{prop}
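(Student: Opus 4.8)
The statement combines the constructions of paragraphs \ref{Dague} and \ref{PreDague}: we start from the two simple strata $[\La,n_i,m_i,\b_i]$ in $\A$ and build the strict lattice sequences $\La^{\ddag}$ of the form \eqref{Francisque}, noting that the single integer $l$ is required to be a common multiple of $e$, $e'_1$ and $e'_2$. The plan is first to record that $\La^{\ddag}$ is of the shape \eqref{SpadassinVariante} with $a=1$ (more precisely, each summand $\La_{k+j}$ is a shift of $\La$, hence lies in the affine class of $\La$, and all these shifts have period equal to $e$, the period of $\La$). So the general machinery of paragraph \ref{DerivedStratum} applies, with $n'_i=n_i$ and $m'_i$ one may take to be $m_i$.

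\textbf{Soundness.} For each $i$, the integer $l$ is in particular a multiple of $e$ and of $e'_i$, so Lemma \ref{EveryBodyIsSound} (applied to the stratum $[\La,n_i,m_i,\b_i]$ with this common multiple $l$) shows directly that $[\La^{\ddag},n_i,m_i,\b_i]$ is sound. Here it is essential that the \emph{same} $l$ works for both values of $i$, which is exactly why we impose that $l$ be a multiple of the three periods $e,e'_1,e'_2$ simultaneously; this is the only place where all three divisibility conditions are used together.

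\textbf{Intertwining and embedding type.} For the remaining two assertions I would argue exactly as in Proposition \ref{LongJohnSilverVariante}, whose proof is said to follow from Propositions \ref{LongJohnSilverGeneralVariante} and \ref{JeromeBauche2}. Concretely: if the strata $[\La,n_i,m_i,\b_i]$ intertwine in $\A$, then since $\La^{\ddag}$ is of the form \eqref{SpadassinVariante} with $a=1$ (so $n'_i=an_i=n_i$ and we may take $m'_i=m_i$), Proposition \ref{LongJohnSilverGeneralVariante} shows that $[\La^{\ddag},n_i,m_i,\b_i]$ intertwine in $\A^{\ddag}=\End_\D(\V^{\ddag})$. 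Likewise, if the strata $[\La,n_i,m_i,\b_i]$ have the same embedding type, then Proposition \ref{JeromeBauche2} gives that $[\La^{\ddag},n_i,m_i,\b_i]$ have the same embedding type. This handles the ``resp.'' clause; combined with the soundness statement above it completes the proof.

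\textbf{Main obstacle.} There is no serious obstacle: the content of the statement is purely bookkeeping — checking that the hypotheses of Lemma \ref{EveryBodyIsSound}, Proposition \ref{LongJohnSilverGeneralVariante} and Proposition \ref{JeromeBauche2} are simultaneously met by a single choice of $l$. The one point that needs care is the compatibility of the periods: one must verify that forming the direct sum \eqref{Francisque} does not change the period of the ambient lattice sequence (it stays $e$, not $le$, since the summands are cyclic shifts of $\La$ by $1$ and the shift by $e$ recovers $\La\p_\D$), so that the construction really is an instance of paragraph \ref{DerivedStratum} with $a=1$; and that $e'_i$ is correctly the period over $\Oo_{\D_{\b_i}}$ of the lattice sequence associated with $\La$ by \eqref{FPrimedescen}, so that Lemma \ref{EveryBodyIsSound} applies verbatim with this $l$.
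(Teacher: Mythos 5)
Your proof is correct and follows exactly the route of the paper, whose own proof simply cites Lemma \ref{EveryBodyIsSound} for soundness and Propositions \ref{LongJohnSilverGeneralVariante} and \ref{JeromeBauche2} for intertwining and embedding type; your observation that $\La^{\ddag}$ is an instance of \eqref{SpadassinVariante} with $a=1$ (so $n'_i=n_i$, $m'_i=m_i$) and that a single $l$, being a common multiple of $e,e'_1,e'_2$, makes Lemma \ref{EveryBodyIsSound} apply to both strata is precisely the bookkeeping the paper leaves implicit.
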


\begin{proof}
This derives from Propositions 
\ref{LongJohnSilverGeneralVariante} and \ref{JeromeBauche2},
and Lemma \ref{EveryBodyIsSound}. 
\end{proof}

%%%%%%%%%%%%%%%%%%%%%%%%%%%%%%%%%%%%%%%%%%%%%%%%%%%%%%%%%%%%%%%%%%%%%%%%%%%
%%%%%%%%%%%%%%%%%%%%%%%%%%%%%%%%%%%%%%%%%%%%%%%%%%%%%%%%%%%%%%%%%%%%%%%%%%%

\section{Intertwining implies conjugacy for simple strata}
\label{EESP}

In this section, we prove the 
``intertwining implies conjugacy'' property for simple 
strata, that is Proposition \ref{IreneeDeLyon}.
We fix a simple central $\F$-algebra $\A$ and a simple left $\A$-module 
$\V$ as in paragraph \ref{Dubol}. 
Associated with it, we have an $\F$-division algebra $\D$.

%%%%%%%%%%%%%%%%%%%%%%%%%%%%%%%%%%%%%%%%%%%%%%%%%%%%%%%%%%%%%%%%%%%%%%%%%%%

\subsection{}
\label{LanceQuiSaigne}

We will need the following general lemma on embed\-ding types.
Let $\Bb$ be a $\D$-basis of $\V$, and let $\L$ be a 
max\-imal unrami\-fied extension of $\F$ contained in $\D$. 
The choice of $\Bb$ defines an isomorphism of $\F$-algebras 
between $\A$ and $\Mat_r(\D)$ 
for some integer $r\>1$, which allows us to identify these 
$\F$-algebras. 
In particular, we will consider $\L$ as an extension of $\F$ 
contained in $\A$. 
We write $\I_r$ for the identity matrix.

\medskip

An embedding $(\K,\La)$ in $\A$ is said to be {\it standard} with respect to 
the pair $(\Bb,\L)$ if $\K$ is a subfield of $\L$ and if $\La$ is split by the 
basis $\Bb$ in the sense of \cite{BL}. 

\begin{lemm}
\label{ConjPiD}
Let $(\Bb,\L)$ be a pair as above. 
\begin{enumerate}
\item 
Any embedding in $\A$ is equivalent to an embedding 
%in $\A$ 
which is standard with res\-pect to the pair $(\Bb,\L)$.
\item
Let $(\K,\La)$ be standard with respect to $(\Bb,\L)$, and let $\w$ be a 
uniformizer of $\D$ nor\-malizing $\L$. 
Then conjugation by the diagonal matrix $\w\cdot\I_r$ 
norma\-li\-zes $\K$ and $\La$, and any element of $\Gal(\K/\F)$ 
is induced by conjugation by a power of $\w\cdot\I_r$.
\end{enumerate}
\end{lemm}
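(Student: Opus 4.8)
The proof is essentially a matter of unwinding the definitions, using standard structure theory of maximal orders in central division algebras together with the classification of lattice sequences split by a basis. For part (1), start with an arbitrary embedding $(\E,\La)$ in $\A=\Mat_r(\D)$; by definition I only need to control $\E^\flit$, the maximal unramified subextension of $\F$ in $\E$ whose degree divides the reduced degree of $\D$. The field $\E^\flit$ embeds in $\Mat_r(\D)$, hence (since its degree divides $\deg\D$) it embeds in $\D$ itself; by Skolem--Noether applied inside $\D$, after conjugating by a scalar matrix I may assume $\E^\flit$ is a subfield of the fixed maximal unramified extension $\L\subseteq\D$ (any unramified subextension of $\D/\F$ of degree dividing that of $\L$ is $\D^\times$-conjugate into $\L$, because $\L/\F$ is the unique unramified extension of its degree). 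Independently, by \cite{BL} every $\Oo_\D$-lattice sequence on $\V$ is, up to translation and conjugation by an element of $\A^\times$, split by the fixed basis $\Bb$; one has to check the two conjugations can be performed simultaneously, which follows because the conjugation achieving a split form can be taken in the normalizer of $\L\cdot\I_r$ (the subgroup of $\GL_r(\D)$ preserving the decomposition into $\L$-lines is large enough — one uses that the stabilizer of the split structure acts transitively on the relevant sets). This gives an equivalent embedding standard with respect to $(\Bb,\L)$.

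For part (2), let $(\K,\La)$ be standard and let $\w$ be a uniformizer of $\D$ normalizing $\L$, so that conjugation by $\w$ induces a generator of $\Gal(\L/\F)$ (this is the defining property of the cyclic structure of $\D$). Then $\w\cdot\I_r$ conjugates $\K\subseteq\L$ into $\L$, hence normalizes $\L$, and it normalizes $\K$ because $\Gal(\L/\F)$ is cyclic and so stabilizes every subextension; and since $\La$ is split by $\Bb$, meaning each lattice $\La_k$ is a direct sum over the basis vectors of fractional $\Oo_\D$-ideals, multiplication by the scalar $\w\cdot\I_r$ just shifts all these ideals uniformly, hence sends $\La$ to a translate of itself, i.e.\ $\w\cdot\I_r\in\KK(\La)$. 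Finally, since $\K$ is a subfield of $\L$ and conjugation by $\w\cdot\I_r$ restricts on $\L$ to a fixed generator of the cyclic group $\Gal(\L/\F)$, its restriction to $\K$ runs through a generator of $\Gal(\K/\F)$ as we take powers; thus every element of $\Gal(\K/\F)$ is induced by conjugation by a suitable power of $\w\cdot\I_r$.

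The main obstacle — the only point requiring genuine care rather than bookkeeping — is the simultaneity issue in part (1): the conjugation that straightens $\E^\flit$ into $\L$ and the conjugation that splits $\La$ by $\Bb$ must be combined into a single element of $\A^\times$. I expect to handle this by first fixing $\La$ in split form by the basis, then observing that the subgroup of $\A^\times$ preserving the split structure of $\La$ still contains enough unramified tori (in particular $\L^\times\cdot\I_r$ and its normalizer) to move any unramified subextension of the right degree into $\L$ via Skolem--Noether applied within the centralizer of the split structure; alternatively one straightens $\E^\flit$ first and then invokes the fact, implicit in \cite{BL}, that a lattice sequence normalized by $\L^\times$ can be split by a basis adapted to $\L$. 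Everything else is routine.
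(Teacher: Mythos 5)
Your treatment of part (2) is correct and is essentially the paper's argument (conjugation by $\w$ generates $\Gal(\L/\F)$, hence restricts to a generator of $\Gal(\K/\F)$, and $\w\cdot\I_r$ visibly shifts a $\Bb$-split sequence), and the first step of part (1) --- conjugating $\E^{\flit}$ into $\L$ --- is fine in substance, except that it uses Skolem--Noether in $\A$, so the conjugating element is a general element of $\mult\A$, not a scalar matrix. The genuine gap is exactly at the ``simultaneity'' step you yourself single out, and the mechanism you propose for it is false. Every $\Bb$-split lattice sequence is normalized by all scalar matrices $x\cdot\I_r$ with $x\in\mult\D$; so if $h$ normalizes $\L\cdot\I_r$ and $h\La$ is split by $\Bb$, then $\L^\times\cdot\I_r\subseteq\KK(h\La)=h\,\KK(\La)\,h^{-1}$, whence $\L^\times\cdot\I_r\subseteq\KK(\La)$. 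Thus your claim that ``the conjugation achieving a split form can be taken in the normalizer of $\L\cdot\I_r$'' can only hold when $\La$ is already $\L$-pure. But the only hypothesis available is that $\La$ is normalized by $\K^\times$ with $\K=\E^{\flit}$, which may be a proper subfield of $\L$, or $\F$ itself (e.g.\ $\E/\F$ totally ramified); and for $r\>2$ there are many lattice sequences that are not $\L$-pure, since the $\L^\times$-fixed points in the building of $\mult\A$ form a proper subset (the image of the building of the centralizer of $\L$). Your fallback sketches do not repair this: Skolem--Noether ``within the centralizer of the split structure'' does not apply, because after splitting $\La$ the conjugated field $g\E^{\flit}g^{-1}$ need not lie in (nor be movable into, by elements of) the diagonal subalgebra; and the fact you want to quote from \cite{BL} would have to concern sequences normalized by $\K^\times$ (not $\L^\times$) and to produce splitting by the \emph{fixed} basis $\Bb$ via a conjugation keeping $\K$ inside $\L$ --- which is precisely the point in dispute.

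What is actually needed, and what the paper proves, is the following: if $\K\subseteq\L$ and $\K^\times$ normalizes $\La$, then some $h$ in the centralizer $\mult\C$ of $\K$ in $\A$ takes $\La$ to a $\Bb$-split sequence; since such an $h$ fixes $\K$ pointwise, $(\K,h\La)$ is standard and equivalent to $(\E,\La)$, and no compatibility problem arises. This is not routine bookkeeping: the paper deduces it from \cite[Th\'eor\`eme II.1.1]{BL}, which says that the affine, $\mult\C$-equivariant map $\boldsymbol{j}$ from the building of $\mult\C$ to that of $\mult\A$ has image exactly the set of $\K^\times$-fixed points; the apartment attached to $\Bb$ lies in this image (its points are fixed by all $x\cdot\I_r$, $x\in\mult\D$), the point attached to $\La$ lies in it because $\K^\times\subseteq\KK(\La)$, and transitivity of $\mult\C$ on the apartments of its own building, transported through $\boldsymbol{j}$, produces $h$. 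Some argument of this kind --- a fixed-point statement for $\K^\times$ combined with transitivity inside the centralizer of $\K$, or an equivalent order-theoretic argument in the style of Fr\"ohlich or Broussous--Grabitz --- must be supplied; as written, your proof of part (1) does not go through.
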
 

\begin{proof} 
Assertion (2) follows from the fact that the map $x\mapsto\w x\w^{-1}$, 
for $x\in\L$, is a generator of the group $\Gal(\L/\F)$.
To prove (1), let $(\E,\La)$ be an embedding in $\A$, and set $\K=\E^{\flit}$
(see paragraph \ref{MainStreet} for the notation). 
One first notices that one can 
con\-jugate the pair $(\K,\La)$ so that $\K\subseteq\L$, 
which we will assume. 
Let $\Ii$ be the non-enlarged Bruhat-Tits building of 
$\mult\A$ and $\Ii'$ be that of the centralizer $\mult\C$ of 
$\K^{\times}$ in $\mult\A$. 
Since the group $\mult\C$ identifies with $\GL_r(\D')$, where $\D'$ 
is the centralizer of $\K$ in $\D$, the two buildings $\Ii$ 
and $\Ii'$ have same dimension $r-1$. 
Recall (see \cite[Th\'eor\`eme II.1.1]{BL}) 
that there exists a unique mapping:
\begin{equation*}
\boldsymbol{j}=\boldsymbol{j}_{\K/\F}:\Ii'\to\Ii
\end{equation*}
which is affine and $\mult\C$-equivariant. 
Its image is the set of $\K^{\times}$-fixed points in $\Ii$.  
The basis $\Bb$ gives rise to an apartment ${\Aa}$ of $\Ii$ 
(see e.g. \cite[\S0]{BL}), 
and points in that apartment are fixed by diagonal matrices 
of $\mult\A$ of the form $x\cdot\I_r$, with $x\in\mult\D$.
In particular, they are fixed by $\K^{\times}$. 
It easily follows that there is some apartment $\Aa'$ in $\Ii'$
such that we have $\Aa=\boldsymbol{j}(\Aa')$.

The affine class of $\La$
determines a point $y$ of the building $\Ii$ 
(see \cite[I.7]{BL}). 
Since $\K^{\times}$ nor\-ma\-li\-zes $\La$, 
this point writes $\boldsymbol{j}(x)$, 
for some $x\in\Ii'$.  
Since $\mult\C$ acts transitively on the set of all 
apartments of $\Ii'$, and since any point of $\Ii'$ 
is contained in some apartment, there exists an element 
$h\in\mult\C$ such that $h\cdot x\in\Aa'$. 
Its follows that $h\cdot y=\boldsymbol{j}(h\cdot x)$ lies in $\Aa$. 
By \cite[Proposition I.2.7]{BL}, 
this means that the lattice sequence $h\La$ 
is split by the basis $\Bb$, i.e. that 
$(h\K h^{-1},h\La)=(\K,h\La)$ 
is standard with respect to the pair $(\Bb,\L)$, as required.
\end{proof}

\begin{rema}
We can rephrase Assertion (1) of the above lemma by saying that, 
for any embedding $(\E,\La)$ in $\A$, 
there is $g\in\mult\A$ such that 
$(\E^\flit,\La)$ is standard with respect to the 
pair $(g\Bb,g\L g^{-1})$.

If one writes $\N_{\mult\A}(\K)$ for the normalizer of $\K$ in $\mult\A$,
Assertion (2) can also be rephrased by saying that conjugation induces 
a surjective group homomorphism from the intersection 
$\KK(\La)\cap\N_{\mult\A}(\K)$ onto $\Gal(\K/\F)$. 
With the notation of the proof of Lemma \ref{ConjPiD}, 
the kernel of this homomorphism is $\KK(\La)\cap\mult\C$.
\end{rema}

%%%%%%%%%%%%%%%%%%%%%%%%%%%%%%%%%%%%%%%%%%%%%%%%%%%%%%%%%%%%%%%%%%%%%%%%%%%

\subsection{}
\label{IICSS}

We will also need the following result, which generalizes \cite[Lemma 1.6]{BH}. 

\begin{prop}
\label{Tauride} 
Let $\La$ be an $\Oo_{\D}$-lattice sequence on $\V$ 
and $\E/\F$ a finite extension.
Suppose that there are two homomorphisms $\h_i:\E\to\A$ of
$\F$-algebras, $i=1,2$, such that the pairs $(\h_1(\E),\La)$
and $(\h_2(\E),\La)$ are two equivalent embeddings in $\A$.
Then there is an element $u\in\KK(\La)$ such that: 
\begin{equation}
\label{Iphigenie}
\h_1(x)=u\h_2(x)u^{-1}, \quad x\in\E.
\end{equation}
\end{prop}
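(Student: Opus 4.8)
\textbf{Proof plan for Proposition \ref{Tauride}.}
The plan is to reduce to a statement about the Bruhat--Tits building using the formalism of \cite{BL}, exactly as in the proof of Lemma \ref{ConjPiD}, and then to invoke the Skolem--Noether theorem in the centralizer. First I would observe that, by the Skolem--Noether theorem applied in $\A$, there is some $v\in\mult\A$ with $\h_1(x)=v\h_2(x)v^{-1}$ for all $x\in\E$; write $\E_1=\h_1(\E)$. Replacing $\La$ by $v^{-1}\La$ (which does not change the hypothesis, since $(\h_1(\E),\La)$ and $(\h_1(\E),v^{-1}\La)=(v\h_2(\E)v^{-1},v^{-1}\La)$ differ by conjugation by $v\in\mult\A$, hence the two embeddings in the statement remain equivalent), I may assume $\h_1=\h_2=:\h$, so that I must find $u\in\KK(\La)$ commuting with $\h(\E)$, i.e. $u$ lies in the centralizer $\mult\C$ of $\E_1=\h(\E)$ in $\mult\A$, with $u\La$ in the translation class of the original $\La$. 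Here $\La$ and $v^{-1}\La$ are two $\E_1$-pure $\Oo_\D$-lattice sequences which, by hypothesis, are equivalent as embeddings; I need to upgrade ``equivalent via some element of $\mult\A$ normalizing $\E_1^\flit$'' to ``conjugate via an element of $\mult\C$''.

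Next I would translate this into building language. By \cite[Th\'eor\`eme II.1.1]{BL} there is the canonical affine, $\mult\C$-equivariant embedding $\boldsymbol{j}=\boldsymbol{j}_{\E_1/\F}\colon\Ii'\to\Ii$ of the non-enlarged building $\Ii'$ of $\mult\C$ into that $\Ii$ of $\mult\A$, whose image is the set of $\E_1^\times$-fixed points. Since both $\La$ and $v^{-1}\La$ are $\E_1$-pure, their affine classes determine points $y,y'$ of $\Ii$ fixed by $\E_1^\times$, hence $y=\boldsymbol{j}(x)$ and $y'=\boldsymbol{j}(x')$ for unique $x,x'\in\Ii'$. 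The hypothesis that the two embeddings are equivalent in $\A$ gives $g\in\mult\A$ with $g\La$ in the translation class of $v^{-1}\La$ and $g\E_1^\flit g^{-1}=\E_1^\flit$; I would then argue, as in Lemma \ref{ConjPiD}, that after adjusting $g$ by an element of $\KK(\La)\cap\N_{\mult\A}(\E_1^\flit)$ realizing the relevant element of $\Gal(\E_1^\flit/\F)$ (using Assertion (2) of that lemma), one may take $g$ to centralize $\E_1^\flit$; but more is needed, namely centralizing all of $\E_1$, and this is where the point is that $\boldsymbol{j}$ identifies the $\mult\C$-action on $\Ii'$ with the $\E_1^\times$-fixed-point set in $\Ii$, so that $g$ moving $y$ to (a translate of) $y'$ while fixing $\boldsymbol{j}(\Ii')$ setwise is automatically given by an element of $\mult\C$ up to the kernel described in the remark after Lemma \ref{ConjPiD}. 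Concretely: since $\mult\C$ acts transitively on apartments of $\Ii'$ and $\boldsymbol{j}$ sends apartments to apartments, I can find $h\in\mult\C$ with $h\cdot x$ and $x'$ in a common apartment, and then a further element of $\mult\C$ matching them up to the lattice-sequence translation; the composite $u\in\mult\C$ then normalizes $\La$, i.e. $u\in\KK(\La)\cap\mult\C$, and satisfies $\h(x)=u\h(x)u^{-1}$ trivially while carrying $v^{-1}\La$'s class to $\La$'s class.

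Unwinding the reduction: the element of $\mult\A$ that conjugates $\h_2$ to $\h_1$ and also normalizes $\La$ is obtained as $v$ times this $u\in\mult\C$, and one checks $vu\in\KK(\La)$ because $u\in\KK(v^{-1}\La\text{-class})=v^{-1}\KK(\La\text{-class})v$ by construction. I expect the main obstacle to be the bookkeeping in this last matching step: ensuring that the element of $\mult\C$ produced from the building argument not only places the two points in the same $\mult\C$-orbit but respects the \emph{translation class} of lattice sequences (not just the associated point of $\Ii$, which only remembers the affine class), so that one genuinely lands in $\KK(\La)$ and not merely in $\KK(a\La+b)$ for some rescaling. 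This is handled by noting that $\La$ and $v^{-1}\La$ have the same period over $\Oo_\D$ (they are $\mult\A$-translates of each other), so the affine-class-to-point passage loses no information relevant here, together with \cite[Proposition I.2.7]{BL} and \cite[I.7]{BL} relating points of the building to translation classes of lattice sequences; the generality of not assuming $\La$ strict requires care here but introduces nothing essentially new beyond what is already in the proof of Lemma \ref{ConjPiD}.
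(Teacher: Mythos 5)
Your opening reduction is sound and parallels the first step of the paper's argument: using Lemma \ref{ConjPiD}(2) (equivalently, Skolem--Noether plus the adjustment by an element of $\KK(\La)$ realizing the relevant element of $\Gal(\E^{\flit}/\F)$) one may indeed assume the two data agree on $\E^{\flit}$, and your transposition to ``one field $\E_1=\h(\E)$, two $\E_1$-pure lattice sequences related by an element normalizing (then centralizing) $\E_1^{\flit}$'' is an equivalent reformulation of the proposition. The gap is in the step that is supposed to produce the element of the centralizer $\mult\C$ of the \emph{whole} field $\E_1$. First, an element $w$ that centralizes only $\E_1^{\flit}$ has no reason to stabilize the $\E_1^{\times}$-fixed locus $\boldsymbol{j}_{\E_1/\F}(\Ii')$: it stabilizes the much larger fixed-point set of $(\E_1^{\flit})^{\times}$, so the claim that such a $w$ carrying $y$ to $y'$ ``is automatically given by an element of $\mult\C$ up to the kernel'' is unjustified. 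Second, and more seriously, the concrete argument you substitute --- move the two points into a common apartment of $\Ii'$ by transitivity of $\mult\C$ on apartments, ``and then a further element of $\mult\C$ matching them up to translation'' --- does not work: two points lying in a common apartment of the building of $\mult\C$ are in general \emph{not} in the same $\mult\C$-orbit (already for $\GL_2$ the orbits on an apartment are proper subsets). Matching them by an element of $\mult\C$ amounts to showing that the two lattice sequences obtained from $\La$ and $\Ga$ by intersecting with $\C$ as in (\ref{FPrimedescen}) have the same invariants, and the only hypothesis available, namely that $\La$ and $\Ga$ are related by an element centralizing $\E_1^{\flit}$ but not $\E_1$, does not transport these $\C$-side data; this is exactly the substantive content of the proposition, which your sketch assumes rather than proves.

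For comparison, the paper's proof fills precisely this hole by a different mechanism: after reducing to $\h_1=\h_2$ on $\E^{\flit}$, it sets $\K=\h_2(\E^{\flit})$, works in the centralizer $\C$ of $\K$ with the order $\CC=\AA(\La)\cap\C$, passes to an unramified extension $\L/\K$ of degree $d$ so that $\C\otimes_\K\L$ is split, applies the split-case result \cite[Lemma 1.6]{BH} to get a conjugating element $u\in\U(\CC\otimes_{\Oo_\K}\Oo_\L)$, and then descends to $u\in\U(\CC)$ by showing the relevant Galois cohomology set $\H^{1}(\Gal(\L/\K),\,\U(\overline\CC)\cap\overline\B{}^{\times})$ is trivial via a standard filtration argument (Lemma \ref{TotRam}). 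Your proposal contains no substitute for this base-change-plus-descent step, so as written it does not prove the statement.
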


\begin{rema}
\label{Pisot}
In particular, if $\K$ denotes the maximal un\-ramified extension 
of $\F$ contained in $\E$, then $u$ conjugates $\h_2(\K)$ to $\h_1(\K)$. 
\end{rema}

\begin{proof}
Since the embeddings 
$(\h_1(\E),\La)$ and $(\h_2(\E),\La)$ are equivalent, 
there exists an element $g\in\KK(\La)$ such that 
$\h_1(\E^{\flit})=g\h_2(\E^{\flit} )g^{-1}$. 
Then the mapping: 
\begin{equation}
\label{Meria}
x\mapsto g\h^{}_2(\h_{1}^{-1}(x))g^{-1}
\end{equation} 
is an $\F$-automorphism of $\h_1(\E^{\flit})$. 
By Lemma \ref{ConjPiD}(2), there is $h\in\KK(\La)$ such that 
this $\F$-auto\-morphism is equal to $x\mapsto hxh^{-1}$. 
We thus have 
$\h_1(x)=w\h_2(x)w^{-1}$, for all $x\in\E^{\flit}$, where $w=h^{-1}g$.
So replacing $\h_2$ by a $\KK (\La)$-conjugate, one may reduce to the
case where $\h_1$ and $\h_2$ coincide on $\E^{\flit} $.
Assume now that we are is this case, and put $\K=\h_2(\E^{\flit})$.
Let $\C$ be the centralizer of $\K$ in $\A$, and write $\CC$ for 
the intersection of $\AA=\AA(\La)$ with $\C$. 

\begin{lemm}
\label{TotRam}
There is $u\in\U(\CC)$ such that {\rm (\ref{Iphigenie})} holds. 
\end{lemm}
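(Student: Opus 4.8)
We are in the situation where $\h_1$ and $\h_2$ already coincide on $\E^{\flit}$, the maximal unramified subextension of $\E/\F$; write $\K=\h_1(\E^{\flit})=\h_2(\E^{\flit})$, let $\C$ be the centralizer of $\K$ in $\A$, and set $\CC=\AA(\La)\cap\C$. So the residual extension $\E/\E^{\flit}$ is totally ramified, and both $\h_1(\E)$ and $\h_2(\E)$ are subfields of $\C$ containing $\K$. The plan is to produce $u\in\U(\CC)=\U(\AA(\La))\cap\mult\C$ conjugating $\h_2$ to $\h_1$; such a $u$ automatically lies in $\KK(\La)$ and fixes $\K$ pointwise, so $(\ref{Iphigenie})$ will hold on all of $\E$ once it holds on a uniformizer.

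**Passing to the centralizer $\C$.** By $(\ref{FPrimedescen})$ applied to the extension $\K/\F$, the lattice sequence $\La$ has a descent to a lattice sequence $\Ga$ on a simple module over $\C$, with $\aa_k(\La)\cap\C=\aa_k(\Ga)$ for all $k$; in particular $\AA(\Ga)=\CC$ and $\U(\CC)=\U(\Ga)$. The two homomorphisms $\h_1,\h_2:\E\to\C$ now restrict to $\F$-algebra embeddings which are the identity on $\K$, i.e. they are $\K$-algebra embeddings of the totally ramified extension $\E/\E^{\flit}$ (viewed over $\K$ via $\h_i|_{\E^{\flit}}$) into $\C$. I would check that the pairs $(\h_i(\E),\Ga)$ are still equivalent embeddings in $\C$: the original equivalence was realized by an element of $\KK(\La)$ conjugating $\h_1(\E^\flit)$ to $\h_2(\E^\flit)$, and after the preliminary reduction we may take that element in $\KK(\La)\cap\mult\C=\KK(\Ga)$ (since it now fixes $\K$), so it witnesses the equivalence of the embeddings of the \emph{totally ramified} fields $\h_i(\E)$ inside $\C$.

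**The totally ramified case.** This reduces the lemma to the statement: if $\E'/\K$ is totally ramified and $\h_1,\h_2:\E'\to\C$ are $\K$-embeddings with $(\h_1(\E'),\Ga)$ and $(\h_2(\E'),\Ga)$ equivalent embeddings in $\C$, then there is $u\in\U(\Ga)$ with $\h_1=u\h_2u^{-1}$. Since $\h_1(\E')$ and $\h_2(\E')$ are conjugate in $\mult\C$ by Skolem–Noether, and they generate the same (unramified, since $\E'/\K$ is totally ramified) residual data, the hard point is to upgrade a conjugacy in $\mult\C$ to one inside the compact open subgroup $\U(\Ga)$: one shows that an element of $\mult\C$ conjugating $\h_2$ to $\h_1$ can be multiplied by an element of $\mult{\h_1(\E')}$ (which normalizes $\h_1$) to land in $\KK(\Ga)$, using that $\h_1(\E')$ acts transitively on the vertices of the image building; and then, because $\E'/\K$ is totally ramified, $\KK(\Ga)=\U(\Ga)\cdot\mult{\h_1(\E')}$ — more precisely the $\Ga$-valuations are controlled by $\mult{\h_1(\E')}$ — so after a further adjustment by $\mult{\h_1(\E')}$ the conjugator lies in $\U(\Ga)$. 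I expect this last normalization — getting genuinely into $\U(\Ga)$ rather than merely $\KK(\Ga)$, in the non-strict case where one cannot simply invoke principality of orders — to be the main obstacle, and it is presumably handled by a direct argument with the building $\boldsymbol{j}_{\K/\F}$ of Lemma \ref{ConjPiD}, exactly paralleling the proof of \cite[Lemma 1.6]{BH} but with $\La$ in place of a strict lattice chain.
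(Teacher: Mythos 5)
Your reduction rests on a misreading of $\E^{\flit}$: in this paper $\E^{\flit}$ is the maximal unramified subextension of $\E/\F$ \emph{whose degree divides the reduced degree $d$ of $\D$}, not the full maximal unramified subextension. The preliminary step in the proof of Proposition \ref{Tauride} therefore only arranges $\h_1=\h_2$ on $\E^{\flit}$, and the extension $\E/\K$ the lemma must handle is in general \emph{not} totally ramified (its residue degree is $f(\E:\F)$ divided by the g.c.d.\ of $f(\E:\F)$ and $d$, which can exceed $1$). So the ``totally ramified case'' framing, and the places where you lean on it, do not apply. Moreover the auxiliary claims you invoke are false even when $\E/\K$ is totally ramified: $\mult{\h_1(\E)}$ does not act transitively on the vertices of the fixed-point set of the building (it acts essentially through its valuation), and $\KK(\Ga)=\U(\Ga)\cdot\mult{\h_1(\E)}$ fails in general --- for instance, for $\E/\K$ ramified quadratic and $\Ga$ the $\E$-pure principal chain of period $4$ in $\Mat_4(\K)$, the normalizer $\KK(\Ga)$ contains elements of $\Ga$-valuation $1$ while $\U(\Ga)\mult{\h_1(\E)}$ only reaches even valuations. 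Thus the step of moving the Skolem--Noether conjugator into $\KK(\Ga)$, let alone into $\U(\Ga)$, is not justified; adjusting on the left by $\KK(\Ga)\cap\mult{\B_1}$ (the centralizer of $\h_1(\E)$) is the natural move, but you give no argument that $\Ga$ and its translate lie in the same orbit, which is exactly the content of \cite[Lemma 1.6]{BH} and is the whole difficulty here in the non-split, non-strict setting. In short, the crucial step is deferred (``presumably handled by a direct argument with the building''), and the reasons offered for it are incorrect.

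For comparison, the paper does not argue directly in $\C$: it chooses an unramified extension $\L/\K$ with $[\L:\F]=d$ (possible precisely because $[\K:\F]$ divides $d$, by the definition of $\E^{\flit}$), so that $\CL=\C\otimes_{\K}\L$ is split and, the residue degree of $\E/\K$ being prime to $[\L:\K]$, the algebra $\E\otimes_{\K}\L$ is a field; it then applies \cite[Lemma 1.6]{BH} in this split algebra with the $\Oo_{\L}$-order $\overline{\CC}=\CC\otimes_{\Oo_{\K}}\Oo_{\L}$ to obtain $u\in\U(\overline{\CC})$ satisfying (\ref{Iphigenie}), and finally descends: the cocycle $\s\mapsto u^{-1}\s(u)$ defines a class in $\H^{1}(\Gal(\L/\K),\U(\overline{\CC})\cap\overline{\B}{}^{\times})$, with $\overline{\B}$ the centralizer of $\h_2(\E)$ in $\CL$, and this set is trivial by a standard filtration argument, so $u$ may be taken in $\U(\CC)$. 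If you want to salvage a direct building-theoretic proof, you would essentially have to redo \cite[Lemma 1.6]{BH} for non-strict lattice sequences over a division algebra (showing the two $\Oo_{\E}$-structures give $\Ga$ the same invariants and normalizing the conjugator into $\U(\Ga)$); the base-change-and-descent argument is designed to avoid exactly that.
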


\begin{proof}
We fix an unramified extension $\L$ of $\K$ such that the degree 
of $\L/\F$ is equal to the reduced degree of $\D$ over $\F$, denoted $d$.
The $\L$-algebra $\CL=\C\otimes_\K\L$ is thus split and, 
as $\E/\K$ has residue class degree prime to $d$, 
the $\L$-algebra $\E\otimes_\K\L$ is an extension 
of $\L$, denoted $\overline{\E}$. 
For each $i$, the $\K$-algebra homo\-morphism $\h_i$ extends 
to a homo\-morphism of $\L$-algebras $\overline{\E}\to\CL$, still 
denoted $\h_i$. 
By applying \cite[Lemma 1.6]{BH} with the $\Oo_{\L}$-order 
$\overline{\CC}=\CC\otimes_{\Oo_{\K}}\Oo_{\L}$ and the homo\-morphisms 
of $\L$-algebras $\h_1$ and $\h_2$, we get 
$u\in\U(\overline{\CC})$ satisfying (\ref{Iphigenie}).  
If we write $\overline{\B}$ for the centralizer of $\h_{2}(\E)$ in 
$\CL$, then the $1$-cocycle $\s\mapsto u^{-1}\s(u)$ 
defines a class in the Galois cohomology set:
\begin{equation*}
\H^{1}(\Gal(\L/\K),\U(\overline{\CC})\cap\overline{\B}{}^{\times}).
\end{equation*}
This cohomology set is trivial by a standard filtration argument.
(For more detail, see e.g. \cite[\S6]{BG}.)
Hence we actually may choose $u$ in $\U(\CC)$, which ends the proof of the 
lemma.  
\end{proof}

Proposition \ref{Tauride} follows immediately from Lemma \ref{TotRam}.
\end{proof}

\begin{rema}
The conclusion of Proposition \ref{Tauride} does not hold if the pairs
$(\h_1(\E),\La)$ and $(\h_2(\E),\La)$ are not assumed to be 
equivalent in $\A$. 
For instance, take $\A=\Mat_2(\D)$ where $\D$ is a quaternionic algebra
over $\F$, and let $\E/\F$ be an unramified quadratic extension. 
%of $\F$. 
One may embed $\E$ in $\Mat_2(\F)$ so that the multiplicative group of the 
image normalizes the order $\Mat_2(\Oo_\F)$. 
This gives an embedding $\h_1$ of $\E$ in 
$\A=\Mat_2(\D)=\Mat_2(\F)\otimes_{\F}\D$, such that $\h_1(\E^{\times})$ 
normalizes $\Mat_2(\Oo_{\D})=\Mat_2(\Oo_\F )\otimes_{\Oo_\F}\Oo_\D$. 
One also may embed $\E$ in $\D$. 
The diagonal embedding of $\D$ in $\A$ gives rise to a second embedding 
$\h_2$ such that $\h_2 (\E^{\times})$ normalizes $\Mat_2(\Oo_{\D})$. 
Take $\La$ to be a strict lattice sequence 
in $\D\times\D$ defining the order 
$\AA=\Mat_2(\Oo_\D )$, so that:
\begin{equation*}
\KK(\La)=\KK(\AA)=\langle\w\rangle\cdot\U(\AA),
\end{equation*}
where $\w$ denotes a uniformizer of $\D$ and $\langle\w\rangle$ the subgroup 
generated by $\w$.
One can check that the pairs $(\h_i(\E),\La)$, $i=1,2$, are 
inequivalent. 
Assume for a contradiction that there is an element
$u\in\KK(\AA)$ such 
that $\h_1(\E)=u\h_2(\E)u^{-1}$, and write $\PP$ for the radical 
of $\AA$.
For $i=1,2$, the map $\h_i$ induces an embedding of the residue 
field $\mathfrak{k}_{\E}$ in the $\mathfrak{k}_{\F}$-algebra 
$\AA/\PP$, which is isomorphic to $\Mat_2(\mathfrak{k}_{\D})$, 
and the images $\h_i(\mathfrak{k}_{\E})$, $i=1,2$, are conjugate 
under the action of $u$ on the quotient $\AA/\PP$. 
But this action stabilizes the centre of $\Mat_2(\mathfrak{k}_{\D})$ 
and $\h_2(\mathfrak{k}_{\E})$ lies in this centre. 
This implies that $\h_1(\mathfrak{k}_{\E})$ is central: 
a contradiction.
\end{rema}

%%%%%%%%%%%%%%%%%%%%%%%%%%%%%%%%%%%%%%%%%%%%%%%%%%%%%%%%%%%%%%%%%%%%%%%%%%%

\subsection{}
\label{Barnabooth}

We now prove the ``intertwining implies conjugacy'' property for sim\-ple 
strata, that is, Proposition \ref{IreneeDeLyon}.
For $i=1,2$, let $[\La,n,m,\b_i]$ a simple stratum in $\A$.
Assume that they intertwine in $\A$ and have the same embedding type, 
and write $\K_i$ for the maximal un\-ramified extension of $\F$ 
contained in $\E_i=\F(\b_i)$.
By Remark \ref{Pisot}, 
we may replace $\b_2$ by some $\KK(\La)$-conjugate and assume that $\K_1$ 
and $\K_2$ are equal to a common extension $\K$ of $\F$.
We write $\N_{\mult\A}(\K)$ for the normalizer of $\K$ in $\mult\A$.
Therefore, we are reduced to proving that there is an element
$u\in\KK(\La)\cap\N_{\mult\A}(\K)$ such that we have
$\b_1-u\b_2u^{-1}\in\aa_{-m}(\La)$. 

\medskip

We proceed as in the proof of proposition \ref{SimonLeMagicienStrong}
(see paragraph \ref{ProvaDAmore}).
Let us fix a simple right $\E_1\otimes_{\F}\D$-module $\SS$ and set 
$\A(\SS)=\End_{\D}(\SS)$.
Let us denote by $\rho_1$ the natural 
$\F$-algebra homomorphism $\E_1\to\A(\SS)$.
We write $\Ss$ for the unique (up to translation) $\E_1$-pure strict 
$\Oo_{\D}$-lattice sequence on $\SS$ and fix
an $\F$-algebra homomorphism $\rho_2:\E_2\to\A(\SS)$ such 
that $\Ss$ is $\rho_2(\E_2)$-pure, and such that 
$(\rho_1(\E_1),\Ss)$ and $(\rho_2(\E_2),\Ss)$ have the 
same embedding type in $\A(\SS)$ (see Lemma \ref{Merimee}).
We also fix a decomposition: 
\begin{equation}
\label{Aron224}
\V=\V^{1}\oplus\cdots\oplus\V^{l}
\end{equation}
of $\V$ into simple right $\K(\b)\otimes_{\F}\D$-modules (which all are 
copies of $\SS$) such that $\La$ is de\-com\-posed by 
(\ref{Aron224}) in the sense of \cite[D\'efinition 1.13]{VS3}, that is, 
$\La$ is the direct sum of the lattice sequences 
$\La^{j}=\La\cap\V^{j}$, for $j\in\{1,\dots,l\}$.
By choosing, for each $j$, an iso\-mor\-phism of 
$\K(\b)\otimes_{\F}\D$-modules between $\SS$ and $\V^{j}$, 
this decomposition gives us an $\F$-algebra homomorphism:
\begin{equation*}
\iota:\A(\SS)\to\A. 
\end{equation*}
Using Lemma \ref{Merimee2},
we may assume that this homomorphism 
satisfies $\iota(\rho_1(\b_1))=\b_1$.

\medskip

For each $i\in\{1,2\}$, let $(k,\b_i)$ be the simple pair of which 
$[\La,n,m,\b_i]$ is a realization. 
By putting $n_0=n_{\F}(\b_i)$ and $m_0={e_{\rho_i(\b_i)}(\Ss)}k$, 
which do not depend on $i$ by Proposition \ref{Eponine},
we get a simple stratum $[\Ss,n_0,m_0,\rho_i(\b_i)]$ which is a
realization of $(k,\b_i)$ in $\A(\SS)$.
The proof of \cite[Theorem 4.1.2]{BG} 
(see also \cite[Lemma 10.5]{Gr}) gives us
an element $v\in\KK(\Ss)$ such that 
$\rho_1(\K)=v \rho_2(\K)v^{-1}$
and $\b_1-v\b_2v^{-1}\in\aa_{-m_0}(\Ss)$. 
By Proposition \ref{Tauride}, there is %an element 
$w\in\KK(\La)$ such that $\iota(\rho_2(x))=wxw^{-1}$ 
for all $x\in\E_2$, and, by Remark \ref{Pisot}, this element 
satisfies $w\K w^{-1}=\iota(\rho_2(\K))$.
Thus $u=\iota(v)w$ normalizes $\K$ and $\La$ and 
satisfies the required condition:
\begin{equation*}
\b_1-u\b_2u^{-1}\in\aa_{-e_{\b_i}(\La)k}(\La)\subseteq\aa_{-m}(\La),
\end{equation*}
which ends the proof of Proposition \ref{IreneeDeLyon}.

%%%%%%%%%%%%%%%%%%%%%%%%%%%%%%%%%%%%%%%%%%%%%%%%%%%%%%%%%%
%%%%%%%%%%%%%%%%%%%%%%%%%%%%%%%%%%%%%%%%%%%%%%%%%%%%%%%%%%

\section{Realizations and intertwining for simple characters}
\label{Sec4}

The two main results of this section are Propositions 
\ref{Hengist6} and \ref{Dolomites}.
The first one asserts that two endo-equivalent ps-characters have 
realizations with very special properties, allowing us to 
use the results of \cite{Gr}.
The second one leads to the rigidity theo\-rem \ref{Paraclet}, 
and will also give us an important property of the base change 
map in paragraph \ref{EhOui}. 

%%%%%%%%%%%%%%%%%%%%%%%%%%%%%%%%%%%%%%%%%%%%%%%%%%%%%%%%%%

\subsection{}

In this paragraph, we generalize the construction given in para\-graph 
\ref{PreDague} by in\-cor\-po\-ra\-ting the notion of embedding type.
For this, we will need the following definition.

\medskip

Let $[\La,n,m,\b]$ be a simple stratum in $\A$, which is a realization 
of a simple pair $(k,\b)$ over $\F$, and set $\E=\F(\b)$. 
The containment of $\Oo_\E$ in $\AA(\La)$ allows us to identify the 
residue field $\mathfrak{k}=\mathfrak{k}_{\E^{\flit}}$ with its canonical 
image in the $\mathfrak{k}_\F$-algebra $\overline{\AA}=\AA(\La)/\PP(\La)$.

\begin{defi}\label{def:Frinv}
The {\it Fr\"ohlich invariant} of $[\La,n,m,\b]$ is 
the degree over $\mathfrak{k}_{\F}$ of the intersection of $\mathfrak{k}$ 
with the centre of $\overline{\AA}$.
\end{defi}

Recall that this invariant has been introduced by Fr\"ohlich (see \cite{Fr}) 
for sound strata. 
In this case, we have the following important property. 

\begin{theo}[\cite{Fr}, Theorem 2]
\label{FROH}
For $i=1,2$, let $(\K_i,\La)$ be a sound embedding in $\A$ where $\K_i/\F$ 
is an unramified extension contained in $\A$. 
These embeddings are equivalent if and only if 
$[\K_1^{\flit}:\F]=[\K_2^{\flit}:\F]$ and they 
have the same Fr\"ohlich invariant.
\end{theo}

We will need the two following lemmas.

\begin{lemm}
\label{Thomas}
Let us fix an integer $l\>1$, an 
$\Oo_{\D}$-lattice sequence $\La'$ and an integer $m'$
as in paragraph \ref{DerivedStratum}, and 
let us form the simple stratum $[\La',n',m',\b]$ in $\A'$.
The simple 
strata $[\La,n,m,\b]$ and $[\La',n',m',\b]$ have the same 
Fr\"ohlich invariant.
\end{lemm}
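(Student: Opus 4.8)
The plan is to reduce to the case $l=1$ and then exploit the explicit description of the Fr\"ohlich invariant in terms of residue fields. Recall that $[\La',n',m',\b]$ is built from lattice sequences $\La^{1},\dots,\La^{l}$ in the affine class of $\La$, all of common period $ae(\La)$, via $\La'=\La^{1}\oplus\cdots\oplus\La^{l}$ in $\A'=\End_\D(\V^{1}\oplus\cdots\oplus\V^{l})$. Since the affine class of $\La$ contains the sequence $a\La+b$ from (\ref{AffCla}), the first step is to observe that replacing $\La$ by $a\La+b$ changes neither $\AA(\La)$, $\PP(\La)$, the residue algebra $\overline{\AA}=\AA(\La)/\PP(\La)$, nor the image of $\mathfrak{k}_{\E^{\flit}}$ therein: indeed $a\La+b$ and $\La$ have the same underlying set of lattices up to reindexing, so $\aa_0$ and $\aa_1$ are unchanged. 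Hence the Fr\"ohlich invariant depends only on the affine class, and so all the $\La^{j}$ have the same Fr\"ohlich invariant as $\La$.

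Next I would compute $\overline{\AA(\La')}$. Writing $\mathfrak{a}=\AA(\La^{j})$ and noting $\AA(\La')\cap\End_\D(\V^{j})=\AA(\La^{j})$ with the off-diagonal blocks $\aa_{k}(\La')\cap\Hom_\D(\V^{i},\V^{j})$ controlled by the relative shifts, one sees that $\AA(\La')/\PP(\La')$ is isomorphic, as a $\mathfrak{k}_\F$-algebra, either to $\Mat_l(\overline{\mathfrak{a}})$ (when the $\La^{j}$ are mutually in the same position, e.g. all equal) or more generally to a block-upper-triangular-with-cyclic-wraparound algebra whose diagonal blocks are copies of $\overline{\mathfrak{a}}$ and whose semisimple quotient is a product of matrix algebras over $\overline{\mathfrak{a}}$. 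In any case, $\mathfrak{k}=\mathfrak{k}_{\E^{\flit}}$ embeds diagonally, and the key point is that the centre of $\overline{\AA(\La')}$ is canonically identified with the centre of $\overline{\mathfrak{a}}=\overline{\AA(\La)}$ (passing to a matrix algebra over a ring does not change the centre, and the wraparound structure is rigid enough that its centre is still the centre of the diagonal block, since $\b$ acts through $\E$ diagonally). Therefore the intersection of $\mathfrak{k}$ with the centre of $\overline{\AA(\La')}$ has the same degree over $\mathfrak{k}_\F$ as the intersection of $\mathfrak{k}$ with the centre of $\overline{\AA(\La)}$, which is exactly the assertion.

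The main obstacle I anticipate is the bookkeeping of the residue algebra $\overline{\AA(\La')}$ when the $\La^{j}$ sit in genuinely different affine positions (the wraparound/block-triangular case), and checking carefully that its centre still coincides with the centre of the diagonal block $\overline{\AA(\La)}$ rather than being strictly larger. The cleanest way around this is probably to reduce to the case where all the $\La^{j}$ are translates of one another: one can always arrange, after the harmless reindexing described above, that $\La^{j}=\La^{1}$ for all $j$ (i.e. take $l$ copies of a single sequence in the affine class), in which case $\overline{\AA(\La')}\cong\Mat_l(\overline{\AA(\La)})$ outright, $\mathfrak{k}$ embeds as scalar matrices over its image in $\overline{\AA(\La)}$, and the centre computation is immediate since $Z(\Mat_l(\overline{\AA(\La)}))=Z(\overline{\AA(\La)})$. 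One then notes that the Fr\"ohlich invariant of $[\La',n',m',\b]$ depends only on the equivalence class of that stratum, hence only on the affine class of $\La'$, which in turn is unchanged if we permute or reposition the blocks; this last remark is what lets us pass from the general $\La'$ back to the all-copies case without loss of generality, completing the argument.
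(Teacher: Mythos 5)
Your argument is fine in two special cases: the case $l=1$ (your observation that $\aa_0$ and $\aa_1$, hence $\overline{\AA}$ and the image of $\mathfrak{k}$, are unchanged under $\La\mapsto a\La+b$ is exactly right), and the case where all the blocks $\La^{j}$ coincide, where indeed $\overline{\AA}{}'\simeq\Mat_l(\overline{\AA})$. But the step by which you reduce the general case to the all-equal case is false, and with it your key claim that the centre of $\overline{\AA}{}'=\AA(\La')/\PP(\La')$ is canonically identified with that of $\overline{\AA}=\AA(\La)/\PP(\La)$. Repositioning the blocks \emph{does} change the affine class of $\La'$, and even the conjugacy class of the order $\AA(\La')$. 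Concretely, take $\La$ strict with $\La_{k+1}=\La_k\p_{\D}$, where $r=\dim_{\D}\V$, so that $\AA(\La)$ is maximal and $\overline{\AA}\simeq\Mat_r(\mathfrak{k}_{\D})$ has centre $\mathfrak{k}_{\D}$; with $a=2$, $l=2$, the choice $2\La\oplus 2\La$ gives $\AA(\La')=\Mat_2(\AA(\La))$ with residue algebra $\Mat_{2r}(\mathfrak{k}_{\D})$ and centre $\mathfrak{k}_{\D}$, whereas $2\La\oplus(2\La+1)$ gives a hereditary order of period $2$ with residue algebra $\Mat_r(\mathfrak{k}_{\D})\times\Mat_r(\mathfrak{k}_{\D})$ and centre $\mathfrak{k}_{\D}\times\mathfrak{k}_{\D}$. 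So the two configurations lie in different affine classes, the centres of $\overline{\AA}{}'$ and $\overline{\AA}$ need not even have the same $\mathfrak{k}_{\F}$-dimension, and $\overline{\AA}{}'$ (which is already semisimple) is in general not a matrix algebra, nor a product of matrix algebras, over $\overline{\AA}$ (already when the quotients $\La_k/\La_{k+1}$ have unequal dimensions). Moreover, the cases your reduction discards are precisely those the lemma is used for: the constructions $\La^{\dag}$ and $\La^{\ddag}$ of paragraphs \ref{Dague} and \ref{PreDague} take the blocks to be the distinct translates $\La+j$, exactly so as to produce a principal order, which is in general not conjugate to $\Mat_l(\AA(\La))$. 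Justifying that repositioning the blocks does not change the Fr\"ohlich invariant is essentially the statement of the lemma itself, so the argument becomes circular at that point.

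The way out is not to compare the centres of the two residue algebras at all. The Fr\"ohlich invariant only records which elements $x$ of $\mathfrak{k}=\mathfrak{k}_{\E^{\flit}}$ have central image, so it suffices to prove that $j(x)$ is central in $\overline{\AA}{}'$ if and only if $x$ is central in $\overline{\AA}$, where $j$ is induced by the diagonal embedding of $\Oo_{\E}$ in $\AA(\La')$. This is the route taken in the paper: identifying $\A'$ with $\Mat_l(\A)$, one checks directly that the diagonal blocks of $\AA(\La')$ and of $\PP(\La')$ are $\AA(\La)$ and $\PP(\La)$ (this is your $l=1$ computation applied blockwise; the off-diagonal blocks are of the form $\aa_c(\La)$), and one deduces the above equivalence for elements of $\mathfrak{k}$, which is exactly what the invariant measures; no identification of $Z(\overline{\AA}{}')$ with $Z(\overline{\AA})$ is needed, and indeed none exists in general.
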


\begin{proof}
Let us identify $\A'$ with the matrix algebra $\Mat_l(\A)$, 
and write $j$ for the $\mathfrak{k}_\F$-algebra homomorphism 
$\mathfrak{k}\to\overline{\AA}{}'=\AA(\La')/\PP(\La')$ 
induced by the embedding of $\Oo_\E$ in $\AA(\La')$ (which is
the restriction to $\Oo_\E$ of the diagonal embedding of $\E$
in $\A'$).
By a direct computation, we see that the diagonal blocks of 
$\AA(\La^{\prime})$ are equal to $\AA(\La)$, 
and that of its radical $\PP(\La^{\prime})$ are equal
to $\PP(\La)$.
This is enough to prove that
$j(x)$ is central in $\overline{\AA}{}'$ if and only if 
$x$ is central in $\overline{\AA}$.
Thus the strata 
$[\La,n,m,\b]$ and $[\La',n',m',\b]$ have the same Fr\"ohlich 
invariant.
\end{proof}

\begin{lemm}
\label{Blaise3}
We set $\La'=\La\oplus\La$ and $m'=m$ (thus $l=2$).
There exists an element $u\in\A^{\prime\times}$ such 
that $\La'$ is $u\F(\b)u^{-1}$-pure and the simple 
stratum $[\La',n,m,u\b u^{-1}]$ in $\A'$ 
has Fr\"ohlich invariant $1$.
\end{lemm}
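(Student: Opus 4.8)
The plan is to produce the required element $u$ by twisting the second summand of $\V'=\V\oplus\V$ by a Galois automorphism of $\E^{\flit}$, where $\E=\F(\b)$, and then to apply the Skolem--Noether theorem. Identify $\A'=\End_\D(\V\oplus\V)$ with $\Mat_2(\A)$, so that $\F(\b)\subseteq\A'$ is embedded diagonally and $\b$ stands for the diagonal element; note that $\AA(\La')=\Mat_2(\AA(\La))$ and $\PP(\La')=\Mat_2(\PP(\La))$, hence $\overline{\AA}{}'=\Mat_2(\overline{\AA})$ and $\Z(\overline{\AA}{}')$ consists of the matrices $\mathrm{diag}(z,z)$ with $z\in\Z(\overline{\AA})$.

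First I would produce an element $\sigma\in\KK(\La)$ which normalizes $\E^{\flit}$ and induces on it, by conjugation, a generator $\tau$ of $\Gal(\E^{\flit}/\F)$: its existence follows from Lemma~\ref{ConjPiD} together with the surjectivity statement of the remark following it, applied to the embedding $(\E^{\flit},\La)$ in $\A$. Then I define an injective homomorphism of $\F$-algebras $\h'\colon\F(\b)\to\A'$ whose diagonal blocks are the inclusion $\F(\b)\hookrightarrow\A$ and the map $x\mapsto\sigma x\sigma^{-1}$ (and whose off-diagonal blocks vanish); this makes sense because both blocks are $\F$-algebra homomorphisms into $\A$, the first being injective. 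Since $\sigma\in\KK(\La)$, the group $\sigma\F(\b)^{\times}\sigma^{-1}$ still normalizes $\La$, so $\La'=\La\oplus\La$ is $\h'(\F(\b))$-pure; and a block-by-block computation gives $\v_{\La'}(\h'(\b))=-n$ and $e_{\h'(\b)}(\La')=e_{\b}(\La)$, so $[\La',n,m,\h'(\b)]$ is a realization of the same simple pair $(k,\b)$ of which $[\La,n,m,\b]$ is a realization, and in particular is a simple stratum by \eqref{Lakme}. By the Skolem--Noether theorem there is $u\in\A^{\prime\times}$ with $\h'(x)=uxu^{-1}$ for all $x\in\F(\b)$; then $\La'$ is $u\F(\b)u^{-1}$-pure and $[\La',n,m,u\b u^{-1}]=[\La',n,m,\h'(\b)]$, so it remains only to check that this stratum has Fr\"ohlich invariant $1$.

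For this, write $\mathfrak{k}=\mathfrak{k}_{\E^{\flit}}$ for its canonical image in $\overline{\AA}$, and let $\bar\tau$ be the generator of $\Gal(\mathfrak{k}/\mathfrak{k}_{\F})$ obtained by reducing $\tau$ (the reduction map $\Gal(\E^{\flit}/\F)\to\Gal(\mathfrak{k}/\mathfrak{k}_{\F})$ is an isomorphism since $\E^{\flit}/\F$ is unramified). Conjugation by $\sigma$ preserves $\AA(\La)$ and $\PP(\La)$, hence induces an automorphism $\theta$ of $\overline{\AA}$, and since $\sigma$ acts on $\E^{\flit}$ as $\tau$ the restriction of $\theta$ to $\mathfrak{k}$ is $\bar\tau$. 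Thus the canonical image of $x\in\mathfrak{k}$ in $\overline{\AA}{}'$ is $\mathrm{diag}(x,\theta(x))=\mathrm{diag}(x,\bar\tau(x))$, which lies in $\Z(\overline{\AA}{}')$ if and only if $x=\bar\tau(x)$ and $x\in\Z(\overline{\AA})$; as $\bar\tau$ generates $\Gal(\mathfrak{k}/\mathfrak{k}_{\F})$, the first condition already forces $x\in\mathfrak{k}_{\F}\subseteq\Z(\overline{\AA})$. Hence $\mathfrak{k}\cap\Z(\overline{\AA}{}')=\mathfrak{k}_{\F}$, so the Fr\"ohlich invariant of $[\La',n,m,\h'(\b)]$ is $1$, as required.

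The conceptual heart of the argument is the twist construction; the two places demanding care are the block computations showing that $[\La',n,m,\h'(\b)]$ really is a realization of $(k,\b)$ on $\La'$ (the equalities $\v_{\La'}(\h'(\b))=-n$ and $e_{\h'(\b)}(\La')=e_{\b}(\La)$, since only then are $n$ and $k$ unchanged), and the (routine but not automatic) extraction from Lemma~\ref{ConjPiD} of a single $\sigma\in\KK(\La)$ that simultaneously normalizes $\La$ and $\E^{\flit}$ and induces a generator of $\Gal(\E^{\flit}/\F)$. Once these are in place the Fr\"ohlich-invariant computation is purely formal.
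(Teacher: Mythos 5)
Your argument is correct and is essentially the paper's own: the paper puts the embedding $(\E^{\flit},\La)$ in standard form via Lemma \ref{ConjPiD} and takes $u=\mathrm{diag}(\I_r,\w\cdot\I_r)$, whose second block plays exactly the role of your Galois twist $\sigma$, and then makes the same residue-field computation that $\mathrm{diag}(x,\bar\tau(x))$ is central in $\Mat_2(\overline{\AA})$ only for $x\in\mathfrak{k}_{\F}$. The only inessential difference is your detour through Skolem--Noether: the block matrix $\mathrm{diag}(1,\sigma)$ already conjugates the diagonal copy of $\F(\b)$ onto $\h'(\F(\b))$, so it could serve directly as $u$.
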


\begin{proof}
We fix a $\D$-basis $\Bb$ of $\V$, a max\-imal unramified extension $\L$ 
of $\F$ contained in $\D$ and a uniformizer $\w$ of $\D$ normalizing $\L$
(see paragraph \ref{LanceQuiSaigne}).
According to Lemma \ref{ConjPiD}, 
we may identify $\A$ with $\Mat_r(\D)$ and assume
that the embedding $(\E^{\flit},\La)$ is in standard form with respect to 
$(\Bb,\L)$.
The map $\h:x\mapsto\w x\w^{-1}$
defines a generator of $\Gal(\E^\flit/\F)$, 
and thus induces on the residue field 
$\mathfrak{k}=\mathfrak{k}_{\E^{\flit}}$ a generator
of $\Gal(\mathfrak{k}/\mathfrak{k}_\F)$, denoted $\s$.  
We write $j$ for the $\mathfrak{k}_\F$-algebra homomorphism 
from $\mathfrak{k}$ to $\overline{\AA}$
induced by $\h$, which is the composite of 
$\s$ with the canonical embedding of $\mathfrak{k}$ 
in $\overline{\AA}$. 
Thus, one has $j(x)=x$ if and only if $x\in\mathfrak{k}_\F$. 
We now set:
\begin{equation*}
u=
\begin{pmatrix}
\I_r&0\\
0&\w\cdot\I_r\\
\end{pmatrix}\in\Mat_2(\A)=\A'.
\end{equation*}
If one identifies the $\mathfrak{k}_\F$-algebra
$\overline{\AA}{}^{\flop}=\AA(\La^{\flop})/\PP(\La^{\flop})$
with $\Mat_{2}(\overline{\AA})$,
then the $\mathfrak{k}_\F$-algebra homomorphism 
$j'$ from $\mathfrak{k}$ to $\overline{\AA}{}^{\flop}$ 
induced by $x\mapsto uxu^{-1}$ is given by:
\begin{equation*}
x\mapsto
\begin{pmatrix}
x&0\\
0&j(x)\\
\end{pmatrix}.
\end{equation*}
Therefore, $j^{\flop}(x)$ is central in $\overline{\AA}{}^{\flop}$ 
if and only if $x=j(x)$ is central in $\overline{\AA}$, that is, 
if and only if $x\in\mathfrak{k}_\F$.
\end{proof}

This leads us to the following result.
For $i=1,2$, let $(k,\b_i)$ be a simple pair over $\F$,
let $[\La,n_i,m_i,\h_i(\b_i)]$ be a realization of $(k,\b_i)$
in $\A$ and let $\t_i\in\Cc(\La,m_i,\h_i(\b_i))$ be a simple 
character.

\begin{prop}
\label{Hengist}
Assume $\t_1$ and $\t_2$ intertwine in $\mult\A$. 
Then there is a simple central $\F$-algebra $\A'$ together with 
realizations $[\La',n^{}_i,m^{}_i,\h'_i(\b^{}_i)]$ of $(k,\b_i)$ 
in $\A'$ (with the same $n_i$ and $m_i$), with 
$i=1,2$, which are sound and have the same embedding type, and such 
that $\t_1'$ and $\t_2'$ intertwine in $\A^{\prime\times}$, where
$\t_i'\in\Cc(\La',m^{}_i,\h_i'(\b_i^{}))$ denotes the transfer of 
$\t_i$.
\end{prop}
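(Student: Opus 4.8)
The plan is to combine the two ``block'' constructions already developed in the paper: the process $\La\mapsto\La^{\ddag}$ of paragraph \ref{PreDague}, which turns any realization into a \emph{sound} one while preserving intertwining (Lemma \ref{EveryBodyIsSound}, Proposition \ref{LongJohnSilverVarianteSound}), and the doubling trick $\La\mapsto\La\oplus\La$ of Lemma \ref{Blaise3}, which normalizes the Fr\"ohlich invariant to $1$. First I would apply Proposition \ref{LongJohnSilverGeneralCarSimVariante} repeatedly: passing from $\La$ to the lattice sequence $\La^{1}\oplus\cdots\oplus\La^{l}$ as in paragraph \ref{DerivedStratum} sends the transfers $\t_i$ to characters that still intertwine in the bigger algebra, so the doubling construction of Lemma \ref{Blaise3} and the stacking construction (\ref{Francisque}) both preserve the intertwining of the (transferred) simple characters, not merely of the strata. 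Since all these operations act on $\La$ in a way that is independent of $i$, the two characters are carried along compatibly.

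The key point is to arrange \emph{simultaneously} that the two strata become sound and have the same embedding type, and to control this via the Fr\"ohlich invariant. The plan is: (1) For each $i$, the maximal unramified subextension of $\F(\b_i)/\F$ sits inside $\A$; by Lemma \ref{ConjPiD} we may conjugate (within $\KK(\La)$, which does not disturb the characters) so that both are in standard form with respect to a common pair $(\Bb,\L)$. (2) Apply the doubling trick of Lemma \ref{Blaise3} to each, reducing to the case where both strata $[\La\oplus\La,n_i,m_i,\h_i'(\b_i)]$ in $\A'=\Mat_2(\A)$ have Fr\"ohlich invariant $1$; by Lemma \ref{Blaise3} the relevant lattice sequence on the doubled space is pure for the conjugated $\F(\b_i)$, and by Proposition \ref{LongJohnSilverGeneralCarSimVariante} the transferred characters $\t_i'$ still intertwine. (3) Now apply the stacking construction (\ref{Francisque}) with a common $l$ that is a multiple of the periods $e$, $e'_1$, $e'_2$ appearing in paragraph \ref{PreDague} for \emph{both} $\b_1$ and $\b_2$, so that Proposition \ref{LongJohnSilverVarianteSound} makes both strata sound; by Lemma \ref{Thomas} the Fr\"ohlich invariant is unchanged under stacking, hence remains $1$ for both. (4) Since the strata are now sound, unramified, and have Fr\"ohlich invariant $1$, Theorem \ref{FROH} (Fr\"ohlich's theorem) forces the two embeddings $(\K_i,\La^{\ddag})$ to be equivalent, i.e.\ the strata have the same embedding type. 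Throughout, Proposition \ref{LongJohnSilverGeneralCarSimVariante} guarantees the transferred simple characters continue to intertwine.

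The main obstacle I anticipate is the bookkeeping needed to make steps (2) and (3) genuinely compatible for both indices at once: the doubling of Lemma \ref{Blaise3} is phrased for a single stratum and uses a conjugation depending on that stratum's embedding, so one has to check that after the standard-form reduction in step (1) a \emph{single} element $u\in\A'^{\times}$ (or a pair of such elements whose effect on $\La'$ is the same) can be used to normalize both Fr\"ohlich invariants while keeping $\t_1'$ and $\t_2'$ intertwining — this is where putting both unramified subextensions inside the common $\L$ pays off, since then the generator $\w$ of $\Gal(\L/\F)$ works uniformly for both. A second, more routine point is verifying that the period $l$ can be chosen to serve both strata simultaneously in (\ref{Francisque}); this is immediate by taking a common multiple, and Lemma \ref{Thomas} together with Proposition \ref{LongJohnSilverVarianteSound} then finishes the argument. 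Assembling these, one obtains the desired realizations $[\La',n_i,m_i,\h_i'(\b_i)]$, and by construction $\t_i'$ is the transfer of $\t_i$ at each stage, so the final characters are indeed the transfers as required.
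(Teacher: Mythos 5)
Your overall strategy is exactly the one the paper follows: double via Lemma \ref{Blaise3} to force Fr\"ohlich invariant $1$, then stack via (\ref{Francisque}) with a common multiple $l$ so that Proposition \ref{LongJohnSilverVarianteSound} and Lemma \ref{Thomas} give sound strata of Fr\"ohlich invariant $1$, conclude equality of embedding types from Theorem \ref{FROH}, and carry the characters along with Proposition \ref{LongJohnSilverGeneralCarSimVariante} (noting that the transferred characters, not just the strata, keep intertwining). The one place you diverge is your step (1), and as written it has a gap. Lemma \ref{ConjPiD}(1) standardizes a \emph{single} embedding, and the equivalence it produces is in general not implemented by an element of $\KK(\La)$: it may move $\La$ within its conjugacy class. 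There is no reason why the two embeddings $(\F(\b_1)^\flit,\La)$ and $(\F(\b_2)^\flit,\La)$, which share the same lattice sequence, can simultaneously be put in standard form with respect to one common pair $(\Bb,\L)$ by conjugations normalizing $\La$; arranging that is essentially as strong as the embedding-type statements one is trying to prove, so you cannot invoke it here.

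Fortunately the step is unnecessary, because a single conjugating element $u$ is not needed. Apply Lemma \ref{Blaise3} separately for each $i$, obtaining $u_i\in\A^{\prime\times}$ such that $[\La',n_i,m_i,u_i\h_i(\b_i)u_i^{-1}]$ has Fr\"ohlich invariant $1$; the lattice sequence $\La'=\La\oplus\La$ is untouched, so both new strata live on the same $\La'$. The transfer $\t_i''$ of $\t_i$ to this conjugated stratum is the conjugate character $x\mapsto\t_i'(u_i^{-1}xu_i)$, and if $\iota(g)$ intertwines $\t_1'$ and $\t_2'$ (which is what the proof of Proposition \ref{LongJohnSilverGeneralCarSimVariante} gives), then $u_1^{-1}\iota(g)u_2$ intertwines $\t_1''$ and $\t_2''$. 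This is precisely how the paper resolves the point you flagged as the main obstacle, and with this adjustment your steps (2)--(4) go through as you describe. One small slip of phrasing: $\KK(\La)$-conjugation does change the characters (it replaces them by conjugates); what it preserves is the intertwining relation, which is all that is needed.
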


\begin{proof}
First, we reduce to the case where the strata $[\La,n_i,m_i,\h_i(\b_i)]$ 
have Fr\"ohlich in\-variant $1$.
Let $g\in\A^{\times}$ intertwine the characters $\t_1$ and $\t_2$ 
as in (\ref{Eaque}).
We set $\La'=\La\oplus\La$ and $\A'=\Mat_2(\A)$ and, 
for each $i$, we fix an element $u_i\in\A^{\prime\times}$ as in Lemma 
\ref{Blaise3} so that the simple stratum 
$[\La',n_i^{},m_i^{},u_i^{}\h_i^{}(\b_i^{})u_i^{-1}]$
has Fr\"ohlich invariant $1$.
For each $i$, let $\t_i'$ be the trans\-fer of $\t_i$ in 
$\Cc(\La',m_i,\h_i(\b_i))$, and let $\t_i''$ be that of $\t_i$ in 
$\Cc(\La',m_i,u_i^{}\h_i^{}(\b_i^{})u_i^{-1})$, 
which is equal to the 
conjugate character $x\mapsto\t_i'(u_i^{-1}xu_i^{})$.
By the proof of Proposition 
\ref{LongJohnSilverGeneralCarSimVariante}, 
the element $g'=\iota(g)\in\A^{\prime\times}$ 
intertwines $\t_1'$ and $\t_2'$,
where $\iota$ denotes the diagonal em\-bed\-ding of $\A$ in $\A'$, 
and it follows that 
$g''=u_1^{-1}g'u_2^{}$ intertwines 
$\t_1''$ and $\t_2''$.
Thus we can assume that the strata $[\La,n_i,m_i,\h_i(\b_i)]$ have 
Fr\"ohlich invariant $1$.
Using Proposition \ref{LongJohnSilverVarianteSound} (with some 
suitable integer $l\>1$) and 
Lemma \ref{Thomas} together, we see that the simple strata 
$[\La^{\ddag},n_i,m_i,\h_i(\b_i)]$
are sound with Fr\"ohlich invariant $1$. 
By Theorem \ref{FROH}, they have the same embedding type. 
Let $\t_i^{\ddag}$ be the trans\-fer of $\t_i$ in 
$\Cc(\La^{\ddag},m_i,\h_i(\b_i))$. 
The fact that $\t_1^{\ddag}$ and $\t_2^{\ddag}$ 
intertwine in $\A^{\ddag\times}$ follows from Proposition 
\ref{LongJohnSilverGeneralCarSimVariante}.
\end{proof}

\begin{rema}
\label{ColorBlue}
The assumption $[\F(\b_1):\F]=[\F(\b_2):\F]$ is not needed in the proof.
\end{rema}

%%%%%%%%%%%%%%%%%%%%%%%%%%%%%%%%%%%%%%%%%%%%%%%%%%%%%%%%%%%%%%%%%%%%%%%%%%%

\subsection{}

Before proving the first main result of this section, that is Proposition 
\ref{Hengist6}, we will need the following lemmas.
Compare the first one with Proposition \ref{Eponine}. 

\begin{lemm}
\label{Hectare1}
For $i=1,2$, let $(\Theta_i,k,\b_{i})$ be a ps-character over $\F$, 
and suppose that $\Theta_1$ and $\Theta_2$ are endo-equivalent.
Then $n_{\F}(\b_1)=n_{\F}(\b_2)$, $e_{\F}(\b_1)=e_{\F}(\b_2)$, 
$f_{\F}(\b_1)=f_{\F}(\b_2)$ and $k_{\F}(\b_1)=k_{\F}(\b_2)$.
\end{lemm}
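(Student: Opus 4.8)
The plan is to reduce this to its analogue for simple pairs, namely Proposition~\ref{Eponine}. Each of the four invariants $n_{\F}(\b)$, $e_{\F}(\b)$, $f_{\F}(\b)$, $k_{\F}(\b)$ depends only on the element $\b$ --- through the field $\F(\b)$, the valuation $\v_{\F(\b)}(\b)$ and the critical exponent recalled in paragraph~\ref{Jessica} --- so it is enough to show that endo-equivalence of the ps-characters $\Theta_{1}$ and $\Theta_{2}$ forces the underlying simple pairs $(k,\b_{1})$ and $(k,\b_{2})$ to be endo-equivalent in the sense of Definition~\ref{Petrone}; Proposition~\ref{Eponine} then gives all four equalities at once.

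To prove this, fix realizations $[\La,n_{i},m_{i},\h_{i}(\b_{i})]$ of $(k,\b_{i})$ in a common simple central $\F$-algebra $\A$ whose simple characters $\t_{i}=\Theta_{i}(\La,m_{i},\h_{i})$ intertwine in $\mult\A$, as provided by Definition~\ref{Petrone2} (recall that $[\F(\b_{1}):\F]=[\F(\b_{2}):\F]$ is part of the hypothesis). By Proposition~\ref{Hengist} --- whose proof does not even use the equality of degrees, cf. Remark~\ref{ColorBlue} --- one may replace these by \emph{sound} realizations $[\La',n_{i},m_{i},\h'_{i}(\b_{i})]$ in a simple central $\F$-algebra $\A'$, having the same embedding type (in fact Fr\"ohlich invariant $1$, by the proof of that proposition) and whose transfers $\t'_{i}\in\Cc(\La',m_{i},\h'_{i}(\b_{i}))$ still intertwine in $\mult{\A'}$. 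One then quotes, in this sound situation, that intertwining of the simple characters $\t'_{i}$ together with equality of embedding types forces the underlying simple strata to be related --- by Grabitz's ``intertwining implies conjugacy'' theorem in the sound case (Theorem~\ref{Grabinoulor}, whose hypotheses on the embeddings are checked by means of Fr\"ohlich's Theorem~\ref{FROH}) and Proposition~\ref{Gr9199} --- so that $(k,\b_{1})$ and $(k,\b_{2})$ become endo-equivalent. Equivalently, one may pass all the way to a split algebra with a strict lattice sequence, through the $\La\mapsto\widetilde\La^{\dag}$ construction of paragraph~\ref{Dague} and its analogue for simple characters (as in Proposition~\ref{LongJohnSilverGeneralCarSimVariante}), and appeal to the corresponding results of~\cite{BK,BH}. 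In either case Proposition~\ref{Eponine} then concludes.

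The main obstacle is the matching of the numerical parameters of the two realizations. A priori the valuations $n_{i}=-\v_{\La}(\h_{i}(\b_{i}))$ and the levels $m_{i}$ need not coincide, whereas the ``intertwining implies conjugacy'' statements available for simple strata or simple characters (Proposition~\ref{IreneeDeLyon}, Theorem~\ref{Grabinoulor}, and their split-case predecessors) are phrased for realizations sharing a common triple $(\La,n,m)$; moreover the conductor of a simple character $\t\in\Cc(\La,m,\b)$, which equals $n=-\v_{\La}(\b)$, is preserved only by the normalizer $\KK(\La)$ and not by an arbitrary intertwining element, so that $n_{1}=n_{2}$ cannot simply be read off at the outset. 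One therefore first enlarges the levels $m_{i}$ --- which only restricts the simple characters and so preserves their intertwining --- and uses the latitude in the sound (or split) realization furnished by Proposition~\ref{Hengist} to bring the two realizations to a common $(\La',n,m)$ before invoking the above theorems; in the split setting this normalization is already absorbed into the endo-equivalence formalism of~\cite{BH}. Once $(k,\b_{1})\thickapprox(k,\b_{2})$ is established, Proposition~\ref{Eponine} yields $n_{\F}(\b_{1})=n_{\F}(\b_{2})$, $e_{\F}(\b_{1})=e_{\F}(\b_{2})$, $f_{\F}(\b_{1})=f_{\F}(\b_{2})$ and $k_{\F}(\b_{1})=k_{\F}(\b_{2})$.
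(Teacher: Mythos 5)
Your overall line of attack (pass to sound realizations via Proposition~\ref{Hengist}, then invoke Grabitz) is the same as the paper's, but there is a genuine gap exactly at the point you try to dispose of in one sentence: the matching of the parameters $n_1,n_2$ and $m_1,m_2$. With a \emph{common} lattice sequence $\La'$, the integer $n_i=-\v_{\La'}(\h'_i(\b_i))$ is not a free parameter: it equals $e_{\h'_i(\b_i)}(\La')\,n_{\F}(\b_i)$, so it is determined by the very invariants $n_\F(\b_i)$, $e_\F(\b_i)$ whose equality is the content of the lemma. Hence $n_1=n_2$ cannot be ``brought about by the latitude in the realization''; it has to be \emph{proved}, and the paper does so by running the first part of the argument of \cite[Proposition~8.4]{BH} on the intertwining characters. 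Likewise, ``enlarging the levels $m_i$'' is not innocuous: for $m>m_2$ the restriction of $\t_2$ lies in $\Cc(\La',m,\b_2)$ only as long as $[\La',n,m,\b_2]$ remains simple, and when $m_1$ exceeds $-k_0(\b_2,\La')-1$ one must replace $\b_2$ by an approximation $\g$ with $[\La',n,m_1,\g]$ simple and equivalent to $[\La',n,m_1,\b_2]$, apply Grabitz's theorem and Proposition~\ref{Gr9199} to $\g$, and only then use \cite[Theorem~5.1(ii)]{BG} to conclude that $[\La',n,m_1,\b_2]$ was simple after all. This approximation step, together with the $n_1=n_2$ argument, is the bulk of the paper's proof of Lemma~\ref{Hectare1}; your proposal presupposes both rather than proving them. (Once Theorem~\ref{Grabinoulor} and Proposition~\ref{Gr9199} do apply, they already yield $e_\F$, $f_\F$ and $k_\F$ directly, so the detour through endo-equivalence of the simple pairs and Proposition~\ref{Eponine} is unnecessary; the remaining equality $n_\F(\b_1)=n_\F(\b_2)$ then falls out of $n_1=n_2$.)

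The alternative route you offer as ``equivalent'' — passing to a split algebra with a strict lattice sequence via $\La\mapsto\widetilde\La^{\dag}$ and quoting \cite{BK,BH} — does not work. Proposition~\ref{LongJohnSilverGeneralCarSimVariante} preserves intertwining of simple characters only for the passage $\A\to\End_\D(\V\oplus\dots\oplus\V)$, i.e.\ over the \emph{same} division algebra; there is no analogue for the passage $\A\to\widetilde\A=\End_\F(\V)$ (Proposition~\ref{LongJohnSilverTildeVariante} concerns strata only, where the inclusion $\aa_k(\La)\subseteq\aa_k(\widetilde\La)$ makes the statement trivial). Transferring intertwining of simple characters to the split situation is precisely the hard problem the interior-lifting and base-change machinery of sections~\ref{PILSC}--\ref{BCASDFSC} is built to solve, and it is not available at this stage of the paper, nor needed for this lemma.
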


\begin{proof}
By assumption, we have $[\F(\b_1):\F]=[\F(\b_2):\F]$ and
there is a simple central $\F$-algebra $\A$ together with 
realizations $[\La,n_i,m_i,\b_i]$ of $(k,\b_i)$, for 
$i=1,2$, such that the corresponding simple characters 
$\t_1$ and $\t_2$ inter\-twi\-ne in $\mult\A$.
By Proposition \ref{Hengist}, we can assume that these 
realizations are sound and have the same embedding type.
We now follow the proof of \cite[Pro\-po\-sition 8.4]{BH}.  
An argument similar to the first part of this proof 
(which we do not reproduce) gives us $n_1=n_2$, denoted $n$. 
Now consider the integers $m_1,m_2$. 
By symmetry, we can assume that $m_1\>m_2$.
Let us choose 
a simple stratum $[\La,n,m_1,\g]$ in $\A$ which is equivalent to 
$[\La,n,m_1,\b_2]$ and let $\t_0$ denote the restriction of $\t_2$ to 
$\H^{m_1+1}(\g,\La)$. 
The characters $\t_0$ and $\t_1$ still intertwine, which implies, 
by the ``inter\-twi\-ning implies conjugacy'' theorem
\cite[Corollary 10.15]{Gr}, the existence of $u\in\KK(\La)$ such that
$\Cc(\La,m_1,\b_1)=\Cc(\La,m_1,u\g u^{-1})$.
By Proposition \ref{Gr9199}, we get:
\begin{equation}
\label{Galaad}
k_\F(\b_1)=k_\F(\g),
\quad
[\F(\b_1):\F]=[\F(\g):\F].
\end{equation}
By \cite[Theorem 5.1(ii)]{BG}, the equality
$[\F(\b_2):\F]=[\F(\g):\F]$
implies that $[\La,n,m_1,\b_2]$ is a simple stratum in $\A$.
By Theorem \ref{Grabinoulor}, we get $e_{\F}(\b_1)=e_{\F}(\b_2)$ and 
$f_{\F}(\b_1)=f_{\F}(\b_2)$, 
and (\ref{Galaad}) gives us $k_\F(\b_1)=k_\F(\b_2)$.
The remaining equality is a consequence of the identity 
$n_i=e_{\b_i}(\La)n_{\F}(\b_i)$. 
\end{proof}

\begin{coro}
\label{Psaume} 
Theorem \ref{Gata} implies Theorems \ref{EndoClasChar} 
and \ref{TrelawneyIIC}.
\end{coro}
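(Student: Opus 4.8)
I want to deduce Theorems \ref{EndoClasChar} and \ref{TrelawneyIIC} from Theorem \ref{Gata}, using the numerical rigidity supplied by Lemma \ref{Hectare1}. The first observation is that Theorem \ref{Gata} is a statement about two realizations on \emph{the same} lattice sequence $\La$ and with \emph{the same} $n,m$; so the main work in the reduction is to arrange that we are in that situation, and then to translate the conclusion (conjugacy of $\t_1'$ and $\t_2'$, together with conjugacy of the $\h_i(\K_i)$) into the two desired statements.

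First I would prove Theorem \ref{TrelawneyIIC}. Here we are given simple strata $[\La,n,m,\b_i]$ in $\A$ with the same embedding type whose simple characters $\t_i\in\Cc(\La,m,\b_i)$ intertwine in $\mult\A$. Set $k=\lfloor m/e_{\b_i}(\La)\rfloor$. The pair $([\La,n,m,\b_i],\t_i)$ determines a ps-character $\Theta_i$ over $\F$, with $\h_i$ the inclusion $\F(\b_i)\hookrightarrow\A$; since $\t_1$ and $\t_2$ intertwine, $\Theta_1\thickapprox\Theta_2$ by Definition \ref{Petrone2}. Now I need to know that the two strata are genuinely realizations of \emph{simple pairs with the same $k$}: this is exactly where Lemma \ref{Hectare1} enters, giving $e_\F(\b_1)=e_\F(\b_2)$, $f_\F(\b_1)=f_\F(\b_2)$, $n_\F(\b_1)=n_\F(\b_2)$ and $k_\F(\b_1)=k_\F(\b_2)$; combined with the hypothesis $n_1=n_2=n$ and $m_1=m_2=m$ this forces $k_1=k_2$, so $[\La,n,m,\b_i]$ are realizations of simple pairs $(k,\b_i)$ in the common algebra $\A$ with the common lattice sequence $\La$. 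Theorem \ref{Gata} then applies verbatim and gives $u\in\KK(\La)$ realizing (1), (2), (3) of Theorem \ref{TrelawneyIIC} (noting that the transfer of $\t_i$ to the identical realization is $\t_i$ itself, by Lemma \ref{Bachir} with $l=1$, so that $\Theta_i(\La,m,\h_i)=\t_i$).

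Next, Theorem \ref{EndoClasChar}: here the two realizations live on the same $\La$ but a priori have different $n_i,m_i$, and the conclusion is only intertwining, not conjugacy. Given $\Theta_1\thickapprox\Theta_2$, Lemma \ref{Hectare1} gives $n_\F(\b_1)=n_\F(\b_2)$, $e_\F(\b_1)=e_\F(\b_2)$, $f_\F(\b_1)=f_\F(\b_2)$, $k_\F(\b_1)=k_\F(\b_2)$, and since both realizations are on the same $\La$ we have $e_{\h_i(\b_i)}(\La)=e_\F(\b_i)\cdot e(\La)$ (same for both), hence $n_1=n_2=:n$ and $k_1=k_2=:k$; only the levels $m_1,m_2$ can differ. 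By symmetry assume $m_1\geqslant m_2$. I would now pass to a simple stratum $[\La,n,m_1,\g]$ equivalent to $[\La,n,m_1,\h_2(\b_2)]$ (using \cite[Theorem 5.1]{BG} to know it is simple, exactly as in the proof of Lemma \ref{Hectare1}), and let $\t_0$ be the restriction of $\t_2=\Theta_2(\La,m_2,\h_2)$ to $\H^{m_1+1}(\g,\La)$; this $\t_0$ lies in $\Cc(\La,m_1,\g)$ and is the value at level $m_1$ of a ps-character $\Theta_0$ endo-equivalent to $\Theta_1$ (it is endo-equivalent to $\Theta_2$, hence to $\Theta_1$). Now $\Theta_0$ and $\Theta_1$ are realized on the \emph{same} $[\La,n,m_1,\cdot]$, so Theorem \ref{Gata} (after conjugating by some element of $\KK(\La)$, which preserves intertwining) produces an \emph{equality} of the level-$m_1$ simple characters attached to $\Theta_1$ and $\Theta_0$ on $\La$; unwinding, $\t_1$ and $\t_0$ intertwine in $\mult\A$, hence $\t_1$ and $\t_2$ intertwine in $\mult\A$, which is the assertion of Theorem \ref{EndoClasChar} for these realizations. (The transfer statement in Theorem \ref{EndoClasChar} is just the definition: $\Theta_i(\La',m_i',\h_i')$ is by construction the transfer of any other realization of $\Theta_i$.)

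**The main obstacle** is the bookkeeping in the $m_1\neq m_2$ case of Theorem \ref{EndoClasChar}: one must make sure that restricting a simple character to a deeper subgroup does not leave the world of simple characters, and that the resulting $\t_0$ really is a realization of a ps-character on $[\La,n,m_1,\g]$ with the \emph{same} underlying lattice sequence as $\t_1$ — this is precisely the content that \cite[Theorem 5.1]{BG} and the restriction compatibility of transfer are invoked for, and it is the step where the argument would be spelled out most carefully. Everything else is a matter of matching the numerical invariants furnished by Lemma \ref{Hectare1} with the hypotheses of Theorem \ref{Gata}.
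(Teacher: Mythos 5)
Your reduction of Theorem \ref{TrelawneyIIC} follows the paper's line, but your deduction of Theorem \ref{EndoClasChar} has two genuine gaps. First, the level normalization goes the wrong way: you restrict $\t_2$ to the deeper level $m_1\>m_2$ and intertwine $\t_1$ with this restriction $\t_0$, but that only yields $\t_0(x)=\t_1(gxg^{-1})$ on $\H^{m_1+1}(\h_2(\b_2),\La)\cap g^{-1}\H^{m_1+1}(\h_1(\b_1),\La)g$, whereas Theorem \ref{EndoClasChar} requires the identity on the larger set $\H^{m_2+1}(\h_2(\b_2),\La)\cap g^{-1}\H^{m_1+1}(\h_1(\b_1),\La)g$; passing from intertwining of restrictions to intertwining of the given characters is precisely the non-trivial direction (it is the content of Lemma \ref{Hectare2}, which needs soundness and Grabitz's results), so your ``hence $\t_1$ and $\t_2$ intertwine'' is unjustified. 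The paper normalizes downwards instead: since $m_1,m_2\>m=e_{\h_i(\b_i)}(\La)k$, one replaces each $\t_i$ by $\Theta_i(\La,m,\h_i)$, which extends it, and intertwining of the extensions trivially restricts; so one may assume $m_1=m_2=m$. (Your detour through a stratum $[\La,n,m_1,\g]$ is also unnecessary: since Lemma \ref{Hectare1} gives $e_{\h_1(\b_1)}(\La)=e_{\h_2(\b_2)}(\La)$, one has $\lfloor m_1/e_{\h_2(\b_2)}(\La)\rfloor=k$, so $[\La,n,m_1,\h_2(\b_2)]$ is itself simple.) Second, and more seriously, Theorem \ref{Gata} requires the two strata to have the \emph{same embedding type}, which is not a hypothesis of Theorem \ref{EndoClasChar}, and your parenthetical fix --- conjugating by an element of $\KK(\La)$ --- cannot supply it: conjugation by $\KK(\La)$ preserves the embedding type of a stratum, by the very definition of equivalence of embeddings. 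The paper's key device here is absent from your argument: one chooses (Lemma \ref{Merimee}, i.e.\ \cite[Corollary 3.16]{BG}) a new homomorphism $\h_3:\F(\b_2)\to\A$ normalizing $\La$ such that $[\La,n,m,\h_3(\b_2)]$ has the same embedding type as $[\La,n,m,\h_1(\b_1)]$, transfers $\t_2$ to $\t_3\in\Cc(\La,m,\h_3(\b_2))$, applies Theorem \ref{Gata} to conjugate $\t_3$ to $\t_1$, and then uses Skolem--Noether to produce $g\in\mult\A$ with $\h_3=g\h_2g^{-1}$, which intertwines $\t_3$ with $\t_2$; composing gives the intertwining of $\t_1$ and $\t_2$.

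A lesser point concerns Theorem \ref{TrelawneyIIC}: your claim that $\Theta_1\thickapprox\Theta_2$ follows from the intertwining of $\t_1$ and $\t_2$ ``by Definition \ref{Petrone2}'' is circular as written, since that definition also requires $k_1=k_2$ and $[\F(\b_1):\F]=[\F(\b_2):\F]$, and you then propose to obtain these from Lemma \ref{Hectare1}, whose hypothesis is endo-equivalence. These equalities must be extracted from the intertwining itself: first make the realizations sound with the same embedding type (Proposition \ref{Hengist}, whose proof does not use the degree equality, cf.\ Remark \ref{ColorBlue}), then invoke Grabitz (Theorem \ref{Grabinoulor}, Proposition \ref{Gr9199}), as in the proof of Lemma \ref{Hectare1} and again in the proof of Theorem \ref{Endo-Class}. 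Once that is supplied, your application of Theorem \ref{Gata} to conclude Theorem \ref{TrelawneyIIC} coincides with the paper's.
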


\begin{proof}
For $i=1,2$, let $(\Theta_{i},k,\b_{i})$ be a ps-character over $\F$, 
and suppose that $\Theta_{1}$ and $\Theta_{2}$ are endo-equivalent. 
Let $\A$ be a simple central $\F$-algebra.
For each $i$, let $[\La,n_i,m_i,\h_i(\b_i)]$ be a realization of $(k,\b_i)$ 
in $\A$, and put $\t_i=\Theta_i(\La,m_i,\h_i)$.
Write $n=n_i$ and:
\begin{equation*}
m={e_{\h_i(\b_i)}(\La)}k,
\end{equation*}
which do not depend on $i$ by Lemma \ref{Hectare1}. 
As $m_1,m_2\>m$, 
we may assume without loss of generality that $m_1=m_2=m$.
Let us fix an $\F$-algebra homomorphism $\h_3:\F(\b_2)\to\A$ 
such that the simple strata $[\La,n,m,\h_1(\b_1)]$ and 
$[\La,n,m,\h_3(\b_2)]$ have the same embedding type, 
and let $\t_3$ denote the transfer of $\t_2$ in $\Cc(\La,m,\h_3(\b_2))$.
According to Theorem \ref{Gata}, there is an element $u\in\KK(\La)$ 
such that 
$\t_3(x)=\t_1(uxu^{-1})$ for all $x\in\H^{m+1}(\h_3(\b_2),\La)$ and, 
by the Skolem-Noether 
theorem, there is an element $g\in\A^{\times}$
such that $\h_3(x)=g\h_2(x)g^{-1}$. 
Thus $g$ intertwines $\t_3$ and $\t_2$,
which proves that $\t_1$ and $\t_2$ intertwine in $\A^{\times}$ and ends 
the proof of Theorem \ref{EndoClasChar}. 

Assume now that the strata $[\La,n,m,\h_i(\b_i)]$, $i=1,2$ have the 
same embedding type. 
Then applying Theorem \ref{Gata} gives immediately Theorem \ref{TrelawneyIIC}.  
\end{proof}

We are thus reduced to proving 
Theorem \ref{Gata}, which will be done in section \ref{Sec8}. 
For this we will have to develop base change methods (see sections 
\ref{PILSC}, \ref{ILT} and \ref{BCASDFSC}).
We now state and prove the first main result of this section. 

\begin{prop}
\label{Hengist6}
For $i=1,2$, let $(\Theta_{i},k,\b_{i})$ be a 
ps-character over $\F$, and suppose that 
$\Theta_{1}$ and $\Theta_{2}$ are endo-equivalent. 
Write $\K_i$ for the maximal un\-ramified extension of $\F$ contained
in $\F(\b_i)$.
Then there exists a simple central $\F$-algebra $\A$ together with 
reali\-za\-tions $[\La,n,m,\h_i(\b_i)]$ of $(k,\b_i)$, for $i=1,2$, 
which are sound and have the same embedding type, and such that:
\begin{enumerate}
\item 
$m$ is a multiple of $k$;
\item
$\h_1(\K_1)=\h_2(\K_2)$;
\item
$\Theta_1(\La,m,\h_1)=\Theta_2(\La,m,\h_2)$.
\end{enumerate}
\end{prop}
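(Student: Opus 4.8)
The plan is to follow the two-stage normalization sketched in the introduction: first bring the two realizations onto a common lattice sequence at a common level that is a multiple of $k$, then transport everything onto a \emph{sound} realization where the results of Grabitz apply, and finally conjugate to turn ``conjugate'' into ``equal''.

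\emph{Normalizing the level.} By Definition \ref{Petrone2} there is a simple central $\F$-algebra $\A$ together with realizations $[\La,n_i,m_i,\h_i(\b_i)]$ of $(k,\b_i)$ in $\A$, on the \emph{same} lattice sequence $\La$, such that $\t_i:=\Theta_i(\La,m_i,\h_i)$ intertwine in $\mult\A$. By Lemma \ref{Hectare1} we have $n_\F(\b_1)=n_\F(\b_2)$, $e_\F(\b_1)=e_\F(\b_2)$, $f_\F(\b_1)=f_\F(\b_2)$ and $k_\F(\b_1)=k_\F(\b_2)$; since the $\h_i(\F(\b_i))$-period $e:=e_{\h_i(\b_i)}(\La)$ of $\La$ depends only on $e_\F(\b_i)$ and on the translation class of $\La$, both $e$ and the integer $n:=e\,n_\F(\b_i)=n_i$ are independent of $i$. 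Restricting $\t_1$ and $\t_2$ to level $M:=\max(m_1,m_2)$ preserves intertwining and is compatible with transfer, so the restrictions are again $\Theta_i(\La,M,\h_i)$; moreover $\lfloor M/e\rfloor=k$, so $[\La,n,M,\h_i(\b_i)]$ is still a realization of $(k,\b_i)$. Thus we may assume $m_1=m_2=M$. To make the common level a multiple of $k$, I then replace $\La$ by the lattice sequence $k\La$ in its affine class (see (\ref{AffCla})) and $M$ by $kM$: by Lemma \ref{Bachir} the groups $\H^{\bullet+1}$, the sets $\Cc$ and the relevant transfer maps are unchanged, so the (now literally identical) simple characters still intertwine in the same $\mult\A$, while $kM$ is a multiple of $k$. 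After renaming, we may therefore assume $m_1=m_2=m$ is a multiple of $k$; this gives condition (1), and it will not be disturbed by what follows.

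\emph{Passing to a sound realization.} Proposition \ref{Hengist} now provides a simple central $\F$-algebra $\A$ (renamed) carrying \emph{sound} realizations $[\La,n,m,\h_i(\b_i)]$ of $(k,\b_i)$, of the same embedding type and at the same $n$ and $m$, whose transfers $\t_i$ intertwine in $\mult\A$. Write $\K_i$ for the maximal unramified extension of $\F$ in $\F(\b_i)$. Since $\h_i(\K_i)^{\flit}=\h_i(\F(\b_i))^{\flit}$, the fact that the strata have the same embedding type gives at once that the embeddings $(\h_1(\K_1),\La)$ and $(\h_2(\K_2),\La)$ are equivalent in $\A$. Moreover each sound simple stratum $[\La,n,m,\h_i(\b_i)]$ is $\h_i(\K_i)$-special in the sense of \cite[Definition 3.1]{Gr}: using $\h_i(\K_i)\subseteq\h_i(\F(\b_i))$ and the identity $\AA(\La)\cap\C_i\cap\B_i=\AA(\La)\cap\B_i$ (with $\B_i$, resp.\ $\C_i$, the centralizer in $\A$ of $\h_i(\F(\b_i))$, resp.\ of $\h_i(\K_i)$), this reduces to the soundness of $[\La,n,m,\h_i(\b_i)]$ together with the standard facts that, for an unramified extension $\K/\F$ inside $\A$ and a principal hereditary order $\AA$, the order $\AA\cap\C_\A(\K)$ is again principal and $\KK(\AA\cap\C_\A(\K))=\KK(\AA)\cap\C_\A(\K)^{\times}$. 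I expect this verification --- that the centralizer of the unramified subextension interacts cleanly with the order and with its normalizer --- to be the main technical point of the proof.

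\emph{Conclusion.} We may now apply Grabitz's Theorem \ref{Grabinoulor} with $f=f_\F(\b_1)=f_\F(\b_2)$ and with the unramified extensions $\h_i(\K_i)$ of degree $f$: all its hypotheses hold, so there is an element $u\in\KK(\La)$ such that $\h_1(\K_1)=u\,\h_2(\K_2)\,u^{-1}$, such that $\Cc(\La,m,\h_1(\b_1))=\Cc(\La,m,u\h_2(\b_2)u^{-1})$, and such that $\t_2(x)=\t_1(uxu^{-1})$ for all $x\in\H^{m+1}(\h_2(\b_2),\La)=u^{-1}\H^{m+1}(\h_1(\b_1),\La)u$. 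I then replace $\h_2$ by $\h_2':=u\,\h_2(\cdot)\,u^{-1}\colon\F(\b_2)\to\A$. Since $u\in\KK(\La)$ normalizes $\La$, the order $\AA(\La)$ and the $\La$-valuation, the stratum $[\La,n,m,\h_2'(\b_2)]$ is again sound and still has the same embedding type as $[\La,n,m,\h_1(\b_1)]$; the first identity above reads $\h_2'(\K_2)=\h_1(\K_1)$, which is condition (2); and, substituting $y=uxu^{-1}$, the third identity says that the character $(\t_2)^{u^{-1}}$ equals $\t_1$ on $\H^{m+1}(\h_1(\b_1),\La)$. By naturality of the transfer map under $\mult\A$-conjugation, $(\t_2)^{u^{-1}}=\Theta_2(\La,m,\h_2')$, so $\Theta_2(\La,m,\h_2')=\t_1=\Theta_1(\La,m,\h_1)$, which is condition (3). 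As condition (1) is untouched, the data $\A$, $\La$, $\h_1$ and $\h_2'$ satisfy all the requirements, and the proof is complete.
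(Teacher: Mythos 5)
Your overall architecture (Proposition \ref{Hengist} to produce sound realizations of the same embedding type, Lemma \ref{Hectare1} for the numerical invariants, then Grabitz's intertwining-implies-conjugacy theorem and a final conjugation of $\h_2$) is the same as the paper's. The genuine divergence, and the gap, lies in how you arrange a common level $m$ that is a multiple of $k$. The paper takes the common level to be $m=e_{\h_i(\b_i)}(\La)\,k$, which is \emph{smaller} than the given levels $m_i$, and the technical heart of its proof is Lemma \ref{Hectare2}: each $\t_i$ has a unique extension $\vartheta_i\in\Cc(\La,m,\h_i(\b_i))$ to the larger group $\H^{m+1}(\h_i(\b_i),\La)$, and these extensions \emph{still intertwine} (the analogue of \cite[Lemma 3.6.7]{BK} and \cite[Lemma 8.5]{BH}, proved via Grabitz's results). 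You avoid extensions altogether by restricting to $M=\max(m_1,m_2)$ --- which is indeed harmless --- and then rescaling $\La$ to $k\La$ so the level becomes $kM$. This works only when $k\>1$. When $k=0$ (a case allowed throughout, and the relevant one for the applications, e.g.\ the endo-class of a discrete series representation), condition (1) forces $m=0$, your rescaling by $a=k$ is undefined, and no rescaling can help anyway, since Lemma \ref{Bachir} leaves the groups and characters literally unchanged: to realize $\Theta_i$ at level $0$ you must \emph{extend} $\t_i$ from $\H^{M+1}$ to $\H^{1}$ and show the extended characters still intertwine, which is exactly the content of the missing Lemma \ref{Hectare2}. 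So the proposal has a genuine gap at $k=0$, and it is precisely the step the paper's proof is built around.

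Two smaller remarks. First, your verification of $\K_i$-speciality invokes unproved ``standard facts'' that are not needed: since $\K_i\subseteq\F(\b_i)$ one has $\K_i(\b_i)=\F(\b_i)$, so the centralizer of $\K_i(\b_i)$ in $\C_i$ is just $\B_i$; then $(\AA\cap\C_i)\cap\B_i=\AA\cap\B_i$ is principal by soundness, and the normalizer identity follows formally (an element of $\B_i^{\times}$ normalizing $\AA\cap\C_i$ normalizes its intersection with $\B_i$, while conversely $\KK(\AA\cap\B_i)=\KK(\AA)\cap\B_i^{\times}\subseteq\KK(\AA\cap\C_i)$). Alternatively you could simply invoke \cite[Corollary 10.15]{Gr}, as the paper does. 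Second, you use without justification that restriction between admissible levels commutes with transfer (so that the restriction of $\Theta_i(\La,m_i,\h_i)$ is $\Theta_i(\La,M,\h_i)$); this is true, and the paper relies on it implicitly too, but it deserves a reference or a proof.
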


\begin{proof}
By Proposition \ref{Hengist}, 
there is a simple central $\F$-algebra $\A$ together with 
realizations $[\La,n_i,m_i,\h_i(\b_i)]$ of $(k,\b_i)$, for 
$i=1,2$, sound and having the same embedding type, 
such that $\t_1=\Theta_1(\La,m_1,\h_1)$ and 
$\t_2=\Theta_2(\La,m_2,\h_2)$ inter\-twi\-ne in $\mult\A$.
By Lemma \ref{Hectare1}, we have $n_1=n_2$,
and the integer $m={e_{\h_i(\b_i)}(\La)}k$ does not depend on $i$.

\begin{lemm}
\label{Hectare2}
For each $i$, there exists a unique 
$\vartheta_i\in\Cc(\La,m,\h_i(\b_i))$ extending $\t_i$,
and the characters $\vartheta_1$ and $\vartheta_2$ intertwine 
in $\mult\A$.
\end{lemm}

\begin{proof}
The proof is similar to that of \cite[Lemma 3.6.7]{BK} 
and \cite[Lemma 8.5]{BH} together.
One just has to replace Corollary $3.3.21$ of \cite{BK} by 
Proposition $2.16$ of \cite{SeSt}, and Theorems $3.5.8$, 
$3.5.9$ and $3.5.11$ of \cite{BK} by Corollary $10.15$ and Propositions 
$9.9$ and $9.10$ of \cite{Gr}.
\end{proof}

Therefore we can assume that $m_1,m_2$ are both equal to $m$.
The result now follows from 
the ``inter\-twi\-ning implies conjugacy'' theorem
\cite[Corollary $10.15$]{Gr}.
\end{proof}

%%%%%%%%%%%%%%%%%%%%%%%%%%%%%%%%%%%%%%%%%%%%%%%%%%%%%%%%%%%%%%%%%%%%%%%%%%%

\subsection{}
\label{GraalFiction}

We now assume that we are in the situation of paragraph \ref{Miskatonic}. 
Let us fix two simple strata $[\La,n,m,\b_i]$, $i=1,2$, in $\A$.
We set $n'=an$ and fix a non-negative integer $m'$ such that 
$\lfloor{m'/a}\rfloor=m$, so that we have simple strata 
$[\La',n',m',\b_i]$, $i=1,2$, in ${\A}'$, where $\La'$ is defined 
by (\ref{SpadassinVariante}).
We fix a simple character $\t_i$ in $\Cc(\La,m,\b_i)$ 
and write $\t_i'$ for its transfer in $\Cc(\La',m',\b_i)$. 
The aim of this paragraph is to prove the following proposition,
which is the second main result of this section. 

\begin{prop}
\label{Dolomites}
Assume that $\t_1$ and $\t_2$ are equal. 
Then $\t^{\prime}_1$ and $\t^{\prime}_2$ are equal.
\end{prop}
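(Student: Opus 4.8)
The plan is to characterise each transfer $\t'_i\in\Cc(\La',m',\b_i)$ by an Iwahori decomposition along the Levi subgroup $\M=\mult\A\times\dots\times\mult\A$ of $\A^{\prime\times}$ attached to $\V'=\V\oplus\dots\oplus\V$, and then to observe that every ingredient of that characterisation is unchanged when $i$ varies, once we know $\t_1=\t_2$. First I would record the consequence of the hypothesis at the level of $\La$: since $\t_1$ and $\t_2$ are the same character they have the same domain, so $\H^{m+1}(\b_1,\La)=\H^{m+1}(\b_2,\La)$; write $\H$ for this common group. Applying Lemma \ref{Bachir} to each summand $\La^{j}$ of $\La'=\La^{1}\oplus\dots\oplus\La^{l}$ (each $\La^{j}$ lies in the affine class of $\La$, and $\lfloor m'/a\rfloor=m$), one gets $\H^{m'+1}(\b_i,\La^{j})=\H^{m+1}(\b_i,\La)=\H$ for every $j$, and the transfer $\Cc(\La^{j},m',\b_i)\to\Cc(\La,m,\b_i)$ is the identity, so that it carries $\t_i$ to $\t_i$.

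Next I would prove $\H^{m'+1}(\b_1,\La')=\H^{m'+1}(\b_2,\La')$. By the Iwahori decomposition of \cite[Th\'eor\`eme 2.17]{SeSt} (as recalled in paragraph \ref{Miskatonic}), the group $\H^{m'+1}(\b_i,\La')$ is the product of its intersections with $\N^{-}$, $\M$ and $\N$; its intersection with $\M$ is $\H\times\dots\times\H$, hence independent of $i$, and its intersections with $\N$ and $\N^{-}$ are, by the description in \textit{loc. cit.}, cut out by suitable lattices $\aa_{c}(\La')$ in the off-diagonal blocks, so they depend only on $\La'$ and $m'$ and not on $i$. This extraction — namely that the off-diagonal part of $\H^{m'+1}(\b,\La')$ does not depend on $\b$ beyond what is already recorded in the diagonal part $\H^{m+1}(\b,\La)$, so that equality of the diagonal parts forces equality of the whole — is the step I expect to be the main obstacle. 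Write $\H'$ for this common group.

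Finally I would conclude. By Lemma \ref{JacquesVerges}, for each $i$ the character $\t'_i$ is trivial on $\H'\cap\N$ and on $\H'\cap\N^{-}$, and its restriction to $\H'\cap\M=\H\times\dots\times\H$ equals $\t_i\otimes\dots\otimes\t_i$; since $\t_1=\t_2$, all of this data is the same for $i=1$ and $i=2$. It then remains to note that a character of $\H'$ which is trivial on $\H'\cap\N$ and $\H'\cap\N^{-}$ is determined by its restriction to $\H'\cap\M$: writing $h=h^{-}h_{0}h^{+}$ according to the Iwahori decomposition, with $h^{-}\in\H'\cap\N^{-}$, $h_{0}\in\H'\cap\M$ and $h^{+}\in\H'\cap\N$, any such character $\chi$ satisfies $\chi(h)=\chi(h^{-})\chi(h_{0})\chi(h^{+})=\chi(h_{0})$. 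Hence $\t'_1$ and $\t'_2$ agree on $\H'$, that is, $\t'_1=\t'_2$.
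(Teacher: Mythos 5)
Your reduction is sound as far as it goes: combining Lemma \ref{Bachir} with the Iwahori decomposition of \cite[Th\'eor\`eme 2.17]{SeSt} and Lemma \ref{JacquesVerges}, the transfer $\t'_i$ is trivial on the unipotent parts and restricts to $\t_i\otimes\dots\otimes\t_i$ on the Levi part, so $\t'_1=\t'_2$ does follow once one knows $\H^{m'+1}(\b_1,\La')=\H^{m'+1}(\b_2,\La')$. This is exactly the paper's Lemma \ref{Jaime}. But the step you flag as the main obstacle is where the argument breaks: it is not true that the intersections $\H^{m'+1}(\b_i,\La')\cap\N$ and $\H^{m'+1}(\b_i,\La')\cap\N^{-}$ are cut out by lattices $\aa_{c}(\La')$ depending only on $\La'$ and $m'$. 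The group $\H^{m'+1}(\b,\La')$ is built inductively from the stratum: already in the minimal case it is $(\U_1(\La')\cap\B^{\prime\times})\U_{\lfloor n'/2\rfloor+1}(\La')$, and in general it involves the centralizer $\B'$ of $\F(\b)$ in $\A'$ and an approximation $\g$ of $\b$. The centralizer $\B'_i$ has nontrivial off-diagonal blocks in the decomposition $\V'=\V\oplus\dots\oplus\V$, so the unipotent parts of $\H^{m'+1}(\b_i,\La')$ genuinely depend on $\b_i$, and knowing that the diagonal parts $\H^{m+1}(\b_1,\La)=\H^{m+1}(\b_2,\La)$ coincide does not by itself force the full groups over $\La'$ to coincide.

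Establishing the equality (\ref{Mesclum}) is in fact the entire content of the paper's proof, and it uses much more than equality of domains: it uses equality of the characters themselves. Concretely, the paper first shows (Lemma \ref{NonNominatus}, via Proposition \ref{Hengist} and Grabitz's intertwining-implies-conjugacy) that $\Theta_1$ and $\Theta_2$ are endo-equivalent, so that Lemma \ref{Hectare1} equalizes the numerical invariants, in particular $q'=-k_0(\b_i,\La')$; it then proves the key technical Lemma \ref{BK359} (a non-split, non-strict analogue of \cite[Theorem 3.5.9]{BK}, resting on an intertwining computation and \cite[Lemma 3.11(i)]{St4}), saying that a common simple character at level $m\geqslant1$ forces $\H^m(\b_1,\La)=\H^m(\b_2,\La)$; finally it runs an induction on $\b_1$ to get $\H^{q'}(\b_1,\La')=\H^{q'}(\b_2,\La')$ (see (\ref{Rosso2})), and a downward induction on the level from $q'$ to $m'$, at each step using that the characters agree on the common group so that Lemma \ref{BK359} applies. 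Without some substitute for this machinery, your assertion that the off-diagonal parts are independent of $i$ is unsupported, and the proposed proof does not go through.
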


\begin{proof}
We first prove the following lemma, which generalizes 
\cite[Theorem~3.5.9]{BK} and \cite[Proposition~9.10]{Gr} 
(see also \cite[Lemme~7.9]{Dat}, which gives a similar result 
in the split case for \emph{semisimple characters} and whose 
proof we follow). 

\begin{lemm}
\label{BK359}
Assume that $m\>1$, and that $\Cc(\La,m,\b_1)\cap\Cc(\La,m,\b_2)$ 
is not empty. 
Then we have $\H^m(\b_1,\La)=\H^m(\b_2,\La)$.
\end{lemm}

\begin{proof}
We put $\nu=2m-1$ and, for $i=1,2$, we choose a simple stratum 
$[\La,n,\nu,\g_i]$ equivalent to $[\La,n,\nu,\b_i]$ in $\A$. 
Then, for each $i=1,2$, we have $\Cc(\La,\nu,\b_i)=\Cc(\La,\nu,\g_i)$ 
and, from~\cite[Proposition~2.15]{SeSt}, 
we have $\H^m(\b_i,\La)=\H^m(\g_i,\La)$. 
Since the restriction of a simple character to
$\H^{\nu+1}(\b_1,\La)=\H^{\nu+1}(\b_2,\La)$ is still a simple character, 
the intersection $\Cc(\La,\nu,\g_1)\cap\Cc(\La,\nu,\g_2)$ is not empty. 
By computing the intertwining of an element of this intersection 
via the formula
of~\cite[Th\'eor\`eme~2.23]{SeSt}, we get:
\begin{equation*}
\Om_{q_1-\nu}(\g_1,\La)\B_{\g_1}^\times\Om_{q_1-\nu}(\g_1,\La) = 
\Om_{q_2-\nu}(\g_2,\La)\B_{\g_2}^\times\Om_{q_2-\nu}(\g_2,\La)
\end{equation*}
with the notations of \emph{loc. cit.} and where, for each 
$i=1,2$, we write 
$\B_{\g_i}$ for the centralizer of $\F(\g_i)$ in $\A$ and 
$q_i=-k_0(\g_i,\La)$.
Taking the intersection with $\PP_m(\La)$ and then its ad\-di\-tive 
closure, we find that the following set:
\begin{equation} 
\label{horribleset}
\PB_m^i+
\left(\PP_{q_i-\nu}(\La)\cap\mathfrak n_{-\nu}(\g_i,\La)\right)\PB_m^i+
\PB_m^i \JJ^{\lceil q_i/2\rceil}(\g_i,\La),
\end{equation}
is independent of $i$, 
where we have put $\PB_m^i=\PP_m(\La)\cap\B_{\g_i}$
and where the notations $\JJ^k$ and $\HH^k$, for $k\>0$, 
are defined in \cite[\S2.4]{SeSt}.
We claim that the set in \eqref{horribleset}
is contained in $\HH^m(\g_i,\La)=\HH^m(\b_i,\La)$.
For then, adding
$\HH^{m+1}(\g_i,\La)=\HH^{m+1}(\b_i,\La)$, which is also in\-de\-pen\-dent of 
$i$, we see that:
\begin{equation*}
\HH^m(\b_i,\La)=\HH^m(\g_i,\La)=\PB_m^i+\HH^{m+1}(\g_i,\La)
\end{equation*}
is independent of $i$, as required.
We need the following lemma (see \cite[Lemma~3.11(i)]{St4}).

\begin{lemm}
\label{awkwardlemma}
Let $[\La,n,m,\b]$ be a simple stratum in $\A$ with $q=-k_0(\b,\La)$. 
For each integer $1\<k\<q-1$, we have:
\begin{equation*}
\left(\mathfrak n_{-k}(\b,\La)\cap \PP_{q-k}(\La)\right) 
\JJ^{\lceil k/2\rceil}(\b,\La) \subseteq 
\HH^{\lfloor k/2\rfloor+1}(\b,\La).
\end{equation*}
\end{lemm}

\begin{proof}
We write $[\widetilde\La,n,m,\b]$ for the simple stratum in 
$\widetilde\A=\End_\F(\V)$ associated with $[\La,n,m,\b]$
(see paragraph \ref{Split}).
Then we have:
\begin{equation*}
\left(\mathfrak{n}_{-k}(\b,\widetilde{\La})
\cap\PP_{q-k}(\widetilde{\La})\right) 
\JJ^{\lceil k/2\rceil}(\b,\widetilde{\La}) \\
\subseteq\HH^{\lfloor k/2\rfloor+1}(\b,\widetilde{\La})
\end{equation*}
by \cite[Lemma~3.11(i)]{St4}.
By taking the intersection with $\A$, we get the expected result. 
\end{proof}

We now see that:
\begin{equation*}
\left(\PP_{q_i-\nu}(\La)\cap\mathfrak n_{-\nu}(\g_i,\La)\right)\PB_m^i 
\subseteq\left(\PP_{q_i-\nu}(\La)\cap\mathfrak n_{-\nu}(\g_i,\La)\right)
\JJ^{\lceil \nu/2\rceil}(\g_i,\La) 
\subseteq\HH^m(\g_i,\La).
\end{equation*}
Similarly, we have: 
\begin{equation*}
\PB_m^i \JJ^{\lceil q_i/2\rceil}(\g_i,\La)\subseteq
\left(\PP_{q_i-(q_i-m)}(\La)\cap\mathfrak n_{m-q_i}(\g_i,\La)\right)
\JJ^{\lceil (q_i-m)/2\rceil}(\g_i,\La) 
\subseteq\HH^{\lfloor (q_i-m)/2\rfloor+1}(\g_i,\La).
\end{equation*}
Since the left hand side here is clearly also contained in $\PP_m(\La)$, 
we see that it is 
contained in $\HH^m(\g_i,\La)$ as required. 
This also completes the proof of Lemma~\ref{BK359}.
\end{proof}

For each $i$, write $\Theta_i$ for the ps-character defined by the pair 
$([\La,n,m,\b_i],\t_i)$, and recall that $\t_1$ and $\t_2$ are equal. 

\begin{lemm}
\label{NonNominatus}
We have $e_{\F}(\b_1)=e_{\F}(\b_2)$ and $f_{\F}(\b_1)=f_{\F}(\b_2)$.
\end{lemm}

\begin{proof}
By Proposition \ref{Hengist}, there is a simple 
central $\F$-algebra $\A$ together with realizations 
$[\La^{0},n,m,\h^{0}_i(\b_i^{})]$ of $(k,\b_i)$, with $i=1,2$, 
which are sound and have the same em\-bedding type, and such 
that $\Theta_1(\La^{0},m,\h^{0}_1)$ and 
$\Theta_2(\La^{0},m,\h^{0}_2)$ intertwine in $\A^{0\times}$.
Let us write $f$ for the greatest common divisor of 
$f_{\F}(\b_1)$ and $f_{\F}(\b_2)$ and $\K_i$ for the maximal 
unramified extension of $\F$ contained in $\F(\h^{0}_i(\b_i^{}))$.
Then Theorem \ref{Grabinoulor} gives us the ex\-pec\-ted equality.
\end{proof}

Thus the ps-characters $\Theta_1$ and $\Theta_2$ are endo-equivalent,
which allows us to use Lemma \ref{Hectare1}. 

\begin{lemm}
\label{Jaime}
The characters $\t^{\prime}_1$ and $\t^{\prime}_2$ are equal if
and only if we have:
\begin{equation}
\label{Mesclum}
\H^{m'+1}(\b_1,\La^{\prime})=\H^{m'+1}(\b_2,\La^{\prime}).
\end{equation}
\end{lemm}

\begin{proof}
This follows immediately from Lemma \ref{JacquesVerges}.
\end{proof}

Thus we are reduced to proving equality (\ref{Mesclum}),
and for this, we claim that it is enough to prove that:
\begin{equation}
\label{Rosso2}
\H^{q'}(\b_1,\La^{\prime})=\H^{q'}(\b_2,\La^{\prime}),
\end{equation}
where $q'=-k_0(\b_i,\La')$ is independent of $i$
by Lemma \ref{Hectare1}.
Indeed, assume that (\ref{Rosso2}) holds, and let $t'$
be the smallest integer in $\{m',\dots,q'-1\}$ such that:
\begin{equation}
\label{StServin}
\H^{t'+1}(\b_1,\La^{\prime})=\H^{t'+1}(\b_2,\La^{\prime}).
\end{equation} 
Suppose that $t'\neq m'$.
By Lemma \ref{Jaime}, the characters $\t^{\prime}_1$ and 
$\t^{\prime}_2$ agree on (\ref{StServin}), that is, the intersection 
$\Cc(\La',t',\b_{1})\cap\Cc(\La',t',\b_{2})$ is not empty. 
By Lemma \ref{BK359}, we get an equality which 
contradicts the minimality of $t'$.
Hence $t'=m'$ and we are thus reduced to proving (\ref{Rosso2}), 
which we do by induction on $\b_1$.
Assume first that $\b_1$ is minimal over $\F$. 
Then so is $\b_2$ by Lemma \ref{Hectare1}, so that we have:
\begin{equation*}
\H^{q'}(\b_1,\La^{\prime})=\U_{q'}(\La')=\H^{q'}(\b_2,\La^{\prime}).
\end{equation*}
Assume now that $\b_1$ is not minimal over $\F$, 
set $q=-k_0(\b_i,\La)$, 
which is independent of $i$ by Lemma \ref{Hectare1}, 
and choose a simple stratum $[\La,n,q,\g_i]$ in $\A$ 
equivalent to the stratum $[\La,n,q,\b_i]$, for each $i\in\{1,2\}$.
We then have:
\begin{equation*}
\H^{q'}(\b_i,\La^{\prime})=\H^{q'}(\g_i,\La'),
\end{equation*}
and the restriction
$\vartheta_i=\t_i\ |\ \H^{q+1}(\g_i,\La)$ 
belongs to $\Cc(\La,q,\g_i)$.
As $\b_i-\g_i\in\PP_{-q}(\La)$, the simple characters 
$\vartheta_1$ and $\vartheta_2$ are equal.
If we write $\vartheta'_i$ for the transfer of $\vartheta_i$ to the 
set $\Cc(\La',q',\g_i)$, then the inductive hypothesis implies that 
$\vartheta'_1=\vartheta'_2$. 
Therefore, the intersection 
$\Cc(\La',q',\g_{1})\cap\Cc(\La',q',\g_{2})$ is not empty, and 
Lemma \ref{BK359} gives us the required equality (\ref{Rosso2}). 
This ends the proof of Proposition \ref{Dolomites}.
\end{proof}

%%%%%%%%%%%%%%%%%%%%%%%%%%%%%%%%%%%%%%%%%%%%%%%%%%%%%%%%%%%%%%%%%%%%%%%%%%%

\subsection{}
\label{Mescalito}

Before closing this section, we prove
the following rigidity theorem for simple characters, which generalizes 
\cite[Theorem 3.5.8]{BK} and \cite[Proposition 9.9]{Gr}
to simple characters in non-necessarily split simple central $\F$-algebras 
with non-necessarily strict lattice sequences.

\begin{theo}
\label{Paraclet}
For $i=1,2$, let $[\La,n,m,\b_{i}]$ be a simple stratum in a simple 
central $\F$-alg\-ebra $\A$.
Assume that the intersection $\Cc(\La,m,\b_{1})\cap\Cc(\La,m,\b_{2})$ 
is not empty. 
Then we have $\Cc(\La,m,\b_{1})=\Cc(\La,m,\b_{2})$.
\end{theo}

\begin{proof}
For each $i\in\{1,2\}$, we fix a simple character
$\t_i\in\Cc(\La,m,\b_{i})$ and assume that $\t_1$ and $\t_2$ are equal.
In particular, we have:
\begin{equation}
\label{Meka}
\H^{m+1}(\b_1,\La)=\H^{m+1}(\b_2,\La).
\end{equation}
By choosing an integer $l$ as in Proposition 
\ref{LongJohnSilverVarianteSound}, 
we have sound simple strata $[\La^{\ddag},n,m,\b_{i}]$, $i=1,2$, in 
$\A^{\ddag}$. 
If we write $\t^{\ddag}_i$ for the transfer of $\t_i$ to 
$\Cc(\La^{\ddag},m,\b_i)$, then it follows from Proposition \ref{Dolomites}
that the simple characters $\t_1^{\ddag}$ and 
$\t_2^{\ddag}$ are equal, hence that the intersection
$\Cc(\La^{\ddag},m,\b_{1})\cap\Cc(\La^{\ddag},m,\b_{2})$ is not empty.
By Proposition \ref{Gr9199},
the sets $\Cc(\La^{\ddag},m,\b_{i})$, $i=1,2$, are equal.
As the transfer map from $\Cc(\La^{\ddag},m,\b_{i})$ to 
$\Cc(\La,m,\b_{i})$ is the restriction map from 
$\H^{m+1}(\b_i,\La^{\ddag})$ to $\H^{m+1}(\b_i,\La)$, the equality 
(\ref{Meka}) implies that $\Cc(\La,m,\b_{1})=\Cc(\La,m,\b_{2})$.
\end{proof}

It is natural to ask whether the simple strata $[\La,n,m,\b_{i}]$
in Theorem \ref{Paraclet} have the same embedding type. 
We have the following conjecture. 

\begin{conj}
\label{combo}
For $i=1,2$, let $[\La,n,m,\b_{i}]$ be a simple stratum in a simple 
central $\F$-alg\-ebra $\A$.
Assume that the intersection $\Cc(\La,m,\b_{1})\cap\Cc(\La,m,\b_{2})$
is not empty, and that $\La$ is strict.
Then these simple strata have the same embedding type.
\end{conj}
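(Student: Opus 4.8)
The plan is to reduce the conjecture, using the rigidity results already available, to a statement about \emph{sound} simple strata — for which Fr\"ohlich's theorem (Theorem \ref{FROH}) encodes the embedding type in a single integer — and then to prove that this integer is determined by the simple character. By Theorem \ref{Paraclet} we have $\Cc(\La,m,\b_1)=\Cc(\La,m,\b_2)$, so we fix $\t$ in this common set. As in the proof of Proposition \ref{Dolomites} (see Lemma \ref{NonNominatus}), the ps-characters $\Theta_i$ defined by $([\La,n,m,\b_i],\t)$ are then endo-equivalent, so Lemma \ref{Hectare1} gives $e_\F(\b_1)=e_\F(\b_2)$, $f_\F(\b_1)=f_\F(\b_2)$, $n_\F(\b_1)=n_\F(\b_2)$ and $k_\F(\b_1)=k_\F(\b_2)$; in particular $[\F(\b_1)^\flit:\F]=[\F(\b_2)^\flit:\F]$. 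Choosing $l\>1$ divisible by the relevant periods and passing to $\La^\ddag$ as in Proposition \ref{LongJohnSilverVarianteSound}, we get sound simple strata $[\La^\ddag,n,m,\b_i]$ in $\A^\ddag$ whose transfers of $\t$ still coincide, by Proposition \ref{Dolomites}; so it is enough to prove the equality of embedding types for the strata $[\La^\ddag,n,m,\b_i]$ and then transport it back to $\La$.

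For the sound strata $[\La^\ddag,n,m,\b_i]$ I would next reduce the equality of embedding types to the equality of Fr\"ohlich invariants. One first checks that the embedding formed by $\F(\b_i)^\flit$ and $\La^\ddag$ is sound: the hereditary order obtained by intersecting $\AA^\ddag=\AA(\La^\ddag)$ with the centralizer of $\F(\b_i)^\flit$ lies between $\AA^\ddag\cap\B_i^\ddag$ and $\AA^\ddag$, both principal since the stratum is sound, and the normalizer identity should be inherited from that for $\B_i^\ddag$ (this bookkeeping has to be carried out carefully, and if needed one works instead with a $\K_i$-special realization in the sense of \cite{Gr}). Granting this, Theorem \ref{FROH} together with the equality $[\F(\b_1)^\flit:\F]=[\F(\b_2)^\flit:\F]$ shows that $[\La^\ddag,n,m,\b_1]$ and $[\La^\ddag,n,m,\b_2]$ have the same embedding type as soon as they have the same Fr\"ohlich invariant.

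The crux, and the expected main obstacle, is then the rigidity statement that \emph{a simple character cannot be realized on sound simple strata of different Fr\"ohlich invariant} — a statement going genuinely beyond Proposition \ref{Gr9199} (which yields the numerical invariants but not the embedding type) and Theorem \ref{Grabinoulor} (which presupposes the embeddings $(\K_i,\La)$ equivalent). I would attack it with the interior-lifting and unramified base-change constructions of sections \ref{PILSC}--\ref{BCASDFSC}: writing $\K_i$ for the maximal unramified subextension of $\F(\b_i)/\F$, whose degree $f_\F(\b_i)$ is already known to be independent of $i$, pass to $\K_i$-special realizations (they exist by Theorem \ref{Grabinoulor}) and use the compatibility of the interior lift with transfer (Theorem \ref{ResComWithTransAndAEFit}), the Galois-invariance of the base change (Proposition \ref{GalInv}) and the cohomological argument of Lemma \ref{CohoArg} to force $\K_1$ and $\K_2$ to be conjugate under $\KK(\La^\ddag)$. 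As the Fr\"ohlich invariant of $[\La^\ddag,n,m,\b_i]$ is the degree over $\mathfrak{k}_\F$ of $\mathfrak{k}_{\K_i^\flit}\cap Z(\overline{\AA^\ddag})$, which is plainly $\KK(\La^\ddag)$-invariant, equality of the Fr\"ohlich invariants would follow at once. Proving this rigidity is the principal difficulty, and is presumably the reason the statement is left as a conjecture.

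Finally, one descends from $\La^\ddag$ back to $\La$, and it is here that the hypothesis that $\La$ is strict is used in an essential way. The diagonal embedding of $\A$ in $\A^\ddag$ identifies the non-enlarged Bruhat--Tits building of $\mult\A$ with an affine subspace of that of $\A^{\ddag\times}$ fixed by the diagonal copy of $\mult\D$; so by a building-theoretic argument in the spirit of Lemma \ref{ConjPiD} and \cite{BL}, and along the lines used in \cite{BG}, an element of $\KK(\La^\ddag)=\KK(\AA^\ddag)$ conjugating the diagonal copy of $\F(\b_2)^\flit$ to that of $\F(\b_1)^\flit$ while fixing the vertex attached to $\La^\ddag$ can be taken to lie in $\KK(\AA)=\KK(\La)$. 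This yields the equivalence of the embeddings $(\F(\b_1)^\flit,\La)$ and $(\F(\b_2)^\flit,\La)$, which is the equality of embedding types asserted by the conjecture.
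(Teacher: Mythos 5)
Your proposal does not prove the statement, and it could not be checked against a paper proof in any case: the paper itself leaves this as a conjecture and only establishes the sound case (Proposition \ref{BrattleStreet}). Your opening reduction is fine and matches what the paper does elsewhere (take $\t\in\Cc(\La,m,\b_1)=\Cc(\La,m,\b_2)$ by Theorem \ref{Paraclet}, get the numerical invariants as in Lemma \ref{NonNominatus} and Lemma \ref{Hectare1}, pass to $[\La^\ddag,n,m,\b_i]$ with equal transferred characters via Propositions \ref{LongJohnSilverVarianteSound} and \ref{Dolomites}). But the step you then single out as the crux --- that a common simple character forces the sound $\ddag$-realizations to have the same Fr\"ohlich invariant, hence the same embedding type --- is not the open part at all: it is exactly Proposition \ref{BrattleStreet}, which the paper proves by a short argument with no base change. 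Namely, by \cite[Th\'eor\`eme 3.50]{VS1} the quotient $\I_{\U(\La^\ddag)}(\t^\ddag)\U^1(\La^\ddag)/\U^1(\La^\ddag)\simeq\U(\BB_i)/\U^1(\BB_i)$, where $\BB_i$ is the intersection of $\AA^\ddag=\AA(\La^\ddag)$ with the centralizer of $\b_i$, is determined by the character alone, so the period of the principal order $\BB_i$ is independent of $i$, and Lemma \ref{Boubinette} (resting on Lemma \ref{PerRinc}) then yields directly that the embeddings $(\K_1,\AA^\ddag)$ and $(\K_2,\AA^\ddag)$ are equivalent. Your proposed attack on this step through interior lifting, base change, Proposition \ref{GalInv} and Lemma \ref{CohoArg} is therefore unnecessary, and as written it is only a programme: you give no argument that these tools produce an element of $\KK(\La^\ddag)$ conjugating $\K_1$ to $\K_2$, and you concede as much yourself.

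The genuine gap is your final paragraph, the descent from $\La^\ddag$ back to $\La$, and the building-theoretic sketch does not address it. An element $u\in\KK(\La^\ddag)$ conjugating the diagonal copy of $\F(\b_2)^\flit$ to that of $\F(\b_1)^\flit$ has no reason to stabilize the diagonally embedded $\A$, nor the image of the building of $\mult\A$ inside that of $\A^{\ddag\times}$, and no mechanism is offered for replacing $u$ by an element of $\KK(\La)$. This is precisely where information is lost: after passing to $\La^\ddag$ all the relevant orders are principal, so by Theorem \ref{FROH} the embedding type upstairs is captured by the single Fr\"ohlich invariant, whereas for a non-principal hereditary order $\AA(\La)$ --- the situation the conjecture is really about, even with $\La$ strict --- embedding types are classified by finer data, so equality upstairs does not imply equality downstairs. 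The assertion that the conjugating element ``can be taken to lie in $\KK(\AA)=\KK(\La)$'' is essentially a restatement of the conjecture modulo the known sound case (and your hypothesis that $\La$ is strict is never actually used). In short: you correctly reduce to the sound case, which the paper already settles by a simpler route than the one you sketch, but the real difficulty --- the descent to the original stratum --- is left entirely open.
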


Note that we know from \cite[Lemma 5.2]{BG} that two equivalent simple 
strata (with respect to a strict lattice sequence) have the same embedding 
type. 

\medskip

In the case where the strata are sound, we will 
prove below that this conjecture is true. 
First we need a series of lemmas. 

\begin{lemm}
\label{PerRinc}
Let $\E/\F$ be a finite extension with ramification index $e$, 
contained in a simple central $\F$-algebra $\A$, 
and let $\mathfrak{B}$ be a principal $\Oo_\E$-order of 
period $r$ in the centralizer $\B$ of $\E$ in $\A$.
Write $\A\simeq\Mat_{k}(\D)$ for some $k\>1$ and some 
$\F$-division algebra $\D$, and write $d$ for the reduced 
degree of $\D$ over $\F$.
\begin{enumerate}
\item 
There exists a unique $\E$-pure hereditary $\Oo_\F$-order 
$\AA$ in $\A$ such that $\mathfrak{B}=\AA\cap\B$ and 
$\KK(\mathfrak{B})=\KK(\AA)\cap\mult\B$, and such an order
is principal.
\item
The period of $\AA$ is equal to
${re}/{(re,d)}$, where $(re,d)$ denotes the greatest common 
divisor of $re$ and $d$.
\end{enumerate}
\end{lemm}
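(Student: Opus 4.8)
The plan is to recast the statement in terms of lattice sequences and to use the correspondence~(\ref{FPrimedescen}), in the spirit of the proof of Lemma~\ref{ConjPiD}. Fix a simple left $\B$-module $\V'$ and write $\D'$ for the $\E$-division algebra opposite to $\End_{\B}(\V')$, so that $\B=\End_{\D'}(\V')$; since $\mathfrak{B}$ is a principal $\Oo_{\E}$-order, we may write $\mathfrak{B}=\AA(\Sigma)$ for a strict $\Oo_{\D'}$-lattice sequence $\Sigma$ on $\V'$, unique up to translation and of period $r$, and then $\KK(\mathfrak{B})=\KK(\Sigma)$ contains an element of $\Sigma$-valuation~$1$. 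The first step is to check that~(\ref{FPrimedescen}) is, up to translation of indices, a \emph{bijection} between $\E$-pure $\Oo_{\D}$-lattice sequences on $\V$ and $\Oo_{\D'}$-lattice sequences on $\V'$: one direction is what is recalled in the excerpt, and for the converse one argues as in Lemma~\ref{ConjPiD}, using the affine $\mult\B$-equivariant map $\boldsymbol{j}_{\E/\F}$ from the Bruhat-Tits building of $\mult\B$ onto the set of $\E^{\times}$-fixed points in the building of $\mult\A$. Granting this, choose an $\E$-pure $\Oo_{\D}$-lattice sequence $\La$ on $\V$ corresponding to $\Sigma$; replacing $\La$ by the strict lattice sequence with the same set of lattices, we may assume $\KK(\La)=\KK(\AA)$, where $\AA=\AA(\La)$.

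With $\La$ in hand, part~(1) becomes essentially formal. Putting $k=0$ in~(\ref{FPrimedescen}) gives $\AA\cap\B=\aa_{0}(\La)\cap\B=\aa_{0}(\Sigma)=\mathfrak{B}$, and $\KK(\AA)\cap\mult\B=\KK(\La)\cap\mult\B=\KK(\Sigma)=\KK(\mathfrak{B})$; the inclusion $\subseteq$ here is anyway immediate, since any $g\in\mult\B$ normalizing $\AA$ normalizes $\AA\cap g\B g^{-1}=\AA\cap\B$. For uniqueness, suppose $\AA_{1}$ also satisfies the stated conditions and pick a strict $\Oo_{\D}$-lattice sequence $\La_{1}$ with $\AA(\La_{1})=\AA_{1}$; since $\E^{\times}\subseteq\KK(\mathfrak{B})=\KK(\AA_{1})\cap\mult\B\subseteq\KK(\AA_{1})=\KK(\La_{1})$, the sequence $\La_{1}$ is $\E$-pure, so by~(\ref{FPrimedescen}) it corresponds to a strict $\Oo_{\D'}$-lattice sequence whose associated order is $\AA_{1}\cap\B=\mathfrak{B}$, hence to $\Sigma$ up to translation; injectivity then gives $\AA_{1}=\AA$. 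That $\AA$ is principal follows from the principality of $\mathfrak{B}$: the valuation-one element of $\KK(\mathfrak{B})\subseteq\KK(\AA)$ forces the lattices of $\La$ within one $\Oo_{\D}$-period to form a chain whose successive quotients are all $\Oo_{\D}$-free of the same length (it also drops out of the period computation of part~(2)).

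Part~(2) is then a computation of the $\Oo_{\D}$-period of $\AA$ in terms of $r$, $e=e(\E/\F)$ and $d$, carried out through~(\ref{FPrimedescen}) and the relations $\p_{\D}^{d}=\p_{\F}\Oo_{\D}$ and $\varpi_{\E}^{e}\in\varpi_{\F}\Oo_{\E}^{\times}$ (for $\varpi_{\E}$ and $\varpi_{\F}$ uniformizers of $\E$ and $\F$): tracking $\La$- and $\Sigma$-valuations of $\varpi_{\E}$ and $\varpi_{\F}$, and using the equality $\v_{\La}(\varpi_{\E})=\v_{\Sigma}(\varpi_{\E})$ that follows formally from $\aa_{k}(\La)\cap\B=\aa_{k}(\Sigma)$, leads to the value $re/(re,d)$. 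The greatest common divisor reflects that the $\Oo_{\D'}$-lattice sequence on $\V'$ attached by~(\ref{FPrimedescen}) to a strict $\Oo_{\D}$-lattice sequence on $\V$ need not itself be strict, so that its sequence period already exceeds $r$ by a factor involving $d$. The step I expect to be the main obstacle is the first one: establishing the bijectivity of~(\ref{FPrimedescen}) with precise control on the index shifts, and verifying en route that the order attached to the \emph{principal} order $\mathfrak{B}$ is itself principal and not merely hereditary; granting that, the rest reduces to the formal verifications of part~(1) and the valuation bookkeeping just indicated.
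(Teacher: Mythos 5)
Your plan hinges on the claim that the correspondence~(\ref{FPrimedescen}) is a bijection ``up to translation of indices'', and the two places where you invoke it are exactly where the content of the lemma lies. The claim is false. The relation $\aa_{k}(\La)\cap\B=\aa_{k}(\Sigma')$ for all $k$ forces the two order sequences to have the same period with respect to multiplication by $\p_{\F}$, so a strict $\Sigma$ often has no exact-index preimage at all (already for $\A=\D$ quaternion and $\mathfrak{B}=\Oo_{\E}$, the sequence attached to $\Oo_{\D}$ is $k\mapsto\p_{\E}^{\lceil k/2\rceil}$, a rescaling of $\Sigma:k\mapsto\p_{\E}^{k}$); worse, the sequence $\Sigma'$ attached to a \emph{strict} $\E$-pure $\La$ is in general neither strict nor of the form $a\Sigma+b$, its jump pattern is irregular, and then $\KK(\Sigma')=\KK(\La)\cap\mult\B$ can be a proper subgroup of $\KK(\AA(\Sigma'))$. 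For this reason the fibre of $\AA\mapsto\AA\cap\B$ over $\mathfrak{B}$, even restricted to $\E$-pure hereditary orders, is usually not a singleton, and the normalizer condition is what singles out one member of it. Your uniqueness argument uses that condition only to get $\E$-purity, so if it were correct it would prove that an $\E$-pure hereditary order with $\AA\cap\B=\mathfrak{B}$ is unique, which is false. Take $\D$ quaternion ($d=2$), $\E/\F$ unramified quadratic embedded diagonally in $\A=\Mat_{2}(\D)$, so that $\B=\Mat_{2}(\E)$, and let $\mathfrak{B}$ be the standard Iwahori order ($e=1$, $r=2$). The two $\E$-pure hereditary orders
\begin{equation*}
\AA_{1}=\begin{pmatrix}\Oo_{\D}&\Oo_{\D}\\ \p_{\D}&\Oo_{\D}\end{pmatrix},
\qquad
\AA_{2}=\End_{\Oo_{\D}}(\Oo_{\D}\oplus\p_{\D})
=\begin{pmatrix}\Oo_{\D}&\p_{\D}^{-1}\\ \p_{\D}&\Oo_{\D}\end{pmatrix}
\end{equation*}
both satisfy $\AA_{i}\cap\B=\mathfrak{B}$ (use $\p_{\D}\cap\E=\p_{\E}$ and $\p_{\D}^{-1}\cap\E=\Oo_{\E}$), but the element $u=\left(\begin{smallmatrix}0&1\\ \varpi_{\E}&0\end{smallmatrix}\right)\in\KK(\mathfrak{B})$ normalizes $\AA_{2}$ and not $\AA_{1}$: indeed $u\cdot(\Oo_{\D}\oplus\Oo_{\D})=\Oo_{\D}\oplus\p_{\D}^{2}$ is not a lattice of the chain of $\AA_{1}$. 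So $\KK(\AA_{1})\cap\mult\B\subsetneq\KK(\mathfrak{B})$, only $\AA_{2}$ is the order of the lemma, and only $\AA_{2}$ has the period $re/(re,d)=1$ predicted by part~(2).

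Concretely, your uniqueness step fails at ``it corresponds to a strict $\Oo_{\D'}$-lattice sequence \dots\ hence to $\Sigma$ up to translation'': in the example, the strict chain $\La_{1}$ with $\AA(\La_{1})=\AA_{1}$ is $\E$-pure and its intersection sequence $\Sigma_{1}$ has $\aa_{0}(\Sigma_{1})=\mathfrak{B}$, yet $\aa_{2}(\La_{1})\cap\B=\p_{\E}\Mat_{2}(\Oo_{\E})$ is not a power of the radical of $\mathfrak{B}$, so $\Sigma_{1}$ is neither strict nor a translate (nor a rescaling) of $\Sigma$, and the appeal to injectivity gives nothing. The same phenomenon breaks the existence step: after replacing $\La$ by its strict version the middle equality in $\KK(\AA)\cap\mult\B=\KK(\La)\cap\mult\B=\KK(\Sigma)$ is unjustified, and the nontrivial inclusion $\KK(\mathfrak{B})\subseteq\KK(\AA)$ really requires producing an $\E$-pure $\La$ whose intersection sequence is a \emph{uniform} rescaling $a\Sigma+b$; that existence, with control of the jump pattern, is precisely the ``continuation of hereditary orders'' theorem and is not a formal consequence of~(\ref{FPrimedescen}) or of the surjectivity of $\boldsymbol{j}_{\E/\F}$ onto the $\E^{\times}$-fixed points. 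The paper does not attempt this: part~(1) is quoted from \cite[Corollary 1.4(ii)]{Gr1} and part~(2) from the period formula in the proof of \cite[Th\'eor\`eme 1.7]{SeSt}; a self-contained lattice-sequence proof would have to redo that work, which your sketch does not.
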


\begin{proof}
The first part is given by \cite[Corollary 1.4(ii)]{Gr1}.
Part (2) follows for instance from the 
formula given in the proof of \cite[Th\'eor\`eme 1.7]{SeSt}.
\end{proof}

In other words, there exists a unique hereditary $\Oo_\F$-order 
$\AA$ in $\A$ such that $\AA\cap\B=\mathfrak{B}$ and that 
$(\E,\AA)$ is a sound embedding in $\A$.

\begin{lemm} 
\label{Boubinette}
For $i=1,2$, let $\E_i$ be an extension of $\F$ 
contained in $\A$ and let $\AA$ be a hereditary 
$\Oo_\F$-order in $\A$ such that $(\E_i,\AA)$ is 
a sound embedding in $\A$.
Write $\BB_i$ for the intersection of $\AA$ with 
the centralizer of $\E_i$ in $\A$.
Let $f$ be the greatest common divisor of 
$f(\E_1:\F)$ and $f(\E_2:\F)$, and for each $i$, 
let $\K_i$ be the un\-ramified extension of $\F$ 
of degree $f$ contained in $\E_i$. 
Assume $\E_1$ and $\E_2$ have the same 
ramification order $e$ and $\BB_1$ and $\BB_2$
have the same period $r$.
Then the embeddings $(\K_1,\AA)$ and $(\K_2,\AA)$ 
are equivalent in $\A$.
\end{lemm}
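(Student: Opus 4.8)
The plan is to reduce the statement to the uniqueness part of Lemma \ref{PerRinc}, after transporting everything to the centralizers of the unramified parts $\K_i^{\flit}$. Throughout, $d$ denotes the reduced degree over $\F$ of the division algebra underlying $\A$, and I write $\C_i$ for the centralizer of $\K_i^{\flit}$ in $\A$. Since $\K_i$ is unramified of degree $f$, one has $[\K_1^{\flit}:\F]=[\K_2^{\flit}:\F]$, both equal to $f_0:=(f,d)$; moreover $\C_i$ is a simple central $\K_i^{\flit}$-algebra whose underlying division algebra has reduced degree $d/f_0$ over $\K_i^{\flit}$, independently of $i$. Finally, as $\K_i^{\flit}\subseteq\E_i$ and $\AA$ is $\E_i$-pure, $\AA$ is $\K_i^{\flit}$-pure, so $\CC_i:=\AA\cap\C_i$ is a hereditary $\Oo_{\K_i^{\flit}}$-order in $\C_i$ and, by the descent formula \eqref{FPrimedescen}, $\KK(\AA)\cap\C_i^{\times}$ is the normalizer of the corresponding descended lattice sequence.

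The first key step is to show that $(\K_i^{\flit},\AA)$ is a sound embedding in $\A$ and that the period of $\CC_i$ does not depend on $i$. For soundness one propagates the soundness of $(\E_i,\AA)$ down the tower $\F\subseteq\K_i^{\flit}\subseteq\E_i$, using the normalizer compatibility of the descent map \eqref{FPrimedescen}: writing $\B_i$ for the centralizer of $\E_i$ in $\A$ (equivalently, of $\E_i$ in $\C_i$), one has $\CC_i\cap\B_i=\AA\cap\B_i=\BB_i$, and a double application of \eqref{FPrimedescen} shows that $(\E_i,\CC_i)$ is a sound embedding in $\C_i$ and that $(\K_i^{\flit},\AA)$ satisfies the conditions of Definition \ref{StrateSoundDef}. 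In particular $\CC_i$ is principal by Lemma \ref{PerRinc}(1); and by Lemma \ref{PerRinc}(2) applied inside $\C_i$, to the principal order $\BB_i=\CC_i\cap\B_i$ of period $r$ and to the extension $\E_i/\K_i^{\flit}$ of ramification index $e$, the period of $\CC_i$ equals $re/(re,d/f_0)$, which is indeed independent of $i$.

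It then remains to assemble these facts. By the Skolem--Noether theorem choose $h\in\mult\A$ with $h\K_2^{\flit}h^{-1}=\K_1^{\flit}$. Then $h\AA h^{-1}$ is a $\K_1^{\flit}$-pure hereditary $\Oo_\F$-order, $(\K_1^{\flit},h\AA h^{-1})$ is sound, and its intersection with $\C_1$ is $h\CC_2h^{-1}$, a principal order of the same period as $\CC_1$. Since all principal orders of a given period in the simple central $\K_1^{\flit}$-algebra $\C_1$ are conjugate under $\C_1^{\times}$, there is $w\in\C_1^{\times}$ with $wh\CC_2h^{-1}w^{-1}=\CC_1=\AA\cap\C_1$. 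Now $wh\AA h^{-1}w^{-1}$ is a $\K_1^{\flit}$-pure hereditary $\Oo_\F$-order, $(\K_1^{\flit},wh\AA h^{-1}w^{-1})$ is sound, and its intersection with $\C_1$ equals $\AA\cap\C_1$; by the uniqueness in Lemma \ref{PerRinc}(1) it therefore coincides with $\AA$ up to translation of indices. Hence $g:=wh$ lies in $\KK(\AA)$ and satisfies $g\K_2^{\flit}g^{-1}=w\K_1^{\flit}w^{-1}=\K_1^{\flit}$, which is exactly the assertion that $(\K_1,\AA)$ and $(\K_2,\AA)$ are equivalent embeddings in $\A$.

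The main obstacle is the soundness transfer in the second paragraph: the fact that soundness of $(\E_i,\AA)$ forces soundness of $(\K_i^{\flit},\AA)$ and of $(\E_i,\CC_i)$ in $\C_i$ is not formal and requires a careful bookkeeping of the descent map \eqref{FPrimedescen} along the tower (alternatively, it can be extracted from the results of Grabitz \cite{Gr}). Once this is granted, one could equally finish via Fr\"ohlich's theorem \ref{FROH}: since $[\K_1^{\flit}:\F]=[\K_2^{\flit}:\F]$, it would suffice to see that the two Fr\"ohlich invariants agree, and these are pinned down by the numerical data $f_0,e,r,d$ which the first two steps show to be independent of $i$.
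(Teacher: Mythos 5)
Your proposal follows essentially the same route as the paper's proof: the paper works with $\K_i$ itself (degree $f$) rather than with $\K_i^{\flit}$, shows that $\CC_i=\AA\cap\C_i$ is principal of period $re/(re,d/(d,f))$, independent of $i$, by Lemma \ref{PerRinc} applied inside $\C_i$, then uses Skolem--Noether, the conjugacy under $\C^{\times}$ of principal orders of equal period, and the uniqueness in Lemma \ref{PerRinc}(1) to put the conjugating element into $\KK(\AA)$ --- which is exactly your assembly step; replacing $\K_i$ by $\K_i^{\flit}$ is an inessential variant, and your period computation agrees with the paper's since $(d,f)=f_0$.

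Concerning the step you flag as the main obstacle: it is indeed needed (your final uniqueness application requires $\KK(\CC_i)=\KK(\AA)\cap\C_i^{\times}$ for both orders being compared, and the paper's final step relies on the same point for $\K_i$), but it is a formal consequence of the uniqueness in Lemma \ref{PerRinc}(1), used twice, so no appeal to Grabitz or to Theorem \ref{FROH} is necessary. First, since $\B_i\subseteq\C_i$, soundness of $(\E_i,\AA)$ gives $\CC_i\cap\B_i=\AA\cap\B_i=\BB_i$ and $\KK(\CC_i)\cap\B_i^{\times}=\KK(\BB_i)$: an element of $\B_i^{\times}$ normalizing $\CC_i$ normalizes $\CC_i\cap\B_i=\BB_i$, while conversely $\KK(\BB_i)=\KK(\AA)\cap\B_i^{\times}\subseteq\KK(\CC_i)\cap\B_i^{\times}$ because $\KK(\AA)\cap\C_i^{\times}$ normalizes $\AA\cap\C_i$. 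Hence $\CC_i$ is the order furnished by Lemma \ref{PerRinc}(1) for $\BB_i$ and $\E_i$ inside $\C_i$, so it is principal of the period you computed; this part is exactly what the paper's proof records, and it is not where the difficulty lies. Second, let $\AA_0$ be the order furnished by Lemma \ref{PerRinc}(1) for $\CC_i$ and $\K_i^{\flit}/\F$ in $\A$, so that $\AA_0\cap\C_i=\CC_i$ and $\KK(\CC_i)=\KK(\AA_0)\cap\C_i^{\times}$. Then $\AA_0\cap\B_i=\CC_i\cap\B_i=\BB_i$, and $\KK(\AA_0)\cap\B_i^{\times}=\KK(\AA_0)\cap\C_i^{\times}\cap\B_i^{\times}=\KK(\CC_i)\cap\B_i^{\times}=\KK(\BB_i)$; in particular $\E_i^{\times}\subseteq\KK(\BB_i)\subseteq\KK(\AA_0)$, so $\AA_0$ is $\E_i$-pure. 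Thus $\AA_0$ and $\AA$ both satisfy the characterization of Lemma \ref{PerRinc}(1) for $(\E_i,\BB_i)$, whence $\AA_0=\AA$ and $\KK(\CC_i)=\KK(\AA)\cap\C_i^{\times}$, i.e.\ $(\K_i^{\flit},\AA)$ is sound. With this inserted, your proof is complete and coincides in substance with the paper's; the alternative ending via Theorem \ref{FROH} would in any case require an extra argument to compare Fr\"ohlich invariants, so the direct uniqueness argument you gave is the right one.
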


\begin{proof}
Let $\CC_i$ denote the intersection of $\AA$ 
with the centralizer $\C_i$ of $\K_i$ in $\A$.
If we write $\B_i$ for the centralizer of $\E_i$ 
in $\A$, then we have $\BB_i=\CC_i\cap\B_i$ and 
$\KK(\mathfrak{B_i})=\KK(\CC_i)\cap\mult\B_i$.
Using Lemma \ref{PerRinc}, the period of $\CC_i$ 
is equal to $re/(re,d_i)$, where $d_i$ is the 
reduced degree of the $\K_i$-division algebra 
$\D_i$ such that $\C_i$ is isomorphic to 
$\Mat_{k_i}(\D_i)$ for some $k_i\>1$.
Using for instance \cite{Zi1}, we have 
$d_i=d/(d,f)$, which does not depend on $i$.
By the Skolem-Noether theorem, there is 
$g\in\mult\A$ such that $g\K_1g^{-1}=\K_2$. 
Thus $g\CC_1g^{-1}$ and $\CC_2$ are two principal $\Oo_{\K_2}$-orders 
in $\C_2$ with the same period, which implies that there exists
$h\in\mult\C_2$ such that $g\CC_1g^{-1}=h\CC_2h^{-1}$. 
Let us write $u=h^{-1}g$. 
Using the unicity property (1) of Lemma \ref{PerRinc},
we get $\AA=u^{-1}\AA u$, that is $u\in\KK(\AA)$.
\end{proof}

We now prove Conjecture \ref{combo} in the case 
where the strata are sound.

\begin{prop}
\label{BrattleStreet}
For $i=1,2$, let $[\La,n,m,\b_{i}]$ be a sound simple 
stratum in a simple central $\F$-alg\-ebra $\A$.
Assume that the intersection $\Cc(\La,m,\b_{1})\cap\Cc(\La,m,\b_{2})$
is not empty.
Then these simple strata have the same embedding type.
\end{prop}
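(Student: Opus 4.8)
The plan is to deduce the statement from Lemma~\ref{Boubinette}. For this I need to check, besides soundness, that the extensions $\F(\b_1)/\F$ and $\F(\b_2)/\F$ have the same ramification index and that the principal orders $\AA(\La)\cap\B_1$ and $\AA(\La)\cap\B_2$ have the same period, where $\B_i$ denotes the centralizer of $\F(\b_i)$ in~$\A$.

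First I would apply Theorem~\ref{Paraclet} to upgrade the hypothesis to the equality $\Cc(\La,m,\b_1)=\Cc(\La,m,\b_2)$, and fix a common simple character~$\t$ in this set. Proposition~\ref{Gr9199} then gives $e_\F(\b_1)=e_\F(\b_2)$ and $f_\F(\b_1)=f_\F(\b_2)$, hence $[\F(\b_1):\F]=[\F(\b_2):\F]$. Combining these degree equalities with the behaviour of Brauer invariants under scalar extension, one sees that, for $i=1,2$, the division algebra $\D_i'$ underlying $\B_i$ has a reduced degree, and hence a residue field~$\mathfrak{k}$, independent of~$i$, and likewise the integer~$c$ such that $\B_i\simeq\Mat_c(\D_i')$; since $\AA(\La)\cap\B_i$ is principal (soundness), its residue algebra $\overline{\AA(\La)\cap\B_i}$ is isomorphic to $\Mat_{c/r_i}(\mathfrak{k})^{r_i}$ for a fixed integer~$c$, where~$r_i$ denotes the period of $\AA(\La)\cap\B_i$.

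The crux is to prove $r_1=r_2$, and this is the step that genuinely uses that equality of the two \emph{sets} of simple characters is much stronger than equivalence of the two strata. By the intertwining formula for simple characters (\cite[Th\'eor\`eme~2.23]{SeSt}; see also~\cite{BK}), the set of elements of~$\mult\A$ that intertwine~$\t$ has the form $\JJ^1(\b_i,\La)\,\mult{\B_i}\,\JJ^1(\b_i,\La)$, where $\JJ^1(\b_i,\La)\subseteq\JJ(\b_i,\La)$ are the standard compact groups attached to $[\La,n,m,\b_i]$, so that $\H^{m+1}(\b_i,\La),\,\U_1(\AA(\La)\cap\B_i)\subseteq\JJ^1(\b_i,\La)\subseteq\U_1(\La)$ and $\JJ(\b_i,\La)=\U(\AA(\La)\cap\B_i)\,\JJ^1(\b_i,\La)$. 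Intersecting this intertwining set with~$\U(\La)$, respectively with~$\U_1(\La)$, therefore returns exactly $\JJ(\b_i,\La)$, respectively $\JJ^1(\b_i,\La)$; since the intertwining set depends only on~$\t$ and $\t_1=\t_2=\t$, the groups $\JJ(\b_1,\La)=\JJ(\b_2,\La)$ and $\JJ^1(\b_1,\La)=\JJ^1(\b_2,\La)$ coincide, so the finite quotient $\JJ(\b_i,\La)/\JJ^1(\b_i,\La)$ is independent of~$i$. As this quotient is isomorphic to $\U(\AA(\La)\cap\B_i)/\U_1(\AA(\La)\cap\B_i)=\mult{\overline{\AA(\La)\cap\B_i}}\cong\GL_{c/r_i}(\mathfrak{k})^{r_i}$, these finite groups are isomorphic for $i=1,2$; comparing the exact power of the residue characteristic dividing their orders, which with~$c$ and~$\mathfrak{k}$ fixed is a strictly decreasing function of~$r_i$, forces $r_1=r_2$. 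I expect this to be the main obstacle: it is the only step that really feeds in the character-theoretic information, and it leans on the structure theory of the groups $\JJ(\b,\La)$ and $\JJ^1(\b,\La)$ developed in~\cite{BK,SeSt}.

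Finally, each $[\La,n,m,\b_i]$ being sound, the pair $(\F(\b_i),\AA(\La))$ is a sound embedding in~$\A$; together with the common ramification index and the equality $r_1=r_2$, Lemma~\ref{Boubinette}---applied with $f=\gcd(f_\F(\b_1),f_\F(\b_2))=f_\F(\b_1)$, so that the extension denoted~$\K_i$ there is the maximal unramified extension of~$\F$ inside $\F(\b_i)$ and hence $\K_i^{\flit}=\F(\b_i)^{\flit}$---provides $u\in\KK(\La)$ with $\F(\b_1)^{\flit}=u\,\F(\b_2)^{\flit}\,u^{-1}$. By the definition of embedding type, this says precisely that $(\F(\b_1),\La)$ and $(\F(\b_2),\La)$ are equivalent embeddings, i.e.\ that $[\La,n,m,\b_1]$ and $[\La,n,m,\b_2]$ have the same embedding type.
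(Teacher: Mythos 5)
Your proposal is correct and follows essentially the same route as the paper's own proof: fix a common simple character, equalize the numerical invariants (the paper uses Proposition~\ref{Gr9199} together with Lemma~\ref{Hectare1}), observe that the intertwining of the character determines the quotient $\U(\BB_i)/\U^1(\BB_i)\simeq\GL_{c/r_i}(\mathfrak{k})^{r_i}$, deduce $r_1=r_2$, and conclude with Lemma~\ref{Boubinette}; your Brauer-theoretic identification of $c$ and $\mathfrak{k}$ and the $p$-part count are a welcome justification of the step "$r_1=r_2$" that the paper asserts in one line. The only imprecision is your intertwining formula $\I_{\A^{\times}}(\t)=\JJ^1(\b_i,\La)\,\B_i^{\times}\,\JJ^1(\b_i,\La)$, which is the level-$0$ statement: for a simple character of level $m>0$ the compact factors are larger (see \cite[Th\'eor\`eme~2.23]{SeSt}, or \cite[Th\'eor\`eme~3.50]{VS1}, which is what the paper invokes), but since they are still contained in $\U^1(\La)$ and normalized by $\U(\BB_i)$, the image of $\I_{\U(\La)}(\t)$ in $\U(\La)/\U^1(\La)$ is still $\U(\BB_i)\U^1(\La)/\U^1(\La)\simeq\U(\BB_i)/\U^1(\BB_i)$, so your argument goes through unchanged once the correct formula is cited.
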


\begin{proof}
For each $i$, we fix a simple character
$\t_i\in\Cc(\La,m,\b_{i})$ and assume 
$\t_1$ and $\t_2$ are equal.
Thus we have $[\F(\b_1):\F]=[\F(\b_2):\F]$ 
by Proposition \ref{Gr9199}.
By Lemma \ref{Hectare1}, we also have 
$e_{\F}(\b_1)=e_{\F}(\b_2)$ and $f_{\F}(\b_1)=f_{\F}(\b_2)$.
If we write $\I_{\U(\La)}(\t_i)$ for the intertwining 
of $\t_i$ in $\U(\La)$, then \cite[Th\'eor\`eme 3.50]{VS1} 
gives us: 
\begin{equation*}
\I_{\U(\La)}(\t_i)\U^1(\La)/\U^1(\La)\simeq\U(\BB_i)/\U^1(\BB_i),
\end{equation*}
where $\BB_i$ is the intersection of $\AA=\AA(\La)$ 
with the centralizer of $\b_i$ in $\A$.
As the stratum $[\La,n,m,\b_{i}]$ is sound, 
$\BB_i$ is a principal $\Oo_{\F(\b_i)}$-order. 
Thus there are a finite extension $\mathfrak{k}_i$ 
of $\mathfrak{k}_{\F}$ and two positive integers 
$r_i,s_i\>1$ such that:
\begin{equation*}
\U(\BB_i)/\U^1(\BB_i)
\simeq
\GL_{s_i}(\mathfrak{k_i})^{r_i}.
\end{equation*}
Since it does not depend on $i$, we have $r_1=r_2$.
Now write $\K_i$ for the maximal unramified extension of $\F$ 
contained in $\F(\b_i)$. 
Using Lemma \ref{Boubinette} with $\E_i=\F(\b_i)$, 
we deduce that the embeddings $(\K_1,\AA)$ and $(\K_2,\AA)$ 
are equivalent in $\A$.
\end{proof}

%%%%%%%%%%%%%%%%%%%%%%%%%%%%%%%%%%%%%%%%%%%%%%%%%%%%%%%%%%%%%%%%%%%%%%%%%%%
%%%%%%%%%%%%%%%%%%%%%%%%%%%%%%%%%%%%%%%%%%%%%%%%%%%%%%%%%%%%%%%%%%%%%%%%%%%

\section{The interior lifting}
\label{PILSC}

In this section, we develop an interior lifting process for simple 
strata and characters with respect to a finite unramified extension 
$\K$ of $\F$, in a way  similar to \cite{BH} and \cite{Gr}.
The situation in \cite{BH} is somewhat more general than ours, since the 
authors only assume $\K/\F$ to be tamely ramified, but is only concerned 
with simple strata and characters in split sim\-ple central $\F$-algebras 
attached to strict lattice sequences.
The situation in \cite{Gr} deals with any simple central 
$\F$-algebra, but puts an unnecessarily restrictive condition on the 
simple strata (they are supposed to be sound).

%%%%%%%%%%%%%%%%%%%%%%%%%%%%%%%%%%%%%%%%%%%%%%%%%%%%%%%%%%%%%%%%%%%%%%%%%%%

\subsection{}
\label{BelleDeJour}

Let $\A$ be a simple central $\F$-algebra and $\K/\F$ be a finite unramified 
extension cont\-ain\-ed in $\A$.
Let $\C$ denote the centralizer of $\K$ in $\A$.
We fix a simple left $\A$-module $\V$ and a simple left $\C$-module $\W$. 
The following definition extends \cite[Definition 2.2]{BH} to strata
with non-necessarily strict lattice sequences.

\begin{defi}
\label{KPure}
A stratum $[\La,n,m,\b]$ in $\A$ is said to be \textit{$\K$-pure} if it 
is pure, if $\b$ centralizes $\K$ and if the algebra $\K[\b]$ is a field 
such that $\K[\b]^{\times}$ normalizes $\La$.
\end{defi}

Given a $\K$-pure stratum $[\La,n,m,\b]$ in $\A$, we can form the pure 
stratum $[\Ga,n,m,\b]$, where $\Ga$ is the unique (up to translation) 
lattice sequence on $\W$ defined by:
\begin{equation}
\label{Edescen}
\aa_{k}(\La)\cap\C=\aa_{k}(\Ga), 
\quad k\in\ZZ.
\end{equation}
Note that the $\mult\C$-normalizer of $\Ga$ is equal to 
$\KK(\La)\cap\mult\C$.
We then get a process:
\begin{equation}
\label{Dulilah}
[\La,n,m,\b]\mapsto[\Ga,n,m,\b]
\end{equation}
giving an injection, respecting equivalence, between the set of 
$\K$-pure strata of $\A$ and the set of pure strata of $\C$. 
The fact that $\Ga$ is defined only up to translation makes (\ref{Dulilah}) 
not well defined, but this will be of no importance in the sequel.
We now discuss the image of simple $\K$-pure strata of $\A$ by
(\ref{Dulilah}). 

\begin{prop}
\label{Alexandre1}
\begin{enumerate}
\item
Let $[\La,n,m,\b]$ be a $\K$-pure stratum in $\A$. 
Then:
\begin{equation}
\label{peccavi}
k_0(\b,\Ga)\<k_0(\b,\La).
\end{equation}
\item
Suppose moreover that $[\La,n,m,\b]$ is simple.
Then the stratum $[\Ga,n,m,\b]$ given by the map 
(\ref{Dulilah}) is simple. 
\end{enumerate}
\end{prop}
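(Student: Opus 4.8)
The plan is to deduce both assertions from the corresponding statements for \emph{pure} strata via a careful comparison of critical exponents under the map $[\La,n,m,\b]\mapsto[\Ga,n,m,\b]$. For assertion (1), I would first recall that by \cite[Proposition 2.25]{VS1} (the identity (\ref{Lakme})) the critical exponent of a realization of a simple pair depends only on the period of the relevant lattice sequence times $k_\F(\b)$. However, here we start from an arbitrary $\K$-pure stratum, not necessarily simple, so I would instead work directly with the definition of $k_0(\b,\La)$ via the ideals $\mathfrak n_k(\b,\La)$. The key observation is that, since $\b$ centralizes $\K$, for each $k$ we have $\mathfrak n_k(\b,\Ga)=\mathfrak n_k(\b,\La)\cap\C$ (one checks this directly from (\ref{Edescen}) together with the fact that $\b x-x\b\in\C$ whenever $x\in\C$); similarly $\AA(\Ga)\cap\B'+\PP(\Ga)=(\AA(\La)\cap\B+\PP(\La))\cap\C$, where $\B$ is the centralizer of $\F[\b]$ in $\A$ and $\B'=\B\cap\C$ is the centralizer of $\K[\b]$ in $\C$. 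Hence, whenever $\mathfrak n_{k+1}(\b,\La)\subseteq\AA(\La)\cap\B+\PP(\La)$, intersecting with $\C$ gives $\mathfrak n_{k+1}(\b,\Ga)\subseteq\AA(\Ga)\cap\B'+\PP(\Ga)$. Taking $k=k_0(\b,\La)$ shows $k_0(\b,\Ga)\<k_0(\b,\La)$, which is (\ref{peccavi}).

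For assertion (2), suppose $[\La,n,m,\b]$ is simple, so $m\<-k_0(\b,\La)-1$. From (\ref{peccavi}) we immediately get $m\<-k_0(\b,\Ga)-1$, so it only remains to check that $[\Ga,n,m,\b]$ is \emph{pure}. The algebra generated by $\b$ inside $\C$ is $\K[\b]$, which is a field by the $\K$-purity hypothesis; the lattice sequence $\Ga$ is $\K[\b]$-pure because $\K[\b]^\times$ normalizes $\La$ and $\K[\b]^\times\subseteq\C$, so $\K[\b]^\times$ normalizes $\aa_k(\La)\cap\C=\aa_k(\Ga)$ for all $k$; and $\v_\Ga(\b)=-n$ because $\v_\La(\b)=-n$ and the valuation map of $\Ga$ is the restriction of that of $\La$ to $\KK(\La)\cap\mult\C$ (which is the $\mult\C$-normalizer of $\Ga$, as noted after (\ref{Edescen})). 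This gives purity of $[\Ga,n,m,\b]$, and combined with the inequality $m\<-k_0(\b,\Ga)-1$ just established, simplicity follows from Definition \ref{stratepure}.

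I expect the main obstacle to be verifying cleanly the compatibility $\mathfrak n_{k}(\b,\Ga)=\mathfrak n_{k}(\b,\La)\cap\C$ and, more delicately, the identity $(\AA(\La)\cap\B+\PP(\La))\cap\C=\AA(\Ga)\cap\B'+\PP(\Ga)$: the inclusion $\supseteq$ is formal, but the reverse inclusion requires knowing that an element of $\C$ lying in $\AA(\La)\cap\B+\PP(\La)$ can be rewritten with both summands in $\C$, which is where one uses that the order $\AA(\La)\cap\B$ and its radical behave well under intersection with $\C$ (essentially the $\mult\C$-equivariance built into (\ref{Edescen}) applied to the simple central $\K[\b]$-algebra setting, i.e. the descent result \cite[Th\'eor\`eme 1.4]{SeSt} applied inside $\B$). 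Once this compatibility is in place, everything else is bookkeeping. An alternative, possibly shorter route for part (2) would be to invoke that a $\K$-pure simple stratum is a realization of a simple pair over $\K$ and that $[\Ga,n,m,\b]$ is then also such a realization, so that (\ref{Lakme}) applied over $\K$ forces simplicity directly; but the argument via (\ref{peccavi}) has the advantage of proving part (1) and part (2) uniformly, so I would present it that way.
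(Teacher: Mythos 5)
Your reduction of (2) to (1), your verification that $[\Ga,n,m,\b]$ is pure, and the identity $\mathfrak{n}_k(\b,\Ga)=\mathfrak{n}_k(\b,\La)\cap\C$ are all fine (since $\b\in\C$, the commutator $\b x-x\b$ lies in $\C$ for $x\in\C$, and one applies (\ref{Edescen})). The genuine gap is exactly the point you flag and then pass over: the inclusion $(\AA(\La)\cap\B+\PP(\La))\cap\C\subseteq(\AA(\La)\cap\B\cap\C)+\PP(\Ga)$. This is a distributivity of sum against intersection, and neither the $\mult\C$-equivariance of (\ref{Edescen}) nor \cite[Th\'eor\`eme 1.4]{SeSt} applied inside $\B$ gives it: those results only say that intersecting the filtration $\aa_k(\La)$ with a centralizer again yields the filtration of a lattice sequence; they do not say that an element of $\C$ which decomposes as $a+p$ with $a\in\AA(\La)\cap\B$ and $p\in\PP(\La)$ admits such a decomposition with both summands in $\C$. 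The statement is true, but it needs a genuine argument, and this is precisely where the unramifiedness of $\K/\F$ enters: the group $\mu$ of roots of unity in $\Oo_\K^{\times}$ generating the residue extension has order prime to the residue characteristic, is contained in $\U(\La)$, fixes $\b$ under conjugation (so conjugation by $\mu$ preserves $\B$), and generates $\Oo_\K$ over $\Oo_\F$; averaging conjugation over $\mu$ therefore gives an $\Oo_\F$-linear projection of $\A$ onto $\C$ which is the identity on $\C$ and preserves $\AA(\La)$, $\PP(\La)$ and $\AA(\La)\cap\B$, and applying it to $c=a+p$ produces the required decomposition. Without some such input (an averaging or tame-corestriction type projection), your proof of (\ref{peccavi}) is incomplete.

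For comparison, the paper's proof avoids this computation entirely: since $\K/\F$ is unramified, $\La$ and $\Ga$ have the same period, so by (\ref{Lakme}) the inequality (\ref{peccavi}) reduces to the field-theoretic inequality $k_\K(\b)\<k_\F(\b)$; this follows from \cite[Theorem 2.4]{BH} applied to the lattice sequence $i\mapsto\p_{\K(\b)}^{i}$ on $\K(\b)$, whose period over $\Oo_{\F(\b)}$ is $1$, together with (\ref{Lakme}) again. Your direct approach, once the projection argument above is supplied, would give a more self-contained argument not relying on \cite[Theorem 2.4]{BH}. Note also that the alternative you sketch for (2) (viewing $[\Ga,n,m,\b]$ as a realization of a simple pair over $\K$) is circular: knowing it realizes a simple pair over $\K$ already presupposes the comparison of critical exponents you are trying to prove.
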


\begin{proof}
As $\K$ is unramified over $\F$, the lattice sequences $\La$ and $\Ga$
have the same period over $\Oo_\F$.
By (\ref{Lakme}) it is then enough to prove that $k_{\K}(\b)\<k_{\F}(\b)$.
Let $\mathfrak{L}$ denote the strict $\Oo_{\F}$-lattice sequence on
$\K(\b)$ defined by $i\mapsto\p_{\K(\b)}^{i}$.
By \cite[Theorem 2.4]{BH}, we have:
\begin{equation*}
k_{\K}(\b)\<k_0(\b,\mathfrak{L}).
\end{equation*}
On the other hand, we have $e_{\b}(\mathfrak{L})=1$ as $\K$ is
unramified over $\F$.
By (\ref{Lakme}) again, we get the expected result.
Suppose now that the stratum $[\La,n,m,\b]$ is simple.
Then the fact that $[\Ga,n,m,\b]$ is simple derives immediately from 
(\ref{peccavi}).
\end{proof}

\begin{rema}
For a case where the map (\ref{Dulilah}) is not surjective, see 
\cite[Exemple 1.6]{SeSt}.
Compare with the split case \cite[(2.3)]{BH}.
\end{rema}

%%%%%%%%%%%%%%%%%%%%%%%%%%%%%%%%%%%%%%%%%%%%%%%%%%%%%%%%%%%%%%%%%%%%%%%%%%%

\subsection{}

Given a simple stratum $[\Ga,n,m,\b]$ in $\C$ in the image of 
(\ref{Dulilah}), the $\K$-pure stratum of $\A$ corresponding 
to it may not be simple. 
However, we have the following result, which generalizes 
\cite[Corollary 3.8]{BH}.

\begin{prop}
\label{KaPureApprox}
Let $[\La,n,m,\b]$ be a $\K$-pure stratum in $\A$ such that 
$[\Ga,n,m,\b]$ is simple.
Then there exists a simple stratum $[\Ga,n,m,\b']$ in $\C$ equivalent 
to $[\Ga,n,m,\b]$ such that the stratum $[\La,n,m,\b']$ is simple.

Moreover, $\b'$ can be chosen such that the maximal unramified extension 
of $\F$ contained in $\F(\b')$ is contained in that of $\F(\b)$.
\end{prop}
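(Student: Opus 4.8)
The plan is to prove the proposition by induction on the non-negative integer $n+k_0(\b,\La)$ (it is non-negative because $k_0(\b,\La)\>\v_\La(\b)=-n$). If $[\La,n,m,\b]$ is itself simple there is nothing to do: one takes $\b'=\b$, and the last assertion holds since $\F(\b')=\F(\b)$. Assume then that $[\La,n,m,\b]$ is not simple. Being pure, it satisfies $-k_0(\b,\La)\<m$; put $q=-k_0(\b,\La)$. Since $[\Ga,n,m,\b]$ is simple we have $m\<-k_0(\b,\Ga)-1$, hence, using Proposition~\ref{Alexandre1}(1) and $k_0(\b,\Ga)\>\v_\Ga(\b)=-n$,
\begin{equation*}
q\<m<-k_0(\b,\Ga)\<n.
\end{equation*}
In particular $\b$ is not minimal over $\F$ and $n+k_0(\b,\La)=n-q>0$. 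We record for later use the following remark: if $\g\in\C$ is such that $\F(\g)$ is a field, $\La$ is $\F(\g)$-pure and $\v_\La(\g)=-n$, then $[\La,n,m,\g]$ is automatically $\K$-pure. Indeed, with $\F_0=\K\cap\F(\g)$, the ring $\K[\g]=\K\otimes_{\F_0}\F(\g)$ is a field (since $\K/\F_0$ is unramified and $\F(\g)$ contains no subextension of $\K$ other than $\F_0$), its maximal order is $\Oo_\K\Oo_{\F(\g)}$ (an unramified base change of maximal orders) which stabilises every $\La_k$, and a uniformiser of $\F(\g)$ is one of $\K[\g]$ and normalises $\La$.

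The crux of the argument is the following refinement step: \emph{there is an element $c\in\aa_{-q}(\La)\cap\C=\aa_{-q}(\Ga)$ such that the stratum $[\La,n,q,\b+c]$ is simple, $c$ being chosen so that the maximal unramified extension of $\F$ in $\F(\b+c)$ is contained in that of $\F(\b)$.} Granting this, put $\b^{\sharp}=\b+c$. Simplicity of $[\La,n,q,\b^{\sharp}]$ gives $-k_0(\b^{\sharp},\La)\>q+1$, so $n+k_0(\b^{\sharp},\La)<n+k_0(\b,\La)$; since $\v_\La(\b^{\sharp})=-n$ and $\La$ is $\F(\b^{\sharp})$-pure, the remark above shows $[\La,n,m,\b^{\sharp}]$ is $\K$-pure; and as $q\<m$ we have $c\in\aa_{-m}(\Ga)$, so $[\Ga,n,m,\b^{\sharp}]$ is equivalent to $[\Ga,n,m,\b]$ and is therefore still simple. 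Applying the inductive hypothesis to the $\K$-pure stratum $[\La,n,m,\b^{\sharp}]$ produces $\b'\in\C$ with $[\La,n,m,\b']$ simple, $[\Ga,n,m,\b']$ equivalent to $[\Ga,n,m,\b^{\sharp}]$, and the maximal unramified extension of $\F$ in $\F(\b')$ contained in that of $\F(\b^{\sharp})$; transitivity of the equivalence relation and of field inclusion shows that $\b'$ has all the required properties, and the induction terminates because $n+k_0$ runs through a strictly decreasing sequence of non-negative integers.

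It remains to establish the refinement step, which is where the real work lies and which extends the $\K$-relative approximation of \cite[\S3]{BH} to the present setting. As in \cite{BK,BH}, the correcting term $c$ is built from the obstruction to simplicity of $[\La,n,q,\b]$ at the critical level $q$ \emph{relative to $\F$}; the essential point is that the inequality $q<-k_0(\b,\Ga)$ means that \emph{relative to $\K$} this obstruction is already trivial — concretely, $\mathfrak n_{-q}(\b,\Ga)\subseteq\AA(\Ga)\cap\B_\K+\PP(\Ga)$, where $\B_\K$ is the centraliser of $\K[\b]$ in $\A$ — and this is exactly what allows one to solve for $c$ inside $\C$ rather than merely inside $\A$, with the control of the maximal unramified subextension that is part of the split approximation theorems (compare \cite[\S2.4]{BK} and \cite[Proposition~4.1.3]{BG}). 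Two features of our situation require care. First, $\La$ need not be strict: one works throughout with the refinement theory of \cite{VS1}, valid for arbitrary $\Oo_\D$-lattice sequences, rather than with the hereditary-order formalism of \cite{BK}, passing to $\widetilde{\A}=\End_\F(\V)$ as in Remark~\ref{gaptransfer} whenever a split statement must be quoted. Secondly, $\C$ need not be split: one then base changes $\C$ along a finite unramified extension $\L/\K$ splitting it — in which $\b$ becomes an element of the split $\L$-algebra $\C\otimes_\K\L$ centralising $\L[\b]$ — applies the split $\K$-relative approximation there, and descends the result to $\C$ by the filtration-and-Galois-cohomology argument used in the proof of Lemma~\ref{TotRam}. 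This last descent is the main obstacle: unlike for the intertwining and conjugacy statements, a simple approximation of a pure stratum is not unique, so one cannot merely invoke the vanishing of an $\H^1$; instead one must make the approximation over $\L$ canonical enough to be $\Gal(\L/\K)$-equivariant — which rests on the uniqueness up to translation of $\E$-pure strict lattice sequences (as used in the Remark following Lemma~\ref{Merimee}) and on Proposition~\ref{Tauride} — so that the element produced automatically lies in $\C$.
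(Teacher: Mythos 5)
Your induction on $n+k_0(\b,\La)$ is sound bookkeeping, but it concentrates the entire content of the proposition in the ``refinement step'', which you do not prove: with your $q=-k_0(\b,\La)$, that step is essentially the proposition itself in the case $m=q$ (together with the requirement, which your induction also needs, that $\K[\b+c]$ again be a field whose multiplicative group normalizes $\La$). The mechanism you offer is not a proof. The inclusion $\mathfrak n_{-q}(\b,\Ga)\subseteq\AA(\Ga)\cap\B'+\PP(\Ga)$ (with $\B'$ the centralizer of $\K[\b]$ in $\A$) merely restates that $[\Ga,n,q,\b]$ is simple; it does not produce a correction $c\in\aa_{-q}(\Ga)$ killing the $\F$-relative obstruction, which concerns $\mathfrak n_{-q}(\b,\La)$ modulo $\AA(\La)\cap\B+\PP(\La)$ and has no evident relation to that inclusion. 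Even in the split case the needed statement is not in \cite[\S2.4]{BK} nor in Proposition \ref{BG413}: it is precisely \cite[Corollary 3.8]{BH}, whose proof rests on the tame lifting theorem \cite[Theorem 3.7]{BH} (every simple pair over $\K$ is endo-equivalent to a $\K/\F$-lift of a simple pair over $\F$), a deep input you never invoke. Your descent from a splitting field $\L$ of $\C$ is, by your own admission, the main obstacle and is not carried out: since simple approximations are not unique there is no cocycle to kill, and ``making the approximation $\Gal(\L/\K)$-equivariant'' via uniqueness of pure strict lattice sequences and Proposition \ref{Tauride} is not a construction --- arranging such an equivariant approximation is essentially what has to be proved. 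Finally, your preliminary remark is false as stated: for $\g\in\C$ with $\F(\g)$ a field and $\La$ being $\F(\g)$-pure, the algebra $\K[\g]$ need not be a field (take $\K/\F$ unramified quadratic, $\C\simeq\Mat_2(\K)$ inside $\A=\Mat_4(\F)$, and $\g=\mathrm{diag}(a,\sigma(a))$ with $a\in\K$ of valuation $-n$ generating $\K$ over $\F$: then $\F(\g)\simeq\K$ is a field but $\K[\g]\simeq\K\times\K$), and the identification $\K[\g]=\K\otimes_{\F_0}\F(\g)$ is exactly what can fail; so the $\K$-purity of $[\La,n,m,\b^{\sharp}]$ is not automatic and must be built into the refinement step rather than deduced afterwards.

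For contrast, the paper's proof avoids both level-by-level refinement and any Galois descent of approximations. It passes to the minimal realization $[\Ss,n_0,m_0,\rho(\b)]$ on a simple $\K(\b)\otimes_\F\D$-module, applies \cite[Theorem 3.7]{BH} to replace the simple pair $(k,\b)$ over $\K$ by an endo-equivalent pair $(k,\a)$ which is a $\K/\F$-lift, realizes $(k,\a)$ in the centralizer with the same embedding type using \cite[Corollary 3.16]{BG}, uses \cite[Proposition 1.10]{BH} and ``intertwining implies conjugacy'' (Proposition \ref{BG413}) to make the two strata equivalent with matching maximal unramified subfields, checks $\F$-simplicity as in \cite[Proposition 4.3]{BH}, and transports the result back to $\A$ through the block decomposition and Lemma \ref{Merimee2}; there $\K[\b']$ is a field by construction, and the containment of maximal unramified subextensions comes from \cite[Theorem 5.1(ii)]{BG} after passing to $\La^{\dag}$. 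If you want to salvage your scheme, the missing ingredient is exactly this endo-equivalence-to-a-lift theorem (or an independent proof of the $\K$-relative one-level refinement); without it the refinement step has no support.
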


\begin{proof}
Let $(k,\b)$ denote the simple pair over $\K$ of which $[\Ga,n,m,\b]$ 
is a realization, fix a simple right $\K(\b)\otimes_{\F}\D$-module $\SS$ 
and set $\A(\SS)=\End_\D(\SS)$.
Write $\rho$ for the natural $\K$-algebra homomorphism from $\K(\b)$ 
to $\A(\SS)$.
Let $\Ss$ denote the unique (up to translation) $\rho(\K(\b))$-pure 
strict $\Oo_\D$-lattice sequence on $\SS$ and $n_0$ the $\Ss$-valuation 
of $\rho(\b)$, and set: 
\begin{equation*}
m_0=e_{\rho(\b)}(\Ss)k,
\end{equation*}
so that $[\Ss,n_0,m_0,\rho(\b)]$ is a $\K$-pure stratum in $\A(\SS)$.
Write $\C(\SS)$ for the cent\-ralizer of $\K$ in $\A(\SS)$, fix a simple 
left $\C(\SS)$-module $\T$ and let $[\Tt,n_0,m_0,\rho(\b)]$
be the stratum in $\C(\SS)$ attached to $[\Ss,n_0,m_0,\rho(\b)]$ by 
(\ref{Dulilah}). 
This stratum is a realization of $(k,\b)$ in $\C(\SS)$, hence this is a 
simple stratum. 
According to \cite[Theorem 3.7]{BH}, the simple pair $(k,\b)$ is 
endo-equivalent to a simple pair $(k,\a)$ over $\K$ which is a 
$\K/\F$-lift of some simple pair over $\F$ in the sense of \cite{BH}
(see paragraph 3).
By \cite[Proposition 1.10]{BH}, the extensions $\K(\a)$ and $\K(\b)$ 
have the same ramification index and residue class degree over $\K$, 
which implies by \cite[Corollary 3.16]{BG} that there is a realization
$[\Tt,n_0,m_0,\h(\a)]$
of $(k,\a)$ in $\C(\SS)$, having the same embedding type as 
$[\Tt,n_0,m_0,\rho(\b)]$. 

\medskip

We now pass to the strata $[\widetilde\Tt,n_0,m_0,\h(\a)]$ and 
$[\widetilde\Tt,n_0,m_0,\rho(\b)]$ in the $\K$-algebra $\End_{\K}(\T)$ 
(see paragraph \ref{Split}).
By \cite{SeSt} (see Th\'eor\`eme 1.7 and Remarque 1.8), 
the lattice sequence $\Tt$ (and thus $\widetilde\Tt$) is in the affine 
class of a strict lattice sequence, so that, up to renormalization, 
one may consider it as being strict (see Lemma \ref{Bachir}). 
By \cite[Proposition 1.10]{BH}, these strata thus intertwine.
Hence, using Proposition \ref{BG413}, we can replace $\h$ by some 
$\KK(\Tt)$-con\-ju\-ga\-te and assume that the strata 
$[\Tt,n_0,m_0,\h(\a)]$ and $[\Tt,n_0,m_0,\rho(\b)]$ are equivalent,
and that the maximal unramified extension
of $\K$ contained in $\K(\h(\a))$ is equal to that of $\K(\rho(\b))$.
We check that the stratum $[\Ss,n_0,m_0,\h(\a)]$ is simple 
as in the proof of \cite[Proposition 4.3]{BH}.
We now fix a decomposition:
\begin{equation}
\label{Aron22}
\V=\V^{1}\oplus\cdots\oplus\V^{l}
\end{equation}
of $\V$ into simple right $\K(\b)\otimes_{\F}\D$-modules (which all are 
copies of $\SS$) such that $\La$ is de\-com\-posed by 
(\ref{Aron22}) in the sense of \cite[D\'efinition 1.13]{VS3}, that is, 
$\La$ is the direct sum of the lattice sequences 
$\La^{j}=\La\cap\V^{j}$, for $j\in\{1,\dots,l\}$.
By choosing, for each $j$, an iso\-mor\-phism of 
$\K(\b)\otimes_{\F}\D$-modules between $\SS$ and $\V^{j}$, 
this decomposition gives us an $\F$-algebra homomorphism:
\begin{equation*}
\iota:\A(\SS)\to\A.
\end{equation*}
Using Lemma \ref{Merimee2},
we may assume that this homomorphism 
satisfies $\iota(\rho(\b))=\b$.
If we set $\b'=\iota(\h(\a))$, then the stratum 
$[\Ga,n,m,\b']$ is simple and satisfies the conditions of the first part
of Proposition \ref{KaPureApprox}.

\medskip

In particular, the pure strata $[\La,n,m,\b]$ and $[\La,n,m,\b']$ 
are equivalent, and the second one is simple. 
By replacing the lattice sequence $\La$ by $\La^\dag$ (see paragraph 
\ref{Dague}), we can apply \cite[Theorem 5.1(ii)]{BG}
and thus get that $f_{\F}(\b')$ divides $f_{\F}(\b)$.
Moreover, the maximal unramified extension of $\K$ contained in $\K(\b')$
is equal to that of $\K(\b)$, denoted $\L$. 
As $\K/\F$ is unramified, the extension $\L/\F$ is unramified.
Thus the maximal unramified extension of $\F$ contained in $\F(\b')$ and 
that of $\F(\b)$ are two finite unramified extensions of $\F$ contained in 
$\L$. 
According to the condition on their degrees, it follows that the maximal 
unramified extension of $\F$ contained in $\F(\b')$ in contained in that 
of $\F(\b)$.
\end{proof}

%%%%%%%%%%%%%%%%%%%%%%%%%%%%%%%%%%%%%%%%%%%%%%%%%%%%%%%%%%%%%%%%%%%%%%%%%%%

\subsection{}

Let $[\La,n,m,\b]$ be a $\K$-pure simple stratum in $\A$, and let 
$[\Ga,n,m,\b]$ be the stratum in $\C$ given by the map 
(\ref{Dulilah}), which is simple by Proposition \ref{Alexandre1}.
Recall that one attaches to these simple strata compact 
open subgroups $\H^{m+1}(\b,\La)$ of $\mult\A$ and 
$\H^{m+1}(\b,\Ga)$ of $\mult\C$, respectively. 

\begin{prop}
\label{Alexandre}
Let $[\La,n,m,\b]$ be a $\K$-pure simple stratum in $\A$,
and let $[\Ga,n,m,\b]$ correspond to it by (\ref{Dulilah}).
Then we have:
\begin{equation*}
\H^{m+1}(\b,\La)\cap\mult\C=\H^{m+1}(\b,\Ga).
\end{equation*}
\end{prop}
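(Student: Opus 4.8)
The plan is to descend the asserted group equality to the level of the underlying $\Oo_\F$-lattices. Recall that $\H^{m+1}(\b,\La)=1+\HH^{m+1}(\b,\La)$ for an $\Oo_\F$-lattice $\HH^{m+1}(\b,\La)\subseteq\PP(\La)$, and likewise $\H^{m+1}(\b,\Ga)=1+\HH^{m+1}(\b,\Ga)$ with $\HH^{m+1}(\b,\Ga)\subseteq\PP(\Ga)$. Since $1+x\in\C$ precisely when $x\in\C$, and $1+x$ is then automatically invertible with inverse in $\C$ when $x\in\PP(\La)\cap\C$, one gets $(1+\HH^{m+1}(\b,\La))\cap\mult\C=1+(\HH^{m+1}(\b,\La)\cap\C)$; so the proposition amounts to the lattice identity $\HH^{m+1}(\b,\La)\cap\C=\HH^{m+1}(\b,\Ga)$, which I shall prove together with the companion identity $\JJ^{m+1}(\b,\La)\cap\C=\JJ^{m+1}(\b,\Ga)$, by induction on the critical exponent $k_0(\b,\La)$, i.e. on the length of a defining sequence of $[\La,n,m,\b]$.

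The tool that makes the induction run is the following consequence of $\K/\F$ being unramified: $\Oo_\K\otimes_{\Oo_\F}\Oo_\K$ is a product of copies of $\Oo_\K$ indexed by $\Gal(\K/\F)$, so $\A$ decomposes as an $\Oo_\K$-bimodule with $\C$ the summand attached to the identity automorphism; this decomposition is compatible with the filtration by the $\aa_k(\La)$ (indeed $\aa_k(\La)\cap\C=\aa_k(\Ga)$ by (\ref{Edescen})) and, more generally, it splits any $\Oo_\K$-sub-bimodule $\mathfrak L$ of $\A$ as $(\mathfrak L\cap\C)\oplus\mathfrak L'$. Since $\Oo_\K$ lies in $\AA(\La)$ and centralizes $\b$ (Definition \ref{KPure}), all the lattices $\HH^{k}(\cdot,\La)$ and $\JJ^{k}(\cdot,\La)$ built from $[\La,n,m,\b]$ and its approximations are $\Oo_\K$-sub-bimodules of $\A$; consequently $\mathfrak L\mapsto\mathfrak L\cap\C$ commutes with all the finite sums and intersections occurring in their recursive definition (see \cite[\S2.4]{SeSt}). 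Finally, writing $\B$ for the centralizer of $\b$ in $\A$ and $\B_\K=\B\cap\C$ for the centralizer of $\b$ in $\C$, one has, for the associated orders, $(\aa_j(\La)\cap\B)\cap\C=\aa_j(\Ga)\cap\B_\K$: intersecting with $\C$ turns the ``$\B$-part'' of a defining formula in $\A$ into the ``$\B_\K$-part'' of the corresponding formula in $\C$.

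For the base case: if $m\>\lfloor n/2\rfloor$ both lattices are $\aa_{m+1}$ and (\ref{Edescen}) applies; if $\b$ is minimal over $\F$, then, $\K/\F$ hence $\K(\b)/\F(\b)$ being unramified, $n_\K(\b)=n_\F(\b)$, and combining this with $k_\K(\b)\<k_\F(\b)$ from Proposition \ref{Alexandre1} and $k_\K(\b)\>-n_\K(\b)$ forces $k_\K(\b)=k_\F(\b)=-n_\F(\b)$, so $\b$ is minimal over $\K$ as well; both $\HH^{m+1}$'s are then given by the same closed formula in terms of $\B$- (resp. $\B_\K$-) orders and the $\aa_{m+1}$'s, which descends by the compatibilities of the previous paragraph. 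For the inductive step, suppose $\b$ is not minimal. The two recursions are anchored at $-k_0(\b,\La)$ and $-k_0(\b,\Ga)$ respectively, and Proposition \ref{Alexandre1} only gives $k_0(\b,\Ga)\<k_0(\b,\La)$; to reconcile them, put $r=-k_0(\b,\Ga)$, choose a simple stratum $[\Ga,n,r,\g]$ in $\C$ equivalent to $[\Ga,n,r,\b]$, note that $[\La,n,r,\g]$ is then $\K$-pure with simple interior lift, and apply Proposition \ref{KaPureApprox} to obtain $\g'$ with $[\Ga,n,r,\g']$ simple equivalent to $[\Ga,n,r,\b]$ and $[\La,n,r,\g']$ simple; since $\g'-\b\in\aa_{-r}(\Ga)\subseteq\aa_{-r}(\La)$, the stratum $[\La,n,r,\g']$ is also equivalent to $[\La,n,r,\b]$, so $\g'$ is an approximation adapted simultaneously to $\A$ and to $\C$. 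Using that $\HH^{m+1}$ and $\JJ^{m+1}$ do not depend on the chosen defining sequence, one computes both sides through a single defining sequence beginning with $\g'$, intersects the $\A$-side termwise with $\C$, and concludes by (\ref{Edescen}), the $\B$-versus-$\B_\K$ compatibility, the distributivity of $(-)\cap\C$, and the inductive hypothesis applied to $\g'$ (legitimate as $k_0(\g',\La)<k_0(\b,\La)$).

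The main obstacle is precisely this reconciliation in the inductive step: a priori $[\La,n,m,\b]$ and $[\Ga,n,m,\b]$ have strictly different critical exponents, so their defining sequences do not run in step. This is what forces the argument through Proposition \ref{KaPureApprox}, used to manufacture an approximation that is good simultaneously in $\A$ and in $\C$, and through the independence of $\HH^{m+1}$ and $\JJ^{m+1}$ from the defining sequence, used to re-anchor the $\A$-side computation at the $\C$-side critical exponent. The secondary point, the distributivity of $(-)\cap\C$ over the sums and intersections in the recursive definition, is exactly where the hypothesis that $\K/\F$ be unramified is used decisively (the $\Oo_\K$-bimodule splitting of $\A$ fails for wildly ramified $\K$); checking that the relevant lattices are $\Oo_\K$-sub-bimodules is then routine.
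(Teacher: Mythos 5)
Your reduction of the group identity to the lattice identity $\HH^{m+1}(\b,\La)\cap\C=\HH^{m+1}(\b,\Ga)$, the $\Oo_\K$-bimodule splitting of $\A$ (which is indeed where unramifiedness of $\K/\F$ enters), and the minimal case are all sound, and your overall strategy (induction along a defining sequence, with Proposition \ref{KaPureApprox} supplying an approximation usable simultaneously in $\A$ and in $\C$) is the same as the paper's. The genuine gap is in the inductive step: you anchor the common approximation at the wrong level. Write $q=-k_0(\b,\La)$ and $q_1=-k_0(\b,\Ga)$; Proposition \ref{Alexandre1} gives $q\<q_1$, and the inequality may be strict. Your $\g'$ satisfies only $\b-\g'\in\aa_{-q_1}(\Ga)\subseteq\aa_{-q_1}(\La)$, i.e. $[\La,n,q_1,\g']$ is equivalent to $[\La,n,q_1,\b]$. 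But the recursive description of $\HH^{m+1}(\b,\La)$ and $\JJ^{m+1}(\b,\La)$ (and the independence-of-defining-sequence statement you invoke) requires an approximation at level $q$, i.e. $\b-\g'\in\aa_{-q}(\La)$; since $\aa_{-q}(\La)\subseteq\aa_{-q_1}(\La)$ when $q\<q_1$, equivalence at level $q_1$ does not give this. So $\g'$ need not begin any defining sequence of $[\La,n,m,\b]$, and ``computing both sides through a single defining sequence beginning with $\g'$'' is unjustified on the $\A$-side; the $\A$-side recursion simply is not available for $\g'$.

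The repair is to anchor at the finer level $q$, as the paper does: use Proposition \ref{KaPureApprox} at level $q$ to produce $\g$ with $[\Ga,n,q,\g]$ simple and equivalent to $[\Ga,n,q,\b]$ and $[\La,n,q,\g]$ simple and $\K$-pure. Then $\g$ is a genuine level-$q$ approximation for the $\A$-side recursion, and, since $\aa_{-q}(\Ga)\subseteq\aa_{-q_1}(\Ga)$, it is a fortiori a level-$q_1$ approximation for the $\C$-side recursion. The level mismatch does not disappear, though: the two recursions then involve $\H^{\lfloor q/2\rfloor+1}(\g,\cdot)$ on the $\A$-side and $\H^{\lfloor q_1/2\rfloor+1}(\g,\cdot)$ on the $\C$-side, and the paper reconciles them by an extra identity expressing $\H^{\lfloor q/2\rfloor+1}(\g,\Ga)$ as the product of $\U_{\lfloor q/2\rfloor+1}(\Ga)$ intersected with the relevant centralizer and $\H^{\lfloor q_1/2\rfloor+1}(\g,\Ga)$. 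Your proposal has no counterpart to this reconciliation because it assumes the mismatch away by re-anchoring at $q_1$; with the anchoring corrected, your lattice-level, bimodule-splitting formulation of the bookkeeping would be a perfectly reasonable alternative to the paper's direct computation with the groups.
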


\begin{proof}
It is enough to prove it when $m=0$.
The proof is by induction on $\b$.
Let $\R$ denote the centralizer of $\K(\b)$ in $\A$.
Assume first that $\b$ is minimal over $\F$, so that:
\begin{equation*}
\H^{1}(\b,\La)=(\U_{1}(\La)\cap\mult\B)\U_{\lfloor n/2\rfloor+1}(\La).
\end{equation*}
According to (\ref{Edescen}), we get:
\begin{equation*}
\H^{1}(\b,\La)\cap\mult\C=
(\U_{1}(\Ga)\cap\mult\R)\U_{\lfloor n/2\rfloor+1}(\Ga),
\end{equation*}
which is equal to $\H^{1}(\b,\Ga)$ as $\b$ is minimal over $\K$ 
by Proposition \ref{Alexandre1}.
Now assume that $\b$ is not minimal over $\F$, set $q=-k_0(\b,\La)$ 
and $r=\lfloor q/2\rfloor+1$, 
and choose a simple stratum $[\Ga,n,q,\g]$ equivalent to $[\Ga,n,q,\b]$ 
such that $[\La,n,q,\g]$ is simple and $\K$-pure, 
which is possible thanks to Proposition \ref{KaPureApprox}. 
We then have:
\begin{equation*}
\label{Pivoine}
\H^{1}(\b,\La)=(\U_{1}(\La)\cap\mult\B)\H^{r}(\g,\La)
\end{equation*}
and, if we set $q_1=-k_0(\b,\Ga)$ and $r_1=\lfloor q_1/2\rfloor+1$, we have:
\begin{equation*}
\label{Sterne}
\H^{1}(\b,\Ga)=(\U_{1}(\Ga)\cap\mult\R)\H^{r_1}(\g,\Ga).
\end{equation*}
As $-k_{0}(\g,\Ga)\>q_1\>q$, the group $\H^{r}(\g,\Ga)$ is equal to 
$(\U_{r}(\Ga)\cap\mult\R)\H^{r_1}(\g,\Ga)$.
It follows from (\ref{Edescen}) that the group $\H^{1}(\b,\Ga)$ is equal to 
the intersection $\H^{1}(\b,\La)\cap\mult\C$.
This ends the proof of Proposition \ref{Alexandre}.
\end{proof}

%%%%%%%%%%%%%%%%%%%%%%%%%%%%%%%%%%%%%%%%%%%%%%%%%%%%%%%%%%%%%%%%%%%%%%%%%%%

\subsection{}
\label{Fritouille}

We now want to lift simple characters. 
For this, given a simple stratum $[\La,n,m,\b]$ in $\A$, we will need 
a characterization of the set $\Cc(\La,m,\b)$ by induction on $\b$, 
generalizing \cite[Proposition 3.47]{VS1} to the case where $\La$ is
non-necessarily strict. 

\begin{lemm}
\label{RegrettableOmission}
Let $[\La,n,m,\b]$ be a simple stratum in $\A$ 
and $\t$ be a character of the group $\H^{m+1}(\b,\La)$, 
and set $q=-k_{0}(\b,\La)$ and $m'=\max\{m,\lfloor q/2\rfloor\}$. 
Then $\t\in\Cc(\La,m,\b)$ if and only if it is
normalized by $\KK(\La)\cap\mult\B$ and satisfies the following 
conditions: 
\begin{enumerate}
\item
if $\b$ is minimal over $\F$, then 
$\t\ |\ \U_{m'+1}(\La)=\psi_{\b}^{\A}$ and 
$\t\ |\ \U_{m+1}(\La)\cap\mult\B=\chi\circ\N_{\B/\E}$
for some character $\chi$ of $1+\p_{\E}$
(see (\ref{LesLapinsNAimentPasLesCarottes}) for the definition of 
$\psi_{\b}^{\A}$); 
\item
if $\b$ is not minimal over $\F$, and if $[\La,n,q,\g]$ is 
simple and equivalent to $[\La,n,q,\b]$ in $\A$, then
$\t\ |\ \H^{m'+1}(\b,\La)=\t_{0}^{}\psi_{\b-\g}^{\A}$ and 
$\t\ |\ \H^{m+1}(\b,\La)\cap\mult\B=\chi\circ\N_{\B/\E}$
for some simple character $\t_{0}\in\Cc(\La,m',\g)$ and 
some character $\chi$ of $1+\p_{\E}$.
\end{enumerate}
\end{lemm}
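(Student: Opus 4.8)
The plan is to reduce the statement to the case where $\La$ is strict, where it is exactly \cite[Proposition 3.47]{VS1}, the reduction being carried out by means of the $\dag$-construction of paragraph \ref{Dague} together with the Iwahori-decomposition results of \cite{SeSt}. Write $e$ for the period of $\La$ over $\Oo_\D$, form the strict lattice sequence $\La^\dag\colon k\mapsto\La_k\oplus\cdots\oplus\La_{k+e-1}$ on $\V^\dag=\V\oplus\cdots\oplus\V$ ($e$ copies) and the simple stratum $[\La^\dag,n,m,\b]$ in $\A^\dag=\End_\D(\V^\dag)$; write $\B^\dag$ for the centralizer of $\E$ in $\A^\dag$. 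Since $e_\b(\La^\dag)=e_\b(\La)$, formula (\ref{Lakme}) gives $-k_0(\b,\La^\dag)=-k_0(\b,\La)=q$, so the integer $m'$ and the admissible approximations $\g$ are the same for the two strata. With respect to the decomposition $\V^\dag=\V\oplus\cdots\oplus\V$ and the attached Levi subgroup $\M=\mult\A\times\cdots\times\mult\A$, Th\'eor\`eme~2.17 of \cite{SeSt} provides Iwahori decompositions of $\H^{m+1}(\b,\La^\dag)$, of $\U_{m'+1}(\La^\dag)$ and $\U_{m+1}(\La^\dag)$, and --- applied inside $\A^\dag$ to $\B^\dag$ --- of $\H^{m+1}(\b,\La^\dag)\cap\mult{\B^\dag}$, all having as $\M$-component the $e$-fold product of the corresponding group attached to $\La$ (each summand of $\V^\dag$ being a translate of $\La$, hence giving the same groups).

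For the ``only if'' direction, let $\t\in\Cc(\La,m,\b)$ and let $\t^\dag\in\Cc(\La^\dag,m,\b)$ be its transfer; by Lemma \ref{JacquesVerges}, $\t^\dag$ is trivial on the unipotent parts of these decompositions and restricts to $\t\otimes\cdots\otimes\t$ on the $\M$-components. By \cite[Proposition 3.47]{VS1} applied to $[\La^\dag,n,m,\b]$, the character $\t^\dag$ is normalized by $\KK(\La^\dag)\cap\mult{\B^\dag}$ and satisfies conditions (1) and (2). Restricting to the first diagonal block recovers the statements for $\t$: the characters $\psi_\b^{\A^\dag}$ and $\psi_{\b-\g}^{\A^\dag}$ are trivial on the unipotent parts (because $\b,\g\in\M$) and restrict on $\M$ to $(\psi_\b^\A)^{\otimes e}$ and $(\psi_{\b-\g}^\A)^{\otimes e}$, since $\tr_{\A^\dag/\F}=\sum_j\tr_{\A/\F}$ there; the reduced norm $\N_{\B^\dag/\E}$ of a block-diagonal element is the product of the block norms, so $\chi\circ\N_{\B^\dag/\E}$ matches $\chi\circ\N_{\B/\E}$ on the $\M$-component; the group $\KK(\La)\cap\mult\B$ embeds diagonally in $\KK(\La^\dag)\cap\mult{\B^\dag}$ (and normalizes $\H^{m+1}(\b,\La)$), so normalization of $\t^\dag$ gives normalization of $\t$; and if $[\La,n,q,\g]$ is simple and equivalent to $[\La,n,q,\b]$ then so is $[\La^\dag,n,q,\g]$ relative to $[\La^\dag,n,q,\b]$, the simple character $\t_0\in\Cc(\La,m',\g)$ being recovered from its transfer $\t_0^\dag\in\Cc(\La^\dag,m',\g)$ by Lemma \ref{JacquesVerges} once more. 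Hence $\t$ is normalized by $\KK(\La)\cap\mult\B$ and satisfies (1) and (2).

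The ``if'' direction is where the main difficulty lies: one cannot simply inflate an arbitrary character $\t$ of $\H^{m+1}(\b,\La)$ to $\La^\dag$, since Lemma \ref{JacquesVerges} only guarantees that inflation along the Iwahori decomposition yields a character when the datum is already a simple character. I would argue by induction on $\b$, using that the construction of $\Cc(\La,m,\b)$ of \cite{SeSt} is valid for arbitrary $\La$: when $\b$ is minimal, conditions (1) together with normalization by $\KK(\La)\cap\mult\B$ are the defining conditions of $\Cc(\La,m,\b)$; when $\b$ is not minimal, condition (2) presents $\t$ as built from some $\t_0\in\Cc(\La,m',\g)$ by exactly the recipe of the inductive step of that construction. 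The point to be checked is that this construction, together with the bookkeeping of the centralizer norm, follows for a non-strict $\La$ the same pattern as the strict construction of \cite{VS1} --- in particular that the product decomposition $\H^{m+1}(\b,\La)=(\U_{m+1}(\La)\cap\mult\B)\,\H^{m'+1}(\b,\La)$, the compatibility of $\psi_\b^\A$ and $\psi_{\b-\g}^\A$ with the centralizer, and the independence of the description from the choice of $\g$, all carry over from \cite{VS1} to \cite{SeSt} (Th\'eor\`eme~2.13, Th\'eor\`eme~2.17 and the surrounding material); the two restrictions, to $\H^{m'+1}(\b,\La)$ and to $\H^{m+1}(\b,\La)\cap\mult\B$, then determine $\t$ and place it inside $\Cc(\La,m,\b)$. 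Alternatively one can transport, by the $\dag$-reduction above, the characterization of the \emph{set} $\Cc(\La,m,\b)$ among characters normalized by $\KK(\La)\cap\mult\B$ from the strict case, and then match any character satisfying (1) and (2) --- which is automatically so normalized --- with a genuine element of $\Cc(\La,m,\b)$ having the same restrictions to those two subgroups, the construction of \cite{SeSt} being surjective onto the compatible data $(\t_0,\chi)$.
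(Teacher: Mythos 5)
Your $\dag$-reduction settles only the direction that was never in doubt. The ``only if'' half is essentially immediate from the inductive definition of $\Cc(\La,m,\b)$ in \cite{SeSt}, and your transfer-plus-Lemma~\ref{JacquesVerges} argument, while correct, is machinery spent on the easy half. The whole content of the lemma is the ``if'' direction, and there — as you yourself observe — the reduction to $\La^\dag$ breaks down, because one cannot inflate an arbitrary character of $\H^{m+1}(\b,\La)$ along the Iwahori decomposition without already knowing it is simple. At that point your proposal stops being a proof: the first fallback (``argue by induction on $\b$\dots the point to be checked is that \dots all carry over'') is precisely the statement to be proved, asserted rather than verified; and the second fallback (match $\t$ with a genuine element of $\Cc(\La,m,\b)$ having the same restrictions) quietly assumes that the construction of \cite{SeSt} is surjective onto all compatible pairs $(\t_0,\chi)$ and that such a pair, via the decomposition $\H^{m+1}(\b,\La)=(\U_{m+1}(\La)\cap\mult\B)\,\H^{m'+1}(\b,\La)$, pins down a unique simple character — but this surjectivity-and-uniqueness statement \emph{is} the lemma, up to reformulation, so the argument is circular as it stands. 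There is also no treatment of the independence of condition (2) from the choice of the approximation $\g$, which is part of what makes the characterization nontrivial.

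For comparison, the paper does not attempt any reduction to the strict case: it re-runs the induction of \cite[Proposition~3.11]{VS1} (the strict-case characterization) verbatim in the non-strict setting, which is possible because the construction of $\H^{m+1}(\b,\La)$ and $\Cc(\La,m,\b)$ in \cite{SeSt} is already made for arbitrary lattice sequences; the only changes are in the ingredients, namely \cite[Corollary~5.3]{BK3} is replaced by \cite[Proposition~1.20]{SeSt}, \cite[Proposition~3.3.9]{BK} by \cite[Proposition~3.30]{VS1}, and \cite[Lemma~1.9]{Gr} turns out not to be needed. So your instinct in the first fallback is the right one, but to make it a proof you must actually walk through that induction and check, step by step, which statements of \cite{VS1}/\cite{SeSt} replace the strict-case inputs — this bookkeeping, not the $\dag$-construction, is where the work of the lemma lies.
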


\begin{proof}
The proof is similar to that of \cite[Proposition 3.11]{VS1}, 
and we do not repeat it.
Note that \cite[Lemma 1.9]{Gr} is actually not needed in the proof, 
and that \cite[Corollary 5.3]{BK3} has to be replaced by 
\cite[Proposition 1.20]{SeSt} and \cite[Proposition 3.3.9]{BK} by 
\cite[Proposition 3.30]{VS1}.
\end{proof}

%%%%%%%%%%%%%%%%%%%%%%%%%%%%%%%%%%%%%%%%%%%%%%%%%%%%%%%%%%%%%%%%%%%%%%%%%%%

\label{Oahu}

Let $[\La,n,m,\b]$ be a simple $\K$-pure stratum in $\A$, 
and let $[\Ga,n,m,\b]$ correspond to it by (\ref{Dulilah}).
We write $\Cc(\Ga,m,\b)$
for the set of simple characters attached to $[\Ga,n,m,\b]$
with respect to the additive character:
\begin{equation}
\label{AddCharK}
\psi_{\K}=\psi\circ\tr_{\K/\F},
\end{equation}
which is trivial on $\p_{\K}$ but not on $\Oo_{\K}$,
as $\K$ is unramified over $\F$. 
Compare the following theorem with \cite[Theorem 7.7]{BH} and 
\cite[Proposition 7.1]{Gr}. 

\begin{theo}
\label{Syriana}
Let $[\La,n,m,\b]$ be a simple $\K$-pure stratum in $\A$, 
and let $[\Ga,n,m,\b]$ correspond to it by (\ref{Dulilah}).
Then, for any $\t\in\Cc(\La,m,\b)$, we have:
\begin{equation*}
\t\ |\ \H^{m+1}(\b,\Ga)\in\Cc(\Ga,m,\b).
\end{equation*}
\end{theo}

\begin{proof}
The proof is by induction on $\b$.
Let $\t^\K$ denote the restriction of $\t$ to the group 
$\H^{m+1}(\b,\Ga)$ and $\R$ the centralizer of $\K(\b)$ in $\A$.
Assume first that $\b$ is minimal over $\F$. 
By Proposition \ref{Alexandre1}, it is also minimal over $\K$. 
If $m\>\lfloor{n/2}\rfloor$, we have $\Cc(\La,m,\b)=\{\psi_{\b}^{\A}\}$ 
and $\Cc(\Ga,m,\b)=\{\psi_{\b}^{\C}\}$, where $\psi_{\b}^{\C}$ denotes 
the character of $\U_{m+1}(\Ga)$ defined by:
\begin{equation*}
\label{LesLapinsNAimentPasLesCarottesSurC}
\psi_\b^{\C}:x\mapsto\psi_{\K}\circ\tr_{\C/\K}(\b(x-1)).
\end{equation*}
So we just need to prove that:
\begin{equation}
\label{PsiRes}
\psi_{\b}^{\A}\ |\ \U_{m+1}(\Ga)=\psi_{\b}^{\C},
\end{equation}
which is given by \cite[Property (7.6)]{BH}.
If $m\<\lfloor{n/2}\rfloor$, then any 
$\t\in\Cc(\La,m,\b)$ extends the character 
$\psi_{\b}^{\A}\ |\ \U_{\lfloor n/2\rfloor+1}(\La)$ 
and its restriction to $\U_{m+1}(\La)\cap\mult\B$ has the form:
\begin{equation*}
\label{MirbeauK}
\t\ |\ \U_{m+1}(\La)\cap\mult\B=\chi\circ\N_{\B/\E}
\end{equation*}
for some character $\chi$ of $1+\p_{\E}$.
Therefore the character $\t^{\K}$ extends 
$\psi_{\b}^{\C}\ |\ \U_{\lfloor n/2\rfloor+1}(\Ga)$,
and its restriction to $\U_{m+1}(\Ga)\cap\mult\R$ 
has the form:
\begin{equation*}
\label{MirbeauP}
\t^{\K}\ |\ \U_{m+1}(\Ga)\cap\mult\R
=\chi\circ\N_{\K(\b)/\E}\circ\N_{\R/\K(\b)}.
\end{equation*}
Finally, the group $\KK(\Ga)\cap\mult\R$, which normalizes both $\t$ 
and the group $\H^{m+1}(\b,\Ga)$, normalizes $\t^{\K}$.
It follows from Lemma \ref{RegrettableOmission}
that $\t^{\K}\in\Cc(\Ga,m,\b)$.

Now assume that $\b$ is not minimal over $\F$. 
We set $q=-k_0(\b,\La)$ 
and $r=\lfloor q/2\rfloor+1$, and choose a simple stratum
$[\Ga,n,q,\g]$ equivalent to $[\Ga,n,q,\b]$ such that $[\La,n,q,\g]$ is 
simple and $\K$-pure. 
If $m\>\lfloor{q/2}\rfloor$, then 
any $\t\in\Cc(\La,m,\b)$ can be written as
$\t=\t_{0}^{}\psi_{\b-\g}^{\A}$ for some simple char\-ac\-ter 
$\t_{0}\in\Cc(\La,m,\g)$. 
Now we claim that:
\begin{equation}
\label{Col}
\H^{m+1}(\b,\Ga)=\H^{m+1}(\g,\Ga).
\end{equation}
We write $q_1=-k_0(\b,\Ga)$.
If $q_1=q$, then the equality (\ref{Col}) follows by definition. 
Other\-wise, we have $q_1>q$ by Proposition \ref{Alexandre1}.
The strata $[\Ga,n,q,\b]$ and $[\Ga,n,q,\g]$ are thus both simple, 
and (\ref{Col}) follows. 
We now restrict the character $\t$ to the group given by (\ref{Col}) and get 
$\t^{\K}=\t_{0}^{\K}\psi_{\b-\g}^{\C}$, 
where $\t_{0}^{\K}$ denotes the restriction $\t_0\ |\ \H^{m+1}(\g,\Ga)$, 
and this restriction is in $\Cc(\Ga,m,\g)$ by the inductive hypothesis.
If $q_1=q$, then $\t^{\K}$ is in $\Cc(\Ga,m,\b)$ by definition.  
Otherwise, $[\Ga,n,q,\b]$ is simple and the result follows from 
\cite[Proposition 2.15]{SeSt}. 
The case $m\<\lfloor{q/2}\rfloor$ reduces to the previous one exactly 
as in the minimal case. 
\end{proof}

%%%%%%%%%%%%%%%%%%%%%%%%%%%%%%%%%%%%%%%%%%%%%%%%%%%%%%%%%%%%%%%%%%%%%%%%%%%
%%%%%%%%%%%%%%%%%%%%%%%%%%%%%%%%%%%%%%%%%%%%%%%%%%%%%%%%%%%%%%%%%%%%%%%%%%%

\section{Interior lifting and transfer}
\label{ILT}

In this section, we define the interior lift of a ps-character.
This amounts to studying the behaviour of the interior lifting 
process with respect to trans\-fer. 

\subsection{}
\label{Jeu}

As in section \ref{PILSC}, we are given in this section 
a simple central $\F$-algebra
$\A$ and a finite unramified extension $\K/\F$ cont\-ain\-ed in $\A$.
We fix a finite unramified extension $\L$ of $\K$ such that the 
$\L$-algebra:
\begin{equation*}
\AL=\A\otimes_{\F}\L
\end{equation*}
is split. 
This $\L$-algebra inherits an action of the Galois group of $\L/\F$ 
in the obvious way, and we consider $\A$ as being naturally embedded 
in $\AL$ by $j_\A:a\mapsto a\otimes_{\F}1$.
We have a decomposition:
\begin{equation}
\label{KaiserFJ2}
\K\otimes_{\F}\L=\K^{1}\oplus\cdots\oplus\K^{\d}
\end{equation}
into simple $\K\otimes_{\F}\L$-modules, where $\d$ denotes the degree 
of $\K/\F$.
For each $i\in\{1,\ldots,\d\}$, we write $\e^{i}$ for the minimal 
idempotent in $\K\otimes_{\F}\L$ corresponding to $\K^{i}$.
The centralizer of $\K\otimes_{\F}\L$ in $\overline{\A}$, denoted $\UL$, 
is equal to $\C\otimes_{\F}\L$.
By identifying it with $\C\otimes_{\K}(\K\otimes_{\F}\L)$ and using 
(\ref{KaiserFJ2}), we get a decomposition:
\begin{equation*}
\UL=\UL^{1}\oplus\cdots\oplus\UL^{\d},
\end{equation*}
where the $\K^{i}$-algebra $\UL^{i}=\e^{i}\overline{\A}\e^{i}$ 
identifies with $\C\otimes_{\K}\K^{i}$ for each $i\in\{1,\ldots,\d\}$.

\medskip

In a similar way, we may consider the centralizer $\C$ of $\K$ in $\A$ as 
being embedded in the split $\L$-algebra $\CL=\C\otimes_\K\L$ by 
the $\K$-algebra homomorphism $j_\C:c\mapsto c\otimes_{\K}1$. 

\medskip

Similarly to the case of simple characters (see paragraph \ref{Oahu}), 
we will define the 
interior lift of a quasi-simple character by restriction from $\AL$ to 
$\CL$.
For this we need an embedding of $\CL$ in $\AL$ satisfying some conditions 
with respect to $j_\A$ and $j_\C$ (see below), 
but there is no canonical such em\-bed\-ding. 
We choose a set:
\begin{equation}
\label{Pipistrelle}
\SS=\{\s_1,\dots,\s_{\d}\}\subseteq\Gal(\L/\F)
\end{equation}
of representatives of $\Hom_{\F}(\K,\L)$ in $\Gal(\L/\F)$, that is a subset 
of $\Gal(\L/\F)$ such that the 
restriction map from $\L$ to $\K$ induces a bijection from $\SS$ to 
$\Hom_{\F}(\K,\L)$.
For simplicity, we assume that we have ordered the $\e^{i}$'s so that:
\begin{equation}
\label{PoupaPoupa}
\textit{$\K^1$ and $\L$ are isomorphic $\K\otimes\L$-modules
and $\s_{i}(\e^{1})=\e^{i}$ for any $i\in\{1,\ldots,\d\}$.}
\end{equation}
This gives us an $\F$-algebra homomorphism:
\begin{equation}
\label{MariusMarius}
\can:\CL\ {\smash{\mathop{\longrightarrow}\limits^{\simeq}}}\ 
\UL^{1}\subseteq\UL,
\end{equation}
and $\s_{i}\circ\can$ is an $\F$-algebra homomorphism from $\CL$
to $\UL^{i}$ for each integer $i\in\{1,\ldots,\d\}$.
The fol\-lowing lemma gives us a relationship between 
(\ref{MariusMarius}) and the embeddings $j_\A$ and $j_\C$.

\begin{lemm}
\label{Glossolalies}
Let $j_{\A,\C}$ denote the restriction of $j_\A$ to $\C$, with 
values in $\UL$.
Then the $\F$-algebra homomorphism from $\CL$ to $\UL$ defined by:
\begin{equation}
\label{Semprum}
\ii=\ii_{\SS}:x\mapsto \s_1\circ\can(x)+\dots+\s_{\d}\circ\can(x)
\end{equation}
satisfies the equality $\ii\circ j_{\C}=j_{\A,\C}$.
\end{lemm}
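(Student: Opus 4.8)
The plan is to verify the identity $\ii\circ j_{\C}=j_{\A,\C}$ by evaluating both sides on an arbitrary element $c\in\C$, which reduces the statement to three elementary observations: the way $\can$ interacts with the embeddings $j_{\A}$ and $j_{\C}$; the nature of the Galois action on $\overline{\A}$ and on the idempotents $\e^{i}$; and the fact that the $\e^{i}$ form a complete orthogonal family.

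The first, and only delicate, point is to unwind the definition of $\can$ and check that
\[
\can(j_{\C}(c))=j_{\A}(c)\,\e^{1}\qquad(c\in\C).
\]
This follows once one observes that $\can=\mathrm{id}_{\C}\otimes_{\K}\phi$, where $\phi\colon\L\to\K^{1}$ is the isomorphism of $\K\otimes_{\F}\L$-modules used to define $\can$ (equivalently, of unital $\K$-algebras, so that $\phi(1_{\L})=\e^{1}$); under the standard identification $\C\otimes_{\K}(\K\otimes_{\F}\L)=\C\otimes_{\F}\L=\UL\subseteq\overline{\A}$, the element $c\otimes_{\K}\e^{1}$ is exactly $j_{\A}(c)\,\e^{1}$, using that $c\in\C$ centralizes $\K$. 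Once this is in hand the remainder is formal: each $\s_{i}\in\Gal(\L/\F)$ acts on $\overline{\A}=\A\otimes_{\F}\L$ through the second factor, hence fixes $j_{\A}(\A)=\A\otimes_{\F}1$ pointwise, while $\s_{i}(\e^{1})=\e^{i}$ by the normalization~(\ref{PoupaPoupa}); therefore
\[
\s_{i}\bigl(\can(j_{\C}(c))\bigr)=\s_{i}(j_{\A}(c))\,\s_{i}(\e^{1})=j_{\A}(c)\,\e^{i}.
\]

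Finally I would sum over $i$. Since $\e^{1},\dots,\e^{\d}$ are the minimal idempotents of the commutative semisimple $\F$-algebra $\K\otimes_{\F}\L$, they are orthogonal and $\e^{1}+\dots+\e^{\d}=1_{\overline{\A}}$, so
\[
\ii(j_{\C}(c))=\sum_{i=1}^{\d}\s_{i}\bigl(\can(j_{\C}(c))\bigr)=j_{\A}(c)\sum_{i=1}^{\d}\e^{i}=j_{\A}(c)=j_{\A,\C}(c),
\]
which is the assertion. I expect no genuine obstacle: the argument is a short computation, the only care needed being the compatibility of the abstractly defined $\can$ with $j_{\A}$, $j_{\C}$ and the distinguished idempotent $\e^{1}$.
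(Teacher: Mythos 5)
Your proof is correct and follows essentially the same route as the paper: the paper's argument likewise rests on $\s_i(\e^1 j_\A(x))=\e^i j_\A(x)$, the identification $\can\circ j_\C=\e^1 j_{\A,\C}$ (stated there as $\e^1 j_{\A,\C}=j_\C$), and summing over the idempotents. Your write-up merely makes the first identification (via $\can=\mathrm{id}_\C\otimes_\K\phi$) explicit, which the paper leaves as a remark.
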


\begin{proof}
We have $\s_{i}(\e^1j_\A(x))=\e^ij_\A(x)$ for all 
$i\in\{1,\ldots,\d\}$ and $x\in\C$, which implies that 
$\ii\circ\e^1j_{\A,\C}=j_{\A,\C}$.
Note that $\e^1j_{\A,\C}=j_\C$, so that we 
get the expected equality.
\end{proof}

%%%%%%%%%%%%%%%%%%%%%%%%%%%%%%%%%%%%%%%%%%%%%%%%%%%%%%%%%%%%%%%%%%%%%%%%%%%

\subsection{}
\label{LarbaudFerminaMarquez}

Let $[\La,n,m,\b]$ be a simple stratum in $\A$, 
which is a realization of a simple pair $(k,\b)$ over $\F$. 
\textit{In this paragraph, we assume that $\La$ is a strict
lattice sequence.} 
If we fix a simple left $\AL$-module $\VL$,
then there is a unique (up to translation) $\Oo_{\L}$-lattice sequence 
$\overline{\La}$ on $\VL$ such that:
\begin{equation}
\label{Ldescen2}
\aa_k(\overline{\La})=\aa_k(\La)\otimes_{\Oo_{\F}}\Oo_{\L},
\quad k\in\ZZ
\end{equation}
(see \cite[\S2.2]{VS1}).
This provides us with a stratum $[\overline{\La},n,m,\b]$ in $\AL$, 
called the {\it quasi-simple $\L/\F$-lift} of the simple stratum 
$[\La,n,m,\b]$ ({\it ibid.}).
This quasi-simple lift is pure if and only if the residue class 
degree of $\E$ over $\F$ is prime to the degree of $\L$ over $\F$,
and in this case it is a simple stratum. 

\medskip

In \cite{VS1} (see paragraph 3.2.3), 
one attaches to the stratum $[\LaL,n,m,\b]$ 
a compact open subgroup $\H^{m+1}(\b,\LaL)$ of $\AL^{\times}$ and a set 
$\Qq(\LaL,m,\b)$ of characters of the group $\H^{m+1}(\b,\LaL)$, 
called {\it quasi-simple characters} of level $m$ and depending on an 
additive character:
\begin{equation}
\label{Versicolor}
\PSI:\L\to\mathbb{C}^{\times}
\end{equation}
extending the additive character 
(\ref{AddCharJ}), being trivial on $\p_{\L}$ but not on $\Oo_{\L}$. 
Recall that the restriction map from $\H^{m+1}(\b,\LaL)$ to 
$\H^{m+1}(\b,\La)$ induces a surjective map from $\Qq(\LaL,m,\b)$ to
$\Cc(\La,m,\b)$. 

\medskip

Let $[\La',n',m',\b]$ be another real\-ization of $(k,\b)$ in a 
simple central $\F$-algebra $\A'$, \textit{with $\La'$ strict}.
We assume that the extension $\L/\F$ is chosen such 
that the $\L$-algebras $\AL$ and $\AL^{\prime}$ are both split, 
and we set:
\begin{equation}
\label{Scelerat}
\V^{\up}=\VL\oplus\VL',
\quad
\La^{\up}=\LaL\oplus\LaL'.
\end{equation}
Then $\La^{\up}$ is a strict $\Oo_{\L}$-lattice sequence on the $\L$-vector 
space $\V^{\up}$, and $\A^{\up}=\End_{\L}(\V^{\up})$ is a split simple 
central $\L$-algebra in which $\E=\F(\b)$ is naturally embedded.
We write $\M$ for the product 
$\AL^{\times}\times\overline{\A}{}^{\prime\times}$
considered as a Levi sub\-group of $\A^{\up\times}$, so that we have
the de\-com\-po\-si\-tion: 
\begin{equation}
\label{Bershka}
\H^{m+1}(\b,\La^{\up})\cap\M=\H^{m+1}(\b,\LaL)\times\H^{m+1}(\b,\LaL').
\end{equation} 
We will need the following characterization of the transfer map.

\begin{prop}
\label{RapaNui}
Let $\t\in\Cc(\La,m,\b)$ and $\t'\in\Cc(\La',m',\b)$ be two 
simple characters.
Then $\t'$ is the transfer of $\t$ if and only if there exists 
$\boldsymbol{\t}^{\up}\in\Qq(\La^{\up},m,\b)$
such that we have:
\begin{equation}
\label{Boucherie}
\boldsymbol{\t}^{\up}\ |\ \H^{m+1}(\b,\La)\times\H^{m+1}(\b,\La')
=\t\otimes\t'.
\end{equation}
\end{prop}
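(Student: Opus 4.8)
The plan is to reduce the statement to the description of the transfer map for quasi-simple characters in the split $\L$-algebra $\A^{\up}=\End_{\L}(\V^{\up})$. Two inputs are needed: first, the Iwahori decomposition of $\H^{m+1}(\b,\La^{\up})$ along the Levi subgroup $\M$ together with the behaviour of a quasi-simple character of $\La^{\up}$ under it, namely that such a character is trivial on $\H^{m+1}(\b,\La^{\up})\cap\N^{\pm}$ and that its restriction to $\H^{m+1}(\b,\La^{\up})\cap\M=\H^{m+1}(\b,\LaL)\times\H^{m+1}(\b,\LaL')$ has the form $\overline{\t}\otimes\overline{\t}'$ with $\overline{\t}'$ the transfer of $\overline{\t}$ (\cite[Th\'eor\`eme~2.17]{SeSt} and \cite{VS1}); second, the surjectivity of the base change restriction $\Qq(\LaL,m,\b)\to\Cc(\La,m,\b)$ recalled in paragraph~\ref{LarbaudFerminaMarquez}. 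Before starting, one normalises — by a harmless change inside the affine class (Lemma~\ref{Bachir}, which preserves the relevant character sets and the transfer maps) — so that the two realizations are placed at a common level, which is the sense in which the single symbol $m$ in (\ref{Boucherie}) is to be understood.

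First I would prove the implication ``(\ref{Boucherie}) $\Rightarrow$ $\t'$ is the transfer of $\t$'', since its argument also dictates the shape of the converse. Let $\boldsymbol{\t}^{\up}\in\Qq(\La^{\up},m,\b)$ satisfy (\ref{Boucherie}). Applying the Levi decomposition relative to $\V^{\up}=\VL\oplus\VL'$ writes $\boldsymbol{\t}^{\up}$ as trivial on $\H^{m+1}(\b,\La^{\up})\cap\N^{\pm}$ and with $\boldsymbol{\t}^{\up}\,|\,\H^{m+1}(\b,\La^{\up})\cap\M=\overline{\t}\otimes\overline{\t}'$ where $\overline{\t}\in\Qq(\LaL,m,\b)$, $\overline{\t}'\in\Qq(\LaL',m,\b)$ and $\overline{\t}'$ is the transfer of $\overline{\t}$. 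Since $\H^{m+1}(\b,\La)\times\H^{m+1}(\b,\La')$ is contained in $\H^{m+1}(\b,\La^{\up})\cap\M$, restricting further and comparing with (\ref{Boucherie}) gives $\overline{\t}\,|\,\H^{m+1}(\b,\La)=\t$ and $\overline{\t}'\,|\,\H^{m+1}(\b,\La')=\t'$. Invoking the compatibility of transfer with base change — that the restrictions $\Qq(\LaL,m,\b)\to\Cc(\La,m,\b)$ and $\Qq(\LaL',m,\b)\to\Cc(\La',m',\b)$ carry the transfer at the $\L$-level to the transfer at the $\F$-level — one concludes that $\t'$ is the transfer of $\t$.

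For the converse I would run the same chain backwards. Assume $\t'$ is the transfer of $\t$; by surjectivity choose $\overline{\t}\in\Qq(\LaL,m,\b)$ restricting to $\t$, and let $\overline{\t}'\in\Qq(\LaL',m,\b)$ be its transfer. By the base-change compatibility above, $\overline{\t}'$ restricts to the transfer of $\t$, that is, to $\t'$. Now $\overline{\t}$ and $\overline{\t}'$ are quasi-simple characters on the strict lattice sequences $\LaL$, $\LaL'$ which are transfers of one another, inside the split $\L$-algebra $\A^{\up}$; by the construction of the transfer map for quasi-simple characters via interpolation on the direct sum of the two lattice sequences (\cite{VS1}, generalizing \cite{BK}) there exists $\boldsymbol{\t}^{\up}\in\Qq(\La^{\up},m,\b)$ with $\boldsymbol{\t}^{\up}\,|\,\H^{m+1}(\b,\La^{\up})\cap\M=\overline{\t}\otimes\overline{\t}'$. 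Restricting this to $\H^{m+1}(\b,\La)\times\H^{m+1}(\b,\La')$ yields $\t\otimes\t'$, which is (\ref{Boucherie}).

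The main obstacle is the base-change compatibility invoked in both directions: that the surjective restriction $\Qq(\LaL,m,\b)\to\Cc(\La,m,\b)$ takes the transfer of quasi-simple characters (between realizations in split $\L$-algebras) to the transfer of simple characters (between the underlying realizations over $\F$). If this is not directly quotable from \cite{VS1}, I would prove it by identifying the two candidates in $\Cc(\La',m',\b)$, the restriction of $\overline{\t}'$ and the transfer of $\t$, via the interpolation/intertwining characterization of the transfer map, using the intertwining formula \cite[Th\'eor\`eme~2.23]{SeSt} and the elementary fact that passing from $\LaL$-level characters to $\La$-level characters by restriction commutes with forming direct sums of lattice sequences; this pins the two candidates down to be equal. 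A secondary, bookkeeping point is the period normalisation mentioned above, so that $\LaL\oplus\LaL'$ and the level $m$ are unambiguous; this is again taken care of by Lemma~\ref{Bachir}.
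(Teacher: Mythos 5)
Your proposal is correct and follows essentially the same route as the paper: both directions reduce to the fact that restriction from $\Qq(\La^{\up},m,\b)$ to each factor $\Qq(\LaL,m,\b)$, $\Qq(\LaL',m,\b)$ is the transfer map (the paper's Lemma \ref{Orion}, which needs the blockwise reduction of \cite[Corollaire 3.34]{VS1} before \cite[Th\'eor\`eme 2.17]{SeSt} applies, since the latter concerns simple rather than quasi-simple characters), combined with the characterization of transfer of simple characters through quasi-simple extensions. The ``main obstacle'' you flag is in fact directly quotable: by \cite[\S3.3]{VS1}, two simple characters are transfers of each other precisely when they admit quasi-simple extensions which are transfers of each other, so your fallback intertwining argument is unnecessary; likewise the triviality on the unipotent radicals plays no role, since a character automatically restricts to a tensor product on the direct-product subgroup, and the existence step in your converse is just the bijectivity of the restriction map given by Lemma \ref{Orion} rather than a separate ``interpolation'' construction.
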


\begin{proof}
Recall (see \cite[\S3.3]{VS1}) that $\t$ and $\t'$ are transfers 
of each others 
if and only if there exist two quasi-simple characters 
$\boldsymbol{\t}\in\Qq(\LaL,m,\b)$ and $\boldsymbol{\t}'\in\Qq(\LaL',m,\b)$,
extending $\t$ and $\t'$ respectively, which are transfers of each others.

\begin{lemm}
\label{Orion}
The map from $\Qq(\La^{\up},m,\b)$ to $\Qq(\LaL,m,\b)$ induced by 
the restriction from $\H^{m+1}(\b,\La^{\up})$ to $\H^{m+1}(\b,\LaL)$ is 
the transfer.
\end{lemm}

\begin{proof}
We have a decomposition of the $\L$-algebra $\E\otimes_{\F}\L$ 
into simple $\E\otimes_{\F}\L$-modules $\E^{j}$, 
for $j\in\{1,\ldots,\c\}$, where $\c$ denotes the greatest 
common divisor of the degree of $\L/\F$
and the residue class degree of $\E/\F$.
For each $j$, we write $\ce^{j}$ for the minimal 
idempotent in $\E\otimes_{\F}\L$ corresponding to $\E^{j}$,
as well as $\La^{\up j}$ for the projection of $\La^{\up}$
onto $\V^{\up j}=\ce^{j}\V^{\up}$ and $\b^{j}$ for $\ce^{j}\b$.
Thus we get a simple stratum $[\La^{\up j},n,m,\b^{j}]$ 
in the $\F$-algebra $\A^{\up j}=\ce^{j}\A^{\up}\ce^{j}$ and, 
similarly, we get a simple stratum $[\LaL^{j},n,m,\b^{j}]$ 
in $\AL^{j}$.
By \cite[Corollaire 3.34]{VS1}, there are bijections:
\begin{equation*}
\Qq(\La^{\up},m,\b)\to
\prod\limits_{j=1}^{\c}\Cc(\La^{\up j},m,\b^{j}),
\quad
\Qq(\LaL,m,\b)\to\prod\limits_{j=1}^{\c}\Cc(\LaL^{j},m,\b^{j}),
\end{equation*}
which are compatible with transfer.
Therefore, it is enough to prove that, for each $j$, 
the map from 
$\Cc(\La^{\up j},m,\b^{j})$ to $\Cc(\LaL^{j},m,\b^{j})$
induced by the restriction from $\H^{m+1}(\b^{j},\La^{\up j})$ to 
$\H^{m+1}(\b^{j},\LaL^{j})$ is the transfer. 
This is \cite[Th\'eor\`eme 2.17]{SeSt}.
\end{proof}

Assume first that there exists a quasi-simple character 
$\boldsymbol{\t}^{\up}\in\Qq(\La^{\up},m,\b)$ such that 
(\ref{Boucherie}) is satisfied, and write $\boldsymbol{\t}$
and $\boldsymbol{\t}'$ for the restrictions of $\boldsymbol{\t}^{\up}$
to $\H^{m+1}(\b,\LaL)$ and $\H^{m+1}(\b,\LaL')$, res\-pec\-tively.
By Lemma \ref{Orion}, these are quasi-simple characters which are 
transfers of each others. 
By (\ref{Boucherie}), they extend the 
simple characters $\t$ and $\t'$. 
It follows that $\t$ and $\t'$ are transfers of each others. 

Conversely, assume that $\t$ and $\t'$ are transfers of each others.
Let $\boldsymbol{\t}$ be a quasi-simple character in 
$\Qq(\LaL,m,\b)$ extending $\t$, 
and let $\boldsymbol{\t}^{\up}$ be its transfer to 
$\Qq(\La^{\up},m,\b)$.
By Lemma \ref{Orion}, the restriction of $\boldsymbol{\t}^{\up}$ 
to $\H^{m+1}(\b,\LaL')$ is the transfer 
of $\boldsymbol{\t}$, and thus extends $\t'$.
Therefore, the identity (\ref{Boucherie}) is satisfied. 
\end{proof}

\subsection{}
\label{Larug}

Let $[\La,n,m,\b]$ be a $\K$-pure simple stratum in $\A$,
and let $[\Ga,n,m,\b]$ denote the simple stratum in $\C$ 
associated with $[\La,n,m,\b]$ by (\ref{Dulilah}).
\textit{In this paragraph, we assume that $\La$ and $\Ga$ are 
strict lattice sequences.}

\medskip

If we fix a simple left $\CL$-module $\overline{\W}$,
we can form the quasi-simple lift $[\GaL,n,m,\b]$ of the 
simple stratum $[\Ga,n,m,\b]$ with respect to $\L/\K$.
One attaches to this quasi-simple lift a 
compact open sub\-group $\H^{m+1}(\b,\GaL)$ of $\CL^{\times}$ and 
a set $\Qq(\GaL,m,\b)$ of characters of $\H^{m+1}(\b,\GaL)$ with respect 
to the additive character:
\begin{equation}
\PSI_{\K}=\PSI\circ(\s_1+\dots+\s_{\d})
\end{equation}
of $\L$, depending on the choice of the set $\SS$ fixed in 
(\ref{Pipistrelle}). 
It is trivial on $\p_\L$ and, thanks to the condition on $\SS$, 
it extends the character $\psi_{\K}$ defined by 
(\ref{AddCharK}); hence it is not trivial on $\Oo_\L$. 
This comes with a surjective restriction map from $\Qq(\GaL,m,\b)$ to
$\Cc(\Ga,m,\b)$. 

\begin{lemm}
\label{MonaLoa}
The image of $\H^{m+1}(\b,\GaL)$ by the map $\ii$ 
is contained in $\H^{m+1}(\b,\LaL)$.
\end{lemm}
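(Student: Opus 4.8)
The plan is to reduce the assertion to a compatibility of $\ii$ with the lattice filtrations, and then to argue by induction on $\b$ exactly as in the proofs of Proposition~\ref{Alexandre} and Theorem~\ref{Syriana}. First observe that $\ii$ is an injective homomorphism of $\F$-algebras, so it carries $\mult{\CL}$ into $\mult{\AL}$; by Lemma~\ref{Glossolalies} we have $\ii\circ j_{\C}=j_{\A,\C}$, so $\ii$ fixes $\b$ (more precisely $\ii(\b\otimes_{\K}1)=\b\otimes_{\F}1$) and therefore carries the centralizer of $\b$ in $\CL$ into the centralizer of $\b$ in $\AL$. Consequently the crux of the proof is to show that
\begin{equation*}
\ii\big(\aa_{k}(\GaL)\big)\subseteq\aa_{k}(\LaL),\quad k\in\ZZ,
\end{equation*}
equivalently that $\ii(\U_{l}(\GaL))\subseteq\U_{l}(\LaL)$ for every $l\>0$.

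To prove this filtration compatibility, note that the image of $\ii$ lies in $\UL=\C\otimes_{\F}\L$, which is the centralizer of $\K$ in $\AL$, and that $\LaL$ is normalized by $\mult\K$ since $\La$ is $\K$-pure. A direct computation with the defining relations (\ref{Ldescen2}) and (\ref{Edescen}), using that $\Oo_{\L}$ is free over $\Oo_{\F}$ and that $\Oo_{\K}\otimes_{\Oo_{\F}}\Oo_{\L}\simeq\Oo_{\L}^{\d}$ with idempotents the $\e^{i}$, shows that
\begin{equation*}
\aa_{k}(\LaL)\cap\UL=\bigoplus_{i=1}^{\d}\aa_{k}(\Upsilon^{i}),\qquad\aa_{k}(\Upsilon^{i})=\aa_{k}(\LaL)\cap\UL^{i},
\end{equation*}
where $\Upsilon^{i}$ is a lattice sequence on $\e^{i}\VL$. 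The point is that interior descent to the centralizer of $\K$ commutes with the unramified base change $-\otimes_{\F}\L$: in terms of the identification (\ref{MariusMarius}) and the normalization (\ref{PoupaPoupa}), this gives $\can(\aa_{k}(\GaL))=\aa_{k}(\Upsilon^{1})$. Applying $\s_{i}$, which stabilizes $\aa_{k}(\LaL)$ setwise (that lattice being Galois-stable by (\ref{Ldescen2})) and carries $\UL^{1}$ onto $\UL^{i}$ because $\s_{i}(\e^{1})=\e^{i}$, we obtain $\s_{i}\circ\can(\aa_{k}(\GaL))=\aa_{k}(\Upsilon^{i})$ for every $i$. Summing over $i$ then yields $\ii(\aa_{k}(\GaL))\subseteq\bigoplus_{i}\aa_{k}(\Upsilon^{i})=\aa_{k}(\LaL)\cap\UL\subseteq\aa_{k}(\LaL)$, as required.

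Granting this, I would finish by induction on $\b$, following the proofs of Proposition~\ref{Alexandre} and Theorem~\ref{Syriana}. If $\b$ is minimal over $\F$, hence over $\K$ by Proposition~\ref{Alexandre1}, then $\H^{m+1}(\b,\GaL)$ is generated over some deep unit group $\U_{s}(\GaL)$ by $\U_{m+1}(\GaL)$ intersected with the centralizer of $\b$ in $\CL$; applying $\ii$ and combining the filtration compatibility with the remark above on centralizers gives $\ii(\H^{m+1}(\b,\GaL))\subseteq\H^{m+1}(\b,\LaL)$. If $\b$ is not minimal, choose by Proposition~\ref{KaPureApprox} a simple stratum $[\Ga,n,q,\g]$ equivalent to $[\Ga,n,q,\b]$ with $[\La,n,q,\g]$ simple and $\K$-pure, so that $\g$ serves as an approximation of $\b$ compatibly on both sides. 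The recursive descriptions of $\H^{m+1}(\b,\GaL)$ and $\H^{m+1}(\b,\LaL)$ then reduce the inclusion to the filtration compatibility, to the inductive hypothesis applied to $\g$, and to the inequality $k_{0}(\b,\Ga)\<k_{0}(\b,\La)$ of Proposition~\ref{Alexandre1}, which lets one absorb the term $\H^{r_{1}}(\g,\GaL)$, with $r_{1}=\lfloor(-k_{0}(\b,\Ga))/2\rfloor+1$, into the term $\H^{r}(\g,\LaL)$, with $r=\lfloor(-k_{0}(\b,\La))/2\rfloor+1\<r_{1}$ --- exactly the mechanism used in the proof of Proposition~\ref{Alexandre}.

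The main obstacle is the filtration compatibility of the second paragraph. Conceptually it amounts to no more than the commutation of interior descent along $\K$ with the base change $-\otimes_{\F}\L$, together with the Galois-equivariance that propagates the block $i=1$ to all the blocks; but to make it precise one must fix coherently the ``up to translation'' normalizations in (\ref{Dulilah}), (\ref{Ldescen2}) and (\ref{Edescen}), as well as the identifications $\UL^{i}\simeq\CL$ built into (\ref{MariusMarius}) and (\ref{PoupaPoupa}). Once this is done, the remainder of the argument is a transcription of the bookkeeping already carried out for the interior lifting in section~\ref{PILSC}.
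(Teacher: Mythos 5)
Your argument is correct and is essentially the paper's: the paper likewise works block by block, proving by induction on $\b$ (via the recursive definitions of the groups and the fact that $\e^1$ commutes with $\b$) that $\can(\H^{m+1}(\b,\GaL))=\e^{1}\H^{m+1}(\b,\LaL)\e^{1}$, then applying the $\s_i$ through (\ref{PoupaPoupa}) and summing the blocks. Your version merely front-loads the lattice-level compatibility $\ii(\aa_{k}(\GaL))\subseteq\aa_{k}(\LaL)$ and runs the induction on $\b$ for $\ii$ directly, in the style of Proposition \ref{Alexandre}, using the same ingredients (the $\e^{i}$/Galois mechanism, Proposition \ref{Alexandre1} and Proposition \ref{KaPureApprox}).
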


\begin{proof}
First we have to prove that:
\begin{equation*}
\can(\H^{m+1}(\b,\GaL))
=\H^{m+1}(\b,\LaL)\cap\UL^{1}
=\e^{1}\H^{m+1}(\b,\LaL)\e^{1}.
\end{equation*}
This follows from the definition of the groups 
$\H^{m+1}(\b,\GaL)$ and $\H^{m+1}(\b,\LaL)$ by induction on $\b$,
and from the fact that $\e^1$ commutes to $\b$.
According to (\ref{PoupaPoupa}), we get:
\begin{equation*}
\s_i\circ\can(\H^{m+1}(\b,\GaL))
=\H^{m+1}(\b,\LaL)\cap\UL^{i}
=\e^{i}\H^{m+1}(\b,\LaL)\e^{i}
\end{equation*}
for each $i\in\{1,\ldots,\d\}$,
and the result follows.
\end{proof}

This gives rise to the following result.

\begin{prop}
\label{Kyoto2}
Let $\t\in\Cc(\La,m,\b)$ be a simple character, 
let $\boldsymbol{\t}\in\Qq(\LaL,m,\b)$ be a
quasi-simple character extending $\t$, and set:
\begin{equation}
\label{MonaKea}
\boldsymbol{\t}^{\K}(x)=\boldsymbol{\t}(\ii(x)),
\quad
x\in\H^{m+1}(\b,\GaL).
\end{equation}
Then $\boldsymbol{\t}^{\K}$ is a quasi-simple character in 
$\Qq(\GaL,m,\b)$ extending $\t^{\K}=\t\ |\ \H^{m+1}(\b,\Ga)$.
\end{prop}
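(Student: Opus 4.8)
The plan is to check three things in turn: that $\boldsymbol{\t}^{\K}$ is a well-defined character of $\H^{m+1}(\b,\GaL)$; that its restriction to $\H^{m+1}(\b,\Ga)$ equals $\t^{\K}$; and — the substantial point — that it lies in $\Qq(\GaL,m,\b)$. The first two are quick. Since $\ii$ is an $\F$-algebra homomorphism, its restriction to $\H^{m+1}(\b,\GaL)$ is a group homomorphism which by Lemma~\ref{MonaLoa} takes values in $\H^{m+1}(\b,\LaL)$; hence $\boldsymbol{\t}^{\K}=\boldsymbol{\t}\circ\ii$ is a character of $\H^{m+1}(\b,\GaL)$. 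For the restriction, the construction of the quasi-simple lift of $[\Ga,n,m,\b]$ embeds $\H^{m+1}(\b,\Ga)$ in $\H^{m+1}(\b,\GaL)$ via $j_{\C}$, and for $y$ in this subgroup Lemma~\ref{Glossolalies} gives $\ii(j_{\C}(y))=j_{\A,\C}(y)=j_{\A}(y)$; as $\H^{m+1}(\b,\Ga)\subseteq\H^{m+1}(\b,\La)$ by Proposition~\ref{Alexandre} and $\boldsymbol{\t}$ restricts to $\t$ on $\H^{m+1}(\b,\La)$, we get $\boldsymbol{\t}^{\K}\ |\ \H^{m+1}(\b,\Ga)=\t\ |\ \H^{m+1}(\b,\Ga)=\t^{\K}$ (which also lies in $\Cc(\Ga,m,\b)$ by Theorem~\ref{Syriana}).

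For the last point I would argue by induction on $\b$, following the proof of Theorem~\ref{Syriana}. If $\b$ is minimal over $\F$, hence (by Proposition~\ref{Alexandre1}) over $\K$, then both $\Qq(\GaL,m,\b)$ and the restriction of $\boldsymbol{\t}$ are described by an explicit $\psi_{\b}^{\AL}$-type character together with a character of the form $\chi\circ\N$ on the centralizer part; one checks directly that $\ii$ carries the former to the matching $\psi_{\b}^{\CL}$-type character and the latter to a character of the same shape for $\CL$. This is the step where the choice $\PSI_{\K}=\PSI\circ(\s_{1}+\dots+\s_{\d})$ of additive character of $\L$ is designed precisely to match the shape $\ii=\s_{1}\circ\can+\dots+\s_{\d}\circ\can$ of the comparison map. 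If $\b$ is not minimal, put $q=-k_{0}(\b,\La)$, choose by Proposition~\ref{KaPureApprox} a $\K$-pure simple stratum $[\La,n,q,\g]$ with $[\Ga,n,q,\g]$ simple and equivalent to $[\Ga,n,q,\b]$, observe $\H^{m+1}(\b,\GaL)=\H^{m+1}(\g,\GaL)$ according to whether $-k_{0}(\b,\Ga)$ equals $q$ or is larger, write $\boldsymbol{\t}=\boldsymbol{\t}_{0}\,\psi_{\b-\g}^{\AL}$ with $\boldsymbol{\t}_{0}$ a quasi-simple character for $[\LaL,n,q,\g]$, restrict to obtain $\boldsymbol{\t}^{\K}=\boldsymbol{\t}_{0}^{\K}\,\psi_{\b-\g}^{\CL}$ with $\boldsymbol{\t}_{0}^{\K}\in\Qq(\GaL,m,\g)$ by the inductive hypothesis, and conclude by the definition of $\Qq(\GaL,m,\b)$ when $-k_{0}(\b,\Ga)=q$ and by \cite[Proposition 2.15]{SeSt} (or its quasi-simple counterpart in \cite{VS1}) otherwise; the case $m\leqslant\lfloor q/2\rfloor$ reduces to the case just treated as in the minimal case. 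Throughout, the split-case input enters via the decomposition $\E\otimes_{\F}\L=\bigoplus_{j}\E^{j}$ and \cite[Corollaire 3.34]{VS1}, which present quasi-simple characters on both sides as tuples of simple characters in split algebras with strict lattice sequences, to which \cite[Theorem 7.7]{BH} applies.

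The hard part, and the reason $\ii$ had to be built so carefully, is the bookkeeping in this last reduction. The decomposition of $\Qq(\LaL,m,\b)$ is taken along $\E\otimes_{\F}\L$, whereas that of $\Qq(\GaL,m,\b)$ is taken along $\K(\b)\otimes_{\K}\L$; $\K$-purity of the stratum (which forces the residue extension of $\E/\F$ to have degree prime to that of $\K/\F$) makes the two decompositions have the same number of components, but identifying them compatibly with the map $\ii_{\SS}$ — equivalently, showing that interior lifting over $\K$ inside $\C$ and unramified base change over $\L/\F$ are compatible for the particular $\ii_{\SS}$ attached to the choice (\ref{PoupaPoupa}) — is the real content, and is exactly the subtlety in the base-change construction flagged in the introduction. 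Once this is established, each component of $\boldsymbol{\t}^{\K}$ is the split-algebra interior lift of the corresponding component of $\boldsymbol{\t}$, and \cite[Theorem 7.7]{BH} then yields $\boldsymbol{\t}^{\K}\in\Qq(\GaL,m,\b)$, completing the proof.
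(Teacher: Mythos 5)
Your first two checks, and the skeleton of your induction, do track the paper: Proposition \ref{Kyoto2} is proved there by exactly the induction on $\b$ modelled on Theorem \ref{Syriana}, with the identity $\tr_{\AL/\L}\circ\ii=(\s_1+\dots+\s_{\d})\circ\tr_{\CL/\L}$ (hence the choice $\PSI_{\K}=\PSI\circ(\s_1+\dots+\s_{\d})$) doing the work in the minimal case. But there is a genuine gap: you never verify the normalization condition in the definition of a quasi-simple character, namely that $\boldsymbol{\t}^{\K}$ is normalized by $\KK(\GaL)\cap\RL^{\times}$ (the quasi-simple analogue of the condition appearing in Lemma \ref{RegrettableOmission}). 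This is not automatic and the paper devotes a separate final step to it: using the computation (\ref{GTMR}), for $g\in\KK(\GaL)\cap\RL^{\times}$ one has $\ii(g)\cdot\LaL_k=\bigoplus_i\e^{i}\LaL_{k+\v(\s_i(\can(g)))}$, and since the elements $\s_i(\can(g))$ all have the same valuation this gives $\ii(g)\in\KK(\LaL)\cap\BL^{\times}$, which normalizes $\boldsymbol{\t}$; only then is $\boldsymbol{\t}^{\K}\in\Qq(\GaL,m,\b)$ established. A second, smaller omission sits in your non-minimal step: in the quasi-simple setting the approximation $\g$ must be chosen, via Proposition \ref{KaPureApprox} together with \cite[Theorem 5.1]{BG}, so that the maximal unramified extension of $\F$ contained in $\F(\g)$ lies in that of $\F(\b)$; this is what makes the $\L$-canonical decomposition of $\g$ finer than that of $\b$ and hence legitimizes the factorization $\boldsymbol{\t}=\boldsymbol{\t}_{0}\,\PSI^{\AL}_{\b-\g}$ with $\boldsymbol{\t}_{0}\in\Qq(\LaL,m,\g)$.

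Your concluding paragraph, where you propose instead to decompose along $\E\otimes_{\F}\L$ via \cite[Corollaire 3.34]{VS1} and apply \cite[Theorem 7.7]{BH} componentwise, is not a proof either: you yourself identify the compatibility of the $\E\otimes_{\F}\L$-decomposition on the $\AL$-side with the $\K(\b)\otimes_{\K}\L$-decomposition on the $\CL$-side under $\ii_{\SS}$ as ``the real content'' and leave it unestablished. The paper does not take that route at all; it argues directly through the inductive description of quasi-simple characters and never reduces Proposition \ref{Kyoto2} to the split-case interior lift of \cite{BH}. As written, then, the proposal stops exactly where the substance begins: to complete it you should either carry out the normalizer verification via (\ref{GTMR}) and the refined choice of $\g$ inside the direct induction, or actually prove the decomposition compatibility that your alternative route defers.
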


\begin{proof}
By Lemmas \ref{Glossolalies} and \ref{MonaLoa}, the character 
$\boldsymbol{\t}^{\K}$ is well defined and extends the simple 
character $\t^{\K}$.
It thus remains to prove that it is in $\Qq(\GaL,m,\b)$.
The proof is by induction on $\b$
(see \cite[D\'efinition 3.22]{VS1}).
Assume first that $\b$ is minimal over $\F$. 
Then it is minimal over $\K$ by Proposition \ref{Alexandre1}.
If $m\>\lfloor n/2\rfloor$, the set $\Qq(\LaL,m,\b)$ con\-sists of a single 
element $\PSI_{\b}^{\AL}$, which is the character of $\U_{m+1}(\LaL)$ 
defined by:
\begin{equation*}
\PSI_\b^{\AL}(x)=\PSI\circ\tr_{\AL/\L}(\b(x-1)),
\quad
x\in\U_{m+1}(\LaL),
\end{equation*}
and the set $\Qq(\GaL,m,\b)$ consists of a single 
element $\PSI_{\b}^{\CL}$, which is the character of $\U_{m+1}(\GaL)$ 
defined by:
\begin{equation*}
\PSI_\b^{\CL}(x)=\PSI\circ\tr_{\CL/\L}(\b(x-1)),
\quad
x\in\U_{m+1}(\GaL).
\end{equation*}
So we just need to prove that:
\begin{equation}
\label{Maca}
\PSI_\b^{\AL}\circ\ii(x)=\PSI_{\b}^{\CL}(x),
\quad
x\in\U_{m+1}(\GaL),
\end{equation}
which follows from the fact that:
\begin{eqnarray*}
\tr_{\AL/\L}\circ\ii&=&\sum\limits_{i=1}^{\d}\tr_{\AL/\L}\circ\s_i\circ\can\\
&=&(\s_1+\dots+\s_{\d})\circ\tr_{\AL/\L}\circ\can
=(\s_1+\dots+\s_{\d})\circ\tr_{\CL/\L}.
\end{eqnarray*}
If $m\<\lfloor n/2\rfloor$, then $\boldsymbol{\t}$ extends 
$\PSI_{\b}^{\AL}\ |\ \U_{\lfloor n/2\rfloor+1}(\LaL)$ 
and its restriction to $\U_{m+1}(\LaL)\cap{\BL}^{\times}$ 
has the form:
\begin{equation}
\label{Mirbeau2}
\boldsymbol{\t}\ |\ \U_{m+1}(\LaL)\cap{\BL}^{\times}=
\boldsymbol{\chi}\circ\N_{\BL/\E\otimes_\F\L},
\end{equation}
where we write $\BL$ for the centralizer of $\E$ in $\AL$ 
and where $\boldsymbol{\chi}$ 
denotes some character of the subgroup 
$1+\p_{\E}\otimes\Oo_{\L}$ of $(\E\otimes_\F\L)^\times$.
Then, if we write $\RL$ for the centralizer of $\K(\b)$ in $\AL$, 
the character $\boldsymbol{\t}^{\K}$ extends 
$\PSI_{\b}^{\CL}\ |\ \U_{\lfloor n/2\rfloor+1}(\GaL)$, and its
restriction to $\U_{m+1}(\GaL)\cap{\RL}^{\times}$ has the form: 
\begin{equation}
\label{Mirbeau'2}
\boldsymbol{\t}^{\K}\ |\ \U_{m+1}(\GaL)\cap{\RL}^{\times}
=\boldsymbol{\chi}^{\SS}\circ\N_{\RL/\K(\b)\otimes_{\K}\L}
\end{equation}
where $\boldsymbol{\chi}^{\SS}$ is the product of all 
the $\boldsymbol{\chi}\circ\s_i$'s for all $i\in\{1,\dots,\d\}$,
as required.

Assume now that $\b$ is not minimal over $\F$. 
We set $q=-k_0(\b,\La)$ and $r=\lfloor{q/2}\rfloor+1$, 
and choose a simple stratum $[\Ga,n,q,\g]$ equivalent 
to $[\Ga,n,q,\b]$ such that $[\La,n,q,\g]$ is simple 
and $\K$-pure. 
By \cite[Theorem 5.1]{BG}
and Proposition \ref{KaPureApprox} together, 
one may assume that the maximal unramified extension of $\F$ contained 
in $\F(\g)$ is contained in that of $\F(\b)$, which implies that the 
$\L$-canonical decomposition of $\g$ is finer than that of $\b$ 
(see paragraph 2.3.4 and the proof of Lemme 3.16 in \cite{VS1}).
If $m\>\lfloor q/2\rfloor$, then 
any $\boldsymbol{\t}\in\Qq(\LaL,m,\b)$ can be written as
$\boldsymbol{\t}=\boldsymbol{\t}_{0}^{}\PSI_{\b-\g}^{\AL}$ for some 
quasi-simple char\-ac\-ter $\boldsymbol{\t}_{0}\in\Qq(\LaL,m,\g)$. 
Now we claim that:
\begin{equation}
\label{ColL}
\H^{m+1}(\b,\GaL)=\H^{m+1}(\g,\GaL).
\end{equation}
We write $q_1=-k_0(\b,\Ga)$.
If $q_1=q$, then the equality (\ref{ColL}) follows by definition. 
Other\-wise, we have $q_1>q$ by Proposition \ref{Alexandre1}.
The strata $[\Ga,n,q,\b]$ and $[\Ga,n,q,\g]$ are thus simple, 
and (\ref{ColL}) follows. 
We now form the character 
$\boldsymbol{\t}^{\K}=\boldsymbol{\t}\circ\ii\ |\ \H^{m+1}(\b,\GaL)$ 
and get the equality 
$\boldsymbol{\t}^{\K}=\boldsymbol{\t}_{0}^{\K}\PSI_{\b-\g}^{\CL}$, 
where $\boldsymbol{\t}_{0}^{\K}$ denotes the character 
$\boldsymbol{\t}_0\circ\ii\ |\ \H^{m+1}(\g,\GaL)$, 
and this character is in $\Qq(\GaL,m,\g)$ by the inductive hypothesis.
If $q_1=q$, then $\boldsymbol{\t}^{\K}$ is in $\Qq(\GaL,m,\b)$ by definition.  
Otherwise, the strata $[\Ga,n,q,\b]$ and $[\Ga,n,q,\g]$ 
are simple and the result follows from 
\cite[Proposition 2.15]{SeSt}. 
The case $m\<\lfloor{q/2}\rfloor$ reduces to the previous one 
as in the minimal case. 

It remains to prove that the subgroup $\KK(\GaL)\cap{\RL}{}^{\times}$ 
normalizes $\boldsymbol{\t}^{\K}$.
If $g\in\KK(\GaL)\cap{\RL}{}^{\times}$, then we have:
\begin{eqnarray}
\label{GTMR}
\ii(g)\cdot\LaL_{k}
=\bigoplus\limits_{i=1}^{\d}\s_i(\can(g))\cdot\e^{i}\LaL_{k}
=\bigoplus\limits_{i=1}^{\d}\e^{i}\LaL_{k+\v(\s_i(\can(g)))}
\end{eqnarray}
where $\v$ denotes the valuation map associated with $\LaL$.
As all the $\s_i(\can(g)$'s have the same valuation, 
the equality (\ref{GTMR}) 
gives us 
$\ii(g)\in\KK(\LaL)\cap{\BL}{}^{\times}$.
Proposition \ref{Kyoto2} now follows from the fact that 
$\KK(\LaL)\cap{\BL}{}^{\times}$ normalizes $\boldsymbol{\t}$. 
\end{proof}

\begin{rema}
Note that the interior lifting map 
from $\Qq(\LaL,m,\b)$ to $\Qq(\GaL,m,\b)$ defined by Proposition \ref{Kyoto2}
depends on the choice of the set $\SS$ chosen in (\ref{Pipistrelle}).
\end{rema}

%%%%%%%%%%%%%%%%%%%%%%%%%%%%%%%%%%%%%%%%%%%%%%%%%%%%%%%%%%%%%%%%%%%%%%%%%%%

\subsection{}
\label{DeThou}

Let $[\La,n,m,\b]$ and $[\La',n',m',\b]$ be 
real\-izations of a simple pair $(k,\b)$ over
$\F$ in 
simple central $\F$-algebras $\A$ and $\A'$, 
respectively. 
Assume further that $\A$ and $\A'$ contain $\K$, 
that the strata $[\La,n,m,\b]$ and $[\La',n',m',\b]$ 
are $\K$-pure and that the strata $[\Ga,n,m,\b]$ 
and $[\Ga',n',m',\b]$ associated with them by (\ref{Dulilah}) 
are realizations of the same simple pair over $\K$. 
(This is equivalent to saying that the extensions of $\K$ 
generated by $\b$ in $\A$ and $\A'$ are $\K$-isomorphic.)
We have the following relation between the transfer maps 
and the interior lifting maps.

\begin{theo}
\label{ResComWithTransAndAEFit}
Let $\t\in\Cc(\La,m,\b)$ and $\t'\in\Cc(\La',m',\b)$ be 
transfers of each others.
Then the simple characters:
\begin{equation*}
\t\ |\ \H^{m+1}(\b,\Ga),
\quad
\t'\ |\ \H^{m'+1}(\b,\Ga')
\end{equation*}
are transfers of each others.
\end{theo}

\begin{proof}
The proof decomposes into two parts.
\begin{enumerate}
\item 
First we prove the theorem in the case where all the lattice sequences 
are strict, so that we can apply the results of paragraphs 
\ref{LarbaudFerminaMarquez} and \ref{Larug}.
We fix a quasi-simple character $\boldsymbol{\t}$ in $\Qq(\LaL,m,\b)$ 
extending $\t$ and write $\boldsymbol{\t}'$ for its transfer in 
$\Qq(\LaL',m',\b)$. 
The restriction of $\boldsymbol{\t}'$ 
to $\H^{m'+1}(\b,\La')$ is thus equal to $\t'$.
By Pro\-position \ref{RapaNui}, there exists a quasi-simple character
$\boldsymbol{\t}^{\up}$ in $\Qq(\La^{\up},m,\b)$ extending 
$\boldsymbol{\t}\otimes\boldsymbol{\t}'$.
We write $\CL$ and $\UL$ as in paragraph \ref{Jeu}, 
and use similar notations $\CL'$ and $\UL'$.
We have:
\begin{equation}
\label{AAA}
\H^{m+1}(\b,\La^{\up})\cap\(\CL^{\times}\times\CL^{\prime\times}\)
=\H^{m+1}(\b,\GaL)\times\H^{m+1}(\b,\GaL').
\end{equation}
We define $\ii$ by (\ref{Semprum}) and write $\boldsymbol{\t}^{\K}$
for the quasi-simple character defined by (\ref{MonaKea}).
We also 
define $\ii'$ and $\boldsymbol{\t}^{\prime\K}$ in a similar way. 
If we restrict the map $x\mapsto(\ii(x),\ii'(x))$ to the subgroup 
(\ref{AAA}) and then compose it with $\boldsymbol{\t}^{\up}$, 
then we get the character 
$\boldsymbol{\t}^{\K}\otimes\boldsymbol{\t}^{\prime\K}$.
This implies that $\boldsymbol{\t}^{\K}$ and $\boldsymbol{\t}^{\prime\K}$ 
are transfers of each others.
By Propositions \ref{Kyoto2} and \ref{RapaNui} together, their 
restrictions 
$\boldsymbol{\t}^{\K}\ |\ \H^{m+1}(\b,\Ga)=\t^\K$ and
$\boldsymbol{\t}^{\prime\K}\ |\ \H^{m'+1}(\b,\Ga')=\t^{\prime\K}$
are transfers of each others.
\item
We now reduce the general case to Case (1).
For this we fix a positive integer $l$ as in Lemma \ref{EveryBodyIsSound}, 
and form the sound simple strata $[\La^{\ddag},n,m,\b]$ and 
$[\La^{\prime\ddag},n',m',\b]$.
Write $\C^{\ddag}$ for the centralizer of $\K$ in $\A^{\ddag}$ and 
$[\Ga^{\ddag},n,m,\b]$ for the simple stratum in $\C^{\ddag}$ 
associated with $[\La^{\ddag},n,m,\b]$ by (\ref{Dulilah}).
In a similar way, we have a $\K$-algebra $\C^{\prime\ddag}$ and a 
simple stratum $[\Ga^{\prime\ddag},n,m,\b]$. 
Then the simple strata $[\Ga^{\ddag},n,m,\b]$ and $[\Ga^{\prime\ddag},n,m,\b]$ 
are realizations of the same simple pair over $\K$. 
Write $\t^{\ddag}$ for the transfer of $\t$ in $\Cc(\La^{\ddag},m,\b)$.
In a similar way, we have a simple character $\t^{\prime\ddag}$.
By Case (1), the simple characters:
\begin{equation*}
\t^{\ddag}\ |\ \H^{m+1}(\b,\Ga^{\ddag}),
\quad
\t^{\prime\ddag}\ |\ \H^{m'+1}(\b,\Ga^{\prime\ddag})
\end{equation*}
are transfers of each others.
Thus it remains to prove the following lemma.

\begin{lemm}
\label{Magritte}
The characters $\t\ |\ \H^{m+1}(\b,\Ga)$ and 
$\t^{\ddag}\ |\ \H^{m+1}(\b,\Ga^{\ddag})$ are 
transfers of each others.
\end{lemm}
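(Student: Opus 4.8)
The statement is entirely about the interior lifting with respect to $\K/\F$, so the strategy is to reduce to known compatibilities between the interior lift on $\A$ and the $\ddag$-construction on $\A$, both of which are already understood. First I would recall the precise definition of the $\ddag$-construction from paragraph \ref{PreDague}: $\La^{\ddag}$ is built from $\La$ by the block formula (\ref{Francisque}), with $l$ a common multiple of $e$, $e'$ (and, in the present $\K$-pure situation, also of the analogous period $e'_{\K}$ of the $\Oo_{\D_\b}$-lattice sequence attached to $\La$ seen inside the centralizer of $\K$). The key observation is that $\La^{\ddag}$, as an $\Oo_\D$-lattice sequence, lies in the family of direct sums $\La^1\oplus\cdots\oplus\La^l$ of paragraph \ref{DerivedStratum}, where each $\La^j$ is in the affine class of $\La$. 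Therefore the centralizer $\C^\ddag$ of $\K$ in $\A^\ddag$ is itself a matrix algebra over $\C$ of the same size, and by a direct block computation (as in the proof of Lemma \ref{EveryBodyIsSound}, applied to $\C$ in place of $\B$) the simple stratum $[\Ga^\ddag,n,m,\b]$ attached to $[\La^\ddag,n,m,\b]$ by (\ref{Dulilah}) is exactly the $\ddag$-type construction applied to $[\Ga,n,m,\b]$ — that is, $\Ga^\ddag$ is the block direct sum $\Ga_k\oplus\Ga_{k+1}\oplus\cdots\oplus\Ga_{k+l-1}$. This compatibility of the two processes (interior lift and $\ddag$) is the geometric heart of the argument.

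Granting this, I would finish as follows. The transfer map $\Cc(\La,m,\b)\to\Cc(\La^\ddag,m,\b)$ factors through the iterated affine/derived constructions, and by Lemma \ref{Bachir} (together with the Iwahori decomposition of \cite[Th\'eor\`eme 2.17]{SeSt}, as used in Lemma \ref{JacquesVerges}) the transfer $\t\mapsto\t^\ddag$ is characterized by: $\t^\ddag$ is trivial on the unipotent parts of the block decomposition of $\H^{m+1}(\b,\La^\ddag)$ and restricts to $\t\otimes\cdots\otimes\t$ on the Levi part $\prod\H^{m+1}(\b,\La)$. Now restrict everything from $\A^\ddag$ to $\C^\ddag$. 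Using Proposition \ref{Alexandre} one has $\H^{m+1}(\b,\La^\ddag)\cap\C^{\ddag\times}=\H^{m+1}(\b,\Ga^\ddag)$ and $\H^{m+1}(\b,\La)\cap\C^\times=\H^{m+1}(\b,\Ga)$; moreover the block/Iwahori decomposition of $\H^{m+1}(\b,\La^\ddag)$ intersected with $\C^{\ddag\times}$ gives exactly the block/Iwahori decomposition of $\H^{m+1}(\b,\Ga^\ddag)$ relative to the corresponding Levi $\prod\C^\times$. Hence restricting the defining relation for $\t^\ddag$ to these subgroups shows that $\t^\ddag\mid\H^{m+1}(\b,\Ga^\ddag)$ is trivial on the unipotent parts and equals $(\t\mid\H^{m+1}(\b,\Ga))^{\otimes l}$ on the Levi part. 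By the same characterization of transfer applied inside $\C$ (i.e.\ Lemma \ref{Bachir} and the Iwahori decomposition for the $\ddag$-construction on $\Ga$), this says precisely that $\t^\ddag\mid\H^{m+1}(\b,\Ga^\ddag)$ is the transfer of $\t\mid\H^{m+1}(\b,\Ga)$, which is the assertion of the lemma.

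The main obstacle I anticipate is the bookkeeping in the first step: verifying cleanly that applying (\ref{Dulilah}) to $\La^\ddag$ yields the block construction applied to $\Ga$, i.e.\ that $\aa_k(\La^\ddag)\cap\C^\ddag=\aa_k(\Ga^\ddag)$ where $\Ga^\ddag$ is defined blockwise from $\Ga$. This is a direct-sum-of-blocks computation very similar to (\ref{DuSeigneur}) in Lemma \ref{EveryBodyIsSound}, but one must be careful that the integer $l$ is chosen to be simultaneously a multiple of the periods relevant to $\La$, to $\Ga$ (over $\Oo_{\D_{\!\K}}$, the division algebra attached to $\C$), and to the centralizer $\B$ of $\E$ — so that all three lattice sequences $\La^\ddag$, $\Ga^\ddag$ and $\Sigma^\ddag$ are strict and the blockwise intersections behave as expected. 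Once $l$ is fixed appropriately, each intersection is computed block by block and everything is routine; the only subtlety is to record that the translation ambiguity in (\ref{Dulilah}) is harmless here, exactly as noted after the definition of that map. The rest of the argument is then a formal consequence of the already-established behaviour of transfer under the derived/affine constructions and of Proposition \ref{Alexandre}.
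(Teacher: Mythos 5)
Your proposal is correct and takes essentially the same route as the paper: the paper also identifies $\Ga^{\ddag}$ blockwise with the $\ddag$-construction applied to $\Ga$ and invokes the Iwahori-decomposition characterization of transfer (Lemma \ref{JacquesVerges}, via \cite[Th\'eor\`eme 2.17]{SeSt}) to see that $\t^{\ddag}$ restricts to $\t^{\K}\otimes\dots\otimes\t^{\K}$ on $\H^{m+1}(\b,\Ga^{\ddag})\cap\M$, whence the conclusion; your write-up merely makes explicit the block computation and the triviality on the unipotent parts that the paper leaves implicit. The extra divisibility you impose on $l$ (to make $\Ga^{\ddag}$ strict) is unnecessary for this lemma, since the transfer and its Iwahori characterization require no strictness, but it is harmless.
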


\begin{proof}
Write $\M$ for the Levi subgroup of $\A^{\ddag\times}$ defined by 
the decomposition of $\V^{\ddag}$ into copies of $\V$. 
According to Lemma \ref{JacquesVerges}, the character $\t^\ddag$ is 
characterised by the identity:
\begin{equation*}
\t^\ddag\ |\ \H^{m+1}(\b,\La^\ddag)\cap\M=\t\otimes\dots\otimes\t.
\end{equation*}
Thus its restriction to 
$\H^{m+1}(\b,\Ga^\ddag)\cap\M
=\H^{m+1}(\b,\Ga)\times\dots\times\H^{m+1}(\b,\Ga)$ 
is equal to the 
tensor product of $l$ copies of $\t^\K$.
\end{proof}

\end{enumerate}
This ends the proof of Theorem \ref{ResComWithTransAndAEFit}. 
\end{proof}

\begin{rema}
In the case where $[\La,n,m,\b]$ and 
$[\La',n',m',\b]$ are sound,
this theorem implies that Grabitz's transfer \cite{Gr} is 
the same as the transfer defined in \cite{VS1}.
\end{rema}

%%%%%%%%%%%%%%%%%%%%%%%%%%%%%%%%%%%%%%%%%%%%%%%%%%%%%%%%%%%%%%%%%%%%%%%%%%%

\subsection{}

Before closing this section, we prove the following result. 
Let $[\La,n,m,\b]$ be a simple $\K$-pure stratum in $\A$, 
and write $[\Ga,n,m,\b]$ for the simple stratum in $\C$
which corresponds to it by (\ref{Dulilah}).
Theorem \ref{Syriana} gives us a map from $\Cc(\La,m,\b)$ 
to $\Cc(\Ga,m,\b)$, 
called the interior lift\-ing map, and denoted 
$\boldsymbol{l}_{\K/\F}:\t\mapsto\t^\K$. 
It has the following properties. 

\begin{prop}
\label{PropInjEquivInt}
The map $\boldsymbol{l}_{\K/\F}$ is injective and 
$\KK(\Ga)$-equivariant.
\end{prop}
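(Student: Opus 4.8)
The plan is to prove the two assertions separately, in each case reducing to an analogous statement at the level of quasi-simple characters over a splitting field $\L$, where the results of \cite{BH} and the explicit description of quasi-simple characters from \cite{VS1} apply.

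\emph{Injectivity.} Suppose $\t_1,\t_2\in\Cc(\La,m,\b)$ have the same interior lift $\t_1^\K=\t_2^\K$. First I would reduce to the case where $\La$ (and hence $\Ga$) is strict, by passing to $\La^\ddag$ as in paragraph \ref{PreDague}: the operation commutes with interior lifting by Lemma \ref{Magritte}, the transfer from $\Cc(\La,m,\b)$ to $\Cc(\La^\ddag,m,\b)$ is injective (it is literally a restriction, inflated back by triviality on the unipotent parts via Lemma \ref{JacquesVerges}), so it suffices to treat the sound case. Now with $\La$ strict, pick quasi-simple characters $\boldsymbol\t_i\in\Qq(\LaL,m,\b)$ extending $\t_i$ and form $\boldsymbol\t_i^\K\in\Qq(\GaL,m,\b)$ by (\ref{MonaKea}). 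By Proposition \ref{Kyoto2}, $\boldsymbol\t_i^\K$ extends $\t_i^\K$, so $\boldsymbol\t_1^\K$ and $\boldsymbol\t_2^\K$ are two quasi-simple characters of $\H^{m+1}(\b,\GaL)$ whose restrictions to $\H^{m+1}(\b,\Ga)$ agree. The key point is then that the map $\ii=\ii_\SS$ of (\ref{Semprum}) is injective (it is built from the field embeddings $\s_i$), and that the restriction $\Qq(\GaL,m,\b)\to\Cc(\Ga,m,\b)$ together with the action of $\Gal(\L/\K)$ determines a quasi-simple character (this is the standard fact that a quasi-simple character is the unique Galois-equivariant extension of its restriction, from \cite{VS1}): since both $\boldsymbol\t_i^\K\circ\can$ are $\Gal(\L/\K)$-stable and restrict to the same simple character on $\C$, they coincide, hence $\boldsymbol\t_1=\boldsymbol\t_2$ on the image of $\ii$, which forces $\boldsymbol\t_1=\boldsymbol\t_2$ as characters of $\H^{m+1}(\b,\LaL)$ after also matching on the idempotent blocks, and restricting back down gives $\t_1=\t_2$.

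\emph{Equivariance.} For $g\in\KK(\Ga)$, I must show $(g\cdot\t)^\K = g\cdot\t^\K$ where $g$ acts on $\Cc(\La,m,\b)$ through the inclusion $\KK(\Ga)=\KK(\La)\cap\mult\C\subseteq\KK(\La)$ and the transfer between conjugate strata. This is essentially a formal unravelling: $g$ conjugates the stratum $[\La,n,m,\b]$ to $[\La,n,m,g\b g^{-1}]$ and, since $g$ centralizes $\K$, it conjugates $[\Ga,n,m,\b]$ to $[\Ga,n,m,g\b g^{-1}]$; the interior-lifting map of Theorem \ref{Syriana} is defined intrinsically by restriction from $\mult\A$ to $\mult\C$ (via Proposition \ref{Alexandre}, $\H^{m+1}(\b,\La)\cap\mult\C=\H^{m+1}(\b,\Ga)$, and this intersection is visibly compatible with conjugation by any element of $\mult\C$ normalizing both groups). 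So both sides are just $x\mapsto\t(g^{-1}xg)$ restricted to $\H^{m+1}(g\b g^{-1},\Ga)$, provided one checks that the transfer on $\Cc(\Ga,\cdot)$ between the $g$-conjugate strata is itself realized by the conjugation map --- which follows from the compatibility of transfer with conjugation, and can be deduced cleanly from Theorem \ref{ResComWithTransAndAEFit} applied with the two realizations $[\Ga,n,m,\b]$ and $[\Ga,n,m,g\b g^{-1}]$ of the same simple pair over $\K$ together with the corresponding $\K$-pure strata over $\A$.

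\emph{Main obstacle.} The delicate step is the injectivity argument, specifically the claim that a quasi-simple character in $\Qq(\GaL,m,\b)$ is determined by its restriction to $\mult\C$ together with Galois invariance. One must be careful that $\boldsymbol\t_i^\K$ is genuinely $\Gal(\L/\K)$-equivariant --- this relies on the choice of $\SS$ in (\ref{Pipistrelle}) and the construction via $\can$ and the $\s_i$'s, and on the Galois-equivariance properties of the base-change construction (the analogue of Proposition \ref{GalInv}, here for the lift over $\L/\K$). One route that avoids this subtlety is to work instead with the \emph{transfer} characterization: reduce the whole statement, via Proposition \ref{LongJohnSilverVarianteSound} and Lemma \ref{Magritte}, to the sound strict case and then quote that in the split setting the interior lift is injective and equivariant by \cite[Theorem 7.7 and 7.8]{BH}, lifting back along the faithful restriction maps. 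I would present the argument this way, keeping the quasi-simple-character computation only as the tool that makes the reduction work.
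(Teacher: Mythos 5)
Your reduction to the sound case (pass to $\La^{\ddag}$, use Lemma \ref{Magritte} and the bijectivity of transfer) is exactly the paper's first step, and your equivariance argument is fine, if over-engineered: no transfer compatibility is needed, since the interior lift is literally restriction to $\H^{m+1}(\b,\La)\cap\mult\C$ and this visibly commutes with conjugation by elements of $\KK(\Ga)=\KK(\La)\cap\mult\C$. The gap is in the final step of injectivity. Your main route rests on the claim that a quasi-simple character in $\Qq(\GaL,m,\b)$ is ``the unique Galois-equivariant extension of its restriction'' to $\H^{m+1}(\b,\Ga)$: this is not in \cite{VS1}, it is not even clear that $\boldsymbol{\t}_i^{\K}$ is $\Gal(\L/\K)$-stable (Proposition \ref{GalInv} only gives that $\boldsymbol{\t}\circ\s$ lies again in the same set), and a uniqueness statement of this kind is essentially as strong as the injectivity you are trying to prove. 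Moreover, even granting $\boldsymbol{\t}_1^{\K}=\boldsymbol{\t}_2^{\K}$, this only says $\boldsymbol{\t}_1=\boldsymbol{\t}_2$ on $\ii(\H^{m+1}(\b,\GaL))$, which is a small subgroup of $\H^{m+1}(\b,\LaL)$; the jump to ``$\boldsymbol{\t}_1=\boldsymbol{\t}_2$ after also matching on the idempotent blocks'' and then to $\t_1=\t_2$ is unsupported --- indeed the whole content of injectivity of the interior lift is precisely that a simple character is determined on the much larger group $\H^{m+1}(\b,\La)$ by its values on $\H^{m+1}(\b,\La)\cap\mult\C$.

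Your fallback route does not repair this: after the $\La^{\ddag}$ reduction the algebra $\A^{\ddag}=\End_{\D}(\V^{\ddag})$ is still non-split whenever $\D\neq\F$, so the interior-lifting results of \cite{BH} (which are stated for $\A$ split over $\F$ and strict lattices) do not apply; reaching the genuinely split situation would require a further unramified base change over $\L$, which reintroduces exactly the quasi-simple-character difficulties above. What is needed at this point, and what the paper invokes, is Grabitz's injectivity result for sound strata in a general simple central $\F$-algebra, \cite[Proposition~7.1]{Gr}; you never cite it, and without it (or an actual proof of the restriction-determines-the-character statement, by induction on $\b$ as in \cite{BH} and \cite{Gr}) the argument is incomplete.
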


\begin{proof}
Note that the second assertion is immediate. 
Let us fix a positive integer $l\>1$ as in Lemma \ref{EveryBodyIsSound}, 
and form the sound simple stratum $[\La^{\ddag},n,m,\b]$.
Write $\C^{\ddag}$ for the centralizer of $\K$ in $\A^{\ddag}$ and 
$[\Ga^{\ddag},n,m,\b]$ for the simple stratum in $\C^{\ddag}$ 
associated with the stratum $[\La^{\ddag},n,m,\b]$ by (\ref{Dulilah}). 
Now let $\t\in\Cc(\La,m,\b)$ be a simple character 
and write $\t^{\ddag}$ for its transfer in $\Cc(\La^{\ddag},m,\b)$. 
Then, by Lemma \ref{Magritte},
the transfer of $\t^\K$ to $\Cc(\Ga^{\ddag},m,\b)$ is equal to
$\t^{\ddag}\ \vert\ \H^{m+1}(\b,\Ga^{\ddag})$. 
As the transfer 
map from $\Cc(\La,m,\b)$ to $\Cc(\La^{\ddag},m,\b)$ is bijective, 
we may replace $\La$ by $\La^{\ddag}$ and assume that the stratum 
$[\La,n,m,\b]$ is sound.  
In this case, the injectivity of the map $\boldsymbol{l}_{\K/\F}$ 
follows from \cite[Proposition 7.1]{Gr}.
\end{proof}

Assume we are given two $\K$-pure 
simple strata $[\La,n,m,\b_i]$, $i=1,2$, in $\A$. 
For each $i$, let $\t_i$ be a simple character in $\Cc(\La,m,\b_i)$.

\begin{prop}
\label{MauiToho}
Assume that $\t_1$ and $\t_2$ are equal.
Then $\boldsymbol{l}_{\K/\F}(\t_1)$ and 
$\boldsymbol{l}_{\K/\F}(\t_2)$ are equal.
\end{prop}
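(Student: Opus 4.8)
The plan is to deduce the statement directly from Proposition \ref{Alexandre}, since once the definitions are unwound there is essentially nothing left to prove. First I would record the elementary remark that the lattice sequence $\Ga$ attached to $\La$ by the process (\ref{Dulilah}) is determined by the equalities $\aa_k(\La)\cap\C=\aa_k(\Ga)$ for all $k\in\ZZ$, and hence depends only on $\La$ (and on $\K$), not on $\b_1$ or $\b_2$. So one and the same lattice sequence $\Ga$ underlies both interior lifts, and the symbols $\boldsymbol{l}_{\K/\F}(\t_1)$ and $\boldsymbol{l}_{\K/\F}(\t_2)$ refer to characters of the groups $\H^{m+1}(\b_1,\Ga)$ and $\H^{m+1}(\b_2,\Ga)$ attached to this fixed $\Ga$.

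Next I would use the hypothesis that $\t_1$ and $\t_2$ are equal. In particular they have the same domain, that is, $\H^{m+1}(\b_1,\La)=\H^{m+1}(\b_2,\La)$. Applying Proposition \ref{Alexandre} to each of the two $\K$-pure simple strata $[\La,n,m,\b_i]$ gives $\H^{m+1}(\b_i,\Ga)=\H^{m+1}(\b_i,\La)\cap\mult\C$ for $i=1,2$, and therefore $\H^{m+1}(\b_1,\Ga)=\H^{m+1}(\b_2,\Ga)$.

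Finally, by Theorem \ref{Syriana} the interior lift $\boldsymbol{l}_{\K/\F}(\t_i)=\t_i^\K$ is by definition the restriction of $\t_i$ to $\H^{m+1}(\b_i,\Ga)$. Since $\t_1=\t_2$ and the two restriction subgroups coincide, these restrictions coincide, which is exactly the desired equality $\boldsymbol{l}_{\K/\F}(\t_1)=\boldsymbol{l}_{\K/\F}(\t_2)$. There is no real obstacle here: the whole difficulty has already been absorbed into Proposition \ref{Alexandre} (itself proved by induction on $\b$), and the only point deserving a word is the initial remark that $\Ga$ does not depend on the choice of $\b_i$, so that comparing the two lifts is legitimate.
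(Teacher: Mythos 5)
Your proposal is correct and follows exactly the paper's own argument: equality of $\t_1$ and $\t_2$ forces $\H^{m+1}(\b_1,\La)=\H^{m+1}(\b_2,\La)$, Proposition \ref{Alexandre} then gives $\H^{m+1}(\b_1,\Ga)=\H^{m+1}(\b_2,\Ga)$, and the interior lifts, being restrictions to this common subgroup, coincide. The preliminary remark that $\Ga$ depends only on $\La$ and $\K$ is a harmless and sensible clarification, left implicit in the paper.
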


\begin{proof}
It suffices to verify that the groups $\H^{m+1}(\b_i,\Ga)$, $i=1,2$, are 
equal.  
This follows from Proposition \ref{Alexandre} and the fact that the groups 
$\H^{m+1}(\b_i,\La)$, $i=1,2$, are equal.  
\end{proof}

%%%%%%%%%%%%%%%%%%%%%%%%%%%%%%%%%%%%%%%%%%%%%%%%%%%%%%%%%%%%%%%%%%%%%%%%%%%
%%%%%%%%%%%%%%%%%%%%%%%%%%%%%%%%%%%%%%%%%%%%%%%%%%%%%%%%%%%%%%%%%%%%%%%%%%%

\section{The base change}
\label{BCASDFSC}

In this section, we develop a base change process for simple 
strata and characters with respect to a finite unramified 
extension $\K$ of $\F$, in a way similar to \cite{BH}.

\subsection{}

Let $\K/\F$ be an unramified extension of degree $\d$. 
Given a simple central $\F$-alg\-ebra $\A$, we set:
\begin{equation*}
\qq{\A}=\A\otimes_{\F}\End_{\F}(\K).
\end{equation*}
Then $\K$ embeds naturally in $\qq{\A}$, and its centralizer, 
denoted $\AH$, is canonically isomorphic to $\A\otimes_{\F}\K$ 
as a $\K$-algebra.
Let $\V$ be a simple left $\A$-module.
Then $\qq{\V}=\V\otimes_{\F}\K$ is a simple left $\qq{\A}$-module 
and, if we fix an $\F$-basis of $\K$, we have a decomposition: 
\begin{equation}
\label{Gauvain}
\qq{\V}=\V\oplus\dots\oplus\V
\end{equation}
of $\qq{\V}$ into a sum of $\d$ copies of $\V$, so that we are in the 
situation of paragraph \ref{DerivedStratum}.

\medskip

Let $[\La,n,m,\b]$ be a simple stratum in $\A$ and set $\E=\F(\b)$.
Let us form the simple stra\-tum $[\qq{\La},n,m,\b]$ in $\qq{\A}$, where
$\qq{\La}=\La\oplus\dots\oplus\La$ is the direct sum of $\d$ copies of $\La$. 
This simple stratum is not $\K$-pure in general.
We have a decomposition:
\begin{equation*}
\label{KFJ}
\E\otimes_{\F}\K=\E^{1}\oplus\cdots\oplus\E^{\c}
\end{equation*}
into simple $\E\otimes_{\F}\K$-modules, where $\c$ denotes the greatest 
common divisor of $\d$ and the residue class degree of $\E$ over $\F$.
For each $j\in\{1,\ldots,\c\}$, we write $\ee^{j}$ for the minimal 
idempotent in $\E\otimes_{\F}\K$ corresponding to $\E^{j}$, and we set:
\begin{equation*}
\b^{j}=\ee^{j}\b,
\quad
j\in\{1,\ldots,\c\}.
\end{equation*}
These are the various $\K/\F$-lifts of $\b$.
If we write
$\qq{\La}^{j}$ for the projection of $\qq{\La}$ onto the space 
$\qq{\V}^{j}=\ee^{j}\qq{\V}$ for each $j$, 
we get a simple stratum $[\qq{\La}^{j},n,m,\b^j]$ in the
$\F$-algebra $\qq{\A}^{j}=\ee^{j}\qq{\A}\ee^{j}$,
which is $\K$-pure for the natural embedding of $\K$ in $\qq{\A}^{j}$.
Thus one can form the interior lift $[\qq{\Ga}^{j},n,m,\b^j]$ in the 
centralizer of $\K$ in $\qq{\A}^{j}$ (see paragraph \ref{BelleDeJour}).

\medskip

Given a simple character $\t\in\Cc(\La,m,\b)$, let $\qq{\t}$ denote 
its transfer to $\Cc(\qq{\La},m,\b)$ and write $\qq{\t}^{j}$ for the 
transfer of $\qq{\t}$ to $\Cc(\qq{\La}^{j},m,\b^j)$, that is the 
restriction of $\qq{\t}$ to $\H^{m+1}(\b^{j},\qq{\La}^{j})$. 
Let us denote by $\t_{\K}^{j}$ the restriction of $\qq{\t}^{j}$ to 
$\H^{m+1}(\b^{j},\qq{\Ga}^{j})$, which belongs to 
$\Cc(\qq{\Ga}^{j},m,\b^j)$ by Theorem \ref{Syriana}.
We have the following definition. 

\begin{defi}
The process:
\begin{equation*}
\boldsymbol{b}_{\K/\F}:\t\mapsto
\{\t_{\K}^{j},\ j=1,\dots,\c\}
\end{equation*}
is the $\K/\F$-{\it base change} for simple characters. 
For each $j$, the simple charac\-ter $\t_{\K}^{j}$ is called the 
$\K/\F$-lift of $\t$ corresponding to the 
$\K/\F$-lift $\b^j$ of $\b$.
\end{defi}

Now let $(\Theta,k,\b)$ be a ps-character over $\F$. 
Let $[\La,n,m,\h(\b)]$ be a realization of the pair 
$(k,\b)$ in a simple central $\F$-algebra $\A$, 
and let $\t$ denote the simple character $\Theta(\La,m,\h)$. 
Let $(k,\b^{j})$, for $j\in\{1,\dots,\c\}$, be the various 
$\K/\F$-lifts of the pair $(k,\b)$, and let $\h^{j}$ denote the 
homo\-morphism of $\K$-algebras 
from $\K(\b^{j})$ to the centralizer of $\K$ in $\qq{\A}^{j}$ 
induced by $\h$. 
Thus the sum of the $\h^{j}$'s is the $\K$-algebra 
homomorphism $\h\otimes{\rm id}_\K$ from $\E\otimes_\F\K$ to $\AH$.
For each $j$, let us denote by $(\Theta_{\K}^{j},k,\b^{j})$ 
the ps-character defined by $([\qq{\Ga}^{j},n,m,\b^j],\t_{\K}^{j})$.

\begin{defi}
The process:
\begin{equation*}
\boldsymbol{b}_{\K/\F}:(\Theta,k,\b)\mapsto
\{(\Theta_{\K}^{j},k,\b^{j}),\ j=1,\dots,\c\}
\end{equation*}
is the $\K/\F$-{\it base change} for ps-characters, and 
$\Theta_{\K}^{j}$ is called the $\K/\F$-lift of $\Theta$
corresponding to the $\K/\F$-lift $\b^j$ of $\b$.
\end{defi}

This definition does not depend on the choice of the realization 
$[\La,n,m,\h(\b)]$.
Indeed, let $[\La',n',m',\h'(\b)]$ be another realization of $(k,\b)$
in a simple central $\F$-algebra $\A'$, and let us write $\t'$ for 
the transfer of $\t$ to $\Cc(\La',m',\h'(\b))$.
Then it follows from Theorem \ref{ResComWithTransAndAEFit} that, 
for each $j$, the $\K/\F$-lifts $\t^{j}_\K$ and $\t^{\prime j}_\K$ 
are transfers of each others.

%%%%%%%%%%%%%%%%%%%%%%%%%%%%%%%%%%%%%%%%%%%%%%%%%%%%%%%%%%%%%%%%%%%%%%%%%%%

\subsection{}
\label{EhOui}

In this paragraph, we study in more details the case where $\c=1$,
that is the case where the residue class degree of $\F(\b)/\F$ 
is prime to $f$.
In this case, the simple pair $(k,\b)$ has exactly one 
$\K/\F$-lift. 
If we write $\La_\K$ for the $\Oo_\K$-lattice sequence defined by 
$\qq{\La}$, then the base change process gives rise to a map:
\begin{equation}
\label{HaleManoa}
\boldsymbol{b}_{\K/\F}:\Cc(\La,m,\b)\to\Cc(\La_{\K},m,\b)
\end{equation}
having the following properties.

\begin{prop}
\label{PropInjEquiv}
The map $\boldsymbol{b}_{\K/\F}$ is injective and $\KK(\La)$-equivariant.
\end{prop}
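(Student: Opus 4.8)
The plan is to realise $\boldsymbol{b}_{\K/\F}$ as a composite of two maps whose properties are already at our disposal, and then to conclude formally. Since $\c=1$, the residue class degrees $f_{\F}(\b)$ and $\d$ are coprime, so $\E\otimes_{\F}\K$ is a field; consequently the stratum $[\qq{\La},n,m,\b]$, where $\qq{\La}=\La\oplus\dots\oplus\La$ ($\d$ copies), is $\K$-pure, and $\La_{\K}$ is exactly the lattice sequence attached to it by the interior lifting process of paragraph \ref{BelleDeJour}. By the definition of the base change in this case, for $\t\in\Cc(\La,m,\b)$ the character $\boldsymbol{b}_{\K/\F}(\t)$ is the restriction to $\H^{m+1}(\b,\La_{\K})$ of the transfer $\qq{\t}\in\Cc(\qq{\La},m,\b)$ of $\t$; in other words,
\begin{equation*}
\boldsymbol{b}_{\K/\F}=\boldsymbol{l}_{\K/\F}\circ\boldsymbol{\tau},
\end{equation*}
where $\boldsymbol{\tau}:\Cc(\La,m,\b)\to\Cc(\qq{\La},m,\b)$ is the transfer map and $\boldsymbol{l}_{\K/\F}:\Cc(\qq{\La},m,\b)\to\Cc(\La_{\K},m,\b)$ is the interior lifting map, which is well defined by Theorem \ref{Syriana}.

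Injectivity is then immediate: the transfer map $\boldsymbol{\tau}$ is bijective by \cite{VS1}, and $\boldsymbol{l}_{\K/\F}$ is injective by Proposition \ref{PropInjEquivInt}, hence their composite $\boldsymbol{b}_{\K/\F}$ is injective. For the $\KK(\La)$-equivariance, I would first fix the action on the target: the diagonal embedding $\iota:\A\to\qq{\A}$ identifies $\A$ with $\A\otimes 1\subseteq\A\otimes_{\F}\K=\AH$ and, by Remark \ref{blioblieris}, carries $\KK(\La)$ into $\KK(\qq{\La})\cap{\AH}^{\times}=\KK(\La_{\K})$, so an element $g\in\KK(\La)$ acts on $\Cc(\La_{\K},m,\b)$ through $\iota(g)$. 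Since $\boldsymbol{l}_{\K/\F}$ is $\KK(\La_{\K})$-equivariant (the second assertion of Proposition \ref{PropInjEquivInt}), it is enough to show that $\boldsymbol{\tau}(\t^{g})=\boldsymbol{\tau}(\t)^{\iota(g)}$ for every $g\in\KK(\La)$; composing then gives $\boldsymbol{b}_{\K/\F}(\t^{g})=\boldsymbol{b}_{\K/\F}(\t)^{\iota(g)}$, as required.

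The verification of this last compatibility is the only step requiring an argument, and it is mild. By Lemma \ref{JacquesVerges}, the transfer $\boldsymbol{\tau}(\t)$ is characterised among characters of $\H^{m+1}(\b,\qq{\La})$ by being trivial on $\H^{m+1}(\b,\qq{\La})\cap\N^{\pm}$ and by restricting to $\t\otimes\dots\otimes\t$ on $\H^{m+1}(\b,\qq{\La})\cap\M$, where $\M=\mult{\A}\times\dots\times\mult{\A}$ is the Levi subgroup of $\mult{\qq{\A}}$ attached to the decomposition $\qq{\V}=\V\oplus\dots\oplus\V$. As $\iota(g)$ lies in $\M$, conjugation by $\iota(g)$ preserves the associated Iwahori decomposition and turns this characterisation into the one for $\boldsymbol{\tau}(\t^{g})$; hence $\boldsymbol{\tau}(\t^{g})=\boldsymbol{\tau}(\t)^{\iota(g)}$. (Alternatively, one may invoke the naturality of the transfer map under $\F$-algebra isomorphisms of realizations, which is built into its construction in \cite{VS1}.) Thus the proposition follows formally from the factorisation above, together with Proposition \ref{PropInjEquivInt}, Lemma \ref{JacquesVerges}, and the bijectivity of the transfer; there is no serious obstacle.
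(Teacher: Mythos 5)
Your proof is correct and follows essentially the same route as the paper: it writes $\boldsymbol{b}_{\K/\F}$ as the composite of the transfer map $\Cc(\La,m,\b)\to\Cc(\qq{\La},m,\b)$ with the interior lifting $\Cc(\qq{\La},m,\b)\to\Cc(\La_\K,m,\b)$ and then invokes Proposition \ref{PropInjEquivInt}. The only difference is that you spell out the $\KK(\La)$-equivariance of the transfer step via the Iwahori-decomposition characterisation of Lemma \ref{JacquesVerges}, a detail the paper leaves implicit.
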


\begin{proof}
As $\boldsymbol{b}_{\K/\F}$ is the composite of the transfer map from 
$\Cc(\La,m,\b)$ to $\Cc(\qq{\La},m,\b)$ and the interior lifting from 
$\Cc(\qq{\La},m,\b)$ to $\Cc(\La_\K,m,\b)$, this follows from Proposition 
\ref{PropInjEquivInt}.
\end{proof}

Assume we are given two simple strata $[\La,n_i,m_i,\b_i]$, $i=1,2$, in $\A$, 
such that $f_\F(\b_1)$ and $f_\F(\b_2)$ are prime to $\d$.
For each $i$, let $\t_i$ be a simple character in $\Cc(\La,m_i,\b_i)$.

\begin{prop}
\label{Maui}
Assume $\t_1$ and $\t_2$ intertwine in $\mult\A$.
Then $\bc_{\K/\F}(\t_1)$ and $\bc_{\K/\F}(\t_2)$ 
inter\-twine in $\A_{\K}^{\times}$.
\end{prop}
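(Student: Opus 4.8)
The strategy is to factor the base change $\bc_{\K/\F}$ through the transfer and the interior lifting, following Proposition~\ref{PropInjEquiv}, and to follow a single intertwining element through both steps. Since $f_\F(\b_i)$ is prime to $\d$, the stratum $[\qq\La,n_i,m_i,\b_i]$ (with $\qq\La=\La\oplus\dots\oplus\La$, the sum of $\d$ copies of $\La$) is a $\K$-pure simple stratum in $\qq\A$, and its interior lift by (\ref{Dulilah}) is the simple stratum $[\La_\K,n_i,m_i,\b_i]$ in $\AH$. By Proposition~\ref{PropInjEquiv}, the character $\bc_{\K/\F}(\t_i)$ is obtained by first transferring $\t_i$ to some $\qq{\t}_i\in\Cc(\qq\La,m_i,\b_i)$ and then restricting $\qq{\t}_i$ to the subgroup $\H^{m_i+1}(\b_i,\La_\K)=\H^{m_i+1}(\b_i,\qq\La)\cap\AH^\times$, the last equality being Proposition~\ref{Alexandre}.

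First I would apply Proposition~\ref{LongJohnSilverGeneralCarSimVariante} with $\A'=\qq\A$ and $\La'=\qq\La$ the sum of $l=\d$ copies of $\La$ (so that $n_i'=n_i$ and $m_i'=m_i$): it yields that $\qq{\t}_1$ and $\qq{\t}_2$ intertwine in $\qq\A^\times$. The key point is that, inspecting the proof of that proposition, the intertwining element is exactly $g'=\iota(g)$, where $g\in\A^\times$ intertwines $\t_1$ and $\t_2$ as in (\ref{Eaque}) and $\iota:\A\to\qq\A$ denotes the diagonal embedding. After identifying $\qq\A$ with $\A\otimes_\F\End_\F(\K)$ and $\qq\V$ with $\V\otimes_\F\K$, the diagonal embedding $\iota$ is precisely the map $a\mapsto a\otimes 1$; hence $\iota(\A)\subseteq\AH$, the centralizer of $\K$ in $\qq\A$, and in particular $g'\in\AH^\times$.

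It then remains to restrict the intertwining identity to $\AH$. Since $g'\in\AH^\times$, conjugation by $g'$ preserves $\AH$, and Proposition~\ref{Alexandre} gives
\begin{equation*}
\H^{m_2+1}(\b_2,\La_\K)\cap g^{\prime-1}\H^{m_1+1}(\b_1,\La_\K)g'=
\big(\H^{m_2+1}(\b_2,\qq\La)\cap g^{\prime-1}\H^{m_1+1}(\b_1,\qq\La)g'\big)\cap\AH^\times .
\end{equation*}
For $x$ in this subgroup one has $\bc_{\K/\F}(\t_2)(x)=\qq{\t}_2(x)=\qq{\t}_1(g'xg^{\prime-1})=\bc_{\K/\F}(\t_1)(g'xg^{\prime-1})$, where the middle equality comes from the intertwining of $\qq{\t}_1$ and $\qq{\t}_2$ in $\qq\A^\times$ and the outer ones from the description of the base change as a restriction. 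Thus $g'$ intertwines $\bc_{\K/\F}(\t_1)$ and $\bc_{\K/\F}(\t_2)$ in $\AH^\times$, which is the assertion.

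The argument is essentially formal once the definitions are unwound; the one point needing a little care is the identification of the diagonal embedding $\A\hookrightarrow\qq\A$ with the canonical map $\A\hookrightarrow\AH=\A\otimes_\F\K$, which is what forces the intertwining element to centralize $\K$ and hence to survive the passage to the interior lift. Everything else is contained in Propositions~\ref{LongJohnSilverGeneralCarSimVariante} and \ref{Alexandre}, Theorem~\ref{Syriana}, and the factorization Proposition~\ref{PropInjEquiv}, so I do not anticipate any real obstacle here.
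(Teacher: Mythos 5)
Your proof is correct and follows essentially the same route as the paper: intertwine the transfers $\qq{\t}_i$ by the diagonal element $\iota(g)$ via (the proof of) Proposition~\ref{LongJohnSilverGeneralCarSimVariante}, observe that $\iota(g)$ lies in $\A_\K^{\times}$, and restrict to the interior lift. The paper's proof is just a terser version of this, leaving implicit the identification $\H^{m+1}(\b_i,\La_\K)=\H^{m+1}(\b_i,\qq\La)\cap\A_\K^{\times}$ from Proposition~\ref{Alexandre} that you spell out.
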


\begin{proof}
Assume $\t_1$ and $\t_2$ are intertwined by $g\in\A^{\times}$. 
By the proof of 
Proposition \ref{LongJohnSilverGeneralCarSimVariante}, 
the characters $\qq{\t}_1$ and $\qq{\t}_2$ are intertwined 
by $\iota(g)$, where $\iota$ denotes the diagonal 
embedding of $\A$ in $\qq{\A}=\Mat_f(\A)$.
As $\iota(g)$ is actually in $\A_{\K}^{\times}$, we deduce that 
the characters 
$\bc_{\K/\F}(\t_1)$ and $\bc_{\K/\F}(\t_2)$ inter\-twine in 
$\A_{\K}^{\times}$.  
\end{proof}

We now suppose that $n_1=n_2$ and $m_1=m_2$.

\begin{prop}
\label{Maui35}
Assume that $\t_1$ and $\t_2$ are equal.
Then $\bc_{\K/\F}(\t_1)$ and $\bc_{\K/\F}(\t_2)$ are equal.
\end{prop}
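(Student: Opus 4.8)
The statement \ref{Maui35} is the ``equal implies equal'' analogue of Proposition \ref{Maui} under the base change map, and it should follow from the corresponding properties of the two processes which compose to give $\bc_{\K/\F}$, namely the transfer map $\t\mapsto\qq{\t}$ and the interior lifting map $\boldsymbol{l}_{\K/\F}$. The plan is to decompose $\bc_{\K/\F}$ exactly as in the proof of Proposition \ref{PropInjEquiv} and track what happens to the two characters $\t_1$ and $\t_2$ at each stage.

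First I would pass from $\A$ to $\qq{\A}=\Mat_{\d}(\A)$: the transfers $\qq{\t}_1\in\Cc(\qq{\La},m,\b_1)$ and $\qq{\t}_2\in\Cc(\qq{\La},m,\b_2)$ are equal, because by Lemma \ref{JacquesVerges} (equivalently, by the characterization in Proposition \ref{LongJohnSilverGeneralCarSimVariante}) the transfer of $\t_i$ to $\Cc(\qq{\La},m,\b_i)$ is trivial on the unipotent parts of the Iwahori decomposition attached to the Levi $\M=\mult\A\times\dots\times\mult\A$ and restricts to $\t_i\otimes\dots\otimes\t_i$ on $\M$; since $\t_1=\t_2$ and (using $n_1=n_2$, $m_1=m_2$) the ambient groups $\H^{m+1}(\b_i,\qq{\La})$ agree, these data coincide. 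Concretely one also needs $\H^{m+1}(\b_1,\qq\La)=\H^{m+1}(\b_2,\qq\La)$, which follows from Lemma \ref{BK359} (or directly, since $\t_1=\t_2$ forces $\H^{m+1}(\b_1,\La)=\H^{m+1}(\b_2,\La)$ and this is inherited by $\qq\La$). Thus after transfer we may assume we are working with a single simple character $\qq\t:=\qq\t_1=\qq\t_2$ on $\qq\A$.

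Next, since $\c=1$ here, there is a single $\K/\F$-lift $\b^1_i$ of each $\b_i$ and a single idempotent $\ee^1$; the strata $[\qq\La,n,m,\b_i]$ are $\K$-pure after projecting, and the base change of $\t_i$ is the restriction to $\H^{m+1}(\b_i,\Ga_\K)$ of $\qq{\t}_i$, i.e. the interior lift $\boldsymbol{l}_{\K/\F}$ applied (after the obvious identifications) to a simple character on a $\K$-pure stratum. At this point the result reduces to Proposition \ref{MauiToho}: if $\qq{\t}_1$ and $\qq{\t}_2$ are equal simple characters on $\K$-pure strata $[\qq\La,n,m,\b_i]$, then their interior lifts $\boldsymbol{l}_{\K/\F}(\qq{\t}_1)$ and $\boldsymbol{l}_{\K/\F}(\qq{\t}_2)$ are equal. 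Proposition \ref{MauiToho} in turn rests only on the equality of the groups $\H^{m+1}(\b_i,\Ga)$, which holds here by Proposition \ref{Alexandre} together with the equality $\H^{m+1}(\b_1,\qq\La)=\H^{m+1}(\b_2,\qq\La)$ established above; hence the two restrictions agree.

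The only point requiring a little care — and the place I would expect to do the small amount of real work — is verifying that the ambient groups genuinely match after the base change construction, i.e. that $\H^{m+1}(\b^1_1,\qq{\La}^1)=\H^{m+1}(\b^1_2,\qq{\La}^1)$ and then $\H^{m+1}(\b^1_1,\qq{\Ga}^1)=\H^{m+1}(\b^1_2,\qq{\Ga}^1)$, so that ``restrict $\qq\t$'' is literally the same recipe applied to $\b^1_1$ and to $\b^1_2$. Granting $n_1=n_2$ and $m_1=m_2$ this is exactly the content invoked in Propositions \ref{MauiToho} and \ref{Alexandre}, applied on the $\K$-pure stratum side, so no genuinely new argument is needed; the proof is then a one-line composition: $\bc_{\K/\F}(\t_i)=\boldsymbol{l}_{\K/\F}(\qq{\t}_i)$ and both factors send equal characters to equal characters.
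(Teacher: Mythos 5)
Your overall architecture is the right one (and is the paper's): write $\bc_{\K/\F}$ as transfer to $\qq\La$ followed by the interior lift, and finish with Proposition \ref{MauiToho} (the matching of the groups on the $\C$-side via Proposition \ref{Alexandre} is exactly how Proposition \ref{MauiToho} is proved). The gap is in your first step. The equality $\qq\t_1=\qq\t_2$ is \emph{not} a formal consequence of Lemma \ref{JacquesVerges} plus $\t_1=\t_2$: by Lemma \ref{JacquesVerges} (equivalently Lemma \ref{Jaime}) it is equivalent to the equality of the ambient groups $\H^{m+1}(\b_1,\qq\La)=\H^{m+1}(\b_2,\qq\La)$, and neither of your two justifications of that equality works. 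Invoking Lemma \ref{BK359} on $\qq\La$ presupposes that $\Cc(\qq\La,m,\b_1)\cap\Cc(\qq\La,m,\b_2)$ is nonempty, i.e.\ essentially the conclusion you are after, so the argument is circular. And the claim that the equality $\H^{m+1}(\b_1,\La)=\H^{m+1}(\b_2,\La)$ is ``inherited by $\qq\La$'' is not a one-liner: the groups $\H^{m+1}(\b_i,\qq\La)$ are defined by induction on $\b_i$ through approximating simple strata, and Lemma \ref{JacquesVerges} only controls their Levi parts; the unipotent parts are not determined by $\H^{m+1}(\b_i,\La)$ alone.

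This missing statement is precisely Proposition \ref{Dolomites}, whose proof in the paper is genuinely substantive: one first shows (Lemma \ref{NonNominatus}, via Proposition \ref{Hengist} and Theorem \ref{Grabinoulor}) that $\t_1=\t_2$ forces the ps-characters to be endo-equivalent, so that Lemma \ref{Hectare1} makes the critical exponents $q'=-k_0(\b_i,\qq\La)$ independent of $i$; then one proves $\H^{q'}(\b_1,\qq\La)=\H^{q'}(\b_2,\qq\La)$ by induction on $\b_1$ using approximating strata $\g_i$, and descends to level $m$ by the minimality argument based on Lemma \ref{BK359}. So your proposal, as written, assumes away the hard part. On the other hand, Proposition \ref{Dolomites} is stated earlier in the paper and applies verbatim here (take $l=\d$ copies of $\La$, $a=1$, $m'=m$); if you replace your first step by a citation of it, the remainder of your argument (interior lift, Proposition \ref{Alexandre}, Proposition \ref{MauiToho}) is exactly the paper's two-line proof.
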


\begin{proof}
If $\t_1$ and $\t_2$ are equal, then Proposition \ref{Dolomites}
gives us $\qq{\t}_1=\qq{\t}_2$ and Proposition \ref{MauiToho} 
gives us the expected equality.
\end{proof}

Let $[\La,n,m,\b]$ and $[\La',n',m',\b]$ be two realizations 
of the simple pair $(k,\b)$, let $\t$ be a simple character in 
$\Cc(\La,m,\b)$
and let $\t'$ be its transfer in $\Cc(\La',m',\b)$.
The following proposition is a special case of 
Theorem \ref{ResComWithTransAndAEFit}.

\begin{prop}
\label{AbbeC}
The character $\bc_{\K/\F}(\t')$ is the transfer of $\bc_{\K/\F}(\t)$ in 
$\Cc(\La'_\K,m',\b)$.
\end{prop}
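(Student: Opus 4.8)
The statement to prove is Proposition~\ref{AbbeC}: that the base change map commutes with transfer, in the special case $\c=1$. Since $\bc_{\K/\F}$ is by definition the composite of the transfer map $\t\mapsto\qq{\t}$ (from $\Cc(\La,m,\b)$ to $\Cc(\qq{\La},m,\b)$) followed by the interior lifting map $\qq{\t}\mapsto\t_\K$ (from $\Cc(\qq{\La},m,\b)$ to $\Cc(\La_\K,m,\b)$), the plan is simply to apply Theorem~\ref{ResComWithTransAndAEFit} together with the transitivity of transfer. Concretely, I would proceed in two short steps.

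\textbf{Step 1: Reduce to the $\K$-pure setting.} Starting with realizations $[\La,n,m,\b]$ and $[\La',n',m',\b]$ of the simple pair $(k,\b)$, form $\qq{\A}=\A\otimes_\F\End_\F(\K)$ and $\qq{\A'}=\A'\otimes_\F\End_\F(\K)$, together with the simple strata $[\qq{\La},n,m,\b]$ and $[\qq{\La'},n',m',\b]$, which are realizations of $(k,\b)$ in these larger algebras. By transitivity of the transfer map (composing $\boldsymbol\tau^{-1}$ with $\boldsymbol\tau$, as recalled in paragraph~\ref{Jessica}), the character $\qq{\t'}$ is the transfer of $\qq{\t}$: indeed both $\qq{\t}$ and $\qq{\t'}$ are transfers of $\t$ and of $\t'$ respectively, and $\t'$ is the transfer of $\t$. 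Since $\c=1$, each of $\qq{\A}$ and $\qq{\A'}$ has a single $\K/\F$-lift idempotent $\ee^1=1$, so $[\qq{\La},n,m,\b]$ is already $\K$-pure (for the natural embedding of $\K$), and likewise for the primed stratum. The associated interior-lift strata $[\overline{\Ga},n,m,\b]$ in the centralizer $\A_\K$ of $\K$ in $\qq{\A}$, and $[\overline{\Ga}',n',m',\b]$ in $\A'_\K$, are realizations of the same simple pair over $\K$ — this is precisely the hypothesis required to apply Theorem~\ref{ResComWithTransAndAEFit}, and it holds because $\K(\b)\subseteq\A_\K$ and $\K(\b)\subseteq\A'_\K$ are both $\K$-isomorphic to $\K\otimes_\F\F(\b)$ (which is a field since $\c=1$).

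\textbf{Step 2: Apply the compatibility theorem.} By Theorem~\ref{ResComWithTransAndAEFit} applied to the $\K$-pure realizations $[\qq{\La},n,m,\b]$ and $[\qq{\La'},n',m',\b]$ and the transfer pair $(\qq{\t},\qq{\t'})$, the restrictions
\begin{equation*}
\qq{\t}\ |\ \H^{m+1}(\b,\overline{\Ga})=\bc_{\K/\F}(\t),
\quad
\qq{\t'}\ |\ \H^{m'+1}(\b,\overline{\Ga}')=\bc_{\K/\F}(\t')
\end{equation*}
are transfers of each other. Identifying $\overline{\Ga}$ with the lattice sequence $\La_\K$ on the $\K$-vector space structure (and similarly $\overline{\Ga}'$ with $\La'_\K$), this says exactly that $\bc_{\K/\F}(\t')$ is the transfer of $\bc_{\K/\F}(\t)$ in $\Cc(\La'_\K,m',\b)$, which is the assertion of Proposition~\ref{AbbeC}.

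The only point requiring a little care — and the step I would regard as the main (modest) obstacle — is checking that the two notions of ``interior lift'' match up cleanly: the map $\boldsymbol{b}_{\K/\F}$ in paragraph~\ref{EhOui} is defined via $\qq{\t}^j\mapsto\t_\K^j$ with $j$ ranging over a one-element set, so one must confirm that the group $\H^{m+1}(\b,\overline{\Ga})$ appearing there is the same as the group $\H^{m+1}(\b,\Ga)$ appearing in Theorem~\ref{ResComWithTransAndAEFit}, i.e.\ that the interior-lift construction~(\ref{Dulilah}) applied to $[\qq{\La},n,m,\b]$ yields precisely the lattice sequence $\La_\K$ up to translation. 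This is immediate from the defining relation $\aa_k(\qq{\La})\cap\A_\K=\aa_k(\La_\K)$ and the normalization conventions, so no real difficulty arises; the bulk of the work has already been done in Theorem~\ref{ResComWithTransAndAEFit} itself.
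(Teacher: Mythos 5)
Your proposal is correct and follows the paper's own route: the paper also deduces Proposition \ref{AbbeC} as a special case of Theorem \ref{ResComWithTransAndAEFit}, applied to the $\K$-pure strata $[\qq{\La},n,m,\b]$ and $[\qq{\La}{}',n',m',\b]$ after noting that $\qq{\t}$ and $\qq{\t}{}'$ are transfers of each other. Your Step 1 merely spells out the details (transitivity of transfer, $\K$-purity when $\c=1$, identification of the interior lift of $\qq{\La}$ with $\La_\K$) that the paper leaves implicit, and these are all accurate.
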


Finally, we will need the following result. 
Note that $\Gal(\K/\F)$ acts naturally on $\AH$.

\begin{prop}
\label{GalInv}
Let $\t\in\Cc(\La_\K,m,\b)$ be a simple character.
For any $\s\in\Gal(\K/\F)$,
we have $\t\circ\s\in\Cc(\La_\K,m,\b)$.
\end{prop}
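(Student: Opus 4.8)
*Let $\t\in\Cc(\La_\K,m,\b)$ be a simple character. For any $\s\in\Gal(\K/\F)$, we have $\t\circ\s\in\Cc(\La_\K,m,\b)$.*

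The plan is to exhibit $\t\circ\s$ as the transfer of a simple character living in the same realization, so that it is automatically simple. First I would make precise the action of $\Gal(\K/\F)$ on $\AH=\A\otimes_\F\K$: an automorphism $\s$ acts as $\mathrm{id}_\A\otimes\s$, it fixes $\A$ (embedded as $\A\otimes 1$) pointwise, it fixes $\b\in\A$ since $\b=\b\otimes 1$, and it permutes the $\Oo_\K$-lattices of $\VL=\V\otimes_\F\K$. The key observation is that $\s$ stabilizes the lattice sequence $\La_\K$ up to translation: indeed $\La_\K$ is, by construction (paragraph \ref{EhOui}), the $\Oo_\K$-lattice sequence defined by $\qq\La=\La\oplus\dots\oplus\La$, and $\aa_k(\La_\K)=\aa_k(\La)\otimes_{\Oo_\F}\Oo_\K$ in the sense of \eqref{Ldescen2} (the hypothesis $\c=1$ is what makes $[\La_\K,n,m,\b]$ a genuine simple stratum, not just a quasi-simple one). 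Since $\s$ acts trivially on $\aa_k(\La)$ and only permutes the $\Oo_\K$-component, it carries $\aa_k(\La_\K)$ to itself, hence $\s$ normalizes $\La_\K$; in particular $\s(\AA(\La_\K))=\AA(\La_\K)$ and $\s(\PP(\La_\K))=\PP(\La_\K)$. Consequently $\s$ carries the compact open subgroup $\H^{m+1}(\b,\La_\K)$ to $\H^{m+1}(\s^{-1}(\b),\s^{-1}(\La_\K))=\H^{m+1}(\b,\La_\K)$ (using $\s(\b)=\b$ and that $\s$ fixes the affine class of $\La_\K$), so that $\t\circ\s$ is at least a well-defined character of the correct group.

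Next I would identify $\t\circ\s$. Because $\s$ fixes $\F$ and $\A$, conjugation by $\s$ on $\AH$ restricts to an $\F$-algebra automorphism of $\A$ — but in fact it is the identity on $\A$. The point is therefore not Skolem--Noether but rather the behaviour of the additive character: the simple characters in $\Cc(\La_\K,m,\b)$ are defined (paragraph \ref{EhOui}, via the interior lift of paragraph \ref{Larug}) with respect to the additive character $\psi_\K=\psi\circ\tr_{\K/\F}$ of $\K$, and this character is $\Gal(\K/\F)$-invariant because $\tr_{\K/\F}\circ\s=\tr_{\K/\F}$. One then shows, by induction on $\b$ exactly as in the inductive definition of simple characters, that $\s$-conjugation sends $\Cc(\La_\K,m,\b)$ bijectively to $\Cc(\La_\K,m,\s(\b))=\Cc(\La_\K,m,\b)$: in the minimal case this is the identity $\psi_\b^{\AH}\circ\s=\psi_{\s(\b)}^{\AH}=\psi_\b^{\AH}$, which holds since $\tr_{\AH/\K}$ commutes with $\s$ and $\psi_\K$ is $\s$-invariant; in the inductive step one uses that a simple stratum $[\La_\K,n,q,\g]$ equivalent to $[\La_\K,n,q,\b]$ has the property that $[\La_\K,n,q,\s(\g)]=[\La_\K,n,q,\g]$ is again equivalent to $[\La_\K,n,q,\b]$, and that $\s$ sends the defining conditions of Lemma \ref{RegrettableOmission} (the restriction to $\U_{m+1}(\La_\K)\cap\B_\K^\times$ factoring through $\N_{\B_\K/\E_\K}$, normalization by $\KK(\La_\K)\cap\B_\K^\times$, etc.) to the corresponding conditions for $\s(\b)$. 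Since $\s(\b)=\b$, these conditions are literally unchanged, and Lemma \ref{RegrettableOmission} then certifies that $\t\circ\s\in\Cc(\La_\K,m,\b)$.

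The one point requiring genuine care — and the step I would expect to be the main obstacle — is matching up the additive character and the characters $\boldsymbol\chi$ of $1+\p_\E$ appearing in Lemma \ref{RegrettableOmission} under $\s$. A priori one might worry that, even though the \emph{set} $\Cc(\La_\K,m,\b)$ is $\s$-stable, proving stability requires knowing $\s$-invariance of the specific additive character used in \emph{loc. cit.}; but since $\psi_\K=\psi\circ\tr_{\K/\F}$ and $\Gal(\K/\F)$ permutes the embeddings of $\K$ while preserving the trace form, $\psi_\K\circ\s=\psi_\K$, which is exactly what is needed. An alternative, cleaner route — which I would adopt if the direct induction becomes cumbersome — is to observe that $\s$-conjugation of the data $([\La_\K,n,m,\b],\t)$ and of the data $([\La_\K,n,m,\b],\t\circ\s)$ produces realizations of one and the same ps-character over $\F$ (because $\s$ fixes the realization over $\F(\b)$ itself, i.e. $\s$ acts trivially on $\Cc_\F(k,\b)$ via the transfer characterization of Proposition \ref{RapaNui} combined with $\s$-invariance of $\PSI_\K$), so $\t\circ\s$ is a transfer of $\t$ to the \emph{same} stratum, hence lies in $\Cc(\La_\K,m,\b)$; this bypasses the minimality induction entirely. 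Either way, the proof is short once the $\Gal(\K/\F)$-stability of $\La_\K$ and of $\psi_\K$ is recorded.
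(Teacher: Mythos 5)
Your proof is correct and follows essentially the same route as the paper: the paper's proof is precisely an induction on $\b$ showing that $\t\mapsto\t\circ\s$ carries $\Cc(\La_\K,m,\b)$ onto the set of simple characters attached to the $\s^{-1}$-image of the stratum with respect to $\psi_\K\circ\s$, and then invokes the $\s$-invariance of the stratum $[\La_\K,n,m,\b]$ (since $\s$ fixes $\b\in\A$ and stabilizes $\La_\K$) and of $\psi_\K=\psi\circ\tr_{\K/\F}$. Your explicit verification of these invariances, and the inductive step via an approximating stratum $[\La_\K,n,q,\g]$, is exactly the argument the authors leave to the reader.
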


\begin{proof}
One checks by induction on $\b$ that the image of $\Cc(\La_\K,m,\b)$ by 
$\t\mapsto\t\circ\s$ is the set of simple characters attached to the 
image of $[\La_\K,n,m,\b]$ by $\s^{-1}$ with respect to the additive 
character $\psi_{\K}\circ\s$. 
The result follows from the fact that this stratum and the additive 
character $\psi_\K$ are invariant by $\s$. 
\end{proof}

\subsection{}

We prove the following theorem, 
which generalizes \cite[Corollary 3.6.3]{BK}. 

\begin{theo}
\label{Pleurotes}
For $i=1,2$, let $(k_i,\b_i)$ be a simple pair over $\F$.
Let us fix two realizations 
$[\La,n,m,\b_{i}]$ and $[\La',n',m',\b_{i}]$ of $(k_i,\b_i)$.  
Assume $\Cc(\La,m,\b_{i})$ and $\Cc(\La',m',\b_{i})$ do not 
depend on $i$.
Then the transfer map
$\boldsymbol\tau_i:\Cc(\La,m,\b_{i})\to\Cc(\La',m',\b_{i})$
does not depend on $i$.
\end{theo}

\begin{proof}
The proof decomposes into three steps.
\begin{enumerate}
\item 
In the first step, we reduce to the case where the strata are all sound.  
For this, we fix an integer $l$ as in Proposition 
\ref{LongJohnSilverVarianteSound} which is large enough for $\La$ and 
$\La'$. 
Write $\boldsymbol{a}_i$ for the transfer map from $\Cc(\La,m,\b_i)$ to 
$\Cc(\La^{\ddag},m,\b_i)$. 
There is also a map $\boldsymbol{a}_i'$ for $\La'$. 
Thus we have a commutative diagram: 
\begin{equation*}
\diagram 
\Cc(\La^{\ddag},m,\b_i)\rto^{\boldsymbol\tau_i^{\ddag}}
&\Cc(\La^{\prime\ddag},m',\b_i)\\
\Cc(\La,m,\b_i)\rto_{\boldsymbol\tau_i}\uto^{\boldsymbol{a}_i}
&\Cc(\La',m',\b_i)\uto_{\boldsymbol{a}_i'}
\enddiagram
\end{equation*}
where $\tau_i^{\ddag}$ denotes the transfer map from 
$\Cc(\La^{\ddag},m,\b_{i})$ to $\Cc(\La^{\prime\ddag},m',\b_{i})$. 
By Proposition \ref{Dolomites}, the vertical maps $\boldsymbol{a}_i$ 
and $\boldsymbol{a}_i'$ do not depend on $i$, 
and Proposition \ref{Gr9199} implies that the sets 
$\Cc(\La^{\ddag},m,\b_{i})$ and $\Cc(\La^{\prime\ddag},m',\b_{i})$ 
do not depend on $i$.
Since $\boldsymbol{a}_i'$ is bijective, 
the equality $\boldsymbol\tau_1^{\ddag}=\boldsymbol\tau_2^{\ddag}$ 
implies that $\boldsymbol\tau_1=\boldsymbol\tau_2$.  
We thus may replace $\La$ by $\La^{\ddag}$ and $\La'$ by 
$\La ^{\prime\ddag}$ and assume that all the strata are sound.  
\item
We now assume that all the strata are sound, and we reduce to the 
case where the extensions $\F(\b_i)/\F$ are totally ramified.
By Proposition \ref{BrattleStreet}, for each $i$, the simple strata 
$[\La,n,m,\b_{i}]$ and $[\La',n',m',\b_{i}]$ have the same 
embedding type.  
Write $\K_i$ for the maximal unramified extension of $\F$ 
contained in $\F(\b_i)$, and fix $\t_i\in\Cc(\La,m,\b_{i})$. 
Assume that the characters $\t_1$ and $\t_2$ are equal.
Using the ``intertwining implies conjugacy'' theorem 
\cite[Corollary 10.15]{Gr}, one may assume that $\K_1=\K_2$, 
denoted $\K$.
Write $\boldsymbol{l}_i$ for the interior lifting map from 
$\Cc(\La,m,\b_i)$ to $\Cc(\Ga,m,\b_i)$. 
There is also a map $\boldsymbol{l}_i'$ for $\La'$.  
By Theorem \ref{ResComWithTransAndAEFit}, we have a commutative diagram: 
\begin{equation*}
\diagram 
\Cc(\Ga,m,\b_i)\rto^{\boldsymbol\tau_i^{\K}}&\Cc(\Ga^{\prime},m',\b_i)\\
\Cc(\La,m,\b_i)\rto_{\boldsymbol\tau_i}\uto^{\boldsymbol{l}_i}&\Cc(\La',m',\b_i)
\uto_{\boldsymbol{l}_i'}
\enddiagram
\end{equation*}
where $\boldsymbol\tau_i^{\K}$ denotes the transfer map from 
$\Cc(\Ga,m,\b_{i})$ to $\Cc(\Ga^{\prime},m',\b_{i})$. 
By Proposition \ref{MauiToho}, the vertical maps $\boldsymbol{l}_i$ 
and $\boldsymbol{l}_i'$ do not depend on $i$, and 
Theorem \ref{Paraclet} implies that the sets 
$\Cc(\Ga,m,\b_i)$ and $\Cc(\Ga',m',\b_i)$ do not depend on $i$.  
By the same argument as above, using that the map $\boldsymbol{l}_i'$
is injective (see Proposition \ref{PropInjEquivInt}), 
we may assume that $\F(\b_i)$ is totally ramified over $ \F$. 
\item
We now assume that $f_{\F}(\b_1)=f_{\F}(\b_2)=1$, 
and reduce to the split case.  
Let us fix a finite unramified extension $\L/\F$ such 
that the $\L$-algebras $\AL$ and $\AL'$ are split.  
Write $\boldsymbol{b}_i$ for the base change map from 
$\Cc(\La,m,\b_i)$ to $\Cc(\LaL,m,\b_i)$. 
There is also a map $\boldsymbol{b}_i'$ for $\La'$. 
By Proposition \ref{AbbeC}, we have a commutative diagram: 
\begin{equation*}
\diagram 
\Cc(\LaL,m,\b_i)\rto^{\overline{\boldsymbol\tau}_i}
&\Cc(\LaL^{\prime},m',\b_i)\\
\Cc(\La,m,\b_i)\rto_{\boldsymbol\tau_i}\uto^{\boldsymbol{b}_i}
&\Cc(\La',m',\b_i)\uto_{\boldsymbol{b}_i'}
\enddiagram
\end{equation*}
where $\overline{\boldsymbol\tau}_i$ denotes the transfer map from 
$\Cc(\LaL,m,\b_{i})$ to $\Cc(\LaL^{\prime},m',\b_{i})$. 
By Proposition \ref{Maui}, the maps $\boldsymbol{b}_i$ 
and $\boldsymbol{b}_i'$ do not depend on $i$.  
Thus \cite[Theorem 3.5.8]{BK} (the rigidity theorem for simple 
characters in the split case)
implies that the sets of simple characters 
$\Cc(\LaL,m,\b_i)$ and $\Cc(\LaL',m',\b_i)$ do not depend on $i$.  
By the same argument as above, using that the map $\boldsymbol{b}_i'$
is injective (see Proposition \ref{PropInjEquiv}), 
we may assume that $\A$ is split and $\La$ is strict. 
\end{enumerate}
The result then follows from \cite[Corollary 3.6.3]{BK}.
\end{proof}

%%%%%%%%%%%%%%%%%%%%%%%%%%%%%%%%%%%%%%%%%%%%%%%%%%%%%%%%%%%%%%%%%%%%%%%%%%%
%%%%%%%%%%%%%%%%%%%%%%%%%%%%%%%%%%%%%%%%%%%%%%%%%%%%%%%%%%%%%%%%%%%%%%%%%%%

\section{Endo-equivalence of simple characters}
\label{Sec8}

\subsection{}
\label{TRC}

In this paragraph, we prove Theorem \ref{Gata} in the totally ramified 
case. 
For $i=1,2$, let $(\Theta_{i},k,\b_i)$ be a ps-character over $\F$ 
with $f_{\F}(\b_{i})=1$, and suppose that $\Theta_{1}$ and $\Theta_{2}$
are endo-equivalent.
Let $\A$ be a simple central $\F$-algebra and let 
$[\La,n,m,\h_i(\b_i)]$ be realizations of 
$(k,\b_i)$ in $\A$, with $i=1,2$.
Write $\t_i$ for the simple character $\Theta_i(\La,m,\h_i)$. 
We have to prove that $\t_1$ and $\t_2$ are conjugate under $\KK(\La)$.

\medskip

For each $i$, we write $\E_i$ for the $\F$-algebra $\F(\b_i)$, 
which is a totally ramified finite extension of $\F$. 
By assumption, we have $[\E_1:\F]=[\E_2:\F]$.
Using 
Proposition \ref{Hengist6}, there exists a simple central $\F$-algebra 
$\A'$ together with sound real\-iza\-tions $[\La',n',m',\h'_i(\b^{}_i)]$ 
of $(k,\b_i)$, 
with $i=1,2$, such that $k$ divides $m'$ and $\t_1'=\t_2'$, where we
write $\t_i'=\Theta^{}_i(\La',m'_i,\h'_i)$. 

\medskip

Now let $\A$ be a simple central $\F$-algebra and 
$[\La,n,m,\h_i(\b_i)]$ be realizations of $(k,\b_i)$ in 
$\A$, for $i=1,2$.
Let $\V$ denote the simple left $\A$-module on which $\La$ is a lattice 
sequence and write $\D$ for the $\F$-algebra opposite to $\End_{\A}(\V)$.
Let us fix a finite unramified extension $\L$ of $\F$ such that the 
$\L$-algebra 
$\AL=\A\otimes_\F\L$ is split and a simple left $\AL$-module $\VL$.
As $\E_i$ is totally ramified over $\F$, 
the quasi-simple lift $[\overline{\La},n,m,\b_i]$ is a simple stratum in $\AL$
(see \cite[Th\'eor\`eme 2.30]{VS1} and \cite[Re\-mar\-que 2.9]{SeSt}).
We denote by $\Cc(\LaL,m,\b_i)$ the set of simple characters 
attached to this quasi-simple lift 
with respect to the character $\psi\circ\tr_{\L/\F}$.
The base change process developed in paragraph \ref{EhOui} gives 
rise to an injective and $\KK(\La)$-equivariant map:
\begin{equation*}
\boldsymbol{b}_{\L/\F}:\Cc(\La,m,\b_i)\to\Cc(\LaL,m,\b_i),
\end{equation*}
simply denoted $\boldsymbol{b}$.  
We use similar notations for $\A'$.
For each $i$, we write $\t_i$ for the simple character 
$\Theta_i(\La,m,\h_i)$. 
By Proposition \ref{Maui}, we have $\bc(\t'_1)=\bc(\t'_2)$.
By Proposition \ref{AbbeC},
for each $i$, the lifts $\bc(\t_i^{})$ 
and $\bc(\t'_i)$ are transfers of each others.
At this point, we cannot apply \cite{BH,BK} to deduce 
that $\bc(\t_1)$ and $\bc(\t_2)$ are $\KK(\LaL)$-conjugate, because the 
lattice sequence $\La$ is not necessarily strict. 

\medskip

Let us fix a simple right $\E_1\otimes_{\F}\D$-module $\SS$. 
We set $\A(\SS)=\End_{\D}(\SS)$, and denote by $\rho_1$ the natural 
$\F$-algebra homomorphism $\E_1\to\A(\SS)$.
Let $\Ss$ denote the unique (up to translation) $\E_1$-pure 
strict $\Oo_{\D}$-lattice sequence on $\SS$, and 
let us fix an $\F$-algebra homomorphism $\rho_2:\E_2\to\A(\SS)$ such 
that $\Ss$ is $\rho_2(\E_2)$-pure. 
Write $n_0$ for the $\Ss$-valuation of $\rho_i(\b_i)$ and:
\begin{equation*}
m_0=e_{\rho_i(\b_i)}(\Ss)k,
\end{equation*}
which do not depend on $i$. 
We thus can form the 
stratum $[\Ss,n_0,m_0,\rho_i(\b_i)]$, 
which is a realization of $(k,\b_i)$ in $\A(\SS)$.
Write $\vartheta_{i}$ for the simple character $\Theta_i(\Ss,m_0,\rho_i)$.
We now form the simple stratum $[\overline\Ss,n_0,m_0,\rho_i(\b_i)]$ 
in the split simple central $\L$-algebra $\A(\SS)\otimes_\F\L$. 
It is a realization of $(k,\b_i)$ over $\L$, and the $\Oo_{\L}$-lattice
sequence $\overline\Ss$ is strict.
We thus can apply \cite[Theorem 8.7]{BH} and \cite[Theorem 3.5.11]{BK}, 
which imply together that there exists $u\in\KK(\overline\Ss)$ such that:
\begin{equation*}
\bc(\vartheta_{2})(x)=\bc(\vartheta_{1})(uxu^{-1}),
\quad
x\in\H^{m+1}(\rho_2(\b_2),\overline\Ss)=
u^{-1}\H^{m+1}(\rho_1(\b_1),\overline\Ss)u.
\end{equation*}
We need the following lemma. 

\begin{lemm}
\label{CohoArg}
We may assume that $u\in\KK(\Ss)$.
\end{lemm}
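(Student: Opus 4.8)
The plan is to descend the conjugating element $u$ from the big split algebra $\A(\SS)\otimes_\F\L$ to the $\F$-algebra $\A(\SS)$ by a standard Galois cohomology argument. Here $\s\mapsto\s$ denotes the natural action of $\Gal(\L/\F)$ on $\AL(\SS)=\A(\SS)\otimes_\F\L$, fixing $\A(\SS)$ pointwise; the lattice sequence $\overline\Ss$ is obtained from $\Ss$ by $\aa_k(\overline\Ss)=\aa_k(\Ss)\otimes_{\Oo_\F}\Oo_\L$, so it is $\Gal(\L/\F)$-stable and $\KK(\Ss)=\KK(\overline\Ss)\cap\A(\SS)^{\times}$. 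First I would observe that, by Proposition \ref{GalInv}, for each $\s\in\Gal(\L/\F)$ the characters $\bc(\vartheta_i)\circ\s$ again lie in $\Cc(\overline\Ss,m_0,\b_i)$; but by construction $\bc(\vartheta_i)$ is the base change of a simple character $\vartheta_i$ of $\A(\SS)$, whose defining group and values are $\Gal(\L/\F)$-invariant, so in fact $\bc(\vartheta_i)\circ\s=\bc(\vartheta_i)$ for all $\s$. Likewise $\b_i\in\A(\SS)$ is fixed by $\Gal(\L/\F)$, so $\H^{m_0+1}(\rho_i(\b_i),\overline\Ss)$ is $\Gal(\L/\F)$-stable.

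Applying $\s$ to the intertwining identity $\bc(\vartheta_2)(x)=\bc(\vartheta_1)(uxu^{-1})$ and using these invariances, one finds that $\s(u)$ also conjugates $\bc(\vartheta_1)$ to $\bc(\vartheta_2)$ in the same way; hence $u^{-1}\s(u)$ normalizes $\overline\Ss$ and centralizes $\bc(\vartheta_1)$ on $\H^{m_0+1}(\rho_1(\b_1),\overline\Ss)$. Write $\overline\B_1$ for the centralizer of $\rho_1(\b_1)$ (equivalently of $\rho_1(\E_1)$, since $\E_1$ is totally ramified and generated by $\b_1$) in $\AL(\SS)$. Using the ``intertwining implies conjugacy'' information already in hand together with the description of the normalizer of a simple character — $\KK(\overline\Ss)$ meeting the stabilizer of $\bc(\vartheta_1)$ equals $(\KK(\overline\Ss)\cap\overline\B_1^{\times})\cdot\U^1(\overline\Ss)$, with the $\U^1$-part absorbed into $\bc(\vartheta_1)$'s stabilizer — one checks that we may adjust $u$ within the coset $u\,(\KK(\overline\Ss)\cap\stab(\bc(\vartheta_1)))$ so that the cocycle $\s\mapsto u^{-1}\s(u)$ takes values in $\U(\overline\CC_1\cap\overline\B_1)$, where $\overline\CC_1=\AA(\overline\Ss)\cap\overline\B_1$ is a hereditary order in $\overline\B_1$. (This is exactly the filtration-of-orders situation used in the proof of Lemma \ref{TotRam}.)

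The cocycle $\s\mapsto u^{-1}\s(u)$ then defines a class in $\H^1(\Gal(\L/\F),\U(\overline\CC_1\cap\overline\B_1))$, and this cohomology set is trivial by the standard filtration argument: the successive quotients of the filtration $\U^k(\overline\CC_1\cap\overline\B_1)$ are cohomologically trivial, the top quotient being a vector group or a product of $\GL$'s over residue fields (for which $\H^1$ of a finite cyclic group vanishes by Lang's theorem / Hilbert 90 and additive vanishing), cf. \cite[\S6]{BG}. Therefore we may write $u^{-1}\s(u)=c^{-1}\s(c)$ for some $c\in\U(\overline\CC_1\cap\overline\B_1)$; replacing $u$ by $uc^{-1}$, which does not change the intertwining relation (since $c$ lies in $\U(\overline\CC_1)\subseteq\U(\overline\Ss)$ and centralizes $\rho_1(\b_1)$, hence centralizes $\bc(\vartheta_1)$), we obtain a new $u$ fixed by every $\s\in\Gal(\L/\F)$, \ie $u\in\A(\SS)^{\times}$. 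Since $u$ still normalizes $\overline\Ss$, it normalizes $\overline\Ss\cap\A(\SS)$-translates, that is $u\in\KK(\overline\Ss)\cap\A(\SS)^{\times}=\KK(\Ss)$, as required. The main obstacle is the middle step: pinning down the precise coset in which $u$ can be moved so that the cocycle genuinely lands in $\U(\overline\CC_1\cap\overline\B_1)$ rather than in the larger group $\KK(\overline\Ss)\cap\stab(\bc(\vartheta_1))$ — this requires knowing that the ambiguity in $u$ is exactly controlled by the centralizer-order units, which in turn rests on the structure of the normalizer of a simple character (Proposition \ref{Tauride} and the transfer/interior-lifting machinery) applied in the split algebra $\AL(\SS)$ where $\overline\Ss$ is strict, so that \cite{BK} is available.
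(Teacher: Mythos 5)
Your overall strategy is the paper's: form the cocycle $\s\mapsto u^{-1}\s(u)$ on $\Gal(\L/\F)$, check that it takes values in a compact subgroup normalizing the lifted simple character, kill its class by a filtration/Lang argument, and untwist. But the execution has two concrete problems. First, an index swap: from $\bc(\vartheta_{2})(x)=\bc(\vartheta_{1})(uxu^{-1})$ and the Galois-invariance you correctly establish, the element $u^{-1}\s(u)$ normalizes $\bc(\vartheta_{2})$, not $\bc(\vartheta_{1})$; correspondingly, for the replacement $u\mapsto uc^{-1}$ to preserve the intertwining identity you need $c$ to normalize $\bc(\vartheta_{2})$, since $\bc(\vartheta_{1})(uc^{-1}xc\,u^{-1})=\bc(\vartheta_{2})(c^{-1}xc)$. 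Your justification (``$c$ centralizes $\rho_1(\b_1)$, hence centralizes $\bc(\vartheta_1)$'') therefore addresses the wrong character; this is repairable by exchanging the roles of $1$ and $2$ throughout, but as written the key verification fails.

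Second, and more seriously, the step you yourself flag as ``the main obstacle'' --- moving $u$ within a coset so that the cocycle lands in $\U(\overline{\CC}_1\cap\overline{\B}_1)$ --- is left unjustified and rests on an incorrect description of the stabilizer: the $\U(\overline\Ss)$-normalizer of a simple character attached to $[\overline\Ss,n_0,m_0,\rho_2(\b_2)]$ is $\J(\rho_2(\b_2),\overline\Ss)=\U(\AA(\overline\Ss)\cap\overline{\B}_2)\,\J^1(\rho_2(\b_2),\overline\Ss)$ by Bushnell--Kutzko, not $(\KK(\overline\Ss)\cap\overline{\B}^{\times})\cdot\U^1(\overline\Ss)$; in particular $\U^1(\overline\Ss)$ does not normalize the character, so the ``$\U^1$-part absorbed'' reduction does not go through. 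The paper avoids this reduction entirely: it takes the coefficient group to be the full normalizer $\J(\rho_2(\b_2),\overline\Ss)$, reduces the vanishing of $\H^1(\Gal(\L/\F),\J(\rho_2(\b_2),\overline\Ss))$ to that of $\H^1$ of the quotient $\J/\J^1$ by \cite[Proposition 2.39]{VS1} (the pro-$p$ part being handled there), and the quotient is a product of general linear groups over finite residue fields, whose $\H^1$ vanishes by the standard filtration/Lang argument of \cite[\S6]{BG}. If you run your cohomological descent directly with $\J(\rho_2(\b_2),\overline\Ss)$ as coefficients, the coset adjustment you could not pin down becomes unnecessary and the proof closes.
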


\begin{proof}
By Proposition \ref{GalInv}, 
the map $\s\mapsto u^{-1}\s(u)$ is a $1$-cocycle 
on $\Gal(\L/\F)$ with values 
in the $\U(\overline\Ss)$-normalizer of $\bc(\vartheta_2)$, 
which is equal to $\J(\rho_2(\b_2),\overline\Ss)$ according 
to \cite{BK}. 
This cocycle defines a class in the cohomology set:
\begin{equation*}
\H^1(\Gal(\L/\F),\J(\rho_2(\b_2),\overline\Ss)).
\end{equation*}
We claim this cohomology set is trivial. 
According to \cite[Proposition 2.39]{VS1}, it is enough to 
prove that:
\begin{equation*}
\H^1(\Gal(\L/\F),\J(\rho_2(\b_2),\overline\Ss)/\J^1(\rho_2(\b_2),\overline\Ss))
\end{equation*}
is trivial, which is given by a standard filtration argument
(see \cite[\S6]{BG}).
\end{proof}

Using Proposition \ref{PropInjEquiv}, we thus may replace $\rho_2$ by a 
$\KK(\Ss)$-conjugate and assume that the characters 
$\vartheta_{1}$ and $\vartheta_{2}$ are equal. 
We now fix a decomposition:
\begin{equation*}
\V=\V^{1}\oplus\cdots\oplus\V^{l}
\end{equation*}
of $\V$ into simple right $\E_{1}\otimes_{\F}\D$-modules (which all are 
copies of $\SS$) such that the lattice sequence $\La$ 
decomposes into the direct sum of the 
$\La^{j}=\La\cap\V^{j}$, for $j\in\{1,\dots,l\}$.
By choosing, for each $j$, an iso\-mor\-phism of 
$\K(\b)\otimes_{\F}\D$-modules between $\SS$ and $\V^{j}$, 
this gives us an $\F$-algebra homomorphism:
\begin{equation*}
\iota:\A(\SS)\to\A. 
\end{equation*}
Using Lemma \ref{Merimee2}, we may assume that $\iota\circ\rho_1=\h_1$, 
and, by Lemma \ref{TotRam}, on may replace $\h_2$ by a $\KK(\La)$-conjugate 
and assume that $\iota\circ\rho_2=\h_2$.
We now remark that, for each $i$, the map $\vartheta_i\mapsto\t_i$ 
corresponds to the process described in paragraph \ref{Miskatonic}. 
The equality $\t_1=\t_2$ thus follows from Proposition \ref{Dolomites}.

\subsection{}
\label{sectionEESC}
\label{Livesey}

In this paragraph, we reduce the proof of Theorem \ref{Gata}
to the totally ra\-mi\-fied case, which has been treated in paragraph 
\ref{TRC}. 
For $i=1,2$, let $(\Theta_{i},k,\b_{i})$ be a ps-character over $\F$, 
set $\E_i=\F(\b_i)$ and write $\K_i$ for the maximal unramified 
extension of $\F$ contained in $\E_i$, 
and suppose that $\Theta_{1}\thickapprox\Theta_{2}$.
Then we have $[\E_1:\F]=[\E_2:\F]$ and, using 
Proposition \ref{Hengist6}, there is a simple central $\F$-algebra 
$\A$ together with realizations $[\La,n,m,\h_i(\b_i)]$ of $(k,\b_i)$, 
with $i=1,2$, which are sound and have the same embedding type, with 
$k$ dividing $m$ and such that $\h_1(\K_1)=\h_2(\K_2)$, denoted $\K$, 
and $\t_1=\t_2$, where $\t_i=\Theta_i(\La,m_i,\h_i)$.
Let $\C$ denote the centralizer of $\K$ in $\A$ 
and write $[\Ga,n,m,\b_i]$ for the stratum 
in $\C$ associated with $[\La,n,m,\b_i]$ by (\ref{Dulilah}).  
By Proposition \ref{MauiToho}, the $\K/\F$-lifts 
$\t_1^{\K}$ and $\t_2^{\K}$ are equal.

\medskip

Now let $\A'$ be a simple central $\F$-algebra and 
$[\La',n',m',\h'_i(\b^{}_i)]$ be realizations of 
$(k,\b_i)$ in $\A$, with $i=1,2$,
having the same embedding type.
By Remark \ref{Pisot}, we may conjugate $\h'_2$ by $\KK(\La')$ 
and assume that the maximal 
un\-ramified extensions of $\F$ contained in $\h'_1(\E^{}_1)$ and 
$\h'_2(\E^{}_2)$ 
are equal to a common extension $\K'$ of $\F$, say. 
Moreover, by Lemma \ref{ConjPiD}, 
we may conjugate again $\h'_2$ by $\KK(\La')$ 
and assume that the $\F$-algebra isomorphisms 
$\h'_1\circ\h^{-1}_1$ and $\h'_2\circ\h^{-1}_2$ agree on $\K$
(and thus identify $\K$ and $\K'$).
Let $\C'$ denote the centralizer of $\K'$ in $\A'$ and write 
$[\Ga',n',m',\h'_i(\b^{}_i)]$ for the stratum in $\C$ 
associated with $[\La',n',m',\h'_i(\b^{}_i)]$ 
by (\ref{Dulilah}). 
Thus the simple strata $[\Ga,n,m,\h_i(\b_i)]$ and 
$[\Ga',n',m',\h'_i(\b^{}_i)]$ are realizations 
of the same simple pair over $\K$. 
For each $i$, we write $\t'_{i}$ for the character 
$\Theta^{}_i(\La',m',\h'_i)$.
By Theorem \ref{ResComWithTransAndAEFit}, for each $i$, 
the $\K/\F$-lifts $\t_i^{\K}$ and $\t_i^{\prime\K}$ are 
transfers of each others.
Therefore, by paragraph \ref{TRC}, 
there exists $u\in\KK(\Ga')$ such that:
\begin{equation*}
\t_2^{\prime\K}(x)=\t_1^{\prime\K}(uxu^{-1}), 
\quad
x\in\H^{m+1}(\h'_2(\b^{}_2),\Ga')=u^{-1}\H^{m+1}(\h'_1(\b^{}_1),\Ga')u.
\end{equation*}
The equality ${\t'}_1^u=\t'_2$ follows from Proposition \ref{PropInjEquivInt}.

\begin{coro}
\label{Vendries2}
Definition \ref{Petrone2} is equivalent to \cite[Definition 8.6]{BH}.
\end{coro}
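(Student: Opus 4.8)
The plan is to show the two definitions of endo-equivalence of simple characters coincide, using the same strategy already carried out for simple pairs in Corollary \ref{Epaminondas} and for the internal reductions in paragraphs \ref{MainStreet2}--\ref{Sec8}. Recall that \cite[Definition 8.6]{BH} is the special case of Definition \ref{Petrone2} in which one insists that the common simple central $\F$-algebra $\A$ be split \emph{and} that the lattice sequence $\La$ be strict (equivalently, that the realizations be in the split algebra $\End_\F(\V)$ with $\La$ strict). Since every instance of the \cite{BH} situation is trivially an instance of Definition \ref{Petrone2}, only the forward implication requires argument: given endo-equivalent ps-characters $\Theta_1,\Theta_2$ in the sense of Definition \ref{Petrone2}, produce realizations in a split algebra with a strict lattice sequence whose associated simple characters still intertwine.

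First I would start from realizations $[\La,n_i,m_i,\h_i(\b_i)]$ of $(k,\b_i)$ in a simple central $\F$-algebra $\A$ whose simple characters $\t_i=\Theta_i(\La,m_i,\h_i)$ intertwine in $\mult\A$. Applying Proposition \ref{Hengist}, I replace these by sound realizations $[\La^\ddag,n_i,m_i,\h_i(\b_i)]$ in $\A^\ddag$ with the same embedding type, whose transfers $\t_i^\ddag$ still intertwine in $\A^{\ddag\times}$. This puts us in a position to invoke the full machinery already developed: Theorem \ref{Gata} (via Corollary \ref{Psaume}, hence Theorem \ref{EndoClasChar}) then shows that any two realizations of $\Theta_1,\Theta_2$ in any common simple central $\F$-algebra have intertwining simple characters. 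In particular, taking the realizations $[\widetilde\La^\ddag,n_i,m_i,\h_i(\b_i)]$ in the split algebra $\widetilde{\A^\ddag}=\End_\F(\V^\ddag)$ — note that $\La^\ddag$, being the lattice sequence of \eqref{Francisque}, is strict, so $\widetilde\La^\ddag$ is strict too — and using that the transfer commutes with passage to $\widetilde{\phantom{A}}$ (Proposition \ref{LongJohnSilverGeneralCarSimVariante}, together with \cite[Th\'eor\`eme 2.23]{SeSt} which says the simple strata $[\widetilde\La^\ddag,n_i,m_i,\b_i]$ are realizations of the same simple pairs), I obtain simple characters in a split simple central $\F$-algebra attached to a strict lattice sequence which intertwine. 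This exhibits $\Theta_1$ and $\Theta_2$ as endo-equivalent in the sense of \cite[Definition 8.6]{BH}.

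The main subtlety — and the reason this corollary is placed at the very end of the paper rather than early on — is that the forward implication genuinely requires Theorem \ref{EndoClasChar} (preservation of intertwining under transfer): without it one cannot move the intertwining relation from the original realizations to split, strict ones. So the proof is essentially a bookkeeping argument once Theorems \ref{EndoClasChar} and \ref{Gata} are in hand; the only points to check carefully are that the transfer maps compose correctly along the chain $\La \rightsquigarrow \La^\ddag \rightsquigarrow \widetilde\La^\ddag$ and that $\widetilde\La^\ddag$ is strict. Concretely I would write: assume $\Theta_1\thickapprox\Theta_2$ in the sense of Definition \ref{Petrone2}; fix any realizations $[\La',n'_i,m'_i,\h'_i(\b_i)]$ of $(k,\b_i)$ in a split simple central $\F$-algebra $\A'$ with $\La'$ strict (these exist, e.g.\ by realizing the simple pairs in $\A(\E_i)^{\dag}$-type constructions); by Theorem \ref{EndoClasChar} the transfers $\Theta_i(\La',m'_i,\h'_i)$ intertwine in $\A^{\prime\times}$; hence $\Theta_1,\Theta_2$ are endo-equivalent per \cite[Definition 8.6]{BH}. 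The converse is immediate since a split algebra with a strict lattice sequence is a special case of the data allowed in Definition \ref{Petrone2}.
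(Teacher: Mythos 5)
Your argument is correct and is essentially the paper's own proof: given any realizations in a split simple central $\F$-algebra with a strict lattice sequence, Theorem \ref{EndoClasChar} shows the associated simple characters intertwine, which is exactly endo-equivalence in the sense of \cite[Definition 8.6]{BH}, and the converse is immediate since that setting is a special case of Definition \ref{Petrone2}. The intermediate detour through Proposition \ref{Hengist} and $\La^\ddag$, $\widetilde\La^\ddag$ is harmless but unnecessary, as that machinery is already subsumed in Theorem \ref{EndoClasChar}.
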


\begin{proof}
Assume we are given two ps-characters $(\Theta_i,k,\b_i)$, $i=1,2$, 
which are endo-equivalent in the sense of Definition \ref{Petrone2},
and let $\A$ be a simple central split $\F$-algebra to\-gether with 
realizations $[\La,n_i,m_i,\h_i(\b_i)]$ of $(k,\b_i)$ in $\A$, with $i=1,2$, 
such that $\La$ is strict.
By Theorem \ref{EndoClasChar}, the simple characters 
$\Theta_i(\La,m_i,\h_i)$ intertwine in $\mult\A$, that is, 
the ps-characters $(\Theta_i,k,\b_i)$ are endo-equivalent 
in the sense of \cite[Definition 8.6]{BH}. 
Conversely, two simple pairs which are endo-equivalent in this 
sense are clearly endo-equivalent in the sense of Definition 
\ref{Petrone2}. 
\end{proof}

\begin{coro}
The relation $\thickapprox$ on ps-characters is an equivalence relation.
\end{coro}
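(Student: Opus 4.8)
The plan is to verify reflexivity, symmetry and transitivity of $\thickapprox$ in turn. Reflexivity and symmetry are purely formal: given a ps-character $(\Theta,k,\b)$, any realization $[\La,n,m,\h(\b)]$ furnishes the simple character $\Theta(\La,m,\h)$, which intertwines itself in $\mult\A$ by taking $g=1$ in (\ref{Eaque}), and together with the trivial equalities $k=k$ and $[\F(\b):\F]=[\F(\b):\F]$ this gives $\Theta\thickapprox\Theta$; symmetry holds because the conditions of Definition \ref{Petrone2} are symmetric in the indices $1,2$, since $g$ intertwines $\t_1$ and $\t_2$ if and only if $g^{-1}$ intertwines $\t_2$ and $\t_1$. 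So the content of the statement is transitivity.

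For transitivity I would start from $\Theta_1\thickapprox\Theta_2$ and $\Theta_2\thickapprox\Theta_3$, where $\Theta_i$ is attached to the simple pair $(k_i,\b_i)$. First, extract from Definition \ref{Petrone2} that $k_1=k_2=k_3=:k$ and $[\F(\b_1):\F]=[\F(\b_2):\F]=[\F(\b_3):\F]$, and then apply Lemma \ref{Hectare1} to each of the two endo-equivalences to conclude that $n_{\F}(\b_i)$, $e_{\F}(\b_i)$, $f_{\F}(\b_i)$ and $k_{\F}(\b_i)$ do not depend on $i\in\{1,2,3\}$. Next I would choose a single split simple central $\F$-algebra $\A$ and a single \emph{strict} $\Oo_{\F}$-lattice sequence $\La$, together with realizations $[\La,n,m,\h_i(\b_i)]$ of $(k,\b_i)$ in $\A$ for $i=1,2,3$ sharing the same $n$ and $m$; this is possible precisely because, by the previous step, the three extensions $\F(\b_i)/\F$ have the same degree, ramification index and residue degree, so that the periods $e_{\h_i(\b_i)}(\La)$ and the integer $n=e_{\h_i(\b_i)}(\La)n_{\F}(\b_i)$ are independent of $i$, and one may select a single $m$ with $\lfloor m/e_{\h_i(\b_i)}(\La)\rfloor=k$ for every $i$. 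Writing $\t_i=\Theta_i(\La,m,\h_i)$, Theorem \ref{EndoClasChar} applied to the two endo-equivalences shows that $\t_1$ intertwines $\t_2$ and that $\t_2$ intertwines $\t_3$ in $\mult\A$.

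The step that actually needs the heavy machinery is passing from these two intertwinings to a single intertwining of $\t_1$ with $\t_3$, since intertwining of simple characters is not transitive in general. Here I would use that $\A$ is split and $\La$ is strict: the division algebra $\D$ attached to $\A$ is then $\F$, so $\F(\b_i)^{\flit}=\F$ for every $i$, whence the strata $[\La,n,m,\h_i(\b_i)]$ automatically have the same embedding type, and Theorem \ref{TrelawneyIIC} (equivalently \cite[Theorem 3.5.11]{BK}) upgrades each intertwining to an honest conjugacy: there are $u,v\in\KK(\La)$ with $\t_2(x)=\t_1(uxu^{-1})$ on $\H^{m+1}(\h_2(\b_2),\La)=u^{-1}\H^{m+1}(\h_1(\b_1),\La)u$ and $\t_3(x)=\t_2(vxv^{-1})$ on $\H^{m+1}(\h_3(\b_3),\La)=v^{-1}\H^{m+1}(\h_2(\b_2),\La)v$. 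Composing gives $\t_3(x)=\t_1((uv)x(uv)^{-1})$ on $\H^{m+1}(\h_3(\b_3),\La)=(uv)^{-1}\H^{m+1}(\h_1(\b_1),\La)(uv)$ with $uv\in\KK(\La)$, so $g=uv\in\mult\A$ intertwines $\t_1$ and $\t_3$ in the sense of (\ref{Eaque}); since $[\La,n,m,\h_1(\b_1)]$ and $[\La,n,m,\h_3(\b_3)]$ are realizations of $(k,\b_1)$ and $(k,\b_3)$ in the common algebra $\A$, this is exactly the condition of Definition \ref{Petrone2}, giving $\Theta_1\thickapprox\Theta_3$.

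The main obstacle is the bookkeeping in the middle step — realizing all three ps-characters simultaneously over one common split $\A$ with one common strict $\La$ and common integers $n,m$ — and Lemma \ref{Hectare1} is exactly what supplies the numerical compatibilities needed to do so. An alternative and shorter route is to invoke Corollary \ref{Vendries2}, which identifies $\thickapprox$ with the relation of \cite[Definition 8.6]{BH}, an equivalence relation by \cite{BH}; I would nevertheless keep the direct argument above as the primary one, since it relies only on results established in this article.
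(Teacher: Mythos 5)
Your proof is correct, but it follows a genuinely different route from the paper, whose entire proof is one line: it invokes Corollary \ref{Vendries2} to identify $\thickapprox$ with the relation of \cite[Definition 8.6]{BH} and then quotes \cite[Corollary 8.10]{BH}, which says that relation is an equivalence relation — exactly the ``alternative and shorter route'' you mention at the end. Your primary argument instead proves transitivity directly inside the framework of this article: reflexivity and symmetry are formal as you say, and for transitivity you use Lemma \ref{Hectare1} to equalize the numerical invariants, realize all three ps-characters on a common split algebra with a common strict lattice sequence and common $n,m$, apply Theorem \ref{EndoClasChar} twice, and then use the triviality of embedding type in the split case ($\F(\b_i)^{\flit}=\F$ since $\D=\F$) so that ``intertwining implies conjugacy'' (\cite[Theorem 3.5.11]{BK}, or Theorem \ref{TrelawneyIIC}) converts both intertwinings into conjugations, which compose. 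This is sound, and everything you use (Lemma \ref{Hectare1}, Theorems \ref{EndoClasChar} and \ref{TrelawneyIIC}) is established before the corollary, so there is no circularity; what your route buys is independence from \cite[Corollary 8.10]{BH}, at the cost of the bookkeeping step, which deserves one more explicit sentence: since $e_\F(\b_i)$ and $f_\F(\b_i)$ are independent of $i$, each $\F(\b_i)$ embeds in $\Mat_N(\F)$ ($N=[\F(\b_i):\F]$) normalizing a lattice chain of period $e_\F(\b_i)$ with constant jumps, and any two such chains are $\GL_N(\F)$-conjugate, so after conjugating the $\h_i$ one may take the chain, hence $n$ and $m$, common to all three. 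The paper's proof is shorter but leans on the dictionary with \cite{BH}; yours is self-contained modulo \cite{BK} in the split strict case.
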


\begin{proof}
This comes from \cite[Corollary 8.10]{BH} together with Corollary 
\ref{Vendries2}.
\end{proof}

\section{The endo-class of a discrete series representation}

\subsection{}
\label{data}

Let $\A$ be a simple central $\F$-algebra, and let $\V$ be 
a simple left $\A$-module.
Associated with it, there is an $\F$-division algebra $\D$.
We write $d$ for the reduced degree of $\D$ over $\F$ and
$m$ for the dimension of $\V$ as a right $\D$-vector space. 
We set $\G=\mult\A$, identified with $\GL_{m}(\D)$.

Let $\pi$ be an irreducible smooth representation of $\G$, 
and assume that its inertial class (in the sense of Bushnell 
and Kutzko's theory of types \cite{BK2}), denoted $\ss(\pi)$,
is homogeneous.
Thus there is a positive integer $r$ dividing $m$, 
an irreducible cuspidal representation $\rho$ of the
group $\G_0=\GL_{m/r}(\D)$ and unramified characters $\chi_i$ of 
$\G_0$, with $i\in\{1,\dots,r\}$, such that $\pi$ is isomorphic
to a quotient of the normalized parabolically induced 
representation $\rho\chi_1\times\dots\times\rho\chi_r$
(see for instance \cite{BHLS} for the notation).

In this section, we associate with $\pi$ an endo-class 
$\boldsymbol\Theta(\pi)$ over $\F$, and show that it depends 
only on the inertial class $\ss=\ss(\pi)$. 

\subsection{}
\label{daga}

Let $\pi$ be a representation of $\G$ as above, and write $\ss=\ss(\pi)$ 
for its inertial class.
According to \cite[Th\'eor\`eme 5.23]{SeSt},
this inertial class possesses a type in the sense of \cite{BK2}.
Such a type is a pair $(\J,\l)$ formed of a compact open 
subgroup $\J$ of $\G$ and of an irreducible smooth representation 
$\lambda$ of $\J$ such that an irreducible smooth representation
of $\G$ has inertial class $\ss$ if and only if $\l$ occurs in its 
restriction to $\J$.
More precisely, $(\J,\l)$ can be chosen to be a {\it simple type} 
in the sense of \cite{VS3}. 
We won't give a precise description of simple types;
the only property of interest for us is the following fact, which 
is a weak form of \cite[Th\'eor\`eme 5.23]{SeSt}. 

\begin{enonce}{Fact}
\label{Blig} 
There is a simple stratum $[\AA,n,0,\b]$ in $\A$ together with 
a simple character $\t\in\Cc(\AA,0,\b)$ such that the order
$\AA\cap\B$ (with $\B$ the centralizer of $\F(\b)$ in $\A$)
is principal of period $r$ and the character $\t$ occurs
in the restriction of $\pi$ to $\H^1(\b,\AA)$.
\end{enonce}

Neither $[\AA,n,0,\b]$ 
nor the character $\t$ are uniquely determined. 
We let $(\Theta,0,\b)$ be the ps-character 
defined by the pair $([\AA,n,0,\b],\t)$ and 
we denote by ${\boldsymbol\Theta}$ its endo-class. 

\begin{theo}
\label{Endo-Class}
The endo-class $\boldsymbol\Theta$ depends only on the 
inertial class $\ss$. 
\end{theo}

\begin{proof}
We have to prove that $\boldsymbol\Theta$ does not depend 
on the choice of the simple stratum $[\AA,n,0,\b]$ 
and the simple character $\t$ satisfying the conditions of 
Fact \ref{Blig}. 
For $i=1,2$, let $[\AA_i,n_i,0,\b_i]$ be a simple stratum 
and $\t_i$ be a simple character satisfying the 
conditions of Fact \ref{Blig}, and let $(\Theta_i,0,\beta_i)$ 
denote the ps-character that it defines.
Let $\AA_i'$ denote the unique principal $\Oo_\F$-order in $\A$
such that the pair $(\E^{}_i,\AA_i')$ is a sound embedding in 
$\A$ (see Lem\-ma \ref{PerRinc}) and let $\t_i'$ denote the 
transfer of $\t_i$ in $\Cc(\AA_i',0,\b^{}_i)$. 
By a standard argument using \cite[Th\'eor\`eme 2.13]{SeSt}, the 
character $\t_i'$ occurs in the restriction of 
$\pi$ to $\H^1(\b^{}_i,\AA_i')$.
Therefore, we can assume without changing $\Theta_i$ 
that $(\E_i,\AA_i)$ is sound. 
By Lemma \ref{PerRinc} again, the principal orders $\AA_1$ 
and $\AA_2$ have the same period (as $\AA_i\cap\B_i$ has period $r$).
Thus one may conjugate $([\AA_1,n_1,0,\b_1],\t_1)$ by an 
element of $\G$ and assume that $\AA_1=\AA_2$, denoted $\AA$. 
For each $i$, we have $\t_i\in\Cc(\AA,0,\b_i)$ and $\t_i$ occurs 
in the restriction of $\pi$ to the subgroup $\H^1(\b_i,\AA)$.
Thus the characters $\t_1$ and $\t_2$ intertwine in $\mult\A$. 
To prove that $\Theta_1$ and $\Theta_2$ are endo-equivalent, 
it remains to prove that $\F(\b_1)$
and $\F(\b_2)$ have the same degree over $\F$.
By copying the beginning of the proof of Lemma \ref{Hectare1},
we get $n_1=n_2$.
We now write $f$ for the greatest common divisor of $f_{\F}(\b_1)$ and 
$f_{\F}(\b_2)$ and $\K_i$ for the maximal un\-ramified extension of $\F$ 
contained in $\F(\b_i)$.
Then Theorem \ref{Grabinoulor} gives us the expected equality.
\end{proof}

We call the class $\boldsymbol\Theta$ 
{\it the endo-class} of $\pi$ (or of $\ss$). 
We have actually obtained more.

\begin{theo}
Let $\pi$ be an irreducible representation with inertial class $\ss$
as above, and let $[\AA,n,0,\b]$ and $\t$ satisfy the conditions of 
Fact \ref{Blig}.
Assume moreover $[\AA,n,0,\b]$ is sound. 
The following objects are invariants of the inertial class $\ss$:
\begin{enumerate}
\item 
the ramification index $e_\F(\b)$ and the residue class degree
$f_F(\b)$; 
\item
the $\G$-conjugacy class of the order $\AA$;
\item
the embedding type of $(\F(\b),\AA)$.
\end{enumerate}
\end{theo}

\begin{proof}
Assertions (1) and (2) have already been proved. 
Assertion (3) follows immediately from Lemma \ref{Boubinette}.
\end{proof}

\subsection{} 

Recall that an irreducible smooth representation $\pi$ of $\G$ is 
{\it essentially squa\-re integrable} if there is a
character $\chi$ of $\G$ such that $\pi\chi$ is unitary and has 
a non-zero coefficient which is square integrable on $\G/\Z$,
where $\Z$ denotes the centre of $\G$.
We write $\EuScript{D}(\G)$ for the set of isomorphism classes of 
essentially squa\-re integrable representation of $\G$.
Ac\-cor\-ding to \cite[\S2.2]{BHLS}, any essentially squa\-re integrable 
representation of $\G$ has an inertial class which is homogeneous 
in the sense of paragraph \ref{data}.
Thus the construction of paragraph \ref{daga} gives us a map:
\begin{equation}
\boldsymbol\Theta_\G:\EuScript{D}(\G)\to\Ee(\F)
\end{equation}
from $\EuScript{D}(\G)$ to the set of endo-classes
of ps-characters over $\F$.

\medskip

We now write $\H=\GL_{md}(\F)$, and let $\JL$ denote 
the Jacquet-Langlands correspondence (see \cite{Ba,DKV})
from $\EuScript{D}(\G)$ to $\EuScript{D}(\H)$.
We have the following conjecture. 

\begin{conj}
\label{EndoClassJL1}
For any $\pi$ in $\EuScript{D}(G)$, we have:
\begin{equation}
\label{EndoClassJL}
\Theta_{\H}(\JL(\pi))=\Theta_{\G}(\pi).
\end{equation}
\end{conj}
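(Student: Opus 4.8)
The plan is to reduce Conjecture~\ref{EndoClassJL1}, in stages, from essentially square-integrable representations to cuspidal ones, then to a compatibility of simple characters with $\JL$ phrased through the transfer map, and finally to a local character computation. \emph{First, reduce to the cuspidal case.} By the construction of paragraph~\ref{daga}, $\boldsymbol\Theta_\G(\pi)$ is the endo-class of any simple character occurring in $\pi$; when $\ss(\pi)$ is homogeneous, generated by an irreducible cuspidal representation $\rho$ of $\GL_{m/r}(\D)$, a simple type for $\pi$ in the sense of \cite{VS3} lies above a maximal simple type for $\rho$ containing, up to conjugacy, the same simple character, so that $\boldsymbol\Theta_\G(\pi)$ equals the endo-class of $\rho$. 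On the split side, $\JL$ respects the combinatorial structure of the discrete series (\cite{DKV,Ba}), and the endo-class of a representation of $\GL_N(\F)$ is unchanged by unramified twist (the twisting character is trivial on the relevant pro-$p$ group) and by passage from a discrete series to a cuspidal in its support; hence the endo-class of $\JL(\pi)$ equals the endo-class of the cuspidal support $\sigma$ of $\JL(\rho)$. One is thus reduced to showing that $\rho$ and $\sigma$ have the same endo-class, where $\sigma$ is an irreducible cuspidal representation of some $\GL_{n'}(\F)$.

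\emph{Second, pass to the split algebra via transfer.} Realize $\rho$ as compactly induced from a maximal simple type attached to a simple stratum $[\AA,n,0,\b]$ and a simple character $\t\in\Cc(\AA,0,\b)$, and let $\Theta$ be the ps-character it realizes. Passing to the split algebra $\End_\F(\V)$ as in paragraph~\ref{Split} — and, if a sound strict realization is wanted, first replacing $\La$ by $\La^{\ddag}$ as in paragraph~\ref{PreDague} — the transfer $\widetilde\t$ of $\t$ realizes the same ps-character $\Theta$ in the split setting. By Theorems~\ref{EndoClasChar}, \ref{TrelawneyIIC} and~\ref{Paraclet}, ``containing a simple character of endo-class $\boldsymbol\Theta$'' is a well-defined property of a cuspidal representation of $\GL_{n'}(\F)$, independent of all choices of realization. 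The conjecture is therefore equivalent to the statement that $\sigma$ contains a simple character realizing $\Theta$, that is, that $\boldsymbol\Theta$ is the common ``wild part'' of $\rho$ and of $\sigma$.

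\emph{Third, the analytic core — and the main obstacle.} This last step is the genuinely hard one, and the reason the statement is posed as a conjecture: there is at present no description of $\JL$ precise enough at the level of types to exhibit this matching of simple characters — supplying one is exactly the long-term objective of the present series and of \cite{HJL1,BHJL2,BHLTL3,SZ1,SZ2}. The only available handle on $\JL$ is the Deligne--Kazhdan--Vign\'eras and Badulescu character identity — up to an explicit sign, $\rho$ and $\JL(\rho)$ have the same Harish-Chandra character on regular elliptic elements — and extracting from this the equality of two wild-ramification data is delicate. Concretely, I would compute the germs of these characters along suitably very regular elements lying in a maximal torus containing $\F(\b)^{\times}$, using the type-theoretic character formulae, whose leading coefficients are controlled by the Gauss sums attached to the simple character $\t$ (respectively $\widetilde\t$), in the manner of the computations of \cite{BHLTL4} and its predecessors; the required bookkeeping — matching embedding types, tracking signs, and controlling the dependence on realizations — is exactly what the notion of embedding type introduced in this article, and the transfer compatibilities established in Sections~\ref{ILT} and~\ref{BCASDFSC}, are designed to handle. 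Forcing the two sets of Gauss sums to agree would give $\boldsymbol\Theta(\sigma)=\boldsymbol\Theta$, hence the conjecture.

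\emph{An alternative route.} When $\F$ has residue characteristic zero, one may instead invoke the local Langlands correspondence for general linear groups over $\F$, Badulescu's theorem that $\JL$ preserves Langlands parameters, and the bijection of \cite{BHLTL4} between $\Ee(\F)$ and $\Ww_\F$-conjugacy classes of irreducible representations of $\Pp_\F$: together these show that $\boldsymbol\Theta_\H(\JL(\pi))$ corresponds to the restriction to $\Pp_\F$ of the Langlands parameter of $\JL(\pi)$. To close the argument one then needs an intrinsic arithmetic identification of $\boldsymbol\Theta_\G(\pi)$ — for instance via the $L$- and $\varepsilon$-factors of pairs for $\GL_m(\D)$, which are available, computed in terms of the simple type of $\pi$ and shown to depend only on that restricted parameter. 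Carrying out this factor computation is itself a substantial project, analogous to \cite{BHLTL4} in the split case; this is why only the conjecture is recorded here.
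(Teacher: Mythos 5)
The statement you are addressing is recorded in the paper only as a conjecture: the authors give no proof of \eqref{EndoClassJL}, and indeed the whole point of their remark is that such a proof is currently out of reach. Measured against that, your proposal is honest but it is not a proof. Your first two stages (reduction to the cuspidal case, and the observation that, by Theorems \ref{EndoClasChar}, \ref{TrelawneyIIC} and \ref{Paraclet} together with the transfer formalism of paragraphs \ref{Split} and \ref{PreDague}, ``containing a simple character realizing the ps-character $\Theta$'' is a well-defined, choice-independent invariant on both sides) are consistent with the framework of the paper and with how $\boldsymbol\Theta_\G$ is actually constructed in paragraph \ref{daga}; they repackage the conjecture, but they do not reduce its difficulty. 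The genuine gap is exactly your ``third'' step: nothing in the paper, nor in the character identities of \cite{DKV,Ba}, is shown to force the cuspidal representations matched by ${\bf JL}$ to contain endo-equivalent simple characters, and your proposed germ/Gauss-sum computation is a programme, not an argument --- no formula is exhibited, no matching of leading terms is carried out, and it is precisely this matching that constitutes the content of Conjecture \ref{EndoClassJL1}. The same applies to your ``alternative route'': an intrinsic arithmetic identification of $\boldsymbol\Theta_\G(\pi)$ via $\varepsilon$-factors of pairs for $\GL_m(\D)$, compatible with the parameter restricted to $\Pp_\F$, is not available in the paper and is itself of the order of difficulty of \cite{BHLTL4}.

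Two smaller points. The bijection of \cite{BHLTL4} is stated in the paper for $\F$ of characteristic zero, not of \emph{residue} characteristic zero (the latter never holds for a nonarchimedean local field), so that hypothesis in your alternative route should be corrected. Also, your claim that $\boldsymbol\Theta_\H(\JL(\pi))$ equals the endo-class of the cuspidal support of $\JL(\pi)$ deserves a sentence of justification from the structure of simple types for a homogeneous inertial class (as in Fact \ref{Blig} and Theorem \ref{Endo-Class}), since $\boldsymbol\Theta_\H$ is defined directly on the discrete series representation; this is routine but should be said. In sum: your write-up correctly identifies where the difficulty lies and why the statement is a conjecture, but as a proof it has a gap that is the entire theorem.
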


This conjecture generalizes the fact that, for any level zero 
representation $\pi$ in $\EuScript{D}(\G)$, the representation 
$\JL(\pi)$ has level zero. 
It allows one to refine the correspondence $\JL$ by fixing the 
endo-class: given ${\bf\Theta}$ an endo-class over $\F$, 
Conjecture \ref{EndoClassJL1} implies that we have a bijective 
map:
\begin{equation*}
\JL_{{\bf\Theta}}:\EuScript{D}(\G,{\bf\Theta})\to
\EuScript{D}(\H,{\bf\Theta})
\end{equation*}
where we write 
$\EuScript{D}(\G,{\bf\Theta})$ for the set of isomorphism classes 
of essentially square integrable representations of 
$\G$ of endo-class ${\bf\Theta}$. 

%\bibliography{Biblio}

\providecommand{\bysame}{\leavevmode ---\ }
\providecommand{\og}{``}
\providecommand{\fg}{''}
\providecommand{\smfandname}{\&}
\providecommand{\smfedsname}{\'eds.}
\providecommand{\smfedname}{\'ed.}
\providecommand{\smfmastersthesisname}{M\'emoire}
\providecommand{\smfphdthesisname}{Th\`ese}

\end{document}